\theoremstyle{plain}
\providecommand*{\theoremincountername}{section}
\newtheorem{theorem}{Theorem}[\theoremincountername]
\newtheorem{lemma}[theorem]{Lemma}
\newtheorem{corollary}[theorem]{Corollary}
\theoremstyle{definition}
\newtheorem{definition}[theorem]{Definition}
\newtheorem{example}[theorem]{Example}
\theoremstyle{remark}
\newtheorem*{remark}{Remark}
\numberwithin{equation}{section}
\DeclareRobustCommand\SMC{%
  \ifx\@currsize\normalsize\small\else
   \ifx\@currsize\small\footnotesize\else
    \ifx\@currsize\footnotesize\scriptsize\else
     \ifx\@currsize\large\normalsize\else
      \ifx\@currsize\Large\large\else
       \ifx\@currsize\LARGE\Large\else
        \ifx\@currsize\scriptsize\tiny\else
         \ifx\@currsize\tiny\tiny\else
          \ifx\@currsize\huge\LARGE\else
           \ifx\@currsize\Huge\huge\else
            \small\SMC@unknown@warning
 \fi\fi\fi\fi\fi\fi\fi\fi\fi\fi
}
\newcommand{\SMC@unknown@warning}{\TBWarning{\string\SMC: unrecognised
    text font size command -- using \string\small}}
\DeclareTextFontCommand{\smalltexttt}{\SMC\ttfamily}
\providecommand{\includegraphics}[2][]{%
   \vrule\ %
   \parbox{0.5\columnwidth}{\texttt{#2} will be inserted here; too bad 
   you can't see it now.}%
   \ \vrule
}
\newcommand{\DefOrd}[2][**]{\DoIndex#1{#2}\textbf{#2}}
\newcommand{\emDefOrd}[2][**]{\DoIndex#1{#2}\emph{#2}}
\newcommand{\DoIndex}{%
   \@ifstar{\@ifstar\index\indexAndGobble}\DoIndexX
}    
\newcommand{\indexAndGobble}[2]{\index{#1}}
\newcommand{\DoIndexX}[1]{%
   \def\search@key{#1}%
   \@ifstar{\joinIndexArgs{}}{\joinIndexArgs\@gobble}%
}
\newcommand{\joinIndexArgs}[2]{\index{\search@key @#2}#1}
\let\gobblepage=\@firstoftwo
\theoremstyle{plain}
\newtheorem*{proposition-nonum}{Proposition}
\newtheorem{assumption}{Assumption}
\theoremstyle{definition}
\newtheorem{construction}[theorem]{Construction}
\theoremstyle{remark}
\newtheorem*{exercise}{Exercise}
\providecommand*{\DefOrd}[2][]{\textbf{#2}}
\providecommand*{\emDefOrd}[2][]{\emph{#2}}
\newcommand*{\parenthetic}[1]{\/\textup{(#1)}}
\providecommand*{\textprime}{\('\)}
\providecommand*{\textbis}{\textprime\textprime}
\providecommand*{\texttris}{\textprime\textprime\textprime}
\providecommand*{\ISBN}{ISBN}
\providecommand*{\Dash}{%
   \hspace*{0.166667em}\textemdash\hspace{0.166667em}%
}
\providecommand*{\Ldash}{%
   \hspace{0.166667em}\textemdash\hspace*{0.166667em}%
}
\newcommand{\PROP}{{\SMC PROP}}
\newcommand{\PROPs}{{\SMC PROP}s}
\newcommand*{\mc}[1]{\mathcal{#1}}
\newcommand*{\pin}[1]{
   \mathchoice{%
      \mathrel{\mathrm{in}}%
   }{%
      \mathrel{\mathrm{in}}%
   }{%
      \mathop{\mathrm{in}}%
   }{%
      \mathop{\mathrm{in}}%
   }#1%
}
\newcommand*{\tS}{t^S\@ifnextchar({\mkern -2mu}{}}
\DeclareMathOperator{\Div}{div}
\newcommand*{\Norm}[2][\@gobble]{\left\|#1. #2 \right\|}
\newcommand*{\norm}[2][\@gobble]{\left|#1. #2 \right|}
\newcommand*{\setOf}[3][\@gobble]{%
   \left\{ \, #2 \,\,\vrule\relax#1.\,\, #3 \, \right\}%
}
\newcommand{\mbin}[1]{\mathbin{#1}\nobreak}
\newcommand{\Epil}{\quad\Longleftrightarrow\quad}
\newcommand{\Fpil}{\longrightarrow}
\newcommand{\Ipil}{\quad\Longrightarrow\quad}
\newcommand{\Lpil}{\rightarrow}
\newcommand{\id}{\mathrm{id}}
\newcommand{\DIS}{\mathrm{DIS}}
\newcommand{\DSM}{\mathrm{DSM}}
\newcommand{\Irr}{\mathrm{Irr}}
\newcommand{\Per}{\mathrm{Per}}
\newcommand{\Red}{\mathrm{Red}}
\newcommand{\Span}{\mathrm{Span}}
\newcommand{\Cspan}{\mathrm{Cspan}}
\newcommand{\supp}{\mathrm{supp}}
\newcommand{\LM}{\mathrm{LM}}
\newcommand{\lm}{\mathrm{lm}}
\newcommand{\cmplM}{\cmpl{\mc{M}}}
\newcommand{\cmplO}{\widehat{\mc{O}}}
\newcommand{\N}{\mathbb{N}}
\newcommand{\Q}{\mathbb{Q}}
\newcommand{\R}{\mathbb{R}}
\newcommand{\Rp}{\R^+}
\newcommand{\Z}{\mathbb{Z}}
\newcommand{\Zp}{\Z^+}
\newcommand{\ve}{\varepsilon}
\newcommand{\ssa}{{\mathsf{a}}}
\newcommand{\ssb}{{\mathsf{b}}}
\newcommand{\ssc}{{\mathsf{c}}}
\newcommand{\ssx}{{\mathsf{x}}}
\newcommand{\ssI}{\mathsf{1}}
\newcommand*{\FAlg}[2]{#2 \langle #1 \rangle }
\newcommand{\RavX}{\FAlg{X}{\mc{R}}}
\newcommand{\RstarY}{R^*\mkern-2mu\mc{Y}}
\newcommand{\cmpl}[1]{%
   \sbox\z@{$#1$}%
   \dimen@=\wd\z@
   \advance \dimen@ -\strip@pt\fontdimen\@ne\textfont\@ne \ht\z@
   \setbox\tw@=\hb@xt@\dimen@{}%
   \ht\tw@=\ht\z@ \dp\tw@=\dp\z@ 
   \box\z@
   \llap{$\overline{\box\tw@}$}%
}
\begin{document}

\title{A Generic Framework for Diamond Lemmas} 
\author{Lars Hellstr\"om%
  \thanks{E-mail: \texttt{Lars.Hellstrom@residenset.net}. 
    Postal address: Lars Hellstr\"om, Sand 216, 
    S-881\,91~Sollefte\r{a}, Sweden.}%
}
\date{}
\maketitle

\begin{abstract}
  This paper gives a generic form of the diamond lemma, which includes 
  support for additive and topological structures of the base set, 
  and which does not require any further structure (e.g.~an 
  associative multiplication operation) to be present. 
  This result is intended to be used as the core of diamond 
  lemmas for particular algebraic structures, taking care of 
  all the common technicalities. 
  With this generic diamond lemma, the main steps needed to prove a 
  specialised diamond lemma is to define the reduction maps and 
  analyse the structure of critical ambiguities.
  
  The abstract machinery is backed up with concrete suggestions for 
  how one should set things up in order to reproduce traditional 
  results in the general setting. 
  Several instances of the fundamental theorem of Gr\"obner basis 
  theory are derived as corollaries of the main result.
\end{abstract}

\section{Introduction}

The \emph{Diamond Lemma for Ring Theory}~\cite{Bergman} of Bergman is 
an important theorem that links together several branches of 
mathematics. On one hand it is the bridge between associative algebra 
and mathematical logic that can make the definition of an algebra 
through generators and relations effective. On another it marks a middle 
ground between the theory of Gr\"obner bases and the theory of term 
rewriting, which can be seen as belonging to either of the two. 
Yet it is only one member in a family of results on similar connections, 
which can be quite 
different in their technical details even though the essential ideas 
are mostly the same. Furthermore many of these results exist in the 
literature only as sketches (which, it seems, everybody is waiting for 
someone else to flesh out, as it is all so ``obvious'' anyway), and 
as a result the rigor in many arguments becomes somewhat lacking, as 
they should rightly have been \emph{proofs} making use of some 
particular diamond lemma.

My intention here is to state and prove a generic form of the diamond 
lemma from which one can easily derive more specialised results 
suitable for particular problems. It is probably not the most generic 
form that is possible, but it can deal with the technicalities in all 
cases I know of, and does so without making extensive assumptions 
about the structure to which it is applied. 

From a strictly technical perspective, 
the theorem given here generalises that of Bergman in three directions:
\begin{enumerate}
  \item
    A topological aspect is added to the basic machinery. This makes 
    it possible to treat e.g.~formal power series problems within the 
    diamond lemma framework.
  \item
    The assumptions about a multiplicative structure have been 
    dropped from the core theorem. Auxiliary theorems are provided 
    which together with the core theorem cover what the associative 
    algebra diamond lemma can do, but also apply for a much broader 
    range of algebraic structures (nonassociative algebras, operads, 
    \PROPs~\cite{MacLane65}, etc.). 
  \item
    The definition of reductions has been separated from the diamond 
    lemma, so that it no longer depends on these having a particular 
    form or that all reductions of a particular form are active. 
    The latter is useful if one wishes to cover Shirshov's theory of 
    bases for Lie algebras~\cite{Shirshov}.
\end{enumerate}
The first generalisation was the subject of my Ph.D. 
thesis~\cite{Avhandlingen}, but the presentation here has been 
refined in that it eliminates many minor assumptions on how the 
multiplicative and topological structures interact. Readers who want 
concrete examples may however prefer the thesis presentation, as it 
treats some applications in great detail.

The main advantage of the topological aspect is that it enables one to 
handle both polynomials and power series (or their respective 
counterparts from less traditional algebraic structures) using the same 
machinery. 
A less apparent advantage is that problems that can be posed entirely 
in terms of finite sums (i.e., polynomials) sometimes have solutions 
where the normal form is an infinite sum (power series), and in this 
case one has to employ the topologized version in order to prove 
things about this normal form. In so doing, one can take advantage of 
certain relaxations of the conditions of the classical result; 
Definition~\ref{Def:TDCC} of the descending chain condition and 
Definition~\ref{Def:Tvetydighet} of ambiguity resolution both admit 
more than in Bergman's diamond lemma.

The second generalisation has been the main direction in my 
subsequent work, initiated in response to a question from Loday on 
whether there is a diamond lemma for operads. It's not too hard to 
see that there is such a creature\Ldash more work had to be spent 
sorting out the details of claims than the details of their 
proofs\Dash but one fundamental change when going from algebras to 
operads is that one goes from a single-sorted algebraic structure 
(there is \emph{one} set of elements) to a multiple-sorted algebraic 
structure (in an operad, elements of different arities don't mix, 
and hence there is a separate sort of element for each arity). 
The interactions 
between these elements of different sorts is certainly a kind of 
multiplicative structure, but one that syntactically is much more 
unwieldy (regardless of whether one prefers to phrase it using the 
structure map formalism or the $i$th composition formalism) than that 
of a ring, and bundling these interactions with the diamond lemma would 
turn an already very technical result into something even worse. 
Furthermore the generalisations do not stop at operads. There are good 
reasons to at least go on to \PROPs\ (because each operad is a part of 
some \PROP, and \PROPs\ have a more concise set of axioms), and after 
that there are more general diagrammatic structures that one may 
wish to consider. Handling them all in one result does not seem a 
likely achievement.

What turns out to work is instead to separate the parts of the 
classical diamond lemma that deal with the multiplicative structure 
from the parts that ignore this structure. The core of the diamond lemma 
(Theorem~\ref{S:CDL}, with the familiar equivalence of four different 
conditions) can be very neatly captured as a result on one sort 
(hence ignoring the multiplicative structure under which sorts may 
interact), whereas the construction of reductions and verifications 
that some ambiguities are trivially resolvable fall into the other part. 
This is actually rather fortunate, 
because the first part will then deal with the classical technicalities, 
whereas the second will deal with the particular features of rings, 
operads, \PROPs, or whatever; there is an almost complete separation of 
responsibilities.

The third generalisation is thus in part a natural consequence of the 
second, but there are also other advantages to it. One is that many 
defining identities of the classical nonassociative algebras do not fit 
well to make ``unconditional'' rules from; a simple example is the 
anticommutativity identity of a Lie algebra, which if expressed as a 
rule `\([x,y] \rightarrow -[y,x]\) for all $x$ and $y$' would lead to 
the infinite rewrite cycle \([x,y] \rightarrow -[y,x] \rightarrow 
-\bigl(-[x,y]\bigr) = [x,y] \rightarrow -[y,x] \rightarrow \dotsb\). 
One way to handle that in practice is to instead make a conditional 
rule `\([x,y] \rightarrow -[y,x]\) if \(x>y\)' out of it, and the 
machinery constructed here can handle that; since each pair $(x,y)$ 
of factors gives rise to a separate reduction map, it is merely a 
matter of considering only those pairs for which \(x>y\) in some 
suitable ordering of the factors. While there is a certain 
price to pay in that ambiguity resolution becomes less automatic, 
this price effectively only comes into play when the conditional 
rules are involved.

The structure of this paper is as follows.
Sections~\ref{Sec:Basics} and~\ref{Sec:Reduktioner} introduce the 
framework within which the core diamond lemma (Theorem~\ref{S:CDL}) 
is formulated. These sections also contain plenty of minor 
constructions for setting up various aspects of this framework, to 
illustrate features of the formalism used, and to aid the reader in 
applying the results. 

Sections~\ref{Sec:Reducibilitet} and~\ref{Sec:Monomordning} contain 
the bulk of the proof of the diamond lemma. The former section is 
about more abstract algebraic--topological properties of monoids of 
maps, whereas the latter introduces an order and uses induction 
to link these properties to conditions that can be verified through 
explicit calculations. Notable lemmas are \ref{L:(b)<=>(c)}~(linking 
normal form uniqueness to univocality of the pointwise limit of 
reductions), \ref{L: Persistently red.}~(existence of normal 
forms), and \ref{L:Red=cmplM}~(uniqueness of normal form given 
relative resolvability of ambiguities). Besides the main 
Theorem~\ref{S:CDL}, there is also Theorem~\ref{S:Konstr.Irr(S)} 
which provides a characterisation of irreducible elements.

Section~\ref{Sec:Tvetydigheter} is about ambiguities 
(a.k.a.~critical pairs or overlaps) and how one in a multisorted 
situation can discard non-critical ambiguities from consideration. 
This is as much about defining `critical ambiguity'\Ldash a subject 
which spans definitions~\ref{Def:Framflyttbar2}, \ref{Def:Montage}, 
and~\ref{Def:V-kritisk}\Dash as it is about proving them discardable. 
The claim that checking the critical ambiguities is as good as checking 
all ambiguities can be found in Theorem~\ref{S:V-kritisk}. 
Example~\ref{Ex:BergmanskTvetydighet} derives Bergman's diamond lemma 
from the generic theory. 
Theorem~\ref{S:DropRule} is aimed more at completion calculations; 
it justifies dropping unnecessary rules while in the middle of 
completing a rewriting system.

Section~\ref{Sec:Konstruktion} collects a construction and some 
technical lemmas that may be used in applications to demonstrate 
that the most common setting (a collection of free modules) leads 
to a framework suitable for the generic diamond lemma. 
Again the aim is to bridge the gap between concrete conditions that 
are easy to verify and more abstract conditions used in the generic 
theory.

The final Section~\ref{Sec:Grobner} is about Gr\"obner bases, where 
Theorem~\ref{S:Grobner-DL} extends the big equivalence in the generic 
diamond lemma with some GB-style claims. Several instances of ``the 
fundamental theorem on Gr\"obner bases'' (in commutative, associative, 
and nonassociative polynomial algebras) are derived as corollaries of 
this theorem, and the theory is shown to also cover the case of path 
algebras.

A more practical application of the generic diamond lemma theory can 
be found in~\cite{RwGraphInvariant}. Unlike the applications in 
Section~\ref{Sec:Grobner}, this exercises the multisorted aspects of 
the framework.

\subsection*{Notation}

The set $\N$ of natural numbers is considered to include $0$. $\Zp$ 
is the set of positive integers and $\Rp$ is the set (sometimes the 
multiplicative group) of positive real numbers. 
The shorthand $f(A)$ for $\setOf[\big]{ f(a) }{ a \in A }$ is 
frequently applied.

Formal variables are typically written using a sans-serif font: 
$\ssa$, $\ssb$, $\ssc$, etc. When $X$ is a set of such letters, 
$X^*$\index{X*@$X^*$} denotes the free monoid on $X$, i.e., the 
set of all finite strings of elements from $X$. The identity 
element in $X^*$ is denoted $\ssI$\index{1@$\ssI$}.

On the matter of monomials versus terms, a \emDefOrd{monomial} is 
considered to not include a coefficient, whereas a \emDefOrd{term} 
generally contains a coefficient. The relation symbol $\equiv$ 
denotes congruence rather than identity.

\section{Basics}
\label{Sec:Basics}

For the machinery employed here, it is convenient to fix a framework 
with five pieces of data:
\begin{itemize}
  \item
    An abelian group $\mc{M}$\index{M@$\mc{M}$} (written additively). 
    This will play the role of set of all finite expressions.
  \item 
    A set $R$\index{R@$R$} of maps \(\mc{M} \Fpil \mc{M}\). This can 
    be used to encode a module structure on $\mc{M}$.
  \item
    A subset $\mc{Y}$\index{Y@$\mc{Y}$} of $\mc{M}$. This will play 
    the role of set of monomials.
  \item 
    A family \index{O@$\mc{O}$}\(\mc{O} = 
    \{B_n\}_{n=1}^\infty\)\index{B n@$B_n$} of subsets of 
    $\mc{M}$. This will become the fundamental system of 
    neighbourhoods of \(0 \in \mc{M}\) and is thus defining the 
    topology.
  \item
    A family $T_1(S)$\index{T 1 S@$T_1(S)$} of maps 
    \(\cmplM \Fpil \cmplM\), where \(\cmplM \supseteq \mc{M}\) is the 
    set of all expressions. These are what in the end specify the 
    wanted congruence on $\mc{M}$.
\end{itemize}
When applying the diamond lemma to a multiple-sorted structure, there 
will be one such quintuplet $\bigl( \mc{M}, R, \mc{Y}, \mc{O}, T_1(S) 
\bigr)$ for each sort, but since the core diamond lemma itself is 
applied separately for each sort, one does not have to take this 
multiplicity into account when proving it. Notation for and 
interactions between different framework quintuplets for a structure 
are considered in Section~\ref{Sec:Tvetydigheter}.

In the main theorem there will also be:
\begin{itemize}
  \item
    a partial order $P$ on $\mc{Y}$;
\end{itemize}
but that can without too much difficulty be separated from the rest 
of the machinery, so it will instead be introduced explicitly 
whenever it is needed. Having it separate is sometimes convenient, as 
one in complicated arguments might want to make use of several 
different orders. Finally, there is in several supporting results:
\begin{itemize}
  \item
    a family $V$ of maps \(\cmplM \Fpil \cmplM\), which can be used 
    to enforce compatibility with a multiplicative structure;
\end{itemize}
but the typical use of that item is rather on the level of 
constructing $T_1(S)$ or proving things about it. 

The choices of $R$, $\mc{Y}$, $\mc{O}$, and $T_1(S)$ are subject to a 
couple of additional conditions, which are specified as 
\emph{assumptions} below. Technically it would be possible to instead 
include them as additional conditions in all theorems and lemmas that 
depend on them, but it is more convenient to throughout the presentation 
assume them to be satisfied. There are plenty of suggestions for how 
one may choose the framework data to ensure that the assumptions are 
met. $T_1(S)$ is treated in the next section, but assumptions on $R$, 
$\mc{Y}$, and $\mc{O}$ are given here.

\begin{assumption}
  Every element of $R$ is a group endomorphism of $\mc{M}$.
\end{assumption}

\begin{definition}
  A subgroup \(N \subseteq \mc{M}\) is said to be an 
  \DefOrd[{module}*]{$R$-module} if \(r(a) \in N\) for all 
  \(a \in N\) and \(r \in R\).
\end{definition}

If $\mc{M}$ has an $\mc{R}$-module structure for some ring $\mc{R}$, 
then it is natural to choose as $R$ the set of maps \(a \mapsto ra : 
\mc{M} \Fpil \mc{M}\) for all \(r \in \mc{R}\); in this case the above 
$R$-module concept coincides with the standard $\mc{R}$-module concept 
and all is as one expects it to be. It may however in some cases be 
necessary to impose a restriction on the elements of $\mc{R}$ which may 
contribute to $R$, and in that case it is weaker to be an $R$-module 
than to be an $\mc{R}$-module. It is also perfectly possible to take 
\(R = \varnothing\) if no particular module structure is available.

\begin{assumption} \label{Ant:YSpan}
  If \(N \subseteq \mc{M}\) is an $R$-module such that \(\mc{Y} 
  \subseteq N\) then \(N = \mc{M}\).
\end{assumption}

In other words, $\mc{Y}$ spans $\mc{M}$. 
The traditional approach is to begin with $\mc{Y}$ just being some 
set, pick some ring $\mc{R}$, and then \emph{construct} $\mc{M}$ as 
the free $\mc{R}$-module with basis $\mc{Y}$; if in particular 
$\mc{Y}$ is the set $X^*$ of words on the alphabet $X$ then this will 
make $\mc{M}$ equal to the free $\mc{R}$-algebra $\RavX$. 
An alternative approach for the free algebra $\RavX$ is however to let 
$\mc{Y}$ be the set of all terms\Ldash products $r\mu$ of a scalar \(r 
\in \mc{R}\) and a monomial \(\mu \in X^*\)\Dash as this makes 
it possible to take \(R=\varnothing\). It is also possible to 
interpolate between these two extremes, or pick a set $\mc{Y}$ with 
more complicated linear dependencies between elements, although the 
latter is likely to make it more complicated to construct $T_1(S)$.

\begin{definition}
  Let $R^*$\index{R*@$R^*$} denote the set of all finite compositions 
  of elements of $R$; in particular, $R^*$ is considered to contain 
  the identity map \(\id\colon \mc{M} \Fpil \mc{M}\). 
  Let \index{+-R*@$\pm R^*$}\(\pm R^*\) denote the set 
  $\setOf{r, -r}{r \in R^*}$. Let \index{R*Y@$\RstarY$}$\RstarY$ 
  denote the set \(\setOf[\big]{ r(\mu) }{ r \in R^*, \mu \in \mc{Y} } 
  \subseteq \mc{M}\).
\end{definition}

\begin{lemma} \label{L:M-form}
  Every element \(a \in \mc{M}\) can be expressed as
  \begin{equation} \label{Eq:M-form}
    a = \sum_{k=1}^n r_k(\mu_k)
  \end{equation}
  for some \(n\in\N\), \(r_1,\dotsc,r_n \in \pm R^*\), and 
  \(\mu_1,\dotsc,\mu_n \in \mc{Y}\).
\end{lemma}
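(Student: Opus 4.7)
The plan is to apply Assumption~\ref{Ant:YSpan} to the set $N$ of all elements that \emph{do} admit an expression of the form~(\ref{Eq:M-form}). To do so I need to verify that $N$ is an $R$-module and that $\mc{Y} \subseteq N$; the conclusion $N = \mc{M}$ is then immediate and gives exactly the lemma.

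First I would observe that $0 \in N$ by taking $n = 0$ (the empty sum). Closure of $N$ under addition is trivial: a sum of two expressions of the form~(\ref{Eq:M-form}) is again of that form, obtained by concatenating the index sets. Closure under negation is likewise immediate, since $-\sum_{k=1}^n r_k(\mu_k) = \sum_{k=1}^n (-r_k)(\mu_k)$ and $\pm R^*$ is by definition stable under negation. Hence $N$ is a subgroup of $\mc{M}$.

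Next I would verify that $N$ is closed under the action of $R$. Take $r \in R$ and $a = \sum_{k=1}^n r_k(\mu_k) \in N$. By Assumption~1, $r$ is a group endomorphism, so $r(a) = \sum_{k=1}^n r(r_k(\mu_k))$. If $r_k = s_k \in R^*$ then $r \circ s_k \in R^*$ and $r(r_k(\mu_k)) = (r \circ s_k)(\mu_k)$; if instead $r_k = -s_k$ with $s_k \in R^*$, then since $r$ is a group homomorphism $r(r_k(\mu_k)) = -r(s_k(\mu_k)) = -(r \circ s_k)(\mu_k)$. Either way the coefficient lies in $\pm R^*$, so $r(a) \in N$ and $N$ is an $R$-module.

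Finally, $\mc{Y} \subseteq N$ because every $\mu \in \mc{Y}$ can be written as $\mu = \id(\mu)$ with $\id \in R^*$ (and $\id \in \pm R^*$). Assumption~\ref{Ant:YSpan} now forces $N = \mc{M}$, completing the proof. The only point requiring any care is keeping track of signs in the $R$-module closure step, ensuring that the composition $r \circ r_k$ lands back in $\pm R^*$; this is immediate from Assumption~1, so no real obstacle remains.
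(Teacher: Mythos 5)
Your proof is correct and follows exactly the paper's own argument: define $N$ as the set of elements admitting an expression of the form \eqref{Eq:M-form}, verify it is an $R$-module containing $\mc{Y}$, and conclude $N=\mc{M}$ via Assumption~\ref{Ant:YSpan}. The paper merely states this in one line, while you spell out the subgroup and $R$-closure verifications, which are routine and correctly handled.
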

\begin{proof}
  The set of elements on the form \eqref{Eq:M-form} constitutes an 
  $R$-module that contains $\mc{Y}$. Hence by 
  Assumption~\ref{Ant:YSpan} the set of such elements is the whole of 
  $\mc{M}$.
\end{proof}

\begin{assumption} \label{Ant:B_n}
  \(\mc{O} = \{B_n\}_{n=1}^\infty\) is a family of $R$-modules such 
  that \(B_n \supseteq B_{n+1}\) for all \(n\in\Zp\) and 
  \(\bigcap_{n=1}^\infty B_n = \{0\}\).
\end{assumption}

\begin{definition}
  A set \(N \subseteq \mc{M}\) is said to be \DefOrd{open} (in 
  $\mc{M}$) if there for every \(a \in N\) exists some \(\ve \in 
  \mc{O}\) such that
  \begin{equation} \label{Eq:Def.Open}
    N \supseteq \setOf{ a+b }{ b \in \ve }
    \text{.}
  \end{equation}
  To put it differently: The topology on $\mc{M}$ is the group 
  topology for which $\mc{O}$ is a fundamental system of 
  neighbourhoods of $0$. 
\end{definition}

Many arguments involving topology in subsequent sections will be 
expressed using $\ve$-$\delta$-formalism, but since $\ve$ and 
$\delta$ will be neighbourhoods of $0$ rather than the conventional 
positive real numbers, a few examples of what this formalism looks 
like may be in order. First and foremost, the 
\emDefOrd[{neighbourhood}*]{$\ve$-neighbourhood} of 
an element $a$ is the set \(a+\ve := \setOf{ a+b }{ b \in \ve }\). 
A map $f$ is continuous at $0$ if there for every \(\ve \in 
\mc{O}\) exists some \(\delta \in \mc{O}\) such that \(f(\delta) := 
\setOf[\big]{ f(a) }{ a \in \delta } \subseteq \ve\). It should 
furthermore be observed that continuity at $0$, for group 
homomorphisms, is equivalent to continuity everywhere (and even to 
uniform continuity everywhere). That $\delta$ is smaller than (or 
equal to) $\ve$ is of course expressed as \(\delta \subseteq \ve\), 
and the minimum of $\ve$ and $\delta$ is $\ve\cap\delta$.

In addition to these general properties of neighbourhood arithmetic, 
there are also some special properties following from 
Assumption~\ref{Ant:B_n} that are of great importance here. Firstly 
\(\ve + \ve = \ve\) for every \(\ve \in \mc{O}\) since 
$\ve$ is a group. Similarly \(\ve - \ve = \ve\) and \(r(\ve) 
\subseteq \ve\) for all \(r \in R\). Finally the inclusion of 
$B_m$ in $B_n$ whenever \(m>n\) implies that \(\ve + \delta = \ve \cup 
\delta\) for all \(\ve,\delta \in \mc{O}\).

\medskip

The choice of $\mc{O}$ is a rather extensive topic, with many 
different approaches that should be mentioned, so it seems best to 
leave that for the end of this section, and instead proceed with the 
things that can be done as soon as $\mc{O}$ is in place.

\begin{lemma}
  The group operations on $\mc{M}$ and all elements in $R$ are 
  continuous.
\end{lemma}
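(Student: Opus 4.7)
The plan is to verify continuity at $0$ for each map separately, since the paragraph preceding the lemma already observes that, for group homomorphisms, continuity at $0$ implies continuity everywhere. All three kinds of maps in question\Dash addition $\mc{M} \oplus \mc{M} \Fpil \mc{M}$, negation, and each $r \in R$\Dash are group homomorphisms, so in each case it suffices to exhibit, for an arbitrary $\ve \in \mc{O}$, a neighbourhood $\delta$ (or pair of neighbourhoods in the case of addition) of $0$ whose image lies in $\ve$.

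In all three cases the plan is to take $\delta = \ve$ and invoke a corresponding property already recorded in the neighbourhood arithmetic immediately before the lemma. For addition at $(0,0)$, $\ve + \ve = \ve$ since $\ve \in \mc{O}$ is a subgroup, so $\delta_1 + \delta_2 := \ve + \ve \subseteq \ve$. For negation, $-\ve = \ve$ since $\ve$ is a group. For $r \in R$, $r(\ve) \subseteq \ve$ since $\ve$ is an $R$-module by Assumption~\ref{Ant:B_n}. Each of these is essentially a one-line verification.

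There is no genuine obstacle; the content of the lemma has been pre-digested into the ``neighbourhood arithmetic'' discussion, where the fact that the $B_n$ are $R$-modules (and hence subgroups closed under every $r \in R$) is precisely what is needed. The only thing worth being explicit about is the passage from continuity at $0$ to continuity everywhere for addition, which one justifies by noting that addition is a homomorphism from the product topological group $\mc{M} \oplus \mc{M}$ (whose topology has $\setOf{\ve \oplus \ve}{\ve \in \mc{O}}$ as a fundamental system of neighbourhoods of $(0,0)$) into $\mc{M}$; alternatively one simply observes that translations by fixed elements are homeomorphisms of $\mc{M}$, so that local continuity at $0$ propagates to all points.
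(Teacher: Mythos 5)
Your proposal is correct and rests on the same neighbourhood-arithmetic facts the paper uses ($\ve-\ve=\ve$ and $r(\ve)\subseteq\ve$, both consequences of $\ve$ being an $R$-submodule). The only real difference is one of packaging: the paper treats subtraction $s(a,b)=a-b$ as a single map (covering addition and negation simultaneously) and verifies continuity by directly showing every point of $s^{-1}(N)$ is interior, rather than explicitly appealing to the ``continuity at $0$ implies continuity everywhere'' transfer principle; your version invokes that principle by name (which is legitimate, as the paper states it in the preceding paragraph) and handles addition and negation as two separate homomorphisms. Either way, $\delta=\ve$ works, and the content is the same.
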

\begin{proof}
  Let \(s\colon \mc{M} \times \mc{M} \Fpil \mc{M} : (a,b) \mapsto 
  a-b\) be subtraction as a map; proving it continuous implies the 
  same for the standard group operations addition and negation. Let 
  \(N \subseteq \mc{M}\) be an arbitrary open set and let \((a,b) \in 
  s^{-1}(N)\) be arbitrary too. Since \(a-b \in N\) there exists some 
  \(\ve \in \mc{O}\) such that \((a -\nobreak b) + \ve \subseteq N\). 
  For this $\ve$, \((a +\nobreak \ve) - (b +\nobreak \ve) = 
  (a -\nobreak b) + (\ve -\nobreak \ve) = (a -\nobreak b) + \ve 
  \subseteq N\), and hence \( (a +\nobreak \ve) \times (b +\nobreak \ve) 
  \subseteq s^{-1}(N)\), which means $(a,b)$ is an interior point of 
  $s^{-1}(N)$. It follows that $s^{-1}$ maps open sets to open sets, 
  and hence $s$ is continuous.
  
  Now let \(r \in R\) and an open set \(N \subseteq \mc{M}\) be 
  arbitrary. For every \(a \in r^{-1}(N)\) there exists some \(\ve 
  \in \mc{O}\) such that \(r(a) + \ve \subseteq N\), and hence \(a + 
  \ve \subseteq r^{-1}(N)\) because \(r(a +\nobreak \ve) = r(a) + 
  r(\ve) \subseteq r(a) + \ve \subseteq N\). Thus $r$ is continuous.
\end{proof}

The next step is to go from $\mc{M}$ to its completion $\cmplM$, 
which can be constructed in the standard way as the set of 
equivalence classes of Cauchy sequences in $\mc{M}$, where two 
sequences $\{a_n\}_{n=1}^\infty$ and $\{b_n\}_{n=1}^\infty$ are 
considered equivalent if \(\lim_{n \Lpil \infty} (a_n -\nobreak b_n) = 
0\). The topology in the completion can be defined in terms of 
limits: \(a = \lim_{n\Lpil\infty}a_n\) for \(a_n = \bigl[ 
\{b_{n,k}\}_{k=1}^\infty \bigr]\) if and only if 
\(\{b_{n,n}\}_{n=1}^\infty\) is a Cauchy sequence in $\mc{M}$ and 
\(a = \bigl[\{b_{n,n}\}_{n=1}^\infty\bigr]\). 
The equivalence classes of the constant sequences provide the 
canonical embedding of $\mc{M}$ into its completion, and it is 
convenient to identify this with the original $\mc{M}$.

An alternative approach, which fits better in with many textbook 
definitions of the completion, is to turn $\mc{M}$ into a metric space 
and make use of this explicit metric when defining e.g.~the topology 
of $\cmplM$. (Both approaches yield the same end result.) There are 
several metrics which all reproduce the topology of $\mc{M}$, but the 
following is often the most natural:
\begin{equation} \label{Eq:O-metrik}
  d(a,b) = \begin{cases}
    1& \text{if \(a-b \notin B_1\),}\\
    \inf \setOf{ 2^{-n} }{ a-b \in B_n }& \text{otherwise}
  \end{cases}
\end{equation}
for all \(a,b \in \mc{M}\). That \(d(a,a)=0\) makes use of the 
infimum, whereas in the formula for $d(a,b)$ when \(a \neq b\) this 
$\inf$ is equivalent to a $\min$.

\begin{definition}
  The completion of $\mc{M}$ is denoted \index{M-bar@$\cmplM$}$\cmplM$, 
  and $\mc{M}$ is considered to be a subset of $\cmplM$. For any 
  \(N \subseteq \cmplM\), the topological closure in $\cmplM$ of $N$ 
  is denoted $\cmpl{N}$. Let \index{O hat@$\cmplO$}\(\cmplO = 
  \left\{ \cmpl{B_n} \right\}_{n=1}^\infty\).
  
  The group operations extend by continuity to the whole of 
  $\cmplM$, as do the homomorphisms in $R$, and will henceforth be 
  considered to be defined on the whole of $\cmplM$. Accordingly, 
  any subgroup \(N \subseteq \cmplM\) is said to be an 
  \DefOrd[{module}*]{$R$-module} for which \(r(a) \in N\) whenever 
  \(a \in N\) and \(r \in R\). If \(Z \subseteq \cmplM\) is some set, 
  then $\Span(Z)$\index{Span@$\Span$} will denote the smallest 
  $R$-module which contains $Z$. Denote by 
  $\Cspan(Z)$\index{Cspan@$\Cspan$} the topological closure of 
  $\Span(Z)$.
\end{definition}

\begin{lemma}
  $\cmplO$ is a fundamental system of neighbourhoods of $0$ in 
  $\cmplM$. In particular the elements of $\cmplO$ are clopen 
  (simultaneously closed and open), whence the topology of $\cmplM$ 
  is zero-dimensional and totally disconnected.
\end{lemma}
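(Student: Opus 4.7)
The plan is to exploit the explicit metric $d$ from~\eqref{Eq:O-metrik}, which extends by continuity to a metric $\hat{d}$ on $\cmplM$ generating its topology. The crucial feature of $d$ is that its range lies in the discrete set $\{0,1\} \cup \{2^{-k} : k \geq 1\}$, whose only accumulation point is $0$; hence any convergent sequence of $d$-values with positive limit $r$ must be eventually constant equal to $r$. Since $\hat{d}(a,b)$ is defined as the limit of $d(a_k,b_k)$ along representing Cauchy sequences, $\hat{d}$ takes values in this same discrete set and inherits the same eventual-constancy property.

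The key step is to identify each $\cmpl{B_n}$ with the closed metric ball $\{a \in \cmplM : \hat{d}(a,0) \leq 2^{-n}\}$. The inclusion $\subseteq$ is immediate: for $a = \lim_{k\to\infty} a_k$ with $a_k \in B_n$ we have $d(a_k,0) \leq 2^{-n}$, so $\hat{d}(a,0) \leq 2^{-n}$. For the reverse inclusion, take any sequence $b_k \to a$ in $\mc{M}$; then $d(b_k,0) \to \hat{d}(a,0) \leq 2^{-n}$, and the discreteness forces $d(b_k,0) \leq 2^{-n}$ from some index onward, whence $b_k \in B_n$ eventually and so $a \in \cmpl{B_n}$.

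The same discreteness argument shows that $\{a : \hat{d}(a,0) \leq 2^{-n}\}$ coincides with the open ball $\{a : \hat{d}(a,0) < 2^{-n+1}\}$, so each $\cmpl{B_n}$ is simultaneously a closed ball (hence closed) and an open ball (hence open). That $\cmplO$ is a fundamental system of neighbourhoods of~$0$ now follows, since any neighbourhood of $0$ in the metric space $\cmplM$ contains an open $\hat{d}$-ball around $0$ of some radius $2^{-m}$, and by the identification above that ball is exactly $\cmpl{B_{m+1}} \in \cmplO$. Finally, translating this clopen base of neighbourhoods at $0$ by any $a \in \cmplM$ produces a clopen base at $a$, which in a Hausdorff space is the very definition of zero-dimensionality and immediately implies total disconnectedness.

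The only point requiring slight care is the discreteness transfer from $d$ to $\hat{d}$, together with the consequent equality of closed and open balls of appropriately related radii; once that is in hand the remainder of the argument is bookkeeping with metric neighbourhoods.
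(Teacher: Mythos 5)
Your proof is correct, but it follows a genuinely different route from the paper's. You work through the explicit metric $d$ of~\eqref{Eq:O-metrik}, its continuous extension $\hat{d}$ to $\cmplM$, and the key observation that the value set of $\hat{d}$ lies in the discrete set $\{0\}\cup\{2^{-m}:m\geqslant 0\}$; from this you identify $\cmpl{B_n}$ simultaneously as a closed ball and as an open ball, whence clopen, and then fundamental-system-ness and zero-dimensionality follow from standard metric-space bookkeeping plus the topological-group structure for the translation step. The paper, by contrast, avoids the metric entirely and argues directly with Cauchy sequences: it first shows (via a diagonal construction) that a closed set meeting every $\cmpl{B_n}$ must contain $0$, deduces the fundamental-system property by taking complements, and establishes clopenness by observing that $B_n+B_n=B_n$ forces every Cauchy sequence to be eventually in $B_n$ or eventually in its complement, so $\cmpl{B_n}$ and $\cmpl{\mc{M}\setminus B_n}$ partition $\cmplM$ into two closed sets. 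The two arguments exploit the same underlying ultrametric phenomenon, but the paper's version is self-contained in the group-topology setting, whereas yours leans on the (true, and acknowledged in the text, but not proved there) identification of the group topology on $\cmplM$ with the metric topology of $\hat{d}$ and on standard facts about metric completions; what you gain is a shorter, more concrete calculation once that identification is granted. One small thing to be explicit about if you write this up: the step where you translate the clopen base at $0$ to an arbitrary $a$ uses that addition on $\cmplM$ is continuous, which the paper has established in passing when extending the group operations by continuity.
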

\begin{proof}
  First consider an arbitrary \(F \subseteq \cmplM\) that is closed 
  and not disjoint from $\cmpl{B_n}$ for any \(n\in\Zp\); as an 
  auxiliary result it will be shown that such an \(F \owns 0\). 
  The closure $\cmpl{N}$ of some \(N \subseteq \mc{M}\) 
  consists of those points \(a \in \cmplM\) for which there exists 
  some Cauchy sequence \(\{a_k\}_{k=1}^\infty \subseteq N\) such that 
  \(a = \lim_{k\Lpil\infty} a_k\). Let $\{a_n\}_{n=1}^\infty \subseteq 
  \cmplM$ be a sequence such that \(a_n \in F \cap \cmpl{B_n}\) and 
  let \(\{b_{n,m}\}_{m,n=1}^\infty \subseteq \mc{M}\) be a collection 
  of points such that \(a_n = \lim_{m\Lpil\infty}b_{n,m}\) and 
  \(\{b_{n,m}\}_{m=1}^\infty \subseteq B_n\) for all $n$. Clearly 
  \(b_{n,n} \in B_n\) for all $n$ and thus \(b_{n,n} \Lpil 0\) as 
  \(n\Lpil\infty\), which implies \(\lim_{n\Lpil\infty} a_n = 0\) as 
  well. Since $F$ was closed, it follows that \(0 \in F\).
  
  Now let \(U \subseteq \cmplM\) be an arbitrary open neighbourhood of 
  $0$, and consider the matter of whether $U$ contains some 
  $\cmpl{B_n}$ for $n$ large enough. The complement \(F = \cmplM 
  \setminus U\) is a closed set that does not contain $0$, and hence 
  by the converse of the above result there is some $n$ for which 
  \(F \cap \cmpl{B_n} = \varnothing\), meaning that \(\cmpl{B_n} 
  \subseteq U\).
  
  Consider next the problem of showing that all the $\cmpl{B_n}$ are 
  clopen (both open and closed). A Cauchy sequence 
  \(\{a_k\}_{k=1}^\infty \subseteq \mc{M}\) has the property that it 
  is \emph{either} eventually in $B_n$ \emph{or} eventually in the 
  complement $\mc{M}\setminus B_n$, because by the definition of 
  Cauchy sequence there exists some $m$ such that if \(i,j \geqslant 
  m\) then \(a_i-a_j \in B_n\), or equivalently \(a_i \in a_j + 
  B_n\), and thus if \(a_j \in B_n\) for some \(j \geqslant m\) then 
  \(a_i \in a_j + B_n \subseteq B_n + B_n = B_n\) for all \(i 
  \geqslant m\), in which case $\{a_k\}_{k=1}^\infty$ indeed is 
  eventually in $B_n$. If no \(a_j \in B_n\) for \(j \geqslant m\) 
  then instead \(a_j \in \mc{M}\setminus B_n\) for all \(j \geqslant 
  m\), and consequently $\{a_k\}_{k=1}^\infty$ will be eventually in 
  $\mc{M} \setminus B_n$. This property of Cauchy sequences means a 
  sequence can converge either to an element of $\cmpl{B_n}$ or to an 
  element of $\cmpl{\mc{M} \setminus B_n}$, but not both, and 
  therefore these sets will be disjoint; $\cmpl{B_n} \cup 
  \cmpl{\mc{M} \setminus B_n}$ is a partition of $\cmplM$. Since both 
  parts in this partition are closed by definition, they are also both 
  open, and in particular $\cmpl{B_n}$ is both closed and open.
  
  This has shown that $\cmplO$ is a fundamental system of 
  neighbourhoods of $0$ in $\cmplM$, and that its members are all 
  clopen. A topology is said to be zero-dimensional if it has a 
  basis consisting entirely of clopen sets, and every space with a 
  zero-dimensional topology is totally disconnected.
\end{proof}

Zero-dimensional topologies are, just like Zariski topologies, 
perfectly fine topologies (i.e., all the axioms hold and hence the 
basic theorems follow), but a bit unsettling when one first 
encounters them as things do not 
behave in quite the way one has gotten used to in the standard 
topology on $\R$\Dash the multitude of sets that are open \emph{and} 
closed at the same time being the most obvious oddity. Being metric, 
and consequently Hausdorff, the topology on $\cmplM$ does however 
have much more in common with the standard topology on $\R$ than it 
has with Zariski topologies, so it is not all that far out.

\begin{exercise}
  An intuition for spaces like $\cmplM$ may be found by comparing 
  them to Cantor sets, as the two have many traits in common. 
  Indeed, for \(\mc{M} = \Z_2[x]\) (univariate polynomials over 
  \(\Z_2 = \Z/2\Z\)) and \(B_n = \mc{M} x^{\lceil \alpha n \rceil}\) 
  where \(\alpha = \log_3 2\), 
  the completion $\cmplM$ is very similar to the standard Cantor set. 
  Show that the map \(\phi\colon \mc{M} \Fpil [0,1]\) defined by
  \begin{equation*}
    \phi\biggl( \sum_{k=0}^n (s_k + 2\Z) x^k \biggr) =
    \frac{2}{3} \sum_{k=0}^n s_k 3^{-k}
    \qquad
    \text{for all \(\{s_k\}_{k=0}^n \subseteq \{0,1\}\)}
  \end{equation*}
  satisfies
  \begin{equation*}
    C_1 d(a,b) \leqslant \norm[\big]{ \phi(a) - \phi(b) } \leqslant 
    C_2 d(a,b)
    \quad\text{for all \(a,b \in \mc{M}\),}
  \end{equation*}
  for some positive constants $C_1$ and $C_2$, where $\norm{\cdot}$ 
  is the standard absolute value on $\R$ and $d$ is the metric from 
  \eqref{Eq:O-metrik}. Conclude that $\phi$ extends to a 
  homeomorphism from $\cmplM$ to the remove-middle-third Cantor set 
  on the unit interval.
\end{exercise}

\begin{lemma} \label{L:Cspan-uppdelning}
  If \(A \subseteq \cmplM\) and \(a \in \Cspan(A)\) then for every 
  \(\ve \in \cmplO\) there exists a natural number $n$, some elements 
  \(\{a_i\}_{i=1}^n \subseteq A\), and some maps \(\{r_i\}_{i=1}^n 
  \subseteq \pm R^*\) such that
  \begin{equation}
    \sum_{i=1}^n r_i(a_i) \in a + \ve \text{.}
  \end{equation}
\end{lemma}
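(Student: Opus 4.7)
The plan is to first prove an analog of Lemma~\ref{L:M-form} with $A$ replacing $\mc{Y}$, namely that
\[
    \Span(A) = W := \setOf[\bigg]{ \sum_{i=1}^n r_i(a_i) }{ n\in\N,\ r_i \in \pm R^*,\ a_i \in A } \text{,}
\]
and then to use the definition of $\Cspan(A)$ as a closure to pass from an exact representation of some nearby $b \in \Span(A)$ to one that approximates $a$ within $\ve$. The target conclusion is precisely that elements of $\Cspan(A)$ are approximable by finite $\pm R^*$-combinations of elements of $A$, so once $\Span(A) = W$ is established the rest is immediate.

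For the module identity, the inclusion $W \subseteq \Span(A)$ is clear because $\Span(A)$ is a subgroup closed under the action of $R$ (and hence of $R^*$ and of $\pm R^*$), and contains $A$. For the opposite inclusion, I would verify that $W$ is itself an $R$-module containing $A$: it contains $A$ because $\id \in R^* \subseteq \pm R^*$ and so each $a \in A$ is the one-term sum $\id(a)$; it is closed under addition (concatenate the sums) and negation (flip the sign of each $r_i$, which keeps it in $\pm R^*$); and it is closed under the action of any $r \in R$ because
\[
    r\biggl( \sum_{i=1}^n r_i(a_i) \biggr) = \sum_{i=1}^n (r \circ r_i)(a_i)
\]
by $r$ being a (continuous, hence also on $\cmplM$) group endomorphism, and $r \circ r_i$ is either in $R^*$ or is the negation of an element of $R^*$ depending on the sign of $r_i$, so it lies in $\pm R^*$. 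Minimality of $\Span(A)$ then gives $\Span(A) \subseteq W$.

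With $\Span(A) = W$ in hand, the rest is topology. Since $a \in \Cspan(A) = \cmpl{\Span(A)}$ and $\cmpl{B_n}$ for $n$ large enough lies inside the given $\ve \in \cmplO$ (by the preceding lemma, $\cmplO$ is a fundamental system of neighbourhoods of $0$), the definition of topological closure produces some $b \in \Span(A) = W$ with $a - b \in \ve$, equivalently $b \in a + \ve$ (using $\ve = -\ve$, since $\ve$ is a subgroup). Expanding $b$ in the form $\sum_{i=1}^n r_i(a_i)$ yields the desired representation. No step presents a genuine obstacle; the only thing to be careful about is that $R^*$ contains $\id$ and is closed under composition, which makes the auxiliary set $W$ automatically closed under the action of $R$ even though $R$ itself does not contain $\id$ by hypothesis.
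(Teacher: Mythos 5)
Your proof is correct and takes essentially the same route as the paper: pick a nearby $b \in \Span(A)$ with $a - b \in \ve$ from the definition of closure, and then observe that $\Span(A)$ coincides with the set of finite $\pm R^*$-combinations of elements of $A$ (the paper invokes this step tersely, echoing the argument of Lemma~\ref{L:M-form}, whereas you verify the $R$-module property of $W$ explicitly). One small redundancy: you speak of finding $\cmpl{B_n}$ small enough to lie inside $\ve$, but $\ve \in \cmplO$ already \emph{is} some $\cmpl{B_n}$, so no shrinking is needed.
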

\begin{proof}
  By definition of topological closure applied to $\Cspan(A)$, there 
  exists some \(b \in \Span(A)\) such that \(a-b \in \ve\). Since the 
  set of all elements on the form $\sum_{i=1}^n r_i(a_i)$ for 
  \(\{a_i\}_{i=1}^n \subseteq A\) and \(\{r_i\}_{i=1}^n \subseteq 
  \pm R^*\) constitute an $R$-module containing $A$, it follows that 
  $b$ has an expression on that form.
\end{proof}

For \(A = \mc{Y}\), this lemma is a topologized version of 
Lemma~\ref{L:M-form}, but Lemma~\ref{L:Slutenhet,I'(S)}, 
Definition~\ref{Def:Kompatibel}, Theorem~\ref{S:Konstr.Irr(S)}, and 
Definition~\ref{Def:DIS} all characterise important subsets of 
$\cmplM$ as being on the form $\Cspan(A)$ for a suitable 
\(A \subseteq \cmplM\).

\medskip

The rest of this section is a discussion of some important methods 
for constructing a topology on $\mc{M}$, i.e., for choosing a system 
of neighbourhoods $\mc{O}$. The trivial choice is to let \(B_n = \{0\}\) 
for all $n$; this equips $\mc{M}$ with the discrete topology, makes 
\(\cmplM = \mc{M}\), and simplifies the machinery below quite 
considerably. This is also the choice one should use if one wishes to 
reproduce Bergman's diamond lemma.

A nontrivial choice of topology which has long traditions in algebra 
is that of an ideal-adic topology. In this case it is assumed that 
$\mc{M}$ also has a multiplicative structure, and as $B_1$ is chosen 
a nontrivial ideal in $\mc{M}$. Then each $B_n$ is defined as the $n$th 
ideal power $B_1^n$ of $B_1$, i.e., the ideal generated by all products 
of $n$ elements from $B_1$. Such choices of $\mc{O}$ 
allow localisations of $\mc{M}$ to be treated within this framework. 
The condition that all these $B_n$ are $R$-modules is not necessarily 
fulfilled for this construction, but it follows very naturally when 
for example $\mc{M}$ is an $\mc{R}$-algebra and $R$ is the set of 
multiplication-by-a-scalar maps. Nor is necessarily 
\(\bigcap_{n=1}^\infty B_1^n = \{0\}\) for every ideal $B_1$, but it 
typically holds for the interesting choices of $B_1$.

A generalisation of the class of ideal-adic topologies is provided by 
the `filtered structures' of Robbiano~\cite{Robbiano} and 
Mora~\cite{Mora:Seven}. Here it is again assumed that $\mc{M}$ is a 
ring, and a $\Gamma$-filtration $\{F_\gamma\}_{\gamma \in \Gamma}$ of 
$\mc{M}$ is given. This means $\Gamma$ is assumed to be a totally 
ordered semigroup (written additively, but at least in 
\cite{Mora:Seven} not assumed to be commutative), and the $F_\gamma$ 
are assumed to be subgroups of $\mc{M}$ which satisfy:
\begin{enumerate}
  \item[(R1)] 
    If \(\gamma,\delta \in \Gamma\) are such that \(\gamma < \delta\) 
    then \(F_\gamma \subseteq F_\delta\).
  \item[(R2)]
    \(F_\gamma \cdot F_\delta \subseteq F_{\gamma+\delta}\) for all 
    \(\gamma,\delta \in \Gamma\).
  \item[(R3)]
    For every \(a \in \mc{M} \setminus \{0\}\) the set 
    \(\setOf{\gamma \in \Gamma}{F_\gamma \owns a}\) has a minimal 
    element.
\end{enumerate}
For actual results, these authors typically also assume that $\Gamma$ 
is `inf-limited', which means that for any infinite strictly 
descending sequence \(\gamma_1 > \gamma_2 > \gamma_3 > \dotsb\) in 
$\Gamma$ and any given \(\gamma \in \Gamma\), there exists an $n$ 
such that \(\gamma_n < \gamma\). In this case, one can simply choose 
one such infinite strictly descending sequence \(\gamma_1 > \gamma_2 
> \gamma_3 > \dotsb\) in $\Gamma$ and define $\mc{O}$ by letting 
\(B_n = F_{\gamma_n}\) for all \(n\in\Zp\); it follows from (R3) and 
inf-limitedness that \(\bigcap_{n=1}^\infty B_n = \{0\}\). Also 
observe that the resulting topology on $\mc{M}$ is the same 
regardless of which sequence \(\{\gamma_n\}_{n=1}^\infty\) is chosen.

The abstract setting of a filtered structure only supports setting 
\(R=\varnothing\), but again the $F_\gamma$ are in many concrete cases 
modules over a ring of scalars, and then it is possible to encode the 
whole of that ring into $R$. Otherwise it is a rather striking 
feature of the filtered structure machinery that one does not assume 
any ``coefficients'' to exist from start, but rather constructs them 
from the filtered structure. Defining
\begin{align*}
  V_\gamma :={}& 
  \bigcup_{\substack{\delta\in\Gamma \\ \delta < \gamma}} F_\delta
    \text{,}\\
  G_\gamma :={}& F_\gamma / V_\gamma \text{,}\\
  G :={}& \bigoplus_{\gamma \in \Gamma} G_\gamma
\end{align*}
one gets the associated graded ring $G$ that can be used as a 
coordinatized form of $\mc{M}$. Each coordinate $a_\gamma$ then 
assumes values in the corresponding $G_\gamma$; these groups may 
vary quite a lot in size and structure, but for reasonable cases 
of $\mc{M}$ being an $\mc{R}$-algebra it often holds that each 
$G_\gamma$ is as a group isomorphic to either $\mc{R}$ or $\{0\}$.
It is also common that a filtered structure is equipped with a map 
\(f\colon \Gamma \Fpil \mc{M}\) such that \(f(\gamma) \in F_\gamma\) 
for all \(\gamma \in \Gamma\) (and \(f(\gamma) \notin V_\gamma\) 
whenever \(V_\gamma \neq F_\gamma\)), although that map is not part 
of the formal definition. The image of such an $f$ is typically the 
primary candidate for $\mc{Y}$, even though there is nothing in the 
generic formalism from which one may deduce that this image should 
span $\mc{M}$.

Even more general is the approach to define $\mc{O}$ as a family of 
balls with respect to an ultranorm $\Norm{\cdot}$ on $\mc{M}$:
\begin{equation} \label{Eq:Norm--B_n}
  B_n = \setOf[\big]{ a \in \mc{M} }{ \Norm{a} < 2^{-n} }
  \qquad\text{for all \(n\in\Zp\).}
\end{equation}
In one sense this construction is universal, because if \(\mc{O} = 
\{B_n\}_{n=1}^\infty\) is given then one can always use 
\eqref{Eq:O-metrik} to reconstruct a norm \(\Norm{a} = d(a,0)\) that 
in \eqref{Eq:Norm--B_n} would give rise to the original 
neighbourhood system $\mc{O}$, but more important is that it often 
provides a convenient method for arriving at a topology with 
desirable properties.

The standard construction of an ultranorm in the case \(\mc{M} = 
\RavX\) is to pick any function \(w\colon X \Fpil \R\)
and define the ultranorm $\Norm{\cdot}$ on \(\mc{Y} = X^*\) 
to be the unique monoid homomorphism \(\mc{Y} \Fpil \Rp\) that 
satisfies \(\Norm{x} = 2^{w(x)}\) for all \(x \in X\); in other words 
\(\Norm{\prod_{i=1}^n x_i} := \prod_{i=1}^n 2^{w(x_i)}\) for all 
\(x_1 x_2 \dotsb x_n \in \mc{Y}\). This is then extended to the whole 
of $\mc{M}$ by defining 
\begin{equation} \label{Eq:Norm&Span}
  \Norm{a} := \max_{\mu \in Z} \Norm{\mu}
  \quad\text{where \(Z \subset \mc{Y}\) is minimal such that 
  \(a \in \Span(Z)\),}
\end{equation}
and in particular letting \(\Norm{0}:=0\), as a sort of \(0 = 
\max\varnothing\). The $w$ is known as the \emDefOrd{weight function} 
for the norm, and its sign determines how the formal variables behave; 
if \(w(x) \geqslant 0\) then $x$ will be a polynomial-style variable, 
whereas if \(w(x)<0\) then $x$ will be a power-series-style variable. 
The logarithm of $\Norm{\cdot}$ behaves as a weighted polynomial-style 
degree function on \(\mc{M} = \RavX\).

\begin{definition} \label{Def:Ultranorm}
  Formally, a function \(a \mapsto \Norm{a} : \mc{M} \Fpil \R\) is said 
  to be a (group) \DefOrd{ultranorm} if
  \begin{enumerate}
    \item[(i)] \(\Norm{a} \geqslant 0\) for all \(a \in \mc{M}\).
    \item[(ii)] \(\Norm{a - b} \leqslant \max\bigl\{ \Norm{a}, \Norm{b} 
      \bigr\}\) for all \(a,b \in \mc{M}\).
    \item[(iii)] \(\Norm{a} = 0\) for some \(a \in \mc{M}\) if and only 
      if \(a=0\).
  \end{enumerate}
  If $\mc{M}$ is a ring and in addition
  \begin{enumerate}
    \item[(iv)] \(\Norm{ab} \leqslant \Norm{a}\Norm{b}\) for all 
    \(a,b \in \mc{M}\)
  \end{enumerate}
  then $\Norm{\cdot}$ is said to be a \DefOrd{ring ultranorm}. 
  If instead $\mc{R}$ is a ring with ultranorm $\norm{\cdot}$ and 
  $\mc{M}$ is an $\mc{R}$-module, then a group ultranorm $\Norm{\cdot}$ 
  on $\mc{M}$ is said to be a \DefOrd{module ultranorm} if
  \begin{enumerate}
    \item[(v)] \(\Norm{ra} \leqslant \norm{r} \Norm{a}\) for all 
      \(r \in \mc{R}\) and \(a \in \mc{M}\).
  \end{enumerate}
  An \DefOrd{algebra ultranorm} has to satisfy all of (i)--(v).
\end{definition}

The `ultra' prefix pertains primarily to property (ii)\Ldash the 
\emDefOrd{strong triangle inequality}\Dash and in particular to its 
right hand side \(\max\bigl\{ \Norm{a}, \Norm{b} \bigr\}\), which is 
more strict than the $\Norm{a}+\Norm{b}$ of the ordinary triangle 
inequality. Among the direct consequences of~(ii) are that any 
$\ve$-neighbourhood of $0$\Ldash i.e., any set of the form 
$\setOf[\big]{ a \in \mc{M} }{ \Norm{a} < \ve }$ for a real number 
\(\ve>0\)\Dash is a subgroup of~$\mc{M}$. 

The \emDefOrd{trivial ultranorm} has \(\Norm{0}=0\) and \(\Norm{a}=1\) 
for all \(a \neq 0\); it exists for all groups and reproduces the 
discrete topology.

If $\mc{M}$ is an $\mc{R}$-module and the ultranorm $\Norm{\cdot}$ on 
$\mc{M}$ satisfies \eqref{Eq:Norm&Span} then equipping $\mc{R}$ with 
the trivial ultranorm will make $\Norm{\cdot}$ an $\mc{R}$-module 
ultranorm. This is typically the ``correct'' scalar norm for a formal 
power series problem, as all nonzero scalar values are then equivalent 
for matters of series convergence. Conversely it is often convenient 
to define the norm on $\mc{M}$ so that it becomes an $\mc{R}$-module 
norm with respect to some given norm $\norm{\cdot}$ on $\mc{R}$. In 
the particular cases where $\mc{Y}$ is an $\mc{R}$-module basis for 
$\mc{M}$, then one may choose to make \(\Norm{r\mu} = \norm{r} 
\Norm{\mu}\) for all \(r \in \mc{R}\) and \(\mu \in \mc{Y}\), which 
has its advantages when it comes to defining $T_1(S)$ below. 
Non-trivial scalar norms may however require nontrivial choices also 
of $R$ and $\mc{Y}$.

What complicates the choice of a scalar norm is the assumption that 
each $B_n$ is an $R$-module, which given \eqref{Eq:Norm--B_n} is 
equivalent to the claim that \(\Norm[\big]{r(a)} < 2^{-n}\) for all 
\(r \in R\) and \(a \in \mc{M}\) satisfying \(\Norm{a} < 2^{-n}\). 
One would typically ensure this by enforcing the condition that 
\(\Norm[\big]{r(a)} \leqslant \Norm{a}\) for all \(r \in R\) and \(a 
\in \mc{M}\), and this will indeed be the case provided \(r \colon 
\mc{M} \Fpil \mc{M}\) is a map on the form \(a \mapsto sa\) for some 
scalar \(s \in \mc{R}\) such that \(\norm{s} \leqslant 1\), as then 
\(\Norm[\big]{r(a)} = \Norm{sa} \leqslant \norm{s}\Norm{a} \leqslant 
\Norm{a}\). Hence the natural choice of $R$ when $\mc{M}$ is an 
$\mc{R}$-module with a ditto ultranorm is to take
\begin{equation} \label{Eq:Def.R,norm}
  R = \setOf[\big]{ a \mapsto ra : \mc{M} \Fpil \mc{M} }{ 
    \text{\(r \in \mc{R}\) and \(\norm{r} \leqslant 1\)}
  } \text{.}
\end{equation}
Unless all scalars \(r \in \mc{R}\) satisfy \(\norm{r} \leqslant 1\), 
this will make the $R$-module concept distinct from that of an 
$\mc{R}$-module however, and this has repercussions elsewhere. 
\label{Sec2:Basics}
$\mc{Y}$ must span $\mc{M}$ as an $R$-module, so if $Y$ is an 
$\mc{R}$-module basis of $\mc{M}$ then $\mc{Y}$ may have to be chosen 
as something like the set of all products $r\mu$ for \(r\in\mc{R}\) 
and \(\mu \in Y\) in order to make it all fit.

A case where this predicament arises is that of $\mc{M}$ being a module 
over the $p$-adic numbers $\Q_p$, as these come equipped with an 
ultranorm (the $p$-adic valuation) that has \(\norm{p^n} = p^{-n}\) 
for all \(n\in\Z\). The $R$ defined by \eqref{Eq:Def.R,norm} for 
\(\mc{R} = \Q_p\) is isomorphic to the $p$-adic integers, but not to 
the entire field of $p$-adic numbers; conversely any $B_n$ defined by 
\eqref{Eq:Norm--B_n} will fail to be closed under multiplication by 
the scalar $p^{-1}$ and is thus not a $\Q_p$-module although it will 
be an $R$-module. A $\Q_p$-module basis $Y$ for $\mc{M}$ will in this 
case not be large enough to serve as $\mc{Y}$. One can instead use 
the set of all terms as suggested above, but since $p^{-1}$ together 
with the $p$-adic integers generate the whole of $\Q_p$, it is also 
sufficient to make $\mc{Y}$ the set of all products $p^{-n}\mu$ for 
\(\mu \in Y\) and \(n\in\Z\) (or even \(n\in\N\)). It is typically 
easier to construct the partial order $P$ on $\mc{Y}$ if the latter 
has a simple, discrete structure.

\section{Reductions}
\label{Sec:Reduktioner}

While the purpose of introducing $\mc{M}$, $R$, $\mc{Y}$, and $\mc{O}$ 
is primarily to fix and structure a stage for the diamond lemma, 
$T_1(S)$ is what provides the actors on that stage, so that a play 
of equivalence and normal forms may be performed. The relation of 
$T_1(S)$ to the equivalence of elements in $\cmplM$ is primarily that 
these maps preserve it\Ldash $t(a)$ must be equivalent to $a$ 
for all \(t \in T_1(S)\) and \(a \in \cmplM\)\Dash but as the 
equivalence concept is derived from $T_1(S)$, this is a theorem rather 
than an assumption. The role of 
the collection $T_1(S)$ of maps \(\cmplM\Fpil\cmplM\) is mostly that 
of a presentation: it is not uniquely determined by that which 
it is used to define, but it gets the job done, and you may use it to 
verify at least some conditions about the whole.

In applications one rarely starts from $T_1(S)$\Ldash hence the 
somewhat odd notation; $T_1(S)$ is typically constructed from a more 
fundamental set \index{S@$S$}$S$ of directed equivalences, a 
so-called \emDefOrd{rewriting system}\Dash but 
for this general proof it provides the best balance between abstract 
adaptability and concrete constructibility. As in the previous section 
some examples will be given below of how $T_1(S)$ can be constructed 
from such a more fundamental $S$, but this should be taken more as 
hints than as a full survey; it is sometimes necessary to combine 
several different methods of construction. That applications typically 
make some $S$ the fundamental entity has however influenced the choice 
of notations below, in that every object that \emph{formally} depends 
on the choice of $T_1(S)$ is \emph{written} as though it would depend 
on `$S$'. Besides being more convenient in applied instances of the 
diamond lemma, this choice of notations also simplifies comparisons 
with~\cite{Avhandlingen}.

\begin{definition}
  Let \index{T S@$T(S)$}$T(S)$ 
  be the set of all finite compositions of elements from 
  $T_1(S)$; in particular the identity map \(\id\colon \cmplM \Fpil 
  \cmplM\) is considered to be an element of $T(S)$, on account of 
  being the composition of an empty sequence of maps in $T_1(S)$. 
  The elements of $T(S)$ are called \DefOrd[*{reduction}]{reductions} 
  and the elements of $T_1(S)$ in particular are called 
  \DefOrd[*{simple reduction}]{simple reductions}.
\end{definition}

The name \emDefOrd{reduction} suggests that these maps take something 
away, and this is indeed typically the case. Standard constructions 
of reductions tend to make them more or less projections, 
and although there is no formal need for them to be, it may be helpful 
on a first reading to think of them that way. It should however be 
observed that even those reductions which really are projections tend 
to be rather skew and have very small kernels, so don't expect to use 
just one and be done with it; getting anywhere is much more like a 
round of golf, where one has to hit the ball repeatedly (typically 
making use of many different clubs) in order to get it into the hole. 

The distinction between simple and non-simple reductions is mostly in 
the eye of the beholder, because nothing prevents picking as $T_1(S)$ 
a set of maps that constitute a monoid under composition, in which 
case one would have \(T(S)=T_1(S)\) and all reductions would be simple. 
The point of letting the user designate some reductions as being simple 
is that it is often sufficient to verify a condition only for the 
simple ones, as the property in question easily extends to all 
reductions.

\begin{assumption} \label{Ant:T(S)-kont-hom}
  Every simple reduction \(t \in T_1(S)\) is a continuous group 
  homomorphism \(\cmplM \Fpil \cmplM\) which satisfies \(t \circ r = 
  r \circ t\) for all \(r \in R\).
\end{assumption}

Reductions may of course satisfy this property for a larger class of 
maps than $R$; they may for example all be $\mc{R}$-linear for some 
ring $\mc{R}$ that is larger than $R$. Therefore many lemmas below 
that say some set is an $R$-module will have a `more generally, 
\dots' clause which covers the case of additional maps $r$ that 
commute with all reductions. 

\begin{lemma}
  Every reduction \(t \in T(S)\) is a continuous group homomorphism 
  \(\cmplM \Fpil \cmplM\) which satisfies \(t \circ r = r \circ t\) 
  for all \(r \in R^*\).
\end{lemma}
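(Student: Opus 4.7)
The plan is to prove all three properties (continuous, group homomorphism, commutes with every $r \in R^*$) by induction on the length $n$ of a composition $t = t_1 \circ t_2 \circ \dotsb \circ t_n$ of elements of $T_1(S)$ representing $t \in T(S)$. The base case $n=0$ is handled by $\id$, which is trivially continuous, a group homomorphism, and commutes with every map on $\cmplM$. For the inductive step, write $t = t_1 \circ t'$ where $t_1 \in T_1(S)$ and $t'$ is a composition of length $n-1$, apply the inductive hypothesis to $t'$, and combine it with Assumption~\ref{Ant:T(S)-kont-hom} applied to $t_1$.

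For continuity and the group homomorphism property, the inductive step is standard: compositions of continuous maps are continuous, and compositions of group homomorphisms are group homomorphisms. So if both $t_1$ and $t'$ have these properties, so does $t_1 \circ t'$.

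The commutativity claim requires a little more care, because Assumption~\ref{Ant:T(S)-kont-hom} only gives us $t_1 \circ r = r \circ t_1$ for \(r \in R\), whereas we want commutativity with every \(r \in R^*\). I would first prove, by a short inner induction on the composition length $k$ of \(r = r_1 \circ \dotsb \circ r_k \in R^*\), that each simple reduction commutes with every element of $R^*$; the induction step slides $t_1$ past one factor $r_i$ at a time using the assumption. With that in hand, given \(r \in R^*\) and \(t = t_1 \circ t'\) as above, compute
\begin{equation*}
   t \circ r = t_1 \circ t' \circ r = t_1 \circ r \circ t' = r \circ t_1 \circ t' = r \circ t \text{,}
\end{equation*}
where the middle two steps use the inductive hypothesis on $t'$ and the lifted commutativity for $t_1$.

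No step is a genuine obstacle; the argument is entirely formal bookkeeping about closure of the class of ``nice'' maps under composition. The only point deserving explicit mention is the inner induction promoting commutativity with $R$ to commutativity with $R^*$, which is easy to overlook but necessary because the hypothesis is stated only for $R$.
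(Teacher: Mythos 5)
Your proposal is correct and takes essentially the same approach as the paper: peel off the identity map as the base case, use closure of continuous group homomorphisms under composition, and extend commutativity from $R$ to $R^*$ and from $T_1(S)$ to $T(S)$ by repeated swapping of adjacent factors. The paper presents the swap as a single direct computation writing both $t$ and $r$ out fully as compositions, whereas you organize it as a nested pair of inductions, but the underlying argument is identical.
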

\begin{proof}
  The identity map \(\id \in T(S)\) trivially satisfies the 
  properties in the lemma. All other reductions are finite 
  compositions of simple reductions, and since the composition of two 
  continuous group homomorphisms is a continuous group homomorphism, 
  it follows from Assumption~\ref{Ant:T(S)-kont-hom} that all 
  reductions are continuous group homomorphisms. Finally if \(t = 
  t_n \circ \dotsb \circ t_1\) for \(t_1,\dotsc,t_n \in T_1(S)\) and 
  \(r = r_m \circ \dotsb \circ r_1\) for \(r_1,\dotsc,r_m \in R\), 
  then \(t_i \circ r_j = r_j \circ t_i\) for all $i$ and $j$ by 
  Assumption~\ref{Ant:T(S)-kont-hom}, whence \(t \circ r = 
  t_n \circ \dotsb \circ t_1 \circ r_m \circ \dotsb \circ r_1 =
  r_m \circ \dotsb \circ r_1 \circ t_n \circ \dotsb \circ t_1 = 
  r \circ t\).
\end{proof}


\begin{lemma}
  A reduction is uniquely determined by its values on $\mc{Y}$.
\end{lemma}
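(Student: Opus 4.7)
The plan is to show that if two reductions $t, t' \in T(S)$ satisfy $t(\mu) = t'(\mu)$ for every $\mu \in \mc{Y}$, then $t = t'$ on all of $\cmplM$. The argument proceeds in two stages: first extend agreement from $\mc{Y}$ to $\mc{M}$ using the algebraic properties of reductions, then extend from $\mc{M}$ to $\cmplM$ using density and continuity.

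For the first stage, I would invoke Lemma~\ref{L:M-form}: an arbitrary $a \in \mc{M}$ admits a representation $a = \sum_{k=1}^n r_k(\mu_k)$ with $r_k \in \pm R^*$ and $\mu_k \in \mc{Y}$. Since every reduction is a group homomorphism that commutes with every element of $R^*$ (by the preceding lemma, together with the fact that group homomorphisms commute with negation), we get
\begin{equation*}
  t(a) = \sum_{k=1}^n r_k\bigl(t(\mu_k)\bigr) = \sum_{k=1}^n r_k\bigl(t'(\mu_k)\bigr) = t'(a),
\end{equation*}
so $t$ and $t'$ agree on all of $\mc{M}$.

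For the second stage, recall that $\mc{M}$ is identified with a dense subset of its completion $\cmplM$ (the equivalence classes of constant Cauchy sequences are dense by construction). Every reduction is continuous on $\cmplM$, so $t$ and $t'$ are continuous maps into the Hausdorff space $\cmplM$ that agree on the dense subset $\mc{M}$; hence they agree everywhere. Equivalently, one may observe that $t - t'$ is a continuous group homomorphism vanishing on $\mc{M}$, and a continuous map into a Hausdorff space is determined by its values on a dense subset.

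I do not expect any real obstacle: the proof is a straightforward combination of Lemma~\ref{L:M-form}, the additivity and $R^*$-equivariance of reductions, and the density of $\mc{M}$ in $\cmplM$ together with continuity. The only small point to be careful about is that one needs to include negation in the computation on $\mc{M}$ (handled automatically since $\pm R^*$ appears in Lemma~\ref{L:M-form} and group homomorphisms respect negation), and that the Hausdorff property of $\cmplM$ is needed for the density argument, which follows from the metric structure noted after \eqref{Eq:O-metrik}.
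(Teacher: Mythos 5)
Your proof is correct and takes essentially the same route as the paper: extend agreement from $\mc{Y}$ to $\mc{M}$ via Lemma~\ref{L:M-form} together with $R^*$-equivariance and additivity of reductions, then from $\mc{M}$ to $\cmplM$ via density and continuity. The small point you flag about negation being absorbed by the group-homomorphism property is a nice explicit observation that the paper leaves implicit.
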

\begin{proof}
  Let \(t \in T(S)\) be arbitrary. Since $t$ is continuous and 
  $\mc{M}$ is dense in $\cmplM$, the values on $\mc{M}$ uniquely 
  determine $t$.
  By Lemma~\ref{L:M-form}, any \(a \in \mc{M}\) can be expressed as
  \(a = \sum_{k=1}^n r_k(\mu_k)\) for some \(n\in\N\), 
  \(r_1,\dotsc,r_n \in \pm R^*\), and \(\mu_1,\dotsc,\mu_n \in 
  \mc{Y}\). Hence \(t(a) = \sum_{k=1}^n t\bigl( r_k(\mu_k) \bigr) 
  = \sum_{k=1}^n r_k\bigl( t(\mu_k) \bigr)\), which expresses $t(a)$ 
  purely in terms of the values on $\mc{Y}$ of $t$. 
\end{proof}

The definitions of reductions are accordingly often simplified to 
stating how they act on elements of $\mc{Y}$. A common approach is to 
define simple reductions so that they change precisely one element 
of $\mc{Y}$ while leaving all other elements the same. Concretely the 
simple reduction which changes \(\mu \in \mc{Y}\) to \(a \in \cmplM\) 
would be defined by
\begin{equation} \label{Eq:Reduktion-monom-def}
  \index{t mu mapsto a@$t_{\mu\mapsto a}$}
  t_{\mu\mapsto a}(\lambda) = \begin{cases}
    a& \text{if \(\lambda=\mu\),}\\
    \lambda& \text{otherwise,}
  \end{cases}
  \qquad\text{for all \(\lambda \in \mc{Y}\).}
\end{equation}
Such a map satisfies \(t_{\mu\mapsto a}(b) = b\) for all \(b \in 
\Cspan\bigl( \mc{Y} \setminus \{\mu\} \bigr)\), so if in addition 
\(a \in \Cspan\bigl( \mc{Y} \setminus \{\mu\} \bigr)\) (which will be 
hard to avoid while satisfying the compatibility condition of 
Definition~\ref{Def:Kompatibel}), then the image of $t_{\mu\mapsto a}$ 
will be contained in \(\Cspan\bigl( \mc{Y} \setminus \{\mu\} \bigr)\) 
and consequently this map becomes a projection. Its kernel is however 
as small as it can possibly be without being trivial, and the $a$ is 
only rarely zero, which means the projection is typically skew.

While \eqref{Eq:Reduktion-monom-def} is the standard definition of a 
simple reduction from a conceptual point of view, it is not obviously 
one which is formally sound; if for example $\mc{Y}$ is not an 
independent set in $\mc{M}$ then a map $t_{\mu\mapsto a}$ for 
arbitrary \(\mu \in \mc{Y}\) and \(a \in \cmplM\) can probably not 
both be a group homomorphism and satisfy 
\eqref{Eq:Reduktion-monom-def}. An alternative definition of 
$t_{\mu\mapsto a}$, which often is better suited for proving 
properties of this reduction, is
\begin{equation} \label{Eq:Def.t-f_mu}
  t_{\mu\mapsto a}(b) = b - f_\mu(b) \cdot (\mu - a)
  \qquad\text{for all \(b\in\cmplM\).}
\end{equation}
Prerequisites for this formula is that $\cmplM$ is some sort of 
$\mc{R}$-module that furthermore comes with coefficient-of-$\mu$ 
homomorphisms \(f_\mu\colon \cmplM \Fpil \mc{R}\) for all 
\(\mu\in\mc{Y}\); if these satisfy \(f_\mu(\mu)=1\) and 
\(f_\mu(\nu)=0\) for all \(\nu \in \mc{Y}\setminus\{\mu\}\) then 
\eqref{Eq:Reduktion-monom-def} becomes an immediate consequence of 
\eqref{Eq:Def.t-f_mu}. Assuming the $\mc{R}$-module operations on 
$\cmplM$ are continuous, the continuity of $t_{\mu\mapsto a}$ is 
furthermore implied by the continuity of the coefficient function 
$f_\mu$, and this depends only on the choices of $\mc{M}$, $R$, 
$\mc{Y}$, and $\mc{O}$. With ultranorms defined using 
\eqref{Eq:Norm&Span}, the continuity of these $f_\mu$ maps is 
typically something one gets for 
free~\cite[Ssec.~2.3.2]{Avhandlingen}. See also 
Lemma~\ref{L:Modul-ultranorm}.

Formulae like \eqref{Eq:Def.t-f_mu} can often be used to define the 
simple reductions even in cases where \eqref{Eq:Reduktion-monom-def} 
leads to contradictions due to dependencies between elements of 
$\mc{Y}$. One example of this is the situation that $\mc{M}$ is a free 
$\mc{R}$-module with basis $Y$, but $R$ is less than $\mc{R}$ and 
$\mc{Y}$ therefore has been chosen as the set of all multiples 
$r\mu$ for \(r\in\mc{R}\) and \(\mu \in Y\). If the range of $\mu$ in 
\eqref{Eq:Def.t-f_mu} is restricted to $Y$ then this formula still 
makes perfect sense, but the result is of course rather a map 
$t_{\mu \mapsto a}$ satisfying
\begin{equation*}
  t_{\mu\mapsto a}(\lambda) = \begin{cases}
    ra& \text{if \(\lambda=r\mu\) for some \(r\in\mc{R}\),}\\
    \lambda& \text{otherwise,}
  \end{cases}
  \qquad\text{for all \(\lambda \in \mc{Y}\).}
\end{equation*}
The underlying idea for all these definitions of a reduction $t_{\mu 
\mapsto a}$ is to distinguish the part of a general element of 
$\cmplM$ that corresponds to the particular undesired element $\mu$ 
of $\mc{Y}$, and then replace this part by something it is equivalent 
to. This process is often straightforward for concrete problems, even 
though it may seem difficult to formalise in general.

\begin{definition}
  A reduction \(t \in T(S)\) is said to \DefOrd{act trivially} on 
  some \(a \in \cmplM\) if \(t(a)=a\). An element \(a \in \cmplM\) 
  is said to be \DefOrd{irreducible} (with respect to $T(S)$) if 
  all \(t \in T(S)\) act trivially on it. The set of all irreducible 
  elements in $\cmplM$ is denoted \index{Irr@$\Irr$}\(\Irr(S)\).
  Also let\index{I(S)@$\mc{I}(S)$}
  \begin{equation} \label{Eq:Def.I'(S)}
    \mc{I}(S) = \overline{ \sum_{t \in T(S)} 
      \setOf[\big]{ a - t(a) }{ a \in \cmplM } }
  \end{equation}
  and write \(a \equiv b \pmod{S}\)\index{= mod S@$\equiv\pmod{S}$} 
  for \(a - b \in \mc{I}(S)\). An \(a \in \Irr(S)\) is said to be a 
  \DefOrd[*{normal form}]{normal form of \(b \in \cmplM\)} if 
  \(a \equiv b \pmod{S}\).
\end{definition}

The main theme in the next two sections is to define a projection $\tS$ 
of $\cmplM$ onto $\Irr(S)$ that constitutes a kind of pointwise limit 
of $T(S)$, and then demonstrate that $\mc{I}(S)$ is the kernel of that 
projection; from this will follow that there exists a unique normal 
form (which is computed by the map $\tS$) for every element of $\cmplM$. 
When it all works out, there is an equivalence 
\begin{equation}
  a \equiv b \pmod{S}  \Epil  \tS(a)=\tS(b)
  \qquad\text{for all \(a,b \in \cmplM\),}
\end{equation}
where the right hand side is algorithmic in style and well suited for 
calculations, whereas the congruence relation $\equiv$ in the left hand 
side is identifiable as the 
reflexive--symmetric--transitive--algebraic--topological closure of 
`\(a \equiv t(a)\) for all \(t \in T_1(S)\) and \(a \in \cmplM\)'. 
A problem that is difficult on one side of the equivalence may have 
an obvious solution when transported to the other side of it; the main 
direction for decision problems is left to right, whereas identities 
tend to be simpler to derive on the left side. Applied calculations 
often focus on the irreducible elements, because the set $\Irr(S)$ 
can be used as a model for the quotient set \(\cmplM \big/ 
{\equiv}\,(\mathrm{mod}\,S)\).

It should be pointed out that this concept of irreducibility has 
nothing to do with \emph{multiplicative irreducibility} (the property 
that the only factorisations of an element are the trivial ones), nor 
for that matter with for example \emph{join-irreducibility} (which in 
lattice theory is the equally important property that an element 
cannot be expressed as the $\vee$ of two other elements), but the point 
about multiplicative irreducibility needs to be stressed since many 
algebraists are accustomed to interpreting an unqualified `irreducible' 
as referring to precisely multiplicative irreducibility; indeed this 
tendency is so strong that many authors seek other names to use for 
this concept. One such synonym is \emDefOrd{normal} (which in this 
sense most commonly occurs in the phrase `normal form'), that 
unfortunately also has the alternative interpretations ``having norm 
$1$'' and ``being orthogonal to tangents'', which are quite different. 
Another synonym is \emDefOrd{terminal}, which refers to the fact that 
reduction stops when reaching one of these elements\Dash however in this 
topologized setting reductions do not in general stop completely; they 
merely ``slow down'' when approaching the limit. `Irreducible' is the 
term that is used in~\cite{Bergman} and must therefore be considered 
established and standard.

\begin{lemma} \label{L: Irr modul}
  An element of $\cmplM$ is irreducible if and only if every simple 
  reduction acts trivially on it. 
  The set $\Irr(S)$ is a topologically closed $R$-module. 
  More generally, any continuous group homomorphism \(r\colon \cmplM 
  \Fpil \cmplM\) satisfying \(r \circ t = t \circ r\) for all \(t \in 
  T(S)\) maps $\Irr(S)$ into itself.
\end{lemma}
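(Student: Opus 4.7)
The plan is to handle the three claims in turn, each reducing to a short direct verification using Assumption~\ref{Ant:T(S)-kont-hom} (and its closure to all of $T(S)$, which was established in the preceding lemma).

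For the first claim, the forward direction is immediate from $T_1(S) \subseteq T(S)$. For the converse, suppose every $t \in T_1(S)$ satisfies $t(a) = a$. The identity obviously fixes $a$; any other reduction has the form $t_n \circ \dotsb \circ t_1$ with $t_i \in T_1(S)$, and an easy induction on $n$ shows this composition fixes $a$, since $t_1(a) = a$ by assumption and the outer $t_n \circ \dotsb \circ t_2$ then fixes $a$ by the induction hypothesis.

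For the second claim I would verify the three properties of a closed $R$-module separately, all using the fact that each $t \in T(S)$ is a continuous group endomorphism of $\cmplM$ that commutes with every $r \in R$. If $a,b \in \Irr(S)$, then $t(a-b) = t(a) - t(b) = a - b$ for all $t \in T(S)$, so $\Irr(S)$ is a subgroup. If $a \in \Irr(S)$ and $r \in R$, then $t(r(a)) = r(t(a)) = r(a)$ for all $t \in T(S)$, so $r(a) \in \Irr(S)$. Finally, if $\{a_n\}_{n=1}^\infty \subseteq \Irr(S)$ converges to some $a \in \cmplM$, then continuity gives $t(a) = \lim_{n\to\infty} t(a_n) = \lim_{n\to\infty} a_n = a$ for every $t \in T(S)$, so $a \in \Irr(S)$.

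For the last claim, the argument for closure under $R$ carries over verbatim: if $r\colon \cmplM \Fpil \cmplM$ is any continuous group homomorphism commuting with every $t \in T(S)$ and $a \in \Irr(S)$, then $t(r(a)) = r(t(a)) = r(a)$ for every $t \in T(S)$. There is no real obstacle in any of this; the only thing worth noting is that one should phrase the first claim carefully enough that it feeds cleanly into the rest, since the other parts are most naturally argued using the full $T(S)$ rather than just $T_1(S)$.
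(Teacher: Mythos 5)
Your proof is correct and relies on exactly the same underlying facts as the paper's, but the organization is a little different, and one step deserves a comment. The paper factors the argument through the sets $I_t := \setOf{ b \in \cmplM }{ t(b) = b } = \ker(t - \id)$: each $I_t$ is a closed subgroup (being the preimage of the closed set $\{0\}$ under the continuous homomorphism $t - \id$) that is mapped into itself by any homomorphism commuting with $t$, and then $\Irr(S) = \bigcap_{t \in T(S)} I_t$ inherits all these properties because they are stable under arbitrary intersection. You instead verify the subgroup, $R$-module, and closure properties directly on $\Irr(S)$, which is equally valid and perhaps more transparent; the paper's $I_t$ factorization buys a slightly cleaner closure argument. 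Concretely: your closure step uses sequential closure (``if $\{a_n\} \subseteq \Irr(S)$ converges to $a$, then \dots''), which happens to suffice here because $\cmplM$ is metrizable (see the metric $d$ of \eqref{Eq:O-metrik}), hence first-countable, hence sequential closure coincides with topological closure; but this fact is being used tacitly. The paper's argument via $\ker(t-\id)$ being a preimage of a closed set avoids any appeal to countability and would survive in a non-metrizable setting. Neither route is wrong, but if you keep the sequential argument it would be worth a one-line remark that sequential closure suffices because the topology is metric.
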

\begin{proof}
  Clearly every simple reduction acts trivially on an irreducible 
  element. Conversely every non-simple reduction is a composition of 
  simple reductions, and if all of these act trivially on an element, 
  then the composite reduction must do so too. Hence all elements 
  which simple reductions act trivially upon are irreducible.
  
  Let \(t \in T(S)\) be arbitrary. Then the set $I_t$ of all \(b \in 
  \cmplM\) such that \(t(b)=b\) can alternatively be characterised as 
  the kernel of the map \(t'\colon \cmplM\Fpil\cmplM\) defined by 
  \(t'(b)=t(b)-b\). Since $t$ is a continuous group homomorphism, 
  $t'$ will be one too, and thus the set $I_t$ will be a subgroup of 
  $\cmplM$. Moreover $I_t$ is topologically closed since it is the 
  inverse image of $\{0\}$, which is a closed set. If $r$ is a 
  homomorphism commuting with $t$ then for any \(b \in I_t\), 
  \(0 = r \bigl( t'(b) \bigr) = (r \circ\nobreak t)(b) - r(b) = 
  (t \circ\nobreak r)(b) - r(b) = t'\bigl( r(b) \bigr)\), and hence 
  $r$ maps $I_t$ into itself.
  Finally \(\Irr(S) = \bigcap_{t \in T(S)} I_t\) and hence 
  $\Irr(S)$ must also be a topologically closed $R$-module, since 
  these properties are preserved under arbitrary intersections.
\end{proof}

There is a similar set of basic properties that hold for the 
complementary set $\mc{I}(S)$ of elements equivalent to $0$, 
but before going into 
that it is convenient to expound a bit on some additional twists in 
the usual construction of simple reductions. It was said above that 
$S$ could be a set of ``directed equivalences'', usually known as 
\emDefOrd[*{rewrite rule}]{rewrite rules} or simply \emph{rules}. 
Concretely these rules may be expressed as pairs \((\mu,a) \in \mc{Y} 
\times \cmplM\) where $\mu$ (the `principal' or `leading' part) is 
to be replaced by $a$. The homomorphism $t_{\mu \mapsto a}$ as 
constructed in \eqref{Eq:Reduktion-monom-def} 
or \eqref{Eq:Def.t-f_mu} implements this replacement, but $T_1(S)$ 
typically contains more than just those $t_{\mu \mapsto a}$ maps for 
which \((\mu,a) \in S\); there will also be reductions which arise 
from placing the basic rule into various contexts. This produces 
reductions that apply in cases where $\mu$ occurs as a part 
of a larger expression.

In the classical case of Bergman's diamond lemma, where \(\cmplM = 
\RavX\) and \(\mc{Y} = X^*\), this means that a pair 
\((\mu,a) \in S\) should not only give rise to a simple reduction 
which maps $\mu$ to $a$, it should also for every multiple 
$\nu_1\mu\nu_2$ of $\mu$ give rise to a simple reduction designed to 
map that $\nu_1\mu\nu_2$ to the corresponding multiple $\nu_1 a \nu_2$ 
of $a$. Thus if one defines
\begin{equation} \label{Eq1:Trad.def.T_1(S)}
  \index{t nu1 s nu2@$t_{\nu_1 s \nu_2}$}
  t_{\nu_1 s \nu_2}(\lambda) = \begin{cases}
    \nu_1 a_s \nu_2& \text{if \(\lambda = \nu_1\mu_s\nu_2\),}\\
    \lambda& \text{otherwise,}
  \end{cases}
\end{equation}
for all \((\mu_s,a_s) = s \in S\) and \(\lambda,\nu_1,\nu_2 \in 
\mc{Y}\), then the corresponding construction of $T_1(S)$ is
\begin{equation} \label{Eq2:Trad.def.T_1(S)}
  T_1(S) = 
  \setOf{ t_{\nu_1 s \nu_2} }{ \nu_1,\nu_2 \in \mc{Y}, s \in S }
  \text{.}
\end{equation}
In this case, it follows that a monomial \(\lambda \in X^*\) is 
irreducible if and only if it is not a multiple of $\mu_s$ for any 
\(s \in S\). If $S$ is finite then the irreducible words furthermore 
constitute a regular language (i.e., they can be described by a 
regexp), although infinite rewriting systems are unavoidable for some 
equivalences. Conversely, it would typically not be possible to make 
do with a finite system $S$ unless there was this kind of ``put rule 
into all possible contexts'' mechanism manufacturing an infinite 
family of reductions from every concrete rule. Though this twist is 
not a technical necessity, it is in practice very helpful.

The basic idea of putting rules into all possible contexts remains 
useful in general, but beyond associative algebra it quickly becomes 
difficult to express concretely in elementary notation. The abstract 
form of this construction is that one has a set $V$ of continuous 
homomorphisms \(\cmplM \Fpil \cmplM\), which furthermore map $\mc{Y}$ 
into itself, and defines simple reductions $t_{v,s}$ through
\begin{equation} \label{Eq:Def.Red.Prolong}
  \index{t v s@$t_{v,s}$}
  t_{v,s}(\lambda) = \begin{cases}
    v(a_s)& \text{if \(\lambda = v(\mu_s)\),}\\
    \lambda& \text{otherwise,}
  \end{cases}
\end{equation}
for all \((\mu_s,a_s) = s \in S\) and \(v \in V\). The set of maps 
which gives rise to the classical case described above is
\begin{equation} \label{Eq:2side-mult.}
  V = \{ b \mapsto \nu_1 b \nu_2 \}_{\nu_1,\nu_2 \in X^*} 
  \text{,}
\end{equation}
but one can also consider other families\Dash see below for some 
examples.

If $V$ (as above) is a monoid under composition then the construction 
\eqref{Eq:Def.Red.Prolong} has the effect that there for every such 
reduction $t_{v,s}$, every \(w \in V\), and every \(\lambda \in 
\mc{Y}\) exists another reduction $t_{w \circ v,s}$ which satisfies 
\(t_{w \circ v,s} \bigl( w(\lambda) \bigr) = w \bigl( 
t_{v,s}(\lambda) \bigr)\), and if $w$ is injective then it's even 
\(t_{w \circ v,s} \circ w = w \circ t_{v,s}\). This turns 
out to be a very useful property, so it deserves a name.

\begin{definition} \label{Def:Framflyttbar1}
  A map \(v\colon \cmplM \Fpil \cmplM\) is said to be 
  \DefOrd{advanceable} (with respect to $T_1(S)$) if there for every 
  \(t \in T_1(S)\) and \(b \in \RstarY\) exists some \(u \in T(S)\) 
  such that
  \(
    u\bigl( v(b) \bigr) = v\bigl( t(b) \bigr) 
  \).
  The map $v$ is said to be 
  \DefOrd[*{advanceable!absolutely}]{absolutely advanceable} (with 
  respect to $T(S)$) if there for every \(t \in T(S)\) exists some 
  \(u \in T(S)\) such that \(v \circ t = u \circ v\). For contrast, 
  the ordinary advanceability may also be called 
  \emDefOrd[*{advanceable!conditionally}]{conditional advanceability}.
\end{definition}

Advanceable maps provide a way to reason about and take advantage of 
the kind of structures in the set of reductions that arise from using 
\eqref{Eq:Def.Red.Prolong} or some variation thereof. The name comes 
from the point of view that if an advanceable map $v$ and a reduction 
are both to be applied to some element, then one can always arrange 
things so that $v$ is applied \emph{before} the reduction (it can be 
\emph{advanced} past a reduction), even if that may come at the price 
of having to apply a different reduction. A point worth noticing is 
that absolute advanceability only needs to be checked for simple 
reductions, as a map $v$ can be advanced past $t_1 \circ t_2$ if it 
can be advanced past $t_1$ and $t_2$, whereas a conditional 
advancement need not have this composition property; different terms 
of $t_2(b)$ may call for different translations of $t_1$ when one 
tries to advance $v$ past $t_1$. It is however the conditional 
variant that in practice is most important.

Absolute advanceability can be viewed as a weaker form of the 
`more generally' condition in Lemma~\ref{L: Irr modul}, in that it 
doesn't require the two reductions to be equal; this point of view is 
employed in the next lemma. Another way of expressing this condition 
is that \(v \circ T(S) \subseteq T(S) \circ v\), and therefore some 
may prefer to describe an absolute advanceable map as being an element 
of the left-normaliser of $T(S)$, but that characterisation seems 
difficult to use for conditional advanceability. Furthermore the 
characterisation as element of the left-normaliser breaks down in the 
more general case of a multi-sorted structure; 
Definition~\ref{Def:Framflyttbar2} gives the whole story.

\begin{lemma} \label{L:Slutenhet,I'(S)}
  The set $\mc{I}(S)$ is a topologically closed $R$-module. More 
  generally, any advanceable continuous group endomorphism on $\cmplM$ 
  maps $\mc{I}(S)$ into itself. Any reduction maps $\mc{I}(S)$ into 
  itself, and in particular \(\ker t \subseteq \mc{I}(S)\) for all 
  \(t \in T(S)\). Furthermore
  \begin{multline} \label{Eq:AltSpecI(S)}
    \mc{I}(S) = 
    \overline{ \sum_{t \in T_1(S)} \!\!
      \setOf[\big]{ a - t(a) }{ a \in \cmplM } } =
    \overline{ \sum_{t \in T_1(S)} \!\!
      \setOf[\big]{ a - t(a) }{ a \in \mc{M} } } = \\ =
    \Cspan\Bigl( 
      \setOf[\big]{ \mu - t(\mu) }{ \mu \in \mc{Y}, t \in T_1(S)} 
      \Bigr)
    \text{.}
  \end{multline}
\end{lemma}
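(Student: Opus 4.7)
The plan is to first establish the chain of equalities in \eqref{Eq:AltSpecI(S)} and then read the remaining assertions off the rightmost, most concrete description $J := \Cspan(A_0)$ where $A_0 := \{\mu - t(\mu) : \mu \in \mc{Y},\, t \in T_1(S)\}$. Three of the four inclusions in the chain are easy: for $\mu \in \mc{Y}$, $t \in T_1(S)$, $r \in \pm R^*$ the $r$--$t$ commutation (Assumption~\ref{Ant:T(S)-kont-hom} extended to $R^*$) rewrites $r(\mu - t(\mu)) = r(\mu) - t(r(\mu))$, placing every generator of $\Span(A_0)$ into $\sum_{t \in T_1(S)}\{a - t(a) : a \in \mc{M}\}$; the remaining inclusions up to $\mc{I}(S)$ are trivial from $\mc{M} \subseteq \cmplM$ and $T_1(S) \subseteq T(S)$.

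The real content is the reverse inclusion $\mc{I}(S) \subseteq J$, i.e.\ that $a - t(a) \in J$ for every $a \in \cmplM$ and $t \in T(S)$. I would bootstrap this in three stages. First, for $t \in T_1(S)$ and $a \in \mc{M}$, Lemma~\ref{L:M-form} supplies $a = \sum_k r_k(\mu_k)$, and $r_k$--$t$ commutation then gives $a - t(a) = \sum_k r_k(\mu_k - t(\mu_k)) \in \Span(A_0) \subseteq J$. Second, for $t \in T_1(S)$ and arbitrary $a \in \cmplM$, approximate by a sequence $\{a_n\} \subseteq \mc{M}$ with $a_n \to a$; continuity of $t$ gives $a_n - t(a_n) \to a - t(a)$, and closedness of $J$ catches the limit. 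Third, for a composition $t = t_n \circ \cdots \circ t_1 \in T(S)$, telescope
\begin{equation*}
    a - t(a) = \sum_{i=1}^{n} \bigl( s_{i-1}(a) - t_i(s_{i-1}(a)) \bigr)
    \quad\text{where $s_0 := \id$ and $s_i := t_i \circ \cdots \circ t_1$,}
\end{equation*}
and apply the second stage to each summand, each being of the form $b - t_i(b)$ with $b \in \cmplM$ and $t_i \in T_1(S)$.

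With $\mc{I}(S) = J$ in hand, the closedness and $R$-module assertions are automatic from the definition of $\Cspan$. For an advanceable continuous group endomorphism $v$, I would verify $v(a - t(a)) \in \mc{I}(S)$ first for $a \in \mc{M}$ and $t \in T_1(S)$: writing $a = \sum_k r_k(\mu_k)$, advanceability applied to each $r_k(\mu_k) \in \RstarY$ furnishes $u_k \in T(S)$ with $u_k\bigl(v(r_k(\mu_k))\bigr) = v\bigl(t(r_k(\mu_k))\bigr)$, so that $v(a - t(a)) = \sum_k \bigl( v(r_k(\mu_k)) - u_k(v(r_k(\mu_k))) \bigr)$ exhibits as a finite sum of generators of $\mc{I}(S)$; the very same continuity-and-telescoping bootstrap then extends this to all $a \in \cmplM$, all $t \in T(S)$, and finally by linearity and closedness to all of $\mc{I}(S)$. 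For the reduction claim, the identity $t(a) = a - (a - t(a))$ realises $t(a)$ as a difference of two elements of the subgroup $\mc{I}(S)$ (the second by the very definition of $\mc{I}(S)$), whence $t(a) \in \mc{I}(S)$; specialising to $a \in \ker t$ gives $a = a - t(a) \in \mc{I}(S)$. The main obstacle is the careful layered bootstrap in the second paragraph: one must pass from single monomials to arbitrary finite sums in $\mc{M}$, then to completion limits, and finally to compositions of reductions, without ever leaving the closed $R$-module $J$.
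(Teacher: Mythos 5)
Your proposal is correct, but it takes a genuinely different (and arguably cleaner) route than the paper. Rather than establishing the three equalities in \eqref{Eq:AltSpecI(S)} one at a time, you close a cycle of inclusions $J \subseteq \overline{\sum_{T_1,\mc{M}}} \subseteq \overline{\sum_{T_1,\cmplM}} \subseteq \mc{I}(S) \subseteq J$ and deduce everything at once. This makes the middle equality (passing from $\mc{M}$ to $\cmplM$) particularly painless: you approximate a single $a - t(a)$ per fixed simple $t$, so only continuity of that one $t$ and closedness of $J$ are needed, whereas the paper's argument approximates the entire finite sum $\sum_{t \in U}(a_t - t(a_t))$ and therefore has to pick a $\delta$ with $t(\delta) \subseteq \ve$ for all $t$ in the finite set $U$. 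Your reduction-invariance argument via the identity $t(a) = a - \bigl(a - t(a)\bigr)$ for $a \in \mc{I}(S)$ is also neater than the paper's expansion $t(b) = a - (t\circ t')(a) - a + t(a) \in N_{t\circ t'} + N_t$. The trade-off is ordering: because you derive closedness and the $R$-module structure from the final $\Cspan$ form, those facts are unavailable until the chain is done, whereas the paper gets them up front (and uses an interim ``absolutely advanceable'' step to show the $R$-module property before \eqref{Eq:AltSpecI(S)} is in place); in your ordering that intermediate step is simply unnecessary, which is a genuine simplification.

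Two cosmetic points worth fixing if you write this up. First, when you invoke advanceability on $r_k(\mu_k)$, recall that $r_k \in \pm R^*$ by Lemma~\ref{L:M-form}, so $r_k(\mu_k)$ need not literally lie in $\RstarY$ (which uses $R^*$, not $\pm R^*$); since $v$, $t$, and the produced $u_k$ are all group homomorphisms, advanceability on $r(\mu_k)$ transfers to $-r(\mu_k)$, but this sign detail should be noted. Second, when you apply the identity $t(a) = a - \bigl(a - t(a)\bigr)$, it is worth saying explicitly that the first summand lies in $\mc{I}(S)$ by the standing assumption $a \in \mc{I}(S)$ and the second lies in $N_t \subseteq \mc{I}(S)$ directly from the definition, since otherwise a reader might wonder which half requires the hypothesis.
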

\begin{proof}
  Let \(t \in T(S)\) be arbitrary. For any \(b,c \in \cmplM\) one 
  finds that
  \begin{equation*}
    \bigl( b - t(b) \bigr) - \bigl( c - t(c) \bigr) =
    b - c - t(b) + t(c) =
    (b-c) - t(b-c) \in
    \setOf[\big]{ a - t(a) }{ a \in \cmplM }
  \end{equation*}
  and hence \(N_t := \setOf[\big]{ a - t(a) }{ a \in \cmplM }\) is a 
  subgroup of $\cmplM$. Obviously the sum of a family of groups is a 
  group, and the closure of a group is a group because the group 
  operation (addition) is continuous. Hence $\mc{I}(S)$ is a group, 
  and it is topologically closed by definition. From the similar 
  observation that any \(a \in \ker t\) satisfies \(a = a - t(a) \in 
  N_t \subseteq \mc{I}(S)\), it follows that \(\ker t \subseteq 
  \mc{I}(S)\).
  
  Now let \(r\colon \cmplM \Fpil \cmplM\) be an absolutely advanceable 
  continuous group homomorphism. Let \(t \in T(S)\) be arbitrary and 
  choose some \(t' \in T(S)\) such that \(r \circ t = t' \circ r\). 
  For any \(b \in \cmplM\) one finds that
  \begin{equation*}
    r\bigl( b - t(b) \bigr) =
    r(b) - r\bigl( t(b) \bigr) =
    r(b) - t'\bigl( r(b) \bigr) \in N_{t'}
  \end{equation*}
  and hence $r$ maps $N_t$ into $N_{t'}$. 
  It follows from the fact that $r$ is a homomorphism that $r$ maps 
  \(N = \sum_{t \in T(S)} N_t\) into itself, and then from the fact 
  that $r$ is continuous that it maps \(\mc{I}(S) = \cmpl{N}\) into 
  itself. Since in particular all \(r \in R\) are absolutely 
  advanceable, it follows that $\mc{I}(S)$ is an $R$-module.
  
  Again let \(t \in T(S)\) be arbitrary and consider the matter of 
  whether $t$ maps $\mc{I}(S)$ into itself. For any \(t' \in T(S)\) 
  and \(b \in N_{t'}\) it holds that \(b=a-t'(a)\) for some \(a \in 
  \cmplM\), and thus
  $$
    t(b) = t(a) - (t \circ\nobreak t')(a) = 
    a - (t \circ\nobreak t')(a) - a + t(a) \in 
    N_{t \circ t'} + N_t \subseteq \mc{I}(S) \text{.}
  $$
  Hence \(t(N_{t'}) \subseteq \mc{I}(S)\) for all \(t' \subseteq 
  T(S)\) and this extends as above to arbitrary elements of 
  $\mc{I}(S)$; an arbitrary reduction \(t \in T(S)\) maps 
  $\mc{I}(S)$ into itself.
  
  For the claim that $\mc{I}(S)$ can be constructed from the $N_t$ 
  groups of simple reductions, one may first observe that $N_{\id}$ 
  is just $\{0\}$, and thus does not contribute anything unique to 
  $\mc{I}(S)$. Any other nonsimple reduction \(t \in T(S)\) is a 
  finite composition \(t_n \circ \dotsb \circ t_1 = t\) of simple 
  reductions \(t_1,\dotsc,t_n \in T(S)\) and it holds that \(N_t 
  \subseteq \sum_{k=1}^n N_{t_k}\), because if \(u_k = t_k \circ 
  \dotsb \circ t_1\) for \(k=1,\dotsc,n\) then any \(a - t(a) \in N_t\) 
  can be written as \(a - t_1(a) + u_1(a) - t_2\bigl(u_1(a)\bigr) +
  \dotsb + u_{n-1}(a) - t_n\bigl( u_{n-1}(a) \bigr) \in N_{t_1} + 
  N_{t_2} + \dotsb + N_{t_n}\).
  Hence \(\sum_{t \in T(S)} N_t \subseteq \sum_{t \in T_1(S)} N_t\); 
  the terms for simple reductions suffice for producing the total sum.
  
  Define \(M_t := \setOf[\big]{ a - t(a) }{ a \in \mc{M} }\) for all 
  \(t \in T_1(S)\). The last equality in \eqref{Eq:AltSpecI(S)} 
  follows from the observation that \(M_t =
  \Span\Bigl( \setOf[\big]{ \mu - t(\mu) }{ \mu \in \mc{Y} } \Bigr)\) 
  for all \(t \in T_1(S)\). In the middle equality, 
  the $\supseteq$ inclusion trivially follows from \(N_t \supseteq 
  M_t\). For the reverse inclusion, let $b$ in the closure of 
  $\sum_{t \in T_1(S)} N_t$ be given. Let \(\ve \in \cmplO\) be 
  arbitrary. Since $b+\ve$ is a neighbourhood of $b$ it contains some 
  element of $\sum_{t \in T_1(S)} N_t$, i.e., there exists a finite 
  \(U \subseteq T_1(S)\) and \(\{a_t\}_{t \in U} \subseteq \cmplM\) 
  such that \(b - \sum_{t \in U} \bigl( a_t -\nobreak t(a_t) \bigr) 
  \in \ve\). Let \(\delta \in \cmplO\) be such that \(\delta \subseteq 
  \ve\) and \(t(\delta) \subseteq \ve\) for all \(t \in U\). Let 
  \(\{c_t\}_{t \in U} \subseteq \mc{M}\) be such that \(a_t-c_t \in 
  \delta\) for all \(t \in U\). Then
  \begin{equation*}
    \sum_{t \in U} \bigl( c_t - t(c_t) \bigr) = 
    \sum_{t \in U} \bigl( a_t - t(a_t) \bigr) + 
      \sum_{t \in U} (c_t - a_t) - \sum_{t \in U} t(c_t - a_t) \in
    b + \ve + \delta - \ve = b + \ve
  \end{equation*}
  and hence $b$, by the arbitrariness of $\ve$, is in the closure of 
  $\sum_{t \in T_1(S)} M_t$.
  
  The claim that also a conditionally advanceable continuous 
  homomorphism $v$ will map $\mc{I}(S)$ into itself is now an easy 
  consequence of \eqref{Eq:AltSpecI(S)}: by continuity and since $v$ 
  is a homomorphism, it suffices to show \(v(b) \in \mc{I}(S)\) for 
  arbitrary \(b \in \Span\bigl( \bigl\{ \mu -\nobreak t(\mu) \bigr\} 
  \bigr)\), \(\mu \in \mc{Y}\), and \(t \in T_1(S)\). Let such $b$, 
  $\mu$, and $t$ be given. There exist \(\{r_i\}_{i=1}^n \subseteq 
  \pm R^*\) such that \(b = \sum_{i=1}^n r_i\bigl( \mu -\nobreak 
  t(\mu) \bigr)\) and \(\{t_i\}_{i=1}^n \in T(S)\) such that \((t_i 
  \circ\nobreak v \circ\nobreak r_i)(\mu) = (v \circ\nobreak t 
  \circ\nobreak r_i)(\mu)\). Thus 
  \begin{align*}
    v(b) = 
    v\biggl( \sum_{i=1}^n r_i\bigl( \mu - t(\mu) \bigr) \biggr) 
      ={}& 
    \sum_{i=1}^n \biggl( v\bigl( r_i(\mu) \bigr) - 
      v\Bigl( r_i\bigl( t(\mu) \bigr) \Bigr) \biggr) 
      = \\ ={}&
    \sum_{i=1}^n \biggl( v\bigl( r_i(\mu) \bigr) - 
      t_i\Bigl( v\bigl( r_i(\mu) \bigr) \Bigr) \biggr) 
      \in \mc{I}(S)\text{.}
  \end{align*}
\end{proof}

In the case of Bergman's diamond lemma, where all maps on the form 
\eqref{Eq:2side-mult.} are advanceable, this lemma implies that 
$\mc{I}(S)$ is a two-sided ideal: it is closed under addition, 
multiplication by a scalar, and multiplication on either side by an 
arbitrary generator of the algebra \(\cmplM = \RavX\), so by 
distributivity it is closed under multiplication by arbitrary elements. 
The letter `$\mc{I}$' was chosen in anticipation of this, since `I' 
is the initial of `ideal', but it is by no means restricted to 
two-sided ideals.

\begin{definition}
  Let $V$ be a set of maps \(\cmplM \Fpil \cmplM\). A nonempty 
  $R$-module \(N \subseteq \cmplM\) is said to be a 
  \DefOrd[{ideal}*]{$V$-ideal} if it is topologically closed and 
  \(v(a) \in N\) for all \(v \in V\) and \(a \in N\). A set 
  \(A \subseteq \cmplM\) is said to be a 
  \DefOrd[{ideal basis}*]{$V$-ideal basis} for $N$ if 
  \(N = \Cspan\bigl( \{ v(a) \}_{a \in A, v \in V} \bigr)\).
\end{definition}

From a minimalistic formal perspective the `$V$-ideal' concept is 
unnecessary, as it is equivalent to `topologically closed $(R 
\cup\nobreak V)$-module', but (apart from being shorter) the 
`$V$-ideal' terminology has the advantage of being closer to the 
familiar terms `left ideal', `right ideal', and `two-sided ideal' 
of which `$V$-ideal' is a common generalisation. Furthermore, it 
is `$V$-ideal basis' that is the more important concept in the above 
definition, since that is a first step on the way to defining 
a \emDefOrd{Gr\"obner basis}. Ideal bases have the right associations 
for this, whereas any combination of `module' and `basis' is likely 
to give rise to incorrect expectations about independence between 
basis elements.

Besides preparing for subsequent developments, this definition also 
gives an opportunity to summarise some of the above constructions of 
simple reductions into a formal statement that actually claims 
something, even if it isn't very much.

\begin{corollary} \label{Kor:t_v,s-reduktion}
  Let \(\mc{R} \supseteq R\) be a unital ring of continuous 
  endomorphisms on $\cmplM$ that is equipped with a topology such 
  that the $\mc{R}$-module action \(\mc{R} \times \cmplM \Fpil 
  \cmplM : (r,b) \mapsto r(b) =: r \cdot b\) is continuous and 
  $\mc{R}$ is complete. 
  Assume $\mc{M}$ is a free $\mc{R}$-module with basis $\mc{Y}$ such 
  that each coefficient-of-$\mu$ homomorphism \(f_\mu\colon 
  \mc{M} \Fpil \mc{R}\) is continuous. 
  
  If $V$ is a monoid of continuous $\mc{R}$-module homomorphisms 
  \(\cmplM \Fpil \cmplM\) that map $\mc{Y}$ into itself then the 
  following holds for all \(S \subseteq \mc{Y} \times \cmplM\):
  \begin{enumerate}
    \item
      For any \(v \in V\) and \(s = (\mu_s,a_s) \in S\), the map 
      \(t_{v,s}\colon \cmplM \Fpil \cmplM\) defined by
      \begin{equation} \label{Eq:t_v,s-reduktion}
        t_{v,s}(b) = 
        b - f_{v(\mu_s)}(b)\bigl( v(\mu_s) - v(a_s) \bigr)
        \qquad\text{for all \(b \in \cmplM\)}
      \end{equation}
      is a continuous homomorphism that commutes with all elements of 
      $\mc{R}$.
    \item
      If \(T_1(S) = \{t_{v,s}\}_{v \in V, s \in S}\) then every 
      \(v \in V\) is advanceable and the set 
      \(\{\mu_s -\nobreak a_s\}_{s \in S}\) is a 
      $V$-ideal basis for $\mc{I}(S)$.
    \item
      Any injective element of $V$ is absolutely advanceable.
  \end{enumerate}
\end{corollary}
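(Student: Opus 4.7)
The plan is to treat the three claims in turn using the explicit formula \eqref{Eq:t_v,s-reduktion} together with the characterisation of $\mc{I}(S)$ from \eqref{Eq:AltSpecI(S)}. A preliminary step common to all parts is to extend each coefficient function $f_\mu \colon \mc{M} \Fpil \mc{R}$ uniquely by continuity to an $\mc{R}$-linear map $\cmplM \Fpil \mc{R}$, which is possible because $\mc{R}$ is complete and $\mc{M}$ is dense in $\cmplM$. For (i), I would set $c_s := v(\mu_s) - v(a_s)$ and observe that $t_{v,s}(b) = b - f_{v(\mu_s)}(b) \cdot c_s$ is the difference of the identity and the composition of $f_{v(\mu_s)}$ with the continuous map $\mc{R} \Fpil \cmplM : r \mapsto r \cdot c_s$; it is therefore a continuous group homomorphism, and commutation with an arbitrary $r \in \mc{R}$ follows from the $\mc{R}$-linearity of $f_{v(\mu_s)}$ together with the module axiom $r \cdot (s \cdot c_s) = (rs) \cdot c_s$.

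For advanceability in (ii), I would argue by cases on $t = t_{w,s} \in T_1(S)$ and $b = r(\mu) \in \RstarY$. If $\mu \neq w(\mu_s)$ then $f_{w(\mu_s)}(b) = 0$, so $t(b) = b$ and $u = \id$ works; if instead $\mu = w(\mu_s)$, then $u = t_{v \circ w, s}$, which is in $T_1(S)$ because $V$ is a monoid, satisfies $u(v(b)) = v(t(b))$ after a short computation that uses the $\mc{R}$-linearity of $v$ to move $r$ past $v$ and to evaluate the relevant coefficient functions. The $V$-ideal basis claim is then immediate from \eqref{Eq:AltSpecI(S)}, since $\mu - t_{v,s}(\mu) = f_{v(\mu_s)}(\mu) \cdot (v(\mu_s) - v(a_s))$ is either $0$ or $v(\mu_s - a_s)$, so the two candidate spanning sets generate the same closed $R$-module.

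For (iii), the natural candidate is again $u = t_{v \circ w, s}$, and comparing $u \circ v$ with $v \circ t_{w,s}$ on $\cmplM$ reduces, after cancellation, to the coefficient identity $f_{v(w(\mu_s))}(v(b)) = f_{w(\mu_s)}(b)$ for all $b \in \cmplM$. By continuity and density it suffices to verify this on $\mc{M}$, where a generic $b = \sum_\mu c_\mu \mu$ is a finite $\mc{R}$-linear combination of basis elements; injectivity of $v$ keeps the $v(\mu)$ pairwise distinct basis elements, so reading off the coefficient of $v(w(\mu_s))$ in $v(b) = \sum_\mu c_\mu v(\mu)$ yields exactly $c_{w(\mu_s)} = f_{w(\mu_s)}(b)$. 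The main obstacle I anticipate is keeping the coefficient-function formalism clean when passing between $\mc{M}$ and $\cmplM$; once the extended $f_\mu$ are on firm footing, each of the three parts is essentially a one-line calculation.
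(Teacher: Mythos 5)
Your proposal is correct and follows essentially the same path as the paper's own proof. Part (i) is the same observation that the formula composes continuous homomorphisms and that commutation with $\mc{R}$ reduces to $\mc{R}$-linearity of $f_{v(\mu_s)}$ plus the module axiom; part (ii)'s case split on whether the relevant monomial equals $v(\mu_s)$, together with reading off the spanning set from \eqref{Eq:AltSpecI(S)}, matches the paper, and part (iii)'s coefficient identity $f_{(w \circ v)(\mu_s)} \circ w = f_{v(\mu_s)}$ (in your labelling $f_{v(w(\mu_s))} \circ v = f_{w(\mu_s)}$) is precisely the key identity the paper isolates. The only stylistic difference is that you verify that identity by expanding $b$ in the basis and reading off coefficients, while the paper argues more abstractly that the two $\mc{R}$-linear continuous functionals agree on $\mc{Y}$ because $w$ is injective on the monomial level; these are the same argument.
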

\begin{proof}
  The definition \eqref{Eq:t_v,s-reduktion} of $t_{v,s}$ is clearly a 
  composition of maps that by assumption are continuous 
  homomorphisms. Furthermore \(t_{v,s}(r \cdot\nobreak b) = r \cdot b 
  - f_{v(\mu_s)}(r \cdot\nobreak b) \cdot v(\mu_s -\nobreak a_s) = 
  r \cdot b - \bigl( r \circ\nobreak f_{v(\mu_s)}(b) \bigr) \cdot 
  v(\mu_s -\nobreak a_s) = r \cdot t_{v,s}(b)\) for all \(r \in 
  \mc{R}\) and \(b \in \cmplM\) since $f_{v(\mu_s)}$ is an 
  $\mc{R}$-module homomorphism and $\cdot$ is a left module action. 
  Hence \(t_{v,s} \circ r = r \circ t_{v,s}\).
  
  By the last part of \eqref{Eq:AltSpecI(S)}, $\mc{I}(S)$ is spanned 
  by all $\lambda - t(\lambda)$ for \(\lambda \in \mc{Y}\) and 
  \(t \in T_1(S)\), i.e., all \(\lambda - t_{v,s}(\lambda) = \lambda 
  - \lambda + f_{v(\mu_s)}(\lambda) \cdot v(\mu_s - \nobreak a_s)\). 
  This is $0$ unless \(\lambda = v(\mu_s)\), in which case it is 
  equal to $v(\mu_s - \nobreak a_s)$. Hence \(\Cspan\bigl( \{ 
  v(\mu_s - \nobreak a_s) \}_{v \in V, s \in S} \bigr) = \mc{I}(S)\).
  
  In order to show that \(w \in V\) is advanceable, let \(t_{v,s} \in 
  T_1(S)\) and \(\lambda \in \mc{Y}\) be arbitrary. If \(\lambda = 
  v(\mu_s)\) then
  \begin{multline*}
    w\bigl( t_{v,s}(\lambda) \bigr) =
    w\Bigl( \lambda - f_{v(\mu_s)}(\lambda) \cdot v(\mu_s - a_s) 
      \Bigr) =
    w\bigl( v(\mu_s) - v(\mu_s - a_s) \bigr) = \\ =
    (w \circ v)(a_s) =
    (w \circ v)(\mu_s) - (w \circ v)(\mu_s - a_s) = \\ =
    w(\lambda) - f_{(w \circ v)(\mu_s)}\bigl( (w \circ v)(\mu_s) 
      \bigr) \cdot (w \circ v)(\mu_s - a_s) =
    t_{w \circ v,s}\bigl( w(\lambda) \bigr)
  \end{multline*}
  and since $w$, $t_{v,s}$, and $t_{w \circ v,s}$ are all 
  $\mc{R}$-module homomorphisms it follows that 
  \(w\bigl( t_{v,s}(r \cdot\nobreak \lambda) \bigr) =
  t_{w \circ v,s}\bigl( w(r \cdot\nobreak \lambda) \bigr)\) for all 
  \(r \in \mc{R}\). If instead \(\lambda \neq v(\mu_s)\) then
  \begin{equation*}
    w\bigl( t_{v,s}(\lambda) \bigr) =
    w\Bigl( \lambda - f_{v(\mu_s)}(\lambda) \cdot v(\mu_s - a_s) 
      \Bigr) =
    w(\lambda) =
    \id\bigl( w(\lambda) \bigr)
  \end{equation*}
  and since $w$, $t_{v,s}$, and $\id$ are all $\mc{R}$-module 
  homomorphisms it follows that \(w\bigl( t_{v,s}(r \cdot\nobreak 
  \lambda) \bigr) = \id\bigl( w(r \cdot\nobreak \lambda) \bigr)\) 
  for all \(r \in \mc{R}\). Either way, $w$ can be advanced past 
  $t_{v,s}$ and hence $w$ is advanceable.
  
  If $w$ is injective then \(w(\lambda) = (w \circ\nobreak v)(\mu_s)\) 
  if and only if \(\lambda = v(\mu_s)\) and hence 
  \(f_{(w \circ v)(\mu_s)} \circ w = f_{v(\mu_s)}\). In this case 
  \(w\bigl( t_{v,s}(b) \bigr) = t_{w \circ v,s}\bigl( w(b) \bigr)\) 
  for all \(b \in \cmplM\).
\end{proof}

In the case \(\mc{M} = \RavX\), the choices of $V$ that give rise to 
one-sided ideals are
\begin{align*}
  V ={}& \{ b \mapsto\nobreak \nu b \}_{\nu\in X^*}&&
    \text{(left ideal),}\\
  V ={}& \{ b \mapsto\nobreak b\nu \}_{\nu\in X^*}&&
    \text{(right ideal);}
\end{align*}
multiplication by a non-monomial element of $\mc{M}$ does not map 
\(\mc{Y} = X^*\) into itself and can therefore not be used with 
Corollary~\ref{Kor:t_v,s-reduktion}. In the fourth classical case 
that $\mc{M}$ is an $\mc{R}[X]$-module and one wants $\mc{I}(S)$ to 
be an $\mc{R}[X]$-submodule, the right choice is to make $V$ the set 
of all maps \(b \mapsto b \prod_{x \in X} x^{n_x}\) for \(\{n_x\}_{x 
\in X} \subset \N\), i.e., the set of maps that multiply by power 
products on $X$.

\section{The limit of all reductions}
\label{Sec:Reducibilitet}

The head-on approach for defining the sought projection $\tS$ of 
$\cmplM$ onto $\Irr(S)$ would be to immediately seek a map \(\cmplM 
\Fpil \Irr(S)\), but often a subtler approach is more convenient. 
The route taken here is to (i)~define subsets of $\cmplM$ where 
the collective of reductions has nice properties, (ii)~use those 
properties in the definition of $\tS$, and only afterwards (iii)~show 
that the subsets defined in~(i) are in fact the whole of $\cmplM$; the 
same approach was followed in~\cite{Bergman}. Steps~(i) and~(ii) are 
carried out in this section, whereas step~(iii) is the subject of the 
next.

The definition of $\tS$ as a pointwise limit of $T(S)$ would be to say 
that $\tS(a)$ is the element of $\Irr(S)$ that is a limit point of 
$\setOf[\big]{t(a)}{t \in T(S)}$; in a discrete topology, this simply 
says that $\tS(a)$ is the element of $\Irr(S)$ which is equal to $t(a)$ 
for some \(t \in T(S)\), but in general one will have to make do with 
being able to get arbitrarily close to some irreducible element. For 
this idea to work as a definition of $\tS(a)$ it is of course first 
necessary that such a limit point exists (and second necessary that it 
is unique), but the existence condition becomes much more 
convenient if one strengthens it a bit.

\begin{definition} \label{Def:PerRed}
  An \(a \in \cmplM\) is said to be \DefOrd{stuck in} 
  \(N \subseteq \cmplM\) under $T(S)$ if \(t(a) \in N\) for all 
  \(t \in T(S)\). 
  Given an \(\ve \in \cmplO\), an \(a \in \cmplM\) is said to 
  be \DefOrd{persistently $\ve$-reducible} under $T(S)$ if there 
  for every \(t_1 \in T(S)\) exists some \(t_2 \in T(S)\) and 
  \(b \in \Irr(S)\) such that \(t_2\bigl(t_1(a)\bigr)\) is stuck in 
  $b+\ve$.
  
  If $a$ is persistently $\ve$-reducible for all \(\ve \in \cmplO\) 
  then $a$ is said to be \DefOrd{persistently reducible}. The set of 
  all elements in $\cmplM$ that are persistently reducible under $T(S)$ 
  is denoted \index{Per@$\Per$}$\Per(S)$ and the set of all elements 
  in $\cmplM$ that are persistently $\ve$-reducible under $T(S)$ is 
  denoted $\Per_\ve(S)$.
\end{definition}


The extra power added in this definition is that the wanted outcome of 
being close to $\Irr(S)$ should \emph{persist} no matter what has to be 
done before or after the $t_2$ reduction chosen to get there: \(a \in 
\Per_\ve(S)\) if and only if there for every \(t_1 \in T(S)\) exists 
some \(t_2 \in T(S)\) and \(b \in \Irr(S)\) such that it holds for every 
\(t_3 \in T(S)\) that \((t_3 \circ\nobreak t_2 \circ\nobreak t_1)(a) - b 
\in \ve\). The corresponding property in~\cite{Bergman} is 
`reduction-finiteness'; see \cite[Ssec.~3.1.2]{Avhandlingen} for a 
comparison and analysis of the two.


\begin{lemma} \label{L:PerTillIrr}
  For each \(a \in \Per(S)\), there exists some \(b \in \Irr(S)\) such 
  that for every \(\ve\in\cmplO\) there is some \(t \in T(S)\) such 
  that $t(a)$ is stuck in $b+\ve$. 
  Furthermore $a$ and $b$ are such that \(a-b \in \mc{I}(S)\), and 
  hence \(\Per(S) \subseteq \mc{I}(S) + \Irr(S)\).
\end{lemma}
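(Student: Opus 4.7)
I would prove this by constructing, through a diagonal/iterative argument, a Cauchy sequence of irreducible elements $b_n$ whose limit is the sought $b$.

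First I would fix a nested sequence $\ve_1 \supseteq \ve_2 \supseteq \dotsb$ in $\cmplO$ with $\bigcap_{n=1}^\infty \ve_n = \{0\}$ (for instance $\ve_n = \cmpl{B_n}$). Then I would construct by recursion on $n$ a sequence of reductions $s_n \in T(S)$ (with $s_0 = \id$) and a sequence of irreducible elements $b_n \in \Irr(S)$ as follows. Having produced $u_n := s_n \circ s_{n-1} \circ \dotsb \circ s_1$, apply persistent $\ve_{n+1}$-reducibility of $a$ with $t_1 := u_n$ to obtain $s_{n+1} \in T(S)$ and $b_{n+1} \in \Irr(S)$ such that $u_{n+1}(a) = s_{n+1}\bigl(u_n(a)\bigr)$ is stuck in $b_{n+1} + \ve_{n+1}$, i.e.\ $t\bigl(u_{n+1}(a)\bigr) \in b_{n+1} + \ve_{n+1}$ for every $t \in T(S)$.

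The key observation is that since $u_n(a)$ is stuck in $b_n + \ve_n$ (under all reductions, including $s_{n+1}$ followed by anything), we have $u_{n+1}(a) \in b_n + \ve_n$, and simultaneously $u_{n+1}(a) \in b_{n+1} + \ve_{n+1} \subseteq b_{n+1} + \ve_n$. Subtracting and using $\ve_n - \ve_n = \ve_n$ gives $b_{n+1} - b_n \in \ve_n$, so $\{b_n\}$ is a Cauchy sequence in $\cmplM$; let $b$ denote its limit. By Lemma~\ref{L: Irr modul} the set $\Irr(S)$ is closed, so $b \in \Irr(S)$.

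Now I would verify the two claims. Given an arbitrary $\ve \in \cmplO$, choose $n$ so large that $\ve_n \subseteq \ve$ and $b - b_n \in \ve$; then for every $t \in T(S)$, $t\bigl(u_n(a)\bigr) \in b_n + \ve_n \subseteq b + \ve + \ve = b + \ve$, exhibiting $u_n(a)$ as stuck in $b + \ve$ and thus providing the required reduction $t := u_n$. For the second claim, write the telescoping identity
\begin{equation*}
  a - u_n(a) = \bigl(a - s_1(a)\bigr) + \bigl(s_1(a) - (s_2 \circ s_1)(a)\bigr) + \dotsb + \bigl(u_{n-1}(a) - u_n(a)\bigr),
\end{equation*}
each summand of which lies in some $N_t \subseteq \mc{I}(S)$ (see the proof of Lemma~\ref{L:Slutenhet,I'(S)}); hence $a - u_n(a) \in \mc{I}(S)$ for every $n$. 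Since $u_n(a) \in b_n + \ve_n$ with $b_n \to b$ and $\ve_n \to 0$, we have $u_n(a) \to b$, and the topological closedness of $\mc{I}(S)$ (again Lemma~\ref{L:Slutenhet,I'(S)}) yields $a - b \in \mc{I}(S)$.

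The only subtle point is the bookkeeping that makes $\{b_n\}$ Cauchy — it crucially uses that \emph{persistence} of $\ve_n$-reducibility lets us continue reducing after the step that produced $b_n$ without leaving the $\ve_n$-neighbourhood of $b_n$, so that the element $u_{n+1}(a)$ lies simultaneously near $b_n$ and near $b_{n+1}$. Everything else is standard metric completion together with direct appeals to the already proven closedness properties of $\Irr(S)$ and $\mc{I}(S)$.
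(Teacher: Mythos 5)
Your proof is correct and follows essentially the same approach as the paper: iteratively build reductions $u_n$ and irreducibles $b_n$ via persistent reducibility, show $\{b_n\}$ is Cauchy by using the "stuck" property to trap $u_{n+1}(a)$ simultaneously near $b_n$ and $b_{n+1}$, pass to the limit $b$, and then use closedness of $\mc{I}(S)$. The only cosmetic difference is that the paper obtains $a - u_n(a) \in \mc{I}(S)$ directly from $N_{u_n} \subseteq \mc{I}(S)$ rather than via your telescoping sum, but these are the same argument.
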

\begin{proof}
  Construct a sequence \(\{u_i\}_{i=0}^\infty \subseteq T(S)\) by 
  letting \(u_0 = \id\) and recursively defining $u_n$ for 
  \(n>0\) as follows: by persistent reducibility of $a$ there exist 
  \(t_n \in T(S)\) and \(b_n \in \Irr(S)\) such that $t_n\bigl( 
  u_{n-1}(a) \bigr)$ is stuck in $b_n+\cmpl{B_n}$, therefore let 
  \(u_n = t_n \circ u_{n-1}\).
  
  For any \(n,i,j \in \N\) such that \(i>j\geqslant n\), the element 
  $u_i(a)$ is in $b_i+\cmpl{B_i}$ as well as in $b_j+\cmpl{B_j}$, 
  which means
  $$
    b_i-b_j = 
    \bigl( b_i - u_i(a) \bigr) + \bigl( u_i(a) - b_j \bigr) \in
    \cmpl{B_i} + \cmpl{B_j} \subseteq \cmpl{B_n}
    \text{.}
  $$
  Hence \(\{b_n\}_{n=0}^\infty\) is a Cauchy sequence in $\Irr(S)$ and 
  consequently it converges to some \(b \in \Irr(S)\). 
  Fix some \(n \in \N\) such that \(\cmpl{B_n} \subseteq \ve\) and 
  consider \(\lim_{i \Lpil \infty} (b_i \mbin{-} b_n)\). Since all 
  elements in this sequence are in the closed set \(\cmpl{B_n}\) it 
  follows that the limit \(b - b_n \in \cmpl{B_n} \subseteq \ve\). 
  Thus \(b_n + \ve = b + \ve\) and $u_n(a)$ is stuck in \(b+\ve\) as 
  claimed.
  
  To see the second claim, observe that \(u_n(a) \Lpil b\) as 
  \(n \Lpil \infty\). For any \(n \in \N\), \(a-u_n(a) \in \mc{I}(S)\) 
  by definition and consequently \(a-b = \lim_{n \Lpil \infty} 
  \bigl(a \mbin{-} u_n(a) \bigr) \in \mc{I}(S)\) as well. Hence 
  \(a \in \mc{I}(S) + \Irr(S)\).
\end{proof}

This proof highlights a subtle point in the basic set-up of 
Section~\ref{Sec:Basics} which may be regarded as a restriction, 
namely that the family \(\mc{O} = \{B_n\}_{n=1}^\infty\) must be 
countable: the construction of $\{u_n\}_{n=0}^\infty$ would not 
necessarily suffice for demonstrating convergence if $\mc{O}$ was 
uncountable. In other proofs it is possible to treat $\cmplO$ as an 
arbitrary collection of neighbourhoods, which might suggest a 
generalisation to topologies defined by an uncountable $\mc{O}$ is 
not unreasonable, but on the other hand the countability of $\mc{O}$ 
is in this lemma closely tied to the status of $T(S)$ as a set of 
finite compositions of elements of $T_1(S)$, and \emph{that} is 
something several proofs rely on. Hence removing the condition that 
$\mc{O}$ is countable would probably require a more powerful 
construction of reductions than as mere compositions of simple 
reductions; it is presently not something for which I see any 
precedence.

Besides this existence of a limit point property, it is also important 
that the set of persistently reducible elements is closed under 
algebraic operations. This exercises the slightly different aspect of 
persistent reducibility that one can find reductions which 
simultaneously take several persistently reducible elements close to 
their normal forms.

\begin{lemma} \label{L:Per modul}
  The set $\Per(S)$ and for every \(\ve \in \cmplO\) the set 
  $\Per_\ve(S)$ are $R$-modules. More generally, $\Per(S)$ is mapped 
  into itself by every continuous group homomorphism \(\cmplM \Fpil 
  \cmplM\) which commutes with all reductions.
\end{lemma}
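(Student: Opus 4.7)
The plan is to first prove that each $\Per_\ve(S)$ is an $R$-module, then obtain $\Per(S) = \bigcap_{\ve\in\cmplO}\Per_\ve(S)$ as an intersection of $R$-modules, and finally extend to the wider class of commuting continuous homomorphisms via a continuity-at-$0$ argument.

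To show $\Per_\ve(S)$ is a subgroup, I first observe that $0 \in \Per_\ve(S)$ trivially, taking $b = 0$ and $t_2 = \id$. The main step is closure under subtraction. Given $a, a' \in \Per_\ve(S)$ and an arbitrary $t_1 \in T(S)$, apply persistent $\ve$-reducibility of $a$ to $t_1$, obtaining $t_2^a \in T(S)$ and $b^a \in \Irr(S)$ such that $t_2^a(t_1(a))$ is stuck in $b^a + \ve$. Then apply persistent $\ve$-reducibility of $a'$ to the composed reduction $t_2^a \circ t_1$, obtaining $t_2^{a'} \in T(S)$ and $b^{a'} \in \Irr(S)$ with $t_2^{a'}(t_2^a(t_1(a')))$ stuck in $b^{a'} + \ve$. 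Set $t_2 := t_2^{a'} \circ t_2^a$. The key observation is that because $t_2^a(t_1(a))$ was \emph{stuck} in $b^a + \ve$, the subsequent application of any $t_3 \circ t_2^{a'}$ keeps the image in $b^a + \ve$; hence $t_2(t_1(a))$ is also stuck in $b^a + \ve$. Using that $t_1, t_2, t_3$ are group homomorphisms, subtracting the two "stuck" assertions gives
\begin{equation*}
  t_3\bigl(t_2(t_1(a-a'))\bigr) \in (b^a + \ve) - (b^{a'} + \ve) = (b^a - b^{a'}) + \ve,
\end{equation*}
where I use that $\ve \in \cmplO$ is a subgroup so $\ve - \ve = \ve$. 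Since $\Irr(S)$ is an $R$-module by Lemma~\ref{L: Irr modul}, $b := b^a - b^{a'}$ lies in $\Irr(S)$, so $t_2(t_1(a-a'))$ is stuck in $b + \ve$, establishing $a - a' \in \Per_\ve(S)$.

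For closure under $R$: given $r \in R$ and $a \in \Per_\ve(S)$, use that $r$ commutes with all reductions and that $\ve$ is an $R$-module (so $r(\ve) \subseteq \ve$); applying persistent $\ve$-reducibility of $a$ to $t_1$ (with $t_1(r(a)) = r(t_1(a))$), the resulting $t_2(t_1(a))$ stuck in $b + \ve$ yields $t_2(t_1(r(a))) = r(t_2(t_1(a)))$ stuck in $r(b) + \ve$, and $r(b) \in \Irr(S)$ again by Lemma~\ref{L: Irr modul}. Consequently $\Per(S) = \bigcap_{\ve \in \cmplO} \Per_\ve(S)$ is an $R$-module as an intersection of such.

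For the "more generally" claim, let $r\colon\cmplM\Fpil\cmplM$ be a continuous group homomorphism commuting with every reduction, fix $a \in \Per(S)$ and $\ve \in \cmplO$. Continuity of $r$ at $0$ supplies some $\delta \in \cmplO$ with $r(\delta) \subseteq \ve$. Since $a \in \Per_\delta(S)$, the same argument as in the preceding paragraph (with $\delta$ in place of $\ve$ at the inner step) produces $t_2$ and $b \in \Irr(S)$ so that $r(t_3(t_2(t_1(a)))) \in r(b+\delta) \subseteq r(b) + \ve$ for all $t_3$; here $r(b) \in \Irr(S)$ by the "more generally" clause of Lemma~\ref{L: Irr modul}. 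Thus $r(a) \in \Per_\ve(S)$ for every $\ve$, i.e., $r(a) \in \Per(S)$. The only mildly delicate point in the whole argument is the sequential choice of $t_2^a$ and $t_2^{a'}$ in the subtraction step, where "stuckness" must be invoked to ensure that the second reduction does not destroy the closeness already achieved for the first summand.
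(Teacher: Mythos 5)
Your proof is correct and follows essentially the same route as the paper: the subtraction step chains the reduction for the first summand with the one for the second and appeals to "stuckness" to keep the first contribution near its limit, exactly as the paper does with its $t_2,t_3,t_4$; the general homomorphism case is handled by the identical continuity-at-$0$ argument. The only stylistic difference is that you verify $R$-module closure of $\Per_\ve(S)$ directly, while the paper derives it from the general argument by noting that $\delta=\ve$ works when $r \in R$.
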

\begin{proof}
  Let \(\ve \in \cmplO\) be given. 
  Clearly \(\Irr(S) \subseteq \Per_\ve(S)\), and hence to see that 
  the latter is a group, it suffices to show for two arbitrary 
  elements in it that their difference is also in this set. 
  Therefore let \(a_1,a_2 \in \Per_\ve(S)\) and \(t_1 \in T(S)\) be 
  arbitrary. There exists some \(t_2 \in T(S)\) and \(b_1 \in \Irr(S)\) 
  such that \(t_2\bigl(t_1(a_1)\bigr)\) is stuck in $b_1+\ve$. There 
  also exists some \(t_3 \in T(S)\) and \(b_2 \in \Irr(S)\) such that 
  \(t_3\bigl( (t_2\mbin{\circ}t_1)(a_2) \bigr)\) is stuck in $b_2+\ve$. 
  Now let \(t_4 \in T(S)\) be arbitrary. Since
  \begin{multline*}
    t_4 \Bigl( (t_3 \circ t_2)\bigl( t_1(a_1-a_2) \bigr) \Bigr) - 
      (b_1-b_2) 
      = \\ =
    \biggl(
      (t_4 \circ t_3) \Bigl( t_2 \bigl( t_1(a_1) \bigr) \Bigr) - b_1
    \biggr) - \biggl(
      t_4 \Bigl( t_3 \bigl( (t_2 \circ t_1)(a_2) \bigr) \Bigr) - b_2
    \biggr) \in \ve + \ve = \ve \text{,}
  \end{multline*}
  it follows that \(t_4 \Bigl( (t_3 \mbin{\circ} t_2) \bigl( 
  t_1(a_1\mbin{-}a_2) \bigr) \Bigr) \in (b_1\mbin{-}b_2)+\ve\). Hence 
  the element \((t_3 \mbin{\circ} t_2)\bigl( t_1(a_1\mbin{-}a_2) \bigr)\) 
  is stuck in \((b_1\mbin{-}b_2)+\ve\), and by arbitrariness of $t_1$ 
  it follows that \(a_1 - a_2 \in \Per_\ve(S)\).
  
  Now let \(r\colon \cmplM \Fpil \cmplM\) be an arbitrary continuous 
  group homomorphism which satisfies \(r \circ t = t \circ r\) for all 
  \(t \in T(S)\), and let \(\delta \in \cmplO\) be such that 
  \(r(\delta) \subseteq \ve\). Let \(a \in \Per_\delta(S)\) and 
  \(t_1 \in T(S)\) be arbitrary. Let \(t_2 \in T(S)\) and \(b \in 
  \Irr(S)\) be such that $(t_2 \circ\nobreak t_1)(a)$ is stuck in 
  $b+\ve$. Then for any \(t_3 \in T(S)\),
  \begin{multline*}
    (t_3 \circ t_2 \circ t_1)\bigl( r(a) \bigr) - r(b) =
    r\bigl( (t_3 \circ t_2 \circ t_1)(a) \bigr) - r(b) = \\ =
    r\bigl( (t_3 \circ t_2 \circ t_1)(a) - b \bigr) \in
    r(\delta) \subseteq \ve
  \end{multline*}
  and thus $(t_2 \circ\nobreak t_1)\bigl( r(a) \bigr)$ is stuck in 
  \(r(b)+\ve\). It follows that $r(a)$ is persistently 
  $\ve$-reducible.
  
  In the case that \(r \in R\), one knows from the fact that $\ve$ 
  is an $R$-module that one can take \(\delta=\ve\), and then the 
  above has shown that $\Per_\ve(S)$ is an $R$-module. For more 
  general $r$ this need not be the case, and then one has only shown 
  about $r$ that there for every \(\ve \in \cmplO\) is some \(\delta 
  \in \cmplO\) such that $r$ maps $\Per_\delta(S)$ into 
  $\Per_\ve(S)$. Suppose now that \(a \in \Per(S)\) is arbitrary, and 
  consider the question of whether \(r(a) \in \Per(S)\). Let \(\ve \in 
  \cmplO\) be arbitrary and let \(\delta \in \cmplO\) be such that 
  \(r\bigl( \Per_\delta(S) \bigr) \subseteq \Per_\ve(S)\). Since 
  \(a \in \Per(S) \subseteq \Per_\delta(S)\), it follows that \(r(a) 
  \in \Per_\ve(S)\), and hence \(r(a) \in \bigcap_{\ve \in \cmplO} 
  \Per_\ve(S) =  \Per(S)\) by the arbitrariness of $\ve$.
\end{proof}

\begin{definition}
  Let \(\ve \in \cmplO\) be arbitrary. An \(a \in \cmplM\) is said to 
  be \DefOrd[{uniquely reducible}*]{$\ve$-uniquely reducible} under 
  $T(S)$ if, for any \(t_1,t_2 \in T(S)\) and \(b_1,b_2 \in \Irr(S)\) 
  such that $t_1(a)$ is stuck in $b_1+\ve$ and $t_2(a)$ is stuck in 
  $b_2+\ve$, it holds that \(b_1+\ve = b_2+\ve\). The set of all 
  elements in $\cmplM$ which are both persistently and 
  $\ve$-uniquely reducible under $T(S)$ is denoted 
  $\Red_\ve(S)$\index{Red epsilon S@$\Red_\ve(S)$}.
\end{definition}


\begin{lemma} \label{L: Red_ve modul}
  For every \(\ve \in \cmplO\), the set $\Red_\ve(S)$ is an 
  $R$-module that furthermore is mapped into itself by every 
  reduction.
\end{lemma}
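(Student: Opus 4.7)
The plan is to split $\Red_\ve(S) = \Per(S) \cap U_\ve(S)$, where $U_\ve(S)$ denotes the set of $\ve$-uniquely reducible elements, and then verify three closure statements: closure under subtraction, closure under each $r \in R$, and invariance under every $t \in T(S)$. The $\Per(S)$ side of each is already handled by Lemma~\ref{L:Per modul} (and a one-line argument that $\Per_\ve(S)$ is mapped into itself by any reduction, just by prepending the reduction inside the persistent reducibility quantifier), so the real work lies in checking $\ve$-unique reducibility.

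The easiest case is invariance under reductions: if $a \in \Red_\ve(S)$ and $t \in T(S)$, then any $t_1(t(a))$ stuck in $b_1+\ve$ is just $(t_1 \circ t)(a)$ stuck in $b_1+\ve$, so the coset $b_1+\ve$ is pinned down by $\ve$-uniqueness of $a$. For $r \in R$, I would use $t \circ r = r \circ t$ and $r(\ve) \subseteq \ve$: given $t_1(r(a))$ stuck in $b_1+\ve$, pick via persistent reducibility of $a$ a prolongation $u_1 \circ t_1$ of the form needed so that $(u_1 \circ t_1)(a)$ is stuck in $c+\ve$ for some $c \in \Irr(S)$; $\ve$-uniqueness of $a$ makes $c+\ve$ independent of choices. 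Then $r((u_1\circ t_1)(a)) = u_1(t_1(r(a)))$ lies simultaneously in $r(c)+\ve$ and in $b_1+\ve$, forcing $b_1+\ve = r(c)+\ve$. Since $r(c) \in \Irr(S)$ by Lemma~\ref{L: Irr modul}, this identifies the unique coset.

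The subtlest case, and the main bookkeeping obstacle, is closure under subtraction. Given $a_1,a_2 \in \Red_\ve(S)$ and a witness $t$, $b$ with $t(a_1-a_2)$ stuck in $b+\ve$, I would chain persistent reducibility twice: first pick $t'$ and $b_1 \in \Irr(S)$ with $(t' \circ t)(a_1)$ stuck in $b_1+\ve$, then pick $t''$ and $b_2 \in \Irr(S)$ with $(t'' \circ t' \circ t)(a_2)$ stuck in $b_2+\ve$. The key observation is that post-composing with additional reductions preserves stuckness, so $(t'' \circ t' \circ t)(a_1)$ is still stuck in $b_1+\ve$. Linearity then gives $(t'' \circ t' \circ t)(a_1-a_2) \in (b_1 - b_2)+\ve$, using $\ve - \ve = \ve$, while it also lies in $b+\ve$ because $t'' \circ t'$ is a further reduction of $t(a_1-a_2)$. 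Non-empty intersection of these two $\ve$-cosets forces $b+\ve = (b_1-b_2)+\ve$, and $b_1 - b_2 \in \Irr(S)$ by Lemma~\ref{L: Irr modul}. Finally, $\ve$-uniqueness of $a_1$ and $a_2$ individually shows that $b_1+\ve$ and $b_2+\ve$ do not depend on the choices of $t'$, $t''$, so $b+\ve$ is an invariant of $a_1-a_2$, proving $\ve$-unique reducibility. The only subtlety is to keep track of which compositions preserve stuckness in which direction; once that is organized, the computation is a straightforward chain of intersections inside $\ve$-neighbourhoods.
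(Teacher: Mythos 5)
Your proof is correct and follows essentially the same approach as the paper's: establish persistence via Lemma~\ref{L:Per modul}, then verify $\ve$-unique reducibility case by case, chaining persistent reducibility of $a_1$ and $a_2$ with a single common reduction for the subtraction case, and using $t \circ r = r \circ t$ together with $r(\ve) \subseteq \ve$ for the $R$-module case. The computations are organized slightly differently (you argue by non-empty intersection of $\ve$-cosets where the paper directly estimates $b - (b_1 - b_2) \in \ve$, and you handle the $R$ case via $r(c) \in \Irr(S)$ rather than the paper's bookkeeping with $b_i - r(b_i')$), but the underlying steps and the key lemmas invoked are the same.
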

\begin{proof}
  Let \(\ve \in \cmplO\) be given. Let \(a_1,a_2 \in \Red_\ve(S)\) be 
  arbitrary. 
  It follows from Lemma~\ref{L:Per modul} that \(a_1 - a_2 \in \Per(S)\) 
  and hence there exists \(t_1 \in T(S)\) and \(b \in \Irr(S)\) such that 
  \(t_1(a_1\mbin{-}a_2)\) is stuck in $b+\ve$. Since \(a_1,a_2 \in 
  \Per(S)\) there furthermore exist \(t_2,t_3 \in T(S)\) and \(b_1,b_2 
  \in \Irr(S)\) such that \(t_2\bigl(t_1(a_1)\bigr)\) is stuck in 
  $b_1+\ve$ and \(t_3\bigl( (t_2\mbin{\circ}t_1)(a_2) \bigr)\) is stuck 
  in $b_2+\ve$. This implies that, for \(t = t_3 \circ t_2 \circ 
  t_1\),
  \[
    b - (b_1-b_2) =
    b - t(a_1-a_2) - \bigl( b_1 - t(a_1) \bigr) + 
      \bigr( b_2 - t(a_2) \bigr) \in
    \ve - \ve + \ve = \ve
    \text{.}
  \]
  Starting from some other \(t_1' \in T(S)\) and \(b' \in \Irr(S)\) 
  such that \(t_1'(a_1\mbin{-}a_2)\) is stuck in $b'+\ve$, one 
  similarly gets the existence of \(t_2',t_3' \in T(S)\) and \(b_1',b_2' 
  \in \Irr(S)\) such that \(t_2'\bigl(t_1'(a_1)\bigr)\) is stuck in 
  $b_1'+\ve$ and \(t_3'\bigl( (t_2'\mbin{\circ}t_1')(a_2) \bigr)\) is 
  stuck in $b_2'+\ve$; in precisely the same way one furthermore 
  shows that \(b' - (b_1'-b_2') \in \ve\). The $\ve$-unique 
  reducibility of $a_1$ and $a_2$ does however imply that 
  \(b_1-b_1' \in \ve\) and \(b_2-b_2' \in \ve\). From this 
  follows that \(b-b' \in \ve\) and hence \(a_1 - a_2 \in 
  \Red_\ve(S)\). This has shown that $\Red_\ve(S)$ is a group.
  
  It must also be shown that elements of $R$ map $\Red_\ve(S)$ into 
  itself. Let \(r \in R\), \(\ve \in \cmplO\), and \(a \in \Red_\ve(S)\) 
  be given. Let \(t_1,t_2 \in T(S)\) and \(b_1,b_2 \in \Irr(S)\) be 
  arbitrary such that $t_i\bigl( r(a) \bigr)$ is stuck in $b_i+\ve$ 
  for \(i=1,2\). Since \(a \in \Per(S)\), there exist \(t_1',t_2' \in 
  T(S)\) and \(b_1',b_2' \in \Irr(S)\) such that $t_i'\bigl( t_i(a) 
  \bigr)$ is stuck in $b_i'+\ve$ for \(i=1,2\). It follows that
  \begin{equation*}
    b_i - r(b_i') = 
    \bigl( b_i - (t_i' \circ t_i \circ r)(a) \bigr) +
      r\bigl( (t_i' \circ t_i)(a) - b_i' \bigr) \in
    \ve + r(\ve) = \ve
  \end{equation*}
  for \(i=1,2\). Hence
  \[
    b_1-b_2 =
    b_1 - r(b_1') + r(b_1'-b_2') + r(b_2') - b_2 \in
    \ve + r(\ve) + \ve = \ve
  \]
  and thus $r(a)$ is $\ve$-uniquely reducible by the arbitrariness of 
  $t_1$ and $t_2$. By Lemma~\ref{L:Per modul}, $r(a)$ is also 
  persistently reducible, and so \(r(a) \in \Red_\ve(S)\).
  
  The corresponding property for reductions is more trivial. 
  $\ve$-unique reducibility of $t(a)$ for \(a \in \Red_\ve(S)\) and 
  \(t \in T(S)\) is the claim that any \(t_1,t_2 \in T(S)\) and 
  \(b_1,b_2 \in \Irr(S)\) such that $t_1\bigl( t(a) \bigr)$ is stuck 
  in $b_1+\ve$ and $t_2\bigl( t(a) \bigr)$ is stuck in $b_2+\ve$ 
  satisfy \(b_1+\ve = b_2+\ve\), but that is just a special case of 
  the $\ve$-unique reducibility of $a$.
\end{proof}

\begin{definition} \label{Def:Red(S)}
  An element in $\cmplM$ is said to be \DefOrd{uniquely reducible} 
  if it is $\ve$-uniquely reducible for all \(\ve \in \cmplO\). The 
  set of those element which are both persistently and uniquely 
  reducible under $T(S)$ is denoted \index{Red@$\Red(S)$}$\Red(S)$. 
  Define the map \(\tS\colon \Red(S) \Fpil \Irr(S)\) by letting 
  $\tS(a)$ be the unique element of $\Irr(S)$ with the property that 
  there for every \(\ve \in \cmplO\) exists some \(t \in T(S)\) such 
  that $t(a)$ is stuck in $\tS(a)+\ve$.
\end{definition}

The chain of sets defined in this section is thus that \(\Red(S) 
\subseteq \Red_\ve(S) \subseteq \Per(S) \subseteq \Per_\ve(S) 
\subseteq \cmplM\), with \(\Red(S) = \bigcap_{\ve\in\cmplO} 
\Red_\ve(S)\) and \(\Per(S) = \bigcap_{\ve\in\cmplO} \Per_\ve(S)\). 
The end one wants to see is that all of these are equal, and in 
Lemmas~\ref{L: Persistently red.} and~\ref{L:Red=cmplM} this is taken 
care of by giving sufficient conditions for \(\Per_\ve(S)=\cmplM\) 
and \(\Red_\ve(S)=\cmplM\) respectively. A more immediate goal is 
however to establish that circumstances inside $\Red(S)$ are good.

\begin{lemma} \label{L: Red modul}
  The set $\Red(S)$ is an $R$-module that is mapped into itself by 
  every reduction. 
  The map \(\tS\colon \Red(S) \Fpil \Irr(S)\) is well-defined and a 
  group homomorphism. More generally, every continuous group 
  homomorphism \(r\colon \cmplM \Fpil \cmplM\) which commutes with 
  all reductions maps $\Red(S)$ into itself and commutes with $\tS$. 
  In addition, \(\tS(b) = b\) for all \(b \in \Irr(S)\), \(\ker \tS 
  \subseteq \mc{I}(S)\), and \(\tS\bigl( t(a) \bigr) = \tS(a)\) for 
  all \(a \in \Red(S)\) and \(t \in T(S)\).
\end{lemma}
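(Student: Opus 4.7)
The plan is to attack the six assertions in the order they are stated, riding on Lemmas~\ref{L: Red_ve modul}, \ref{L:PerTillIrr}, \ref{L:Per modul}, and~\ref{L: Irr modul}. Since $\Red(S)=\bigcap_{\ve\in\cmplO}\Red_\ve(S)$, and arbitrary intersections of $R$-modules that are mapped into themselves by all reductions inherit both properties, the first assertion is immediate from Lemma~\ref{L: Red_ve modul}.

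For well-definedness of $\tS$, I would apply Lemma~\ref{L:PerTillIrr} to $a\in\Red(S)\subseteq\Per(S)$ to produce some $b\in\Irr(S)$ with the wanted limit property, and then use $\ve$-unique reducibility of $a$ together with $\bigcap_{n=1}^\infty\cmpl{B_n}=\{0\}$ to force $b$ to be unique. To verify that $\tS$ is a group homomorphism, given $a_1,a_2\in\Red(S)$ and $\ve\in\cmplO$, I would synchronise reductions exactly as in the proof of Lemma~\ref{L: Red_ve modul}: chain together reductions so that a single $t\in T(S)$ sends $a_1$ inside $\tS(a_1)+\ve$, $a_2$ inside $\tS(a_2)+\ve$, and $a_1-a_2$ inside $b+\ve$ for some witness $b\in\Irr(S)$; linearity of $t$ then gives $b-\bigl(\tS(a_1)-\tS(a_2)\bigr)\in\ve$, and shrinking $\ve$ yields $\tS(a_1-a_2)=\tS(a_1)-\tS(a_2)$.

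The more general clause is the technical heart. Given a continuous group endomorphism $r$ on $\cmplM$ commuting with every reduction, the general version of Lemma~\ref{L:Per modul} already delivers $r\bigl(\Red(S)\bigr)\subseteq\Per(S)$, so the real content is $\ve$-unique reducibility of $r(a)$ for every $\ve\in\cmplO$. My approach: choose $\delta\in\cmplO$ with $r(\delta)\subseteq\ve$; given $t_1,t_2\in T(S)$ and $b_1,b_2\in\Irr(S)$ with $t_i\bigl(r(a)\bigr)$ stuck in $b_i+\ve$, invoke $a\in\Red_\delta(S)$ to find $t_i'\in T(S)$ and $b_i^*\in\Irr(S)$ such that $(t_i'\circ t_i)(a)$ is stuck in $b_i^*+\delta$ and, by unique reducibility of $a$, $b_1^*-b_2^*\in\delta$. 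Commuting $r$ past the reductions transports this into $(t_i'\circ t_i)\bigl(r(a)\bigr)\in r(b_i^*)+\ve$; comparing with the "stuck in $b_i+\ve$" hypothesis produces $b_i-r(b_i^*)\in\ve$, and chaining all the inclusions forces $b_1-b_2\in\ve$. The same chain identifies $\tS\bigl(r(a)\bigr)$ with $r\bigl(\tS(a)\bigr)$ by picking $t\in T(S)$ driving $a$ into $\tS(a)+\delta$ and pushing it through~$r$.

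The remaining three assertions are short. For $b\in\Irr(S)$ every reduction fixes $b$, so $b\in\Red(S)$ trivially and the defining property of $\tS(b)$ is satisfied by $b$ itself. For $\ker\tS\subseteq\mc{I}(S)$, the condition $\tS(a)=0$ yields for each $n$ some $t_n\in T(S)$ with $t_n(a)\in\cmpl{B_n}$; since $a-t_n(a)\in\mc{I}(S)$ by Lemma~\ref{L:Slutenhet,I'(S)}, $t_n(a)\to 0$, and $\mc{I}(S)$ is topologically closed, $a\in\mc{I}(S)$ follows. Finally, for $\tS\bigl(t(a)\bigr)=\tS(a)$, I would note that $t(a)\in\Red(S)$ by the first assertion, so given $\ve$ persistent reducibility of $t(a)$ supplies $v\in T(S)$ and $c\in\Irr(S)$ with $v\bigl(t(a)\bigr)$ stuck in $c+\ve$; but $v\circ t\in T(S)$ is then a reduction witnessing a limit for $a$, so $\ve$-unique reducibility of $a$ gives $c+\ve=\tS(a)+\ve$, i.e.\ $v\bigl(t(a)\bigr)$ is also stuck in $\tS(a)+\ve$, and uniqueness of $\tS\bigl(t(a)\bigr)$ identifies it with $\tS(a)$. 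I expect the bookkeeping in the general $r$-clause to be the main obstacle, since one has to juggle two neighbourhoods $\ve$ and $\delta$ and carefully transport the "stuck in" property across the commutation identity $r\circ t=t\circ r$ without confusing the roles of $r(a)$ and $a$.
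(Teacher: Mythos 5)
Your proposal is correct and follows essentially the same strategy as the paper: appeal to Lemma~\ref{L: Red_ve modul} and the fact that $\Red(S)=\bigcap_\ve\Red_\ve(S)$ for the first claim, to Lemma~\ref{L:PerTillIrr} plus $\ve$-unique reducibility for well-definedness, and the synchronisation trick for the homomorphism property. The one place you diverge in bookkeeping is the ``$r$ commutes with $\tS$'' clause: the paper fixes $r\bigl(\tS(a)\bigr)$ as a reference point and shows every stuck-neighbourhood $b+\ve$ of $t_1\bigl(r(a)\bigr)$ has $b-r\bigl(\tS(a)\bigr)\in\ve$, needing only one pair $(b,t_1)$; you instead compare two arbitrary pairs $(b_1,t_1),(b_2,t_2)$ directly by pushing $r$ through the commutation identity and invoking $\delta$-unique reducibility of $a$. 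Both are valid; the paper's version is slightly more economical (one commutation, no $b_i^*$'s), while yours tracks the definition of $\ve$-unique reducibility more literally, which some readers may find clearer. Two small cautions: when you write that commutation yields $(t_i'\circ t_i)\bigl(r(a)\bigr)\in r(b_i^*)+\ve$, you implicitly need that $t_i\bigl(r(a)\bigr)$ being stuck in $b_i+\ve$ lets you compare at the specific point $(t_i'\circ t_i)\bigl(r(a)\bigr)$ as well, which it does, but it is worth stating; and in your homomorphism argument the phrase ``sends $a_1$ inside $\tS(a_1)+\ve$'' should be ``stuck in,'' otherwise the subsequent appeal to unique reducibility has nothing to grip.
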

\begin{proof}
  Since \(\Red(S) = \bigcap_{\ve\in\cmplO} \Red_\ve(S)\) is an 
  intersection of sets which by Lemma~\ref{L: Red_ve modul} are 
  $R$-modules that are mapped into themselves by every reduction, it 
  follows that $\Red(S)$ shares these properties.
  
  Next consider $\tS$. It was shown in Lemma~\ref{L:Per modul} that 
  there for every \(a \in \Per(S)\) exists some \(b \in \Irr(S)\) 
  which is a candidate for being $\tS(a)$, but what about uniqueness? 
  One may observe that if \(a \in \Red(S)\) and \(b_1,b_2 \in \Irr(S)\) 
  are such that there for every \(\ve \in \cmplO\) exist \(t_1,t_2 \in 
  T(S)\) such that $t_1(a)$ is stuck in $b_1+\ve$ and $t_2(a)$ is stuck 
  in $b_2+\ve$, then by $\ve$-unique reducibility \(b_1-b_2 \in \ve\) 
  for every \(\ve \in \cmplO\). Hence \(b_1=b_2\) as 
  \(\bigcap_{\ve\in\cmplO} \ve = \{0\}\) and thus $\tS$ is 
  well-defined. The same argument for $(t_1 \circ\nobreak t)(a)$ and 
  $t_2(a)$ being stuck in $b_1+\ve$ and $b_2+\ve$ respectively 
  demonstrates that \((\tS \circ\nobreak t)(a) = \tS(a)\) for all \(t 
  \in T(S)\) and \(a \in \Red(S)\). Since an irreducible element $b$ 
  is always stuck in every neighbourhood of itself, it follows that 
  \(\tS(b)=b\) for all \(b \in \Irr(S)\).
  
  Now let \(a_1,a_2 \in \Red(S)\) be given and consider the matter of 
  whether \(\tS(a_1 +\nobreak a_2) = \tS(a_1) + \tS(a_2)\). Let \(\ve 
  \in \cmplO\) be arbitrary. By definition of $\tS$ there exists some 
  \(t_1 \in T(S)\) such that $t_1(a_1)$ is stuck in $\tS(a_1) + \ve$. 
  Since \(a_2 \in \Per(S)\) there exists some \(t_2 \in T(S)\) and 
  \(b_2 \in \Irr(S)\) such that $(t_2 \circ\nobreak t_1)(a_2)$ is stuck 
  in $b_2+\ve$, and by unique reducibility of $a_2$ it follows that 
  \(b_2+\ve = \tS(a_2)+\ve\). Hence $(t_2 \circ\nobreak t_1)(a_1 
  +\nobreak a_2)$ is stuck in $\tS(a_1)+\tS(a_2)+\ve$, and then by 
  unique reducibility of $a_1+a_2$ it follows that \(\tS(a_1 
  +\nobreak a_2) + \ve = \tS(a_1)+\tS(a_2)+\ve\). Thus \(\tS(a_1 
  +\nobreak a_2) = \tS(a_1) + \tS(a_2)\) by the arbitrariness of 
  $\ve$.
  
  Next consider the matter of whether a continuous group homomorphism 
  \(r\colon \cmplM \Fpil \cmplM\) that commutes with all reductions 
  will map $\Red(S)$ into itself and commute with $\tS$. Let \(a \in 
  \Red(S)\) be given and \(\ve\in\cmplO\) be arbitrary. Let \(b \in 
  \Irr(S)\) and \(t_1 \in T(S)\) such that \(t_1\bigl( r(a) \bigr)\) 
  is stuck in $b+\ve$ be arbitrary. By the continuity of $r$ there 
  exists some \(\delta\in\cmplO\) such that \(r(\delta) \subseteq 
  \ve\); let \(t_2 \in T(S)\) be such that $t_2\bigl( t_1(a) \bigr)$ 
  is stuck in $\tS(a)+\delta$. Since
  \begin{equation*}
    b - r\bigl( \tS(a) \bigr) =
    b - (t_2 \circ t_1 \circ r)(a) + 
      r\bigl( (t_2 \circ t_1)(a) - \tS(a) \bigr) \in
    \ve + r(\delta) = \ve
  \end{equation*}
  it follows that $r(a)$ is $\ve$-uniquely reducible, and by the 
  arbitrariness of $\ve$ that it is uniquely reducible. It is 
  furthermore persistently reducible by Lemma~\ref{L:Per modul}, and 
  hence an element of $\Red(S)$. Finally \(\tS\bigl( r(a) \bigr) = 
  r\bigl( \tS(a) \bigr)\) since it was in neighbourhoods of 
  $r\bigl( \tS(a) \bigr)$ that images of $r(a)$ could get stuck.
  
  Last, it should be verified that \(\ker\tS \subseteq \mc{I}(S)\). 
  Let \(a \in \ker\tS\) be given. Let \(\ve \in \cmplO\) be 
  arbitrary. There exists some \(t \in T(S)\) such that $t(a)$ 
  is stuck in \(\tS(a)+\ve = \ve\), and hence \(a - t(a) \in a - 
  \ve\) on one hand and \(a - t(a) \in \setOf[\big]{ b-t(b) }{ b \in 
  \cmplM} \subseteq \mc{I}(S)\) on the other, i.e., $a$ is a limit 
  point of $\mc{I}(S)$. Since $\mc{I}(S)$ is topologically closed 
  by definition, \(a \in \mc{I}(S)\).
\end{proof}

A third property that $\Per_\ve(S)$ and $\Red_\ve(S)$ should possess 
is to be topologically closed, but this does \emph{not} happen 
automatically. A minimal example of a situation where $\Per_\ve(S)$ is 
not topologically closed can be constructed on the formal power series 
foundation \(\mc{M} = \Z[\ssa]\), \(\cmplM = \Z [\![ \ssa ]\!]\), and 
\(\cmpl{B_n} = \ssa^n \cmplM\), if one picks as \(T_1(S) = 
\{t_n\}_{n=1}^\infty\) where
\begin{equation*}
  t_n(\ssa^m) = \begin{cases}
    \ssa^{m-1}& \text{if \(m=n\),}\\
    \ssa^m& \text{otherwise}
  \end{cases}
\end{equation*}
for all \(n \geqslant 1\) and \(m \geqslant 0\). The problematic 
trait of this set of reductions is that \((t_1 \circ\nobreak \dotsc 
\circ\nobreak t_n)(\ssa^n) = \ssI \notin \cmpl{B_1}\) for any \(n 
\geqslant 1\), although the initial \(\ssa^n \in \cmpl{B_n}\). This 
means no proper series \(a \in \cmplM \setminus \mc{M}\) is ever 
stuck in any set of the form \(b + \ve\), and consequently no such 
element can ever be persistently reducible. It follows 
that in this case \(\Red(S) = \Per(S) = \mc{M}\), which is rather 
small compared to the closure $\cmplM$.

On a conceptual level, what breaks down in this example is the 
principle that series truncation produces a useful approximation. 
Truncation works for a fixed 
reduction \(t \in T(S)\)\Ldash in order to determine $t(a)$ up to 
a certain number $n$ of terms (i.e., in order to identify $t(a) + 
\cmpl{B_n}$) it is sufficient to determine $t(b)$ where $b$ is 
truncated to some number $m$ of terms; this is the claim that $t$ is 
continuous\Dash but it may fail when the reduction is not fixed. What 
one would want is therefore a bound $m$ on the number of terms that 
must be taken into account that works for all reductions, and as it 
happens the property that such a bound exists has a name that is well 
known in analysis.

\begin{definition} \label{Def:Likgradig-kont.}
  A set $F$ of group homomorphisms \(\cmplM \Fpil \cmplM\) is said to 
  be \DefOrd{equicontinuous} if there for every \(\ve \in \cmplO\) 
  exists some \(\delta \in \cmplO\) such that for all \(f \in F\) it 
  holds that \(f(\delta) \subseteq \ve\).
\end{definition}

Returning to the example, one may observe that \(T_1(S) = 
\{t_n\}_{n=1}^\infty\) actually is equicontinuous (for \(\ve = 
\cmpl{B_n}\) take \(\delta = \cmpl{B_{n+1}}\)), but what matters is 
that $T(S)$ is not: no matter how small an $\ssa^m$ may be, there is 
always a composition of simple reductions that magnifies it to 
something outside every \(\ve\in\cmplO\). On the contrapositive side, 
when $T(S)$ is equicontinuous then all the sets defined in this 
section become topologically closed.

\begin{lemma} \label{L:SlutenhetPer}
  If $T(S)$ is equicontinuous then \(\tS\colon \Red(S) \Fpil \Irr(S)\) 
  is continuous and furthermore the sets $\Per(S)$, $\Per_\ve(S)$, 
  $\Red(S)$, and $\Red_\ve(S)$ are, for all $\ve\in\cmplO$, 
  topologically closed in $\cmplM$.
\end{lemma}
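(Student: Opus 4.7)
The plan is to exploit equicontinuity as a uniform transfer mechanism for the ``stuck'' relation. Given \(\ve \in \cmplO\), equicontinuity of \(T(S)\) furnishes \(\delta \in \cmplO\) with \(t(\delta) \subseteq \ve\) for every \(t \in T(S)\); since \(\id \in T(S)\) this automatically forces \(\delta \subseteq \ve\). The key auxiliary observation is then: if \(a - a' \in \delta\), \(t_1 \in T(S)\), and \(b \in \Irr(S)\) are such that \(t_1(a')\) is stuck in \(b + \ve\), then \(t_1(a)\) is also stuck in \(b + \ve\), because for every \(t_3 \in T(S)\) one can write \((t_3 \circ t_1)(a) = (t_3 \circ t_1)(a') + (t_3 \circ t_1)(a - a')\), where the first term lies in \(b + \ve\) by assumption and the second in \(\ve\) by equicontinuity applied to the composite \(t_3 \circ t_1 \in T(S)\); summing, \((t_3 \circ t_1)(a) \in b + \ve + \ve = b + \ve\) since \(\ve\) is a group.

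With this transfer observation all the closure statements follow quickly. For closedness of \(\Per_\ve(S)\), given a limit point \(a\) and arbitrary \(t_1 \in T(S)\), pick \(a' \in \Per_\ve(S)\) with \(a - a' \in \delta\); persistent \(\ve\)-reducibility of \(a'\) supplies \(t_2 \in T(S)\) and \(b \in \Irr(S)\) with \(t_2(t_1(a'))\) stuck in \(b + \ve\), and the transfer (applied to \(t_2 \circ t_1\) in place of \(t_1\)) promotes this to \(t_2(t_1(a))\) being stuck in \(b + \ve\). Closedness of \(\Red_\ve(S)\) reduces to verifying \(\ve\)-unique reducibility of a limit point \(a\): given competing witnesses \(t_1(a)\) stuck in \(b_1 + \ve\) and \(t_2(a)\) stuck in \(b_2 + \ve\), pick \(a' \in \Red_\ve(S)\) with \(a - a' \in \delta\), invoke the transfer to show that \(t_1(a')\) and \(t_2(a')\) are stuck in the same respective sets, and conclude \(b_1 - b_2 \in \ve\) from \(\ve\)-unique reducibility of \(a'\). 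Then \(\Per(S) = \bigcap_{\ve \in \cmplO} \Per_\ve(S)\) and \(\Red(S) = \bigcap_{\ve \in \cmplO} \Red_\ve(S)\) are closed as intersections of closed sets.

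For continuity of \(\tS\), with the same \(\delta\), let \(a, a' \in \Red(S)\) satisfy \(a - a' \in \delta\). Choose \(t \in T(S)\) so that \(t(a)\) is stuck in \(\tS(a) + \ve\), then \(t' \in T(S)\) so that \((t' \circ t)(a')\) is stuck in \(\tS(a') + \ve\); stuckness of \(t(a)\) automatically yields \((t' \circ t)(a) \in \tS(a) + \ve\). Splitting
\[
  \tS(a) - \tS(a') =
  \bigl( \tS(a) - (t' \circ t)(a) \bigr) + (t' \circ t)(a - a') +
    \bigl( (t' \circ t)(a') - \tS(a') \bigr)
\]
places each summand in \(\ve\) (using equicontinuity for the middle one), so \(\tS(a) - \tS(a') \in \ve\); since \(\delta\) does not depend on \(a\) this actually gives uniform continuity. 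The only nonroutine point\Ldash and the reason the topological hypothesis has bite\Dash is that equicontinuity must be applied to arbitrary \emph{compositions} \(t_3 \circ t_1\) appearing inside the ``stuck'' quantifier, not merely to individual simple reductions; this uniform control over all of \(T(S)\) is precisely what the pathological example just preceding the lemma shows may fail without the hypothesis, and it is what makes all three ingredients (closedness of the $\Per$-sets, of the $\Red$-sets, and continuity of $\tS$) collapse into variations of the same transfer computation.
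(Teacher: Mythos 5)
Your proof is correct and follows essentially the same approach as the paper: equicontinuity supplies a uniform $\delta$ so that stuckness near an irreducible element transfers from a nearby element $a'$ to $a$, and all the closedness claims drop out of this. The one small variation is in the continuity of $\tS$: the paper invokes Lemma~\ref{L: Red modul} to reduce to continuity at $0$ (where $a \in \Red(S) \cap \delta$ immediately gives $\tS(a) \in \ve$), whereas you prove uniform continuity directly by comparing $\tS(a)$ and $\tS(a')$ through a common chain $t' \circ t$; both are valid, and yours explicitly uses (as it must) that $a' \in \Red(S)$ to conclude $(t' \circ t)(a')$ lands stuck in $\tS(a') + \ve$ rather than in some other irreducible's neighbourhood.
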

\begin{proof}
  Since $\tS$ by Lemma~\ref{L: Red modul} is a group homomorphism, it 
  suffices to show that it is continuous at $0$. Let \(\ve\in\cmplO\) 
  be arbitrary. Let \(\delta \in \cmplO\) be such that \(t(\delta) 
  \subseteq \ve\) for all \(t \in T(S)\); in other words every \(a 
  \in \delta\) is stuck in $\ve$. Any \(a \in \Red(S) \cap \delta\) 
  thus satisfies \(\tS(a) \in \ve\), and hence $\tS$ is continuous at 
  $0$.
  
  Now let \(\ve\in\cmplO\) be given, and let \(\delta \in \cmplO\) 
  be such that \(t(\delta) \subseteq \ve\) for all \(t \in T(S)\).
  Let \(a \in \cmplM\) be an arbitrary limit point of $\Per_\ve(S)$, 
  and let \(c \in \Per_\ve(S)\) be such that \(c-a \in \delta\). 
  Let \(t_1 \in T(S)\) be arbitrary. By persistent reducibility of 
  $c$ there exists some \(t_2 \in T(S)\) and \(b \in \Irr(S)\) such 
  that \((t_2 \circ\nobreak t_1)(c)\) is stuck in \(b+\ve\), i.e., 
  \((t_3 \circ\nobreak t_2 \circ\nobreak t_1)(c) \in b+\ve\) for all 
  \(t_3 \in T(S)\). By equicontinuity
  \(
    (t_3 \circ\nobreak t_2 \circ\nobreak t_1)(c) - 
    (t_3 \circ\nobreak t_2 \circ\nobreak t_1)(a) 
    \in \ve
  \)
  and hence \((t_3 \circ\nobreak t_2 \circ\nobreak t_1)(a) \in 
  b+\ve\) as well, which means \((t_2 \circ\nobreak t_1)(a)\) is 
  stuck in \(b+\ve\). By the arbitrariness of $t_1$ it follows 
  that \(a \in \Per_\ve(S)\), and hence that set must be topologically 
  closed. $\Per(S)$ is thus known to be the intersection of a family of 
  topologically closed sets, which implies that it too is closed.
  
  To show that $\Red_\ve(S)$ is topologically closed, let \(a \in 
  \Per(S)\) be a limit point of $\Red_\ve(S)$. Let \(t_1,t_2 \in T(S)\) 
  and \(b_1,b_2 \in \Irr(S)\) be arbitrary such that \(t_i(a)\) is 
  stuck in $b_i+\ve$ for \(i=1,2\). Let \(c \in \Red_\ve(S)\) be 
  such that \(c-a \in \delta\). Since \(t(c) - t(a) \in \ve\) for all 
  \(t \in T(S)\), it follows that $t_i(c)$ is also stuck in $b_i+\ve$ 
  for \(i=1,2\), and hence \(b_1-b_2 \in \ve\) by the $\ve$-unique 
  reducibility of $c$, whence $a$ is $\ve$-uniquely reducible. All 
  limit points of $\Red_\ve(S)$ are in $\Per(S)$ and thus 
  $\Red_\ve(S)$ is topologically closed. $\Red(S)$ is similarly now 
  known to be the intersection of a family of topologically closed 
  sets, which implies that it is closed as well.
\end{proof}

From an analytical perspective, the effect of equicontinuity of 
$T(S)$ is rather drastic\Ldash $\Irr(S)$ becomes sticky, in the sense 
that any \(a \in \Irr(S) + \delta\) is stuck in the corresponding 
$\Irr(S) + \ve$ (and even in $a+\ve$)\Dash so in view of the 
ruggedness of the proof of Lemma~\ref{L:SlutenhetPer}, one might 
wonder whether equicontinuity really is The Right Condition for 
reaching the end that $\Red(S)$ is closed, but the jury is still out 
on that one. Looking at the proofs certainly suggests that it should 
be possible to make do with something weaker, but concrete 
applications rather tend to end up satisfying the stronger condition 
that \(t(\ve) \subseteq \ve\) for all \(\ve \in \cmplO\) and \(t \in 
T(S)\). Right now, the best reason for using equicontinuity is 
probably that it is well established and fully general; many other 
conditions which at first may seem to give finer control or be easier 
to verify are only defined with respect to some additional structure, 
such as a metric.

The next lemma is the first step towards the Diamond Lemma. In 
natural language, the first of the two equivalent claims is that all 
elements of $\cmplM$ are uniquely reducible, whereas the second claim 
is that every element has a unique normal form.

\begin{lemma} \label{L:(b)<=>(c)}
  If $T(S)$ is equicontinuous and all elements of $\cmplM$ are 
  persistently reducible then the following claims are equivalent:
  \begin{itemize}
    \item \(\Red(S) = \cmplM\).
    \item \(\cmplM = \Irr(S) \oplus \mc{I}(S)\).
  \end{itemize}
\end{lemma}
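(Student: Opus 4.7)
The plan is to prove each implication separately, relying on the map $\tS$ for the forward direction and on the normal-form statement of Lemma~\ref{L:PerTillIrr} for the backward one.

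First I would handle the forward implication. Assuming $\Red(S)=\cmplM$, the map $\tS\colon \cmplM \Fpil \Irr(S)$ is defined everywhere, and by Lemmas~\ref{L: Red modul} and~\ref{L:SlutenhetPer} it is a continuous group homomorphism that fixes $\Irr(S)$ pointwise, satisfies $\tS\circ t = \tS$ for every $t \in T(S)$, and has $\ker\tS \subseteq \mc{I}(S)$. The first step is to upgrade this last inclusion to an equality: since $\tS\bigl(a - t(a)\bigr) = \tS(a) - \tS\bigl(t(a)\bigr) = 0$ for every $a \in \cmplM$ and every $t \in T_1(S)$, continuity of $\tS$ together with the characterization~\eqref{Eq:AltSpecI(S)} of $\mc{I}(S)$ gives $\mc{I}(S) \subseteq \ker\tS$. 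The decomposition $a = \tS(a) + \bigl(a - \tS(a)\bigr)$ then writes any $a \in \cmplM$ as a sum of an element of $\Irr(S)$ and an element of $\ker\tS = \mc{I}(S)$, while any element of $\Irr(S) \cap \mc{I}(S)$ is both fixed by $\tS$ and sent to $0$, hence is $0$.

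For the converse I would assume $\cmplM = \Irr(S) \oplus \mc{I}(S)$. Since $\Per(S)=\cmplM$ by hypothesis, it suffices to verify $\ve$-unique reducibility for every $a \in \cmplM$ and every $\ve \in \cmplO$. Decompose $a = \alpha + \beta$ with $\alpha \in \Irr(S)$ and $\beta \in \mc{I}(S)$. The crucial observation is that every persistently reducible element of $\mc{I}(S)$ has $0$ as its normal form in the sense of Lemma~\ref{L:PerTillIrr}: the lemma supplies some $b \in \Irr(S)$ with $\beta - b \in \mc{I}(S)$, whence $b \in \Irr(S) \cap \mc{I}(S) = \{0\}$. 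Since reductions preserve $\mc{I}(S)$ by Lemma~\ref{L:Slutenhet,I'(S)}, the same argument applies to $t_1(\beta)$, to $(u_1 \circ t_2)(\beta)$, and to any other reduction of $\beta$.

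Now suppose $t_i(a)$ is stuck in $b_i + \ve$ for $i=1,2$. I would choose $u_1 \in T(S)$ so that $u_1\bigl(t_1(\beta)\bigr)$ is stuck in $\ve$, then choose $u_2 \in T(S)$ so that $u_2\bigl(u_1(t_2(\beta))\bigr)$ is stuck in $\ve$, and set $u = u_2 \circ u_1$. Since the stuck-in-$\ve$ property propagates under further reduction, both $u\bigl(t_1(\beta)\bigr)$ and $u\bigl(t_2(\beta)\bigr)$ lie in $\ve$. Because $\alpha$ is irreducible, $u\bigl(t_i(a)\bigr) = \alpha + u\bigl(t_i(\beta)\bigr)$, and this lies in $b_i + \ve$ by the stuck hypothesis, so $b_i - \alpha \in \ve$, whence $b_1 - b_2 \in \ve$. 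The main obstacle is precisely the construction of this single reduction $u$ that simultaneously tames $t_1(\beta)$ and $t_2(\beta)$; once it is available, the direct-sum hypothesis enters only through the identification of the normal form of an element of $\mc{I}(S)$ as zero, and the rest is bookkeeping.
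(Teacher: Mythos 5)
Your forward implication is essentially the paper's: both use $\tS$ as the projection witnessing $\cmplM = \Irr(S) \oplus \ker\tS$ and then establish $\ker\tS = \mc{I}(S)$ by closing up $\{a - t(a)\}$ under continuity. The backward implication, however, takes a genuinely different route. The paper applies Lemma~\ref{L:PerTillIrr} directly to the two reducts $t_1(a)$ and $t_2(a)$, produces their Per-normal-forms $c_1, c_2 \in \Irr(S)$, notes $c_1 - c_2 \in \Irr(S) \cap \mc{I}(S) = \{0\}$, and then a limit argument (using that $b_i + \ve$ is closed) yields $c_1 - b_i \in \ve$. You instead decompose $a = \alpha + \beta$ along the direct sum, observe that $\beta$ and every reduct of $\beta$ lies in $\mc{I}(S)$ and hence has Per-normal-form exactly $0$ (since Lemma~\ref{L:PerTillIrr} pins it to $\Irr(S) \cap \mc{I}(S) = \{0\}$), build a single reduction $u$ crushing both $u(t_1(\beta))$ and $u(t_2(\beta))$ into $\ve$, and then use $u\bigl(t_i(a)\bigr) = \alpha + u\bigl(t_i(\beta)\bigr)$ together with the stuck hypothesis to pin each $b_i$ near $\alpha$. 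Both arguments lean on the same two pillars (Lemma~\ref{L:PerTillIrr} and trivial intersection), but yours makes the role of the direct sum more structural: it identifies $\alpha$ from the start as the unique candidate normal form and reduces the problem to showing the ideal component is crushable, whereas the paper's argument keeps $a$ whole and lets the two reduction branches converge to a common $c$. Your version is slightly more explicit about *why* the normal form is what it is; the paper's avoids introducing the decomposition but pays with a $\delta$-parametrised limiting step. Both are correct.
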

\begin{proof}
  First assume \(\Red(S) = \cmplM\). By Lemma~\ref{L: Red modul}, 
  $\tS$ is a projection of $\cmplM$ onto $\Irr(S)$, hence \(\cmplM = 
  \Irr(S) \oplus \ker\tS\). By the same lemma, \(\ker\tS \subseteq 
  \mc{I}(S)\). Hence \(\cmplM = \Irr(S) \oplus \mc{I}(S)\) will 
  follow if it can be shown that \(\mc{I}(S) \subseteq \ker\tS\). For 
  any \(t \in T(S)\) and \(a \in \cmplM\) it follows from 
  Lemma~\ref{L: Red modul} that \(\tS\bigl(a -\nobreak t(a) \bigr) = 
  0\) and hence \(\setOf[\big]{ a - t(a) }{ a \in \cmplM } \subseteq 
  \ker\tS\) for any \(t \in T(S)\). Since $\ker\tS$ is closed under 
  addition,
  \[
    \ker\tS \supseteq 
    \sum_{t \in T(S)} \setOf[\big]{ a - t(a) }{ a \in \cmplM }
    \text{,}
  \]
  and since it by Lemma~\ref{L:SlutenhetPer} also is topologically 
  closed, the wanted \(\mc{I}(S) \subseteq \ker\tS\) has been 
  established. This has proved one half of the equivalence.
  
  For the other half, assume \(\cmplM = \Irr(S) 
  \oplus \mc{I}(S)\). Let \(a \in \cmplM\) and \(\ve \in \cmplO\) be 
  arbitrary; it will be shown that $a$ is $\ve$-uniquely reducible. 
  Let \(t_1,t_2 \in T(S)\) and \(b_1,b_2 \in \Irr(S)\) such that 
  $t_i(a)$ is stuck in $b_i+\ve$ for \(i=1,2\) be arbitrary. By 
  Lemma~\ref{L:PerTillIrr} and the persistent reducibility of $t_1(a)$ 
  and $t_2(a)$, there exist \(c_1,c_2 \in \Irr(S)\) such that there 
  for every \(\delta \in \cmplO\) exist \(u_1,u_2 \in T(S)\) such 
  that \(u_i\bigl( t_i(a) \bigr)\) is stuck in $c_i+\delta$ for 
  \(i=1,2\). Clearly \(u_1\bigl( t_1(a) \bigr) - u_2\bigl( t_2(a) 
  \bigr) \in \mc{I}(S)\), and since such $u_1$ and $u_2$ exist for 
  all \(\delta \in \cmplO\) it follows that the limit \(c_1-c_2 \in 
  \mc{I}(S)\) as well, but since also \(c_1-c_2 \in \Irr(S)\) and 
  \(\mc{I}(S) \cap \Irr(S) = \{0\}\) it just so happens that 
  \(c_1=c_2\). For either \(i=1,2\) this common value is a limit 
  point of a sequence of elements that are stuck in the topologically 
  closed set \(b_i+\ve\), and hence \(c_1 - b_i \in \ve\). Therefore 
  \(b_1-b_2 \in \ve\) and the two neighbourhoods are the same. By the 
  arbitrariness of $b_1$ and $b_2$, the element $a$ is $\ve$-uniquely 
  reducible.
\end{proof}

The final lemma in this section explores a slightly different aspect of 
the machinery: how the sets change if some simple reductions are 
removed. It sometimes happens when one is preparing a presentation of 
an argument involving the diamond lemma that some of the reductions 
turn out to be redundant, but not all ways of verifying this redundancy 
are as easy as they may seem. The key condition for establishing 
equivalence of $T_1(S)$ to $T_1(S')$\Ldash i.e., that all things 
constructed from the set of simple reductions are the same when the 
set of simple reductions is $T_1(S')$ as when it is $T_1(S)$\Dash is 
that their respective sets $\Irr(S)$ and $\Irr(S')$ of irreducible 
elements are the same; see Theorem~\ref{S:Konstr.Irr(S)} for a method 
of characterising the irreducible elements.

\begin{lemma} \label{L:Utgl.T(S)}
  If \(T_1(S') \subseteq T_1(S)\) are such that \(\Irr(S') = 
  \Irr(S)\), \(\Per(S') = \cmplM\), \(\Red(S) = \cmplM\), and $T(S)$ 
  is equicontinuous, then \(\Red(S') = \cmplM\) as well and \(t^{S'} 
  = \tS\).
\end{lemma}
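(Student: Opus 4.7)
The plan is to leverage the direct sum decomposition $\cmplM = \Irr(S) \oplus \mc{I}(S)$ supplied by Lemma~\ref{L:(b)<=>(c)} (available because $\Red(S) = \cmplM$ and $T(S)$ is equicontinuous) in order to identify any $T(S')$-limit with the already-known $T(S)$-normal form $\tS(a)$. The inclusion $T_1(S') \subseteq T_1(S)$ gives $T(S') \subseteq T(S)$, and hence $\mc{I}(S') \subseteq \mc{I}(S)$, so any irreducible approximant produced on the $S'$ side is automatically comparable with $\tS$.

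First I would show that for every $a \in \cmplM$, the element $c_a \in \Irr(S')$ provided by Lemma~\ref{L:PerTillIrr} applied under $T(S')$ (available thanks to $\Per(S') = \cmplM$) equals $\tS(a)$. That lemma yields $a - c_a \in \mc{I}(S') \subseteq \mc{I}(S)$, and since $c_a \in \Irr(S') = \Irr(S)$, uniqueness of the decomposition $\cmplM = \Irr(S) \oplus \mc{I}(S)$ forces $c_a = \tS(a)$. Consequently, for each $\ve \in \cmplO$ there exists some $t \in T(S')$ with $t(a)$ stuck under $T(S')$ in $\tS(a) + \ve$; once $\ve$-unique reducibility is established, this already identifies $t^{S'}(a)$ as $\tS(a)$.

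Next I would verify $\ve$-unique reducibility under $T(S')$ for arbitrary $a \in \cmplM$ and $\ve \in \cmplO$. Suppose $t_1, t_2 \in T(S')$ and $b_1, b_2 \in \Irr(S') = \Irr(S)$ are such that $t_i(a)$ is stuck in $b_i + \ve$ under $T(S')$. Applying the first step to each $t_i(a)$, and using $\tS(t_i(a)) = \tS(a)$ from Lemma~\ref{L: Red modul} (since $t_i \in T(S') \subseteq T(S)$ and $a \in \Red(S) = \cmplM$), we obtain for any $\delta \subseteq \ve$ some $u_i \in T(S')$ with $u_i(t_i(a)) \in \tS(a) + \delta$. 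Simultaneously, the stuck-in hypothesis applied to $u_i \in T(S')$ gives $u_i(t_i(a)) \in b_i + \ve$; hence $\tS(a) - b_i \in \ve + \delta \subseteq \ve$, which yields $b_1 - b_2 \in \ve$, i.e.\ $b_1 + \ve = b_2 + \ve$.

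Combining this with the given $\Per(S') = \cmplM$ gives $\Red(S') = \cmplM$, and the identification in the first step yields $t^{S'} = \tS$. I expect the main conceptual hurdle to be the first step: one has only the \emph{existence} of a $T(S')$-limit (with no a~priori connection to $\tS$), and the cleanest way to pin it down is the direct-sum characterisation coming from $T(S)$, made available by Lemma~\ref{L:(b)<=>(c)}. The rest is then a two-neighbourhood comparison argument that exploits $\ve + \delta = \ve$ whenever $\delta \subseteq \ve$.
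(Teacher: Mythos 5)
Your proof is correct, but it takes a genuinely different route from the paper's. The paper proves $\ve$-unique reducibility under $T(S')$ directly by a bridging argument: it uses equicontinuity of $T(S)$ to choose $\delta$, applies persistent $\delta$-reducibility of $a$ under $T(S')$ to land near $b_i' \in \Irr(S') = \Irr(S)$, shows that these approximants are stuck under the \emph{larger} $T(S)$ in $b_i' + \ve$, and then invokes the known $\ve$-unique reducibility under $T(S)$ to conclude $b_1' - b_2' \in \ve$ (and thence $b_1 - b_2 \in \ve$). Your proof instead first pins down the $T(S')$-side normal form via the direct-sum decomposition $\cmplM = \Irr(S) \oplus \mc{I}(S)$ coming from Lemma~\ref{L:(b)<=>(c)}: since $\mc{I}(S') \subseteq \mc{I}(S)$ and $\Irr(S') = \Irr(S)$, the candidate irreducible element from Lemma~\ref{L:PerTillIrr} is forced to be the $\Irr(S)$-component of $a$, which is $\tS(a)$. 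The two-neighbourhood comparison then delivers $\ve$-unique reducibility under $T(S')$ by comparing each $b_i$ with $\tS(a)$ rather than (as in the paper) with a separate pair $b_1', b_2'$. Your route is more conceptual and immediately yields $t^{S'} = \tS$ as a byproduct of the identification step, while the paper's is more elementary and does not explicitly invoke the direct-sum decomposition. Both rely on equicontinuity of $T(S)$: yours via Lemma~\ref{L:(b)<=>(c)}, the paper's via the bridging of stuck-ness from $T(S')$ to $T(S)$. One cosmetic caveat: Lemma~\ref{L:PerTillIrr} only asserts \emph{existence} of the limit point $c_a$, not uniqueness, so phrase your first step as showing that \emph{any} such $c_a$ equals $\tS(a)$ (which is exactly what the direct-sum argument does).
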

\begin{proof}
  Let \(a \in \cmplM\) and \(\ve \in \cmplO\) be arbitrary. What 
  needs to be shown is that $a$ is $\ve$-uniquely reducible under 
  $T(S')$. Therefore let \(b_1,b_2 \in \Irr(S')\) and \(t_1,t_2 \in 
  T(S')\) be such that $t_1(a)$ is stuck in $b_1+\ve$ under $T(S')$ 
  and $t_2(a)$ is stuck in $b_2+\ve$ under $T(S')$. Let \(\delta \in 
  \cmplO\) be such that \(t(\delta) \subseteq \ve\) for all \(t \in 
  T(S)\). By persistent $\delta$-reducibility of $a$ under $T(S')$ 
  there exist \(t_1',t_2' \in T(S')\) and \(b_1',b_2' \in \Irr(S')\) 
  such that \(t_1'\bigl( t_1(a) \bigr) \in b_1'+\delta\) and 
  \(t_2'\bigl( t_2(a) \bigr) \in b_2'+\delta\). By equicontinuity this 
  implies that $t_1'\bigl( t_1(a) \bigr)$ and $t_2'\bigl( t_2(a) 
  \bigr)$ are stuck under $T(S)$ in $b_1'+\ve$ and $b_2'+\ve$ 
  respectively. By $\ve$-unique reducibility under $T(S)$ of $a$ this 
  implies that \(b_1'-b_2' \in \ve\). Furthermore \(b_i - b_i' = 
  b_i - t_i'\bigl( t_i(a) \bigr) + t_i'\bigl( t_i(a) \bigr) - b_i' 
  \in \ve + \delta \subseteq \ve\) for \(i=1,2\) and thus \(b_1-b_2 
  \in \ve\) as well. Hence $a$ is indeed $\ve$-uniquely reducible 
  under $T(S')$, and it follows that \(\Red(S') = \cmplM\).
  
  Now let \(a \in \cmplM\) be given and consider the matter of 
  whether \(t^{S'}(a) = \tS(a)\). Let \(\ve \in \cmplO\) be arbitrary 
  and let \(\delta \in \cmplO\) be such that \(t(\delta) \subseteq 
  \ve\) for all \(t \in T(S)\). There exists some \(t \in T(S') 
  \subseteq T(S)\) such that \(t(a) \in t^{S'}(a) + \delta\), hence 
  $t(a)$ is stuck in $t^{S'}(a) + \ve$ under $T(S)$, and consequently 
  \(\tS(a) \in t^{S'}(a) + \ve\). It follows from the arbitrariness 
  of $\ve$ that \(\tS(a) = t^{S'}(a)\).
\end{proof}

\section{The core theorem}
\label{Sec:Monomordning}

At the heart of every diamond lemma lies an induction, and the role 
that $\mc{Y}$ will play is as the domain of that induction, to which 
end it is necessary to order $\mc{Y}$. 
For many novices, the need to systematically order the 
monomials is by far the most unfamiliar aspect of working with the 
diamond lemma (and\slash or Gr\"obner basis theory), and the problem 
of constructing a suitable order can be quite baffling. While this is 
not the place to give advice on how to attack that problem\Ldash 
see instead~\cite{Avhandlingen} for tips on this, in particular for 
issues regarding how the order interacts with the topology\Dash it 
will still become necessary to reason about orders and their 
relations to other structures. For that end, it helps to introduce a 
bit of notation for order relations, which will facilitate discussions 
that simultaneously involve several orders. The aim is to allow the 
order to be an ordinary mathematical letter $P$ (or more generally 
an expression), rather than a fancy symbol like $\succ$.

The basic claim one can make (with respect to an order relation $P$) 
about a pair $(\mu,\nu)$ of elements is that they are related by this 
relation. The usual formal interpretation of this is that \((\mu,\nu) 
\in P\), but notationally it is more convenient to write something 
like `\(\mu \leqslant \nu \pin{P}\)', to clarify that this is a 
non-strict inequality and that $\mu$ is on the ``small side''. Using 
that one then defines
\begin{alignat*}{3}
  \mu \geqslant{}& \nu \pin{P} &\quad&\Epil& \quad
    \nu \leqslant{}& \mu \pin{P} \text{,}\\
  \mu <{}& \nu \pin{P}  &&\Epil& 
    \mu \leqslant{}& \nu \pin{P} \text{ and } 
    \nu \not\leqslant \mu \pin{P} \text{,}\\
  \mu >{}& \nu \pin{P} &&\Epil& 
    \nu <{}& \mu \pin{P} \text{,}\\
  \mu \sim{}& \nu \pin{P}  &&\Epil& 
    \mu \leqslant{}& \nu \pin{P} \text{ and } 
    \nu \leqslant \mu \pin{P} \text{.}
\end{alignat*}
If $P$ is a partial order then \(\mu \sim \nu \pin{P}\) is the same 
thing as \(\mu=\nu\), but if $P$ is a more general quasi-order then 
this need not be the case. It is often convenient to construct 
a complex partial order by a sequence of refinements of some simpler 
quasi-order.
Not every partial order will usefully support inductions 
however, so an additional property is needed.

\begin{definition} \label{Def:TDCC}
  A binary relation $P$ on $\mc{Y}$ is said to satisfy the 
  \DefOrd{topological descending chain condition} (or to be 
  \DefOrd{TDCC} for short) if \(\lim_{n\Lpil\infty} \mu_n = 0\) for 
  every infinite sequence \(\{\mu_n\}_{n=0}^\infty \subseteq \mc{Y}\) 
  such that \(\mu_n > \mu_{n+1} \pin{P}\) for all \(n\in\N\).
\end{definition}

An informal phrasing of the ordinary descending chain condition~(DCC) 
is `there is no infinite strictly descending chain', although it is 
important to observe that `descending' here implicitly requires that 
the chain elements are indexed. An index-free formulation of the DCC 
is `every nonempty subset has a minimal element', and when this 
definition is given one usually speaks about the order being 
\emDefOrd{well-founded} (which is thus a synonym of DCC). Note that 
asking for a \emph{minimal} element is weaker than asking for a 
\emph{minimum} element; the latter would give rise to a well-order, 
which in particular is always a total order.

The next lemma gives the precise form of an induction over 
$\mc{Y}$; condition~\parenthetic{i} provides the induction base, 
whereas the verification of condition~\parenthetic{ii} is the 
induction step.

\begin{lemma} \label{L: Induktion i Y}
  Let $P$ be a partial order on $\mc{Y}$ that is TDCC. If 
  \(Z \subseteq \mc{Y}\) is such that:
  \begin{enumerate}
    \item[\parenthetic{i}]
      there exists an \(\ve\in\cmplO\) such that \(\ve \cap \mc{Y} 
      \subseteq Z\), and
    \item[\parenthetic{ii}]
      if \(\mu \in \mc{Y}\) is such that 
      \(\setOf{ \nu \in \mc{Y} }{ \nu < \mu \pin{P} } \subseteq Z\)
      then \(\mu \in Z\);
  \end{enumerate}
  then \(Z=\mc{Y}\).
\end{lemma}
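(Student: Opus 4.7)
The plan is to argue by contradiction: suppose $Z \neq \mc{Y}$, so the set $\mc{Y} \setminus Z$ is nonempty, and derive a contradiction by constructing an infinite strictly descending chain in it that, by TDCC, must eventually land inside $\ve$ and therefore inside $Z$.

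First I would read condition~(ii) contrapositively: if $\mu \in \mc{Y} \setminus Z$, then the set $\setOf{\nu \in \mc{Y}}{\nu < \mu \pin{P}}$ cannot be entirely contained in $Z$, so there exists some $\nu \in \mc{Y} \setminus Z$ with $\nu < \mu \pin{P}$. Thus from any starting point $\mu_0 \in \mc{Y} \setminus Z$ (which exists by the assumption $Z \neq \mc{Y}$) we can, by dependent choice, recursively pick $\mu_{n+1} \in \mc{Y} \setminus Z$ with $\mu_{n+1} < \mu_n \pin{P}$, obtaining an infinite sequence $\{\mu_n\}_{n=0}^\infty \subseteq \mc{Y} \setminus Z$ with $\mu_n > \mu_{n+1} \pin{P}$ for all $n$.

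Now I would apply the TDCC hypothesis on $P$ to conclude that $\mu_n \to 0$ as $n \to \infty$. Let $\ve \in \cmplO$ be the neighbourhood supplied by condition~(i); since $\ve$ is a neighbourhood of $0$ in $\cmplM$ (and hence in $\mc{M}$), convergence $\mu_n \to 0$ forces $\mu_n \in \ve$ for all sufficiently large $n$. For such $n$ we have $\mu_n \in \ve \cap \mc{Y} \subseteq Z$, contradicting $\mu_n \in \mc{Y} \setminus Z$. This contradiction proves $Z = \mc{Y}$.

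The argument is essentially routine; the only point that requires a moment's care is noting that the failure of the premise of~(ii) is exactly what lets us extend a descending chain, and that TDCC then meshes with the topological condition~(i) to close things off. The main conceptual feature to highlight is that TDCC is weaker than the ordinary DCC precisely because it allows infinite descending chains, but only ones that converge to $0$; condition~(i) is what absorbs such a tail into $Z$, so the two hypotheses together play the role that plain well-foundedness would play in an untopologized setting.
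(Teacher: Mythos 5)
Your proof is correct and follows essentially the same route as the paper: build an infinite $P$-descending chain in $\mc{Y}\setminus Z$ using the contrapositive of (ii), apply TDCC to get convergence to $0$, and let (i) absorb the tail to produce the contradiction. The only cosmetic difference is that the paper frames it as a contrapositive (a proper $Z$ satisfying (ii) must fail (i)) while you phrase it as a direct contradiction, which is logically equivalent.
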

\begin{proof}
  Let $Z$ be an arbitrary proper subset of $\mc{Y}$ which satisfies 
  \parenthetic{ii}; it will be shown that $Z$ does not satisfy 
  \parenthetic{i}. To see this, let \(\mu_0 \in \mc{Y} \setminus Z\). 
  For any \(\mu_n \in \mc{Y} \setminus Z\) there must exist a 
  \(\mu_{n+1} \in \mc{Y} \setminus Z\) such that \(\mu_{n+1} < 
  \mu_n \pin{P}\), because if that was not the case then all 
  \(\nu \in \mc{Y}\) which satisfy \(\nu < \mu_n \pin{P}\) 
  would also satisfy 
  \(\nu \in Z\), and hence by \parenthetic{ii} \(\mu_n \in Z\), 
  which would be a contradiction. Thus there exists an infinite 
  $P$-descending sequence \(\{\mu_n\}_{n=1}^\infty \subseteq 
  \mc{Y} \setminus Z\), and hence by TDCC \(\lim_{n \Lpil \infty} \mu_n 
  = 0\). In other words there exists for each \(\ve\in\cmplO\) an 
  integer $N$ such that \(\mu_n \in \ve\) for all \(n \geqslant N\), 
  and thus \(\ve \cap \mc{Y} \ni \mu_N \notin Z\). Hence $Z$ does 
  not satisfy \parenthetic{i}.
\end{proof}

For such inductions to be useful in the present context, it is 
however necessary that the reductions used comply with the order.

\begin{definition} \label{Def:Kompatibel}
  If $P$ is a binary relation on $\mc{Y}$ and \(\mu \in \mc{Y}\) then 
  $\DSM(\mu,P)$ \index{DSM mu P@$\DSM(\mu,P)$} denotes the least 
  topologically closed $R$-module of $\cmplM$ which contains all 
  \(\nu \in \mc{Y}\) such that \(\nu < \mu \pin{P}\), i.e.,
  \begin{equation} \label{Eq:Def:Kompatibel}
    \DSM(\mu,P) = \Cspan\bigl( 
      \setOf{ \nu \in \mc{Y} }{ \nu < \mu \pin{P} }
    \bigr) \text{.}
  \end{equation}
  The set $\DSM(\mu,P)$ is called the 
  \DefOrd[*{down-set!module}]{down-set module} of $\mu$ with respect 
  to $P$.
  
  A reduction \(t \in T(S)\) is said to be 
  \DefOrd[*{compatible!reduction}]{compatible} with the relation 
  $P$ if \(t(\mu) \in \{\mu\} \cup \DSM(\mu,P)\) for all \(\mu \in 
  \mc{Y}\). A set of reductions is said to be compatible with $P$ if 
  all its elements are compatible with $P$.
  
  An element \(g \in \cmplM\) is said to be 
  \DefOrd[*{monic}]{$P$-monic} if there exists some \(\mu \in 
  \mc{Y}\) such that \(g - \mu \in \DSM(\mu,P)\). A subset of 
  $\cmplM$ is said to be $P$-monic if all elements in it are 
  $P$-monic.
\end{definition}

\emph{Down-set}\index{down-set} is a term from poset theory, but the 
standard name there for this concept is \emph{ideal} rather than 
down-set. That terminology has however been avoided so that no 
confusion with the ring-theoretic ideal concept will arise.

That an arbitrary map \(\cmplM \Fpil \cmplM\) should be compatible 
with some order relation $P$ is a rather strong condition, but for 
reductions it is often something which comes naturally. 
For a simple reduction $t_{\mu \mapsto a}$ satisfying 
\eqref{Eq:Reduktion-monom-def}, it boils down to the condition that 
\(a \in \DSM(\mu,P)\), and when that is the case then $\mu-a$ is 
$P$-monic. Conversely, if \eqref{Eq:Reduktion-monom-def} for any 
\((\mu,a) \in \mc{Y} \times \cmplM\) defines a continuous 
homomorphism $t_{\mu \mapsto a}$ that commutes with elements of $R$, 
then every $P$-monic \(g \in \cmplM\) gives rise to some map $t_{\mu 
\mapsto \mu-g}$ that is compatible with $P$. Gr\"obner basis theory 
preaches an extreme form of this, where the normal state of things is 
that the leading monomial $\mu$ is split off from a 
basis element $g$ every time something is to be reduced modulo $g$ 
(although it is recognised that caching $\mu$ with $g$ can improve 
performance). This approach is facilitated by the fact that Gr\"obner 
basis theory normally only considers total orders, as that guarantees 
that there always is a unique leading monomial to split off.

In more general cases, compatibility is often something one arrives 
at indirectly. When preparing to apply the diamond lemma, one often 
starts with some reductions $t_{\mu \mapsto a}$ that one wants to 
use, and faces the task of constructing some $P$ with which these 
would be compatible. (This $P$ will also have to satisfy some other 
conditions, in particular the TDCC and in most cases some variant of 
\eqref{Eq:OrdnFramflyttKompatibel}, which constrains the 
possibilities quite a lot.) If the given reductions do not generate 
all the wanted congruences, then the next step is to find a set of 
$P$-monic generators which cover the rest, and then make additional 
simple reductions from these. At each step one's choices are 
restricted by the need to ensure compatibility further on, but when 
the set-up is complete it is usually a trivial matter to verify the 
compatibility of simple reductions. The next lemma then extends this 
result to general reductions.

\begin{lemma} \label{L:Kompabilitet}
  If $P$ is a partial order on $\mc{Y}$ with which \(t \in T(S)\) is 
  compatible, then for all \(\mu \in \mc{Y}\) and \(b \in \DSM(\mu,P)\) 
  it holds that \(t(b) \in \DSM(\mu,P)\). If $P$ is a partial order on 
  $\mc{Y}$ with which \(t_1,t_2 \in T(S)\) are compatible, then $t_2 
  \circ t_1$ is compatible with $P$ as well. If $T_1(S)$ is compatible 
  with a partial order $P$ on $\mc{Y}$ then the whole of $T(S)$ is 
  compatible with $P$.
\end{lemma}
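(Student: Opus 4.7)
The plan is to prove the three claims in order, noting that each feeds into the next.

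For the first claim, fix $\mu \in \mc{Y}$ and a reduction $t$ compatible with $P$. I would first show that $t$ maps the set $Z_\mu := \setOf{\nu \in \mc{Y}}{\nu < \mu \pin{P}}$ into $\DSM(\mu,P)$. Indeed, for any such $\nu$ compatibility gives $t(\nu) \in \{\nu\} \cup \DSM(\nu,P)$; since $\nu < \mu \pin{P}$ clearly $\nu \in \DSM(\mu,P)$, and transitivity of $P$ yields $\DSM(\nu,P) \subseteq \DSM(\mu,P)$ because every $\lambda < \nu \pin{P}$ also satisfies $\lambda < \mu \pin{P}$. Hence $t(Z_\mu) \subseteq \DSM(\mu,P)$. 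Since $t$ is a continuous group homomorphism commuting with all elements of $R$, the preimage $t^{-1}\bigl(\DSM(\mu,P)\bigr)$ is a topologically closed $R$-module (closed under the group operation because $t$ is a homomorphism, under $R$ because $t$ commutes with $R$ and $\DSM(\mu,P)$ is an $R$-module, and topologically closed because $t$ is continuous and $\DSM(\mu,P)$ is closed). As this preimage contains $Z_\mu$, it must contain the least such object, namely $\DSM(\mu,P)$ itself. Therefore $t\bigl(\DSM(\mu,P)\bigr) \subseteq \DSM(\mu,P)$.

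For the second claim, let $t_1, t_2 \in T(S)$ both be compatible with $P$ and let $\mu \in \mc{Y}$ be arbitrary. By compatibility of $t_1$, either $t_1(\mu) = \mu$, in which case $(t_2 \circ t_1)(\mu) = t_2(\mu) \in \{\mu\} \cup \DSM(\mu,P)$ directly, or else $t_1(\mu) \in \DSM(\mu,P)$, in which case the first claim applied to $t_2$ gives $t_2\bigl(t_1(\mu)\bigr) \in \DSM(\mu,P)$. Either way $(t_2 \circ t_1)(\mu) \in \{\mu\} \cup \DSM(\mu,P)$, so $t_2 \circ t_1$ is compatible with $P$.

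For the third claim, recall that $T(S)$ is by definition the set of all finite compositions of elements of $T_1(S)$, including the identity as the empty composition. The identity is trivially compatible with $P$ since $\id(\mu) = \mu$. A simple induction on the number of factors, with the induction step supplied by the second claim, then shows that every element of $T(S)$ is compatible with $P$. The main (albeit modest) obstacle in the whole argument is the first claim, whose subtlety lies in recognising that compatibility is defined only on $\mc{Y}$ but must be lifted to the whole of $\DSM(\mu,P)$; the lifting uses continuity, the homomorphism property, and commutation with $R$ simultaneously, all of which are guaranteed by Assumption~\ref{Ant:T(S)-kont-hom} and the lemma following it.
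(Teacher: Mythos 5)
Your proof is correct and closely parallels the paper's argument. The only genuine difference is in the first claim: the paper proceeds ``bottom-up'', explicitly chasing the construction of $\Cspan$ by verifying the inclusion successively on elements $\nu \in \mc{Y}$, then on $r(\nu)$ for $r \in R^*$, then on group differences, then on limits; you instead argue ``top-down'' by observing that $t^{-1}\bigl(\DSM(\mu,P)\bigr)$ is a topologically closed $R$-module (using the homomorphism property, $R$-commutation, and continuity respectively) containing the generating set, and invoking the defining minimality of $\DSM(\mu,P)$. The two arguments rest on the same three properties of reductions and are mathematically equivalent; yours is slightly more compact because it exploits the universal-property characterisation of $\Cspan$ directly rather than re-deriving it. The second and third claims are handled identically to the paper.
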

\begin{proof}
  For the first claim, consider first some special $b$, and then 
  generalise following the characterisation \eqref{Eq:Def:Kompatibel} 
  of $\DSM(\mu,P)$. If \(b=\nu \in \mc{Y}\) satisfies \(\nu < \mu 
  \pin{P}\) then \(\DSM(\nu,P) \subseteq \DSM(\mu,P)\) and 
  consequently \(t(\nu) \in \{\nu\} \cup \DSM(\nu,P) \subseteq 
  \DSM(\mu,P)\) as claimed. If \(r \in R^*\) is any finite 
  composition of elements of $R$ then \(t\bigl( r(\nu) \bigr) = 
  r\bigl( t(\nu) \bigr) \in r\bigl( \DSM(\mu,P) \bigr) \subseteq 
  \DSM(\mu,P)\), thus extending the result to $b$ on the form $r(\nu)$. 
  If \(b_1,b_2 \in \DSM(\mu,P)\) are such that \(t(b_1),t(b_2) \in 
  \DSM(\mu,P)\) then clearly \(t(b_1 -\nobreak b_2) \in \DSM(\mu,P)\) 
  as well, and since this establishes that the set of $b$ for which 
  the result holds is a group it follows that the result holds for 
  arbitrary \(b \in \Span\bigl( \setOf{\nu \in \mc{Y}}{\nu<\mu\pin{P}} 
  \bigr)\). 
  Finally if \(\{b_n\}_{n=1}^\infty \subseteq \DSM(\mu,P)\) are such 
  that \(t(b_n) \in \DSM(\mu,P)\) then \(t(b) \in \DSM(\mu,P)\) also 
  for \(b=\lim_{n\Lpil\infty}b_n\) by continuity, and thus the result 
  holds for all \(b \in \DSM(\mu,P)\).
  
  For the second claim, let \(\mu \in \mc{Y}\) be arbitrary. If 
  \(t_1(\mu) = \mu\) then \((t_2 \circ\nobreak t_1)(\mu) = t_2(\mu) 
  \in \{\mu\} \cup \DSM(\mu,P)\) by the compatibility of $t_2$ with 
  $P$. Otherwise \(t_1(\mu) \in \DSM(\mu,P)\) and thus \((t_2 
  \circ\nobreak t_1)(\mu) \in \DSM(\mu,P)\) by the first claim. 
  Hence $t_2 \circ t_1$ is compatible with $P$. The third claim 
  immediately follows from the second and the observation that the 
  identity map $\id$ is compatible with all relations.
\end{proof}

An intuitive picture which might be useful is to think of the down-set 
module of $\mu$ as a sort of cone with $\mu$ at the apex. This 
picture is deceiving insofar as it represents entire $R$-modules of 
the form \(\Span\bigl(\{\mu\}\bigr)\) as single points and does not 
even begin to consider the topological structure, but it is nonetheless 
very much to the point. In that picture, one might interpret the above 
lemma as saying compatible reductions cannot map elements inside a cone 
to elements outside it. This is similar to how the sets $\Per_\ve(S)$ of 
persistently $\ve$-reducible elements behave with respect to reductions, 
and indeed the next lemma makes use of down-set modules in showing 
that $\Per_\ve(S)$ is the whole of~$\cmplM$.

\begin{lemma} \label{L: Persistently red.}
  Assume $T(S)$ is equicontinuous and compatible with some partial 
  order $P$ on $\mc{Y}$. If $P$ satisfies the topological descending 
  chain condition then \(\Per(S) = \cmplM\).
\end{lemma}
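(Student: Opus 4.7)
The plan is to fix an arbitrary $\ve \in \cmplO$, prove $\Per_\ve(S) = \cmplM$, and then intersect over $\ve$ to conclude $\Per(S) = \bigcap_{\ve \in \cmplO} \Per_\ve(S) = \cmplM$. By Lemma~\ref{L:Per modul}, $\Per_\ve(S)$ is an $R$-module, and by Lemma~\ref{L:SlutenhetPer} (using equicontinuity) it is topologically closed; combined with Assumption~\ref{Ant:YSpan} and Lemma~\ref{L:M-form}, it suffices to show $\mc{Y} \subseteq \Per_\ve(S)$, since any closed $R$-module containing $\mc{Y}$ contains all of $\cmplM$. This containment I would establish by the induction principle of Lemma~\ref{L: Induktion i Y}, which is available because $P$ is TDCC.

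For the induction base, equicontinuity of $T(S)$ furnishes some $\delta \in \cmplO$ with $t(\delta) \subseteq \ve$ for all $t \in T(S)$. Any $\mu \in \delta \cap \mc{Y}$ is then persistently $\ve$-reducible: for arbitrary $t_1 \in T(S)$, take $t_2 = \id$ and $b = 0$; for every $t_3 \in T(S)$ one has $t_3 \circ t_1 \in T(S)$, so $(t_3 \circ t_1)(\mu) \in (t_3 \circ t_1)(\delta) \subseteq \ve$, which means $t_1(\mu)$ is stuck in $0 + \ve$. For the induction step, assume every $\nu \in \mc{Y}$ with $\nu < \mu \pin{P}$ lies in $\Per_\ve(S)$. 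Since $\Per_\ve(S)$ is a closed $R$-module, this forces $\DSM(\mu,P) \subseteq \Per_\ve(S)$. Given $t_1 \in T(S)$, compatibility (Lemma~\ref{L:Kompabilitet}) gives $t_1(\mu) \in \{\mu\} \cup \DSM(\mu,P)$. If $t_1(\mu) \in \DSM(\mu,P)$, the persistent $\ve$-reducibility of $t_1(\mu)$ (applied with first reduction $\id$) yields $t_2$ and $b$ with $t_2(t_1(\mu))$ stuck in $b + \ve$. If $t_1(\mu) = \mu$ and $\mu \in \Irr(S)$, take $t_2 = \id$ and $b = \mu$; every $t_3$ fixes $\mu$, so $t_3(t_1(\mu)) = \mu \in \mu + \ve$. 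If $t_1(\mu) = \mu$ but $\mu \notin \Irr(S)$, choose any $t^* \in T(S)$ with $t^*(\mu) \neq \mu$; then $t^*(\mu) \in \DSM(\mu,P) \subseteq \Per_\ve(S)$, so its persistence (again with first reduction $\id$) yields $u \in T(S)$ and $b \in \Irr(S)$ with $u(t^*(\mu))$ stuck in $b + \ve$, and $t_2 := u \circ t^*$ does the job for $\mu$.

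The only delicate point is the subcase where $t_1$ fixes $\mu$ yet $\mu$ is reducible: one must remember to pre-compose with a nontrivial reduction $t^*$ in order to actually drop into $\DSM(\mu,P)$ and invoke the inductive hypothesis. Everything else is straightforward bookkeeping\Dash equicontinuity handles the base, compatibility keeps the induction step inside $\{\mu\} \cup \DSM(\mu,P)$, and closure of $\Per_\ve(S)$ together with Assumption~\ref{Ant:YSpan} lifts the conclusion from $\mc{Y}$ to the whole of $\cmplM$.
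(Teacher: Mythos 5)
Your proof is correct and follows essentially the same route as the paper's: the same use of Lemma~\ref{L: Induktion i Y}, the same equicontinuity argument for the base case, the same three-way case split in the induction step (with the key pre-composition by a nontrivial reduction $t^*$ when $t_1$ fixes a reducible $\mu$), and the same lift from $\mc{Y}$ to $\cmplM$ via closure and the $R$-module structure of $\Per_\ve(S)$.
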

\begin{proof}
  Let \(\ve\in\cmplO\) be arbitrary. Let $Z$ be the set of all elements 
  of $\mc{Y}$ which are persistently $\ve$-reducible. By equicontinuity 
  there exists some \(\delta \in \cmplO\) such that \(t(\delta) 
  \subseteq \ve\) for all \(t \in T(S)\). Hence all \(\mu \in \mc{Y} 
  \cap \delta\) belong to $Z$, since these satisfy \(t(\mu) \in \ve\) 
  for all \(t \in T(S)\). These elements constitute the base for the 
  induction, fulfilling condition~\parenthetic{i} of 
  Lemma~\ref{L: Induktion i Y}.
  
  For the induction step, consider some arbitrary \(\mu \in \mc{Y}\). 
  Assume that all \(\nu \in \mc{Y}\) such that \(\nu < \mu \pin{P}\) 
  satisfy $\nu \in Z$; it will now be shown that this implies \(\mu 
  \in Z\). To that end, let \(t_1 \in T(S)\) be given and try to find 
  some \(t_2 \in T(S)\) and \(a \in \Irr(S)\) such that $t_2\bigl( 
  t_1(\mu) \bigr)$ is stuck in $a+\ve$. It is useful to observe 
  that the induction hypothesis, by Lemmas~\ref{L:Per modul} 
  and~\ref{L:SlutenhetPer}, implies \(\DSM(\mu,P) \subseteq 
  \Per_\ve(S)\).
  
  Depending on $\mu$ and $t_1$, there are three cases that can occur. 
  If \(\mu \in \Irr(S)\) then \(t_1(\mu) = \mu\) is stuck in 
  \(\mu+\ve\) and hence \(\mu \in Z\). If \(t_1(\mu) \neq \mu\) 
  (and hence \(\mu \notin \Irr(S)\)) then by compatibility of $t_1$ 
  with $P$ it follows that \(t_1(\mu) \in \DSM(\mu,P) \subseteq 
  \Per_\ve(S)\) and consequently by this persistent 
  $\ve$-reducibility of $t_1(\mu)$ there exist \(a \in \Irr(S)\) and 
  \(t_2 \in T(S)\) such that \(t_2\bigl( t_1(\mu) \bigr)\) is stuck 
  in $a+\ve$. Finally, if \(\mu \notin \Irr(S)\) but 
  \(t_1(\mu) = \mu\) then there still exists some 
  \(t_2' \in T(S)\) such that \(t_2'(\mu) \neq \mu\) and thus 
  \(t_2'(\mu) \in \DSM(\mu,P) \subseteq \Per_\ve(S)\). As before 
  there now exist \(t_2'' \in T(S)\) and \(a \in \Irr(S)\) such that 
  \(t_2''\bigl( t_2'(\mu) \bigr)\) is stuck in $a+\ve$, whence for 
  \(t_2 = t_2'' \circ t_2'\) one finds that \(t_2\bigl( t_1(\mu) 
  \bigr)\) is stuck in $a+\ve$. Either way, \(\mu \in Z\) by the 
  arbitrariness of $t_1$, which completes the induction step.
  
  All conditions for Lemma~\ref{L: Induktion i Y} are now fulfilled 
  and hence \(Z = \mc{Y}\). By Lemmas~\ref{L:Per modul} and 
  Lemma~\ref{L:SlutenhetPer}, \(\Per_\ve(S) = \cmplM\). By the 
  arbitrariness of $\ve$, it then follows that \(\Per(S) = 
  \bigcap_{\ve\in\cmplO} \Per_\ve(S) = \cmplM\) as well.
\end{proof}

The same conditions also suffice for giving an explicit description 
of $\Irr(S)$. It is not unusual that one can quickly establish this 
result also through more elementary arguments, but for complicated 
set-ups it is convenient to have a proof relying on (a subset of) the 
conditions of Theorem~\ref{S:CDL}.

\begin{theorem} \label{S:Konstr.Irr(S)}
  Assume $T(S)$ is equicontinuous and compatible with some partial 
  order $P$ on $\mc{Y}$. If $P$ satisfies the topological descending 
  chain condition then
  \begin{equation} \label{Eq:Konstr.Irr(S)}
    \Irr(S) = \Cspan\Bigl( 
      \setOf[\big]{ \mu \in \mc{Y} }{ 
        \text{\(t(\mu) = \mu\) for all \(t \in T_1(S)\)}
      }
    \Bigr)\text{.}
  \end{equation}
\end{theorem}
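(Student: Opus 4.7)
Let $Y_0 = \setOf{\mu \in \mc{Y}}{t(\mu) = \mu \text{ for all } t \in T_1(S)}$, so the claim is $\Irr(S) = \Cspan(Y_0)$. The inclusion $\Cspan(Y_0) \subseteq \Irr(S)$ is immediate: every element of $Y_0$ is fixed by all simple reductions and hence by all of $T(S)$, so $Y_0 \subseteq \Irr(S)$; since $\Irr(S)$ is a topologically closed $R$-module by Lemma~\ref{L: Irr modul}, it contains $\Cspan(Y_0)$.

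For the reverse inclusion I would first establish, for each fixed $\ve \in \cmplO$, the following auxiliary claim by TDCC induction via Lemma~\ref{L: Induktion i Y}: every $\mu \in \mc{Y}$ admits some $t \in T(S)$ and $c \in \Cspan(Y_0)$ with $t(\mu) - c \in \ve$. Condition~(i) of that lemma is handled by equicontinuity: pick $\delta \in \cmplO$ with $u(\delta) \subseteq \ve$ for all $u \in T(S)$, so any $\mu \in \delta \cap \mc{Y}$ works with $t = \id$ and $c = 0$. For condition~(ii), if $\mu \in Y_0$ take $t = \id$ and $c = \mu$; otherwise some $t_0 \in T_1(S)$ satisfies $t_0(\mu) \neq \mu$, and compatibility (Definition~\ref{Def:Kompatibel}) places $t_0(\mu) \in \DSM(\mu, P)$. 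Approximate $t_0(\mu)$ within a small prepared sub-neighbourhood of $0$ by a finite sum $\sum_i r_i(\nu_i)$ with $\nu_i < \mu \pin{P}$ using Lemma~\ref{L:Cspan-uppdelning}, and then combine the induction hypothesis for each $\nu_i$ into a single $t' \in T(S)$ by iterated composition. At each stage, compatibility with $P$ keeps each pending $\nu_j$ either fixed or in $\DSM(\nu_j, P)$ (where the induction hypothesis applies again, the TDCC ensuring that the nested recursion terminates), while equicontinuity lets one prepare tolerances in advance so that reductions applied for later $\nu_j$ do not disturb the $\Cspan(Y_0)$-proximity already achieved for earlier ones—elements of $\Cspan(Y_0) \subseteq \Irr(S)$ being fixed by every element of $T(S)$. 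Setting $t = t' \circ t_0$ and $c = \sum_i r_i(c_i)$ closes the inductive step.

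To conclude, fix $a \in \Irr(S)$ and $\ve \in \cmplO$. Pick $\delta \subseteq \ve$ from equicontinuity and approximate $a$ within $\delta$ by a finite $\pm R^*$-combination $\sum_j s_j(\mu_j)$ using Lemma~\ref{L:Cspan-uppdelning}. Applying the same iterative construction to the finitely many $\mu_j$ yields a single $t \in T(S)$ and elements $c_j \in \Cspan(Y_0)$ with $t(\mu_j) - c_j \in \delta$ for every $j$. Since $t(a) = a$, expanding $a = t(a) = \sum_j s_j(t(\mu_j)) + t\bigl(a - \sum_j s_j(\mu_j)\bigr)$ places $a$ within $\ve$ of $\sum_j s_j(c_j) \in \Cspan(Y_0)$; closedness of $\Cspan(Y_0)$ then gives $a \in \Cspan(Y_0)$. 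The main technical difficulty is the nested iteration that combines reductions for several monomials into one; it parallels the bookkeeping in the proof of Lemma~\ref{L:PerTillIrr} but additionally tracks proximity to $\Cspan(Y_0)$.
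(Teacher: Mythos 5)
The overall strategy — the easy inclusion via Lemma~\ref{L: Irr modul} plus a TDCC induction over $\mc{Y}$ for the reverse — is the same as the paper's, but your auxiliary claim is not strong enough to support the induction step or the final combination argument, and that is a genuine gap, not a detail.

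Your auxiliary claim is a bare existence statement: for each $\mu$ there is \emph{some} $t$ with $t(\mu)$ within $\ve$ of $\Cspan(Y_0)$. To get a single $t$ working for all $\mu_j$ simultaneously you apply $t_1$ (tailored to $\mu_1$), and now compatibility sends each remaining $\mu_j$ either to itself or to a general element of $\DSM(\mu_j,P)$. At that point your induction hypothesis no longer applies — it is a statement about \emph{monomials}, not about elements of down-set modules. You then propose to re-expand by Lemma~\ref{L:Cspan-uppdelning} and recurse, using TDCC for termination and equicontinuity for stability; but each expansion introduces an approximation error at some tolerance level, and each subsequent reduction applied for a later monomial widens the earlier errors by one tolerance level. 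So the depth of the nested tolerances you must prepare equals the depth of the recursion tree, which is finite by TDCC and K\"onig's lemma but is not known in advance of setting up the $\delta$'s. That order-of-quantifiers problem is not resolved by anything you say, and there is no cheap way around it with your weaker invariant.

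The paper's proof dissolves this by choosing the right invariant from the start: it defines $N$ as the set of $a$ such that \emph{for every} $t_1 \in T(S)$ there exists $t_2$ with $(t_3 \circ t_2 \circ t_1)(a) \in \Span(W) + \ve$ \emph{for every} $t_3$. The inner $\forall t_3$ is exactly the ``stuck'' property that makes proximity immune to further reductions, and the outer $\forall t_1$ is what makes $N$ closed under subtraction (fix up $a_1$ after a $t_1$, then fix up $a_2$ after $t_2^{(1)}\circ t_1$, and the stuck-ness of the first keeps it from drifting). With that quantifier structure, $N$ is easily verified to be a topologically closed $R$-module, and then the TDCC induction only has to establish $\mc{Y}\subseteq N$ — no nested tolerances, no unbounded recursion tree. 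If you want to repair your argument without essentially reproducing the paper's proof, the minimal fix is to replace your auxiliary claim with a ``stuck'' form: for each $\mu$ and each $\ve$, there exists $t \in T(S)$ such that $t(\mu)$ is stuck in $c + \ve$ for some $c \in \Cspan(Y_0)$; that one change propagates to fix both the group-closure and the tolerance-nesting problems.

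Your final bookkeeping step is fine: once you have a single $t$ with $t(\mu_j) - c_j \in \delta$ for each $j$ (and $a - \sum_j s_j(\mu_j) \in \delta$), the computation $a - \sum_j s_j(c_j) = \sum_j s_j\bigl(t(\mu_j) - c_j\bigr) + t\bigl(a - \sum_j s_j(\mu_j)\bigr) \in \ve$ is correct. The gap is entirely in obtaining that single $t$.
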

\begin{proof}
  Let $W$ be the set of irreducible elements of $\mc{Y}$.
  It follows from Lemma~\ref{L: Irr modul} that the left hand side 
  $\Irr(S)$ of \eqref{Eq:Konstr.Irr(S)} contains the right hand side 
  $\Cspan(W)$. The reverse inclusion will be established by 
  demonstrating that \(\Irr(S) \subseteq \Span(W) + \ve\) for all 
  \(\ve \in \cmplO\).
  
  Let \(\ve \in \cmplO\) be given. Let \(\delta \in \cmplO\) be such 
  that \(t(\delta) \subseteq \ve\) for all \(t \in T(S)\). Let $N$ be 
  the set of those \(a \in \cmplM\) such that there for every \(t_1 
  \in T(S)\) exists some \(t_2 \in T(S)\) such that for every \(t_3 
  \in T(S)\) it holds that \((t_3 \circ\nobreak t_2 \circ\nobreak 
  t_1)(a) \in \Span(W) + \ve\). It will now be shown that \(N = 
  \cmplM\).
  
  To see that $N$ is a subgroup of $\cmplM$, let \(a_1,a_2 \in N\) be 
  given. Let \(t_1 \in T(S)\) be arbitrary. 
  There exists some \(t_2 \in T(S)\) such that 
  \((t_3 \circ\nobreak t_2 \circ\nobreak t_1)(a_1) \in \Span(W) + \ve\) 
  for every \(t_3 \in T(S)\). There also exists some \(t_4 \in T(S)\) 
  such that \(\bigl( t_3 \circ\nobreak t_4 \circ\nobreak 
  (t_2 \circ\nobreak t_1) \bigr) (a_2) \in \Span(W) + \ve\) for every 
  \(t_3 \in T(S)\). Hence \(\bigl( t_3 \circ\nobreak (t_4 \circ\nobreak 
  t_2) \circ\nobreak t_1 \bigr) (a_1 - \nobreak a_2) = 
  \bigl( (t_3 \circ\nobreak t_4) \circ\nobreak t_2 \circ\nobreak t_1 
  \bigr) (a_1) - \bigl( t_3 \circ\nobreak t_4 \circ\nobreak 
  (t_2 \circ\nobreak t_1) \bigr) (a_2) \in \Span(W) + \ve\) for every 
  \(t_3 \in T(S)\), and thus \(a_1-a_2 \in N\) by the arbitrariness 
  of $t_1$. Elements of $R$ map $N$ into itself because they commute 
  with all reductions and map $\Span(W) + \ve$ into itself, hence $N$ 
  is an $R$-module. $N$ is topologically closed because any \(t \in 
  T(S)\) maps $N + \delta$ into $t(N) + \ve$; if \(a \in \cmpl{N}\) 
  then there is some \(a' \in N \cap (a +\nobreak \delta)\), hence 
  for any \(t_1 \in T(S)\) there exists some \(t_2 \in T(S)\) such 
  that \((t_3 \circ\nobreak t_2 \circ\nobreak t_1)(a') \in \Span(W) + 
  \ve\) for all \(t_3 \in T(S)\), and thus \((t_3 \circ\nobreak t_2 
  \circ\nobreak t_1)(a) \in (t_3 \circ\nobreak t_2 \circ\nobreak 
  t_1)(a') + \ve \subseteq \Span(W) + \ve + \ve\).
  
  The proof that \(\mc{Y} \subseteq N\) is done by induction. As 
  usual, \(\mc{Y} \cap \delta \subseteq \delta \subseteq N\) since 
  \(t(\delta) \subseteq 0 + \ve\) for all \(t \in T(S)\). For the 
  induction step, let \(\mu \in \mc{Y}\) be arbitrary and assume 
  \(\nu \in N\) for all \(\nu \in \mc{Y}\) such that \(\nu < \mu 
  \pin{P}\). If \(\mu \in W\) then \(t(\mu) \in W\) for all \(t \in 
  T(S)\) and hence \(\mu \in N\). Otherwise let \(t_1 \in T(S)\) be 
  given. If \(t_1(\mu) = \mu\) then let \(t_2' \in T_1(S)\) be such 
  that \(t_2'(\mu) \neq \mu\), otherwise let \(t_2' = \id\). Since 
  \((t_1' \circ\nobreak t_1)(\mu) \neq \mu\) it follows from 
  compatibility, the induction hypothesis, and the previous paragraph 
  that \((t_2' \circ\nobreak t_1)(\mu) \in \DSM(\mu,P) \subseteq N\). 
  Hence there exists some \(t_2 \in T(S)\) such that any \(t_3 \in 
  T(S)\) satisfies \((t_3 \circ\nobreak t_2 \circ\nobreak t_2' 
  \circ\nobreak t_1)(\mu) \in \Span(W) + \ve\) and thus \(\mu \in 
  N\). Since $P$ is TDCC, the conclusions \(\mc{Y} \subseteq N\) and 
  \(N = \cmplM\) follow.
  
  Finally, let \(a \in \Irr(S)\) be arbitrary. Since \(a \in N\) 
  there exists some \(t \in T(S)\) such that \(t(a) \in \Span(W) + 
  \ve\), but \(t(a)=a\) by irreducibility. Hence \(\Irr(S) \subseteq 
  \Span(W) + \ve\) by the arbitrariness of $a$, and it follows that 
  \(\Irr(S) \subseteq \bigcap_{\ve \in \cmplO} 
  \bigl( \Span(W) +\nobreak \ve \bigr) = \Cspan(W)\).
\end{proof}

The \(\Red(S) = \cmplM\) counterpart of Lemma~\ref{L: Persistently red.} 
is Lemma~\ref{L:Red=cmplM}, but unique reducibility requires another 
condition, wherein the following definition is handy.

\begin{definition} \label{Def:DIS}
  Let $P$ be a binary relation on $\mc{Y}$. Then for any 
  \(\mu \in \mc{Y}\), define
  \begin{equation}
    \DIS(\mu,P,S) = \Cspan\Bigl(
      \setOf[\big]{ \nu - t(\nu) }{ \nu < \mu \pin{P}, t \in T_1(S) }
    \Bigr) \text{.}
  \end{equation}
  Also write \(a \equiv b \pmod{S < \mu \pin{P}}\)
  \index{= mod S mu P@$\equiv\pmod{S < \mu \pin{P}}$} 
  (read ``$a$ is congruent to $b$ mod $S$ less $\mu$ in $P$'')
  as a shorthand for \(a-b \in \DIS(\mu,P,S)\).
\end{definition}

The etymology of this $\DIS$ notation is ``Down-set $\mc{I}(S)$ 
Section'', even though one cannot in general interpret $\DIS(\mu,P,S)$ 
as being synonymous to \(\DSM(\mu,P) \cap \mc{I}(S)\). It 
is clear that \(\DIS(\mu,P,S) \subseteq \mc{I}(S)\), and if $T(S)$ 
is compatible with $P$ then \(\DIS(\mu,P,S) \subseteq \DSM(\mu,P)\) 
by Lemma~\ref{L:Kompabilitet}, but for equality with the intersection 
to hold one pretty much have to fulfil the conditions of 
Theorem~\ref{S:CDL}. It is however not so interesting to exactly map 
the extent of this module; one rather seeks to prove that particular 
elements of $\cmplM$ belong to it by exhibiting explicit expressions 
for them. Calculations are often convenient to express in the 
form \(a \equiv a_1 \equiv \dotsb \equiv a_n \equiv 0 
\pmod{S < \mu \pin{P}}\).

\begin{lemma} \label{L:Red=cmplM}
  Let $P$ be a partial order on $\mc{Y}$. Assume $T_1(S)$ is such that 
  for all \(\mu \in \mc{Y}\) and simple reductions \(t_1,t_2 \in 
  T_1(S)\) that act nontrivially on $\mu$ it holds that
  \begin{equation} \label{Eq:Red=cmplM}
    t_1(\mu) - t_2(\mu) \in \DIS(\mu,P,S) \text{.}
  \end{equation}
  If furthermore $P$ satisfies the topological descending chain 
  condition and $T(S)$ is equicontinuous and compatible with $P$, 
  then \(\Red(S) = \cmplM\).
\end{lemma}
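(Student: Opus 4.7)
The plan is to combine the full persistent reducibility $\Per(S) = \cmplM$ from Lemma~\ref{L: Persistently red.} with an induction over $\mc{Y}$ (via Lemma~\ref{L: Induktion i Y}) that, for each fixed $\ve \in \cmplO$, establishes the $\ve$-unique reducibility of every $\mu \in \mc{Y}$. Once $\mc{Y} \subseteq \Red_\ve(S)$ is known, $\Red_\ve(S)$ is a topologically closed $R$-module by Lemmas~\ref{L: Red_ve modul} and~\ref{L:SlutenhetPer}, and since $\mc{Y}$ generates $\cmplM$ in this sense (Lemma~\ref{L:M-form} combined with the density of $\mc{M}$ in $\cmplM$), the identity $\Red_\ve(S) = \cmplM$ follows. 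Intersecting over $\ve$ then yields $\Red(S) = \cmplM$.

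The induction base is handled by equicontinuity: pick $\delta \in \cmplO$ with $t(\delta) \subseteq \ve$ for every $t \in T(S)$, so that each $\mu \in \mc{Y} \cap \delta$ is stuck in $0 + \ve$ under every reduction and is thus trivially $\ve$-uniquely reducible with candidate normal form $0$. For the induction step, fix $\mu \in \mc{Y}$ and assume every $\nu < \mu \pin{P}$ lies in $\Red_\ve(S)$; then $\DSM(\mu,P) \subseteq \Red_\ve(S)$, and by compatibility (Lemma~\ref{L:Kompabilitet}) every simple reduction that moves $\mu$ sends it into $\DSM(\mu,P)$.

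Before confronting $\mu$ directly I would introduce an auxiliary map $\pi$ on $\Red_\ve(S)$ that assigns to each $a$ the $\ve$-coset $b + \ve$ provided by Lemma~\ref{L:PerTillIrr}, the coset being well-defined by $\ve$-unique reducibility. Adapting the arguments of Lemmas~\ref{L: Red modul} and~\ref{L:SlutenhetPer} to the weaker setting of $\Red_\ve(S)$, one checks that $\pi$ is a continuous $R$-module homomorphism from $\Red_\ve(S)$ into the discrete quotient group $\cmplM/\ve$, and that $\pi \circ t = \pi$ for every $t \in T(S)$ (because pre-composing an approaching reduction $u$ with $t$ still yields a reduction approaching the same coset). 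The kernel $\ker \pi$ is therefore a topologically closed $R$-submodule of $\cmplM$ that contains $\nu - t(\nu)$ for every $\nu < \mu \pin{P}$ and $t \in T_1(S)$; by the definition of $\DIS(\mu,P,S)$ one concludes $\DIS(\mu,P,S) \subseteq \ker \pi$.

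Closing the induction step then amounts to the following argument. Suppose $t_1(\mu)$ is stuck in $b_1 + \ve$ and $t_2(\mu)$ in $b_2 + \ve$; the goal is $b_1 - b_2 \in \ve$. If $\mu \in \Irr(S)$ the identities $t_i(\mu) = \mu \in b_i + \ve$ settle the matter. Otherwise, for each $i$ I would extract a simple $s_i \in T_1(S)$ with $s_i(\mu) \neq \mu$ together with some $u_i \in T(S)$ making $u_i(s_i(\mu))$ stuck in $b_i + \ve$: in the sub-case $t_i(\mu) \neq \mu$ take $s_i$ to be the first simple factor of $t_i$ that moves $\mu$ and $u_i$ the remaining composition; in the sub-case $t_i(\mu) = \mu$ use any simple reduction that reduces $\mu$ and set $u_i = \id$, since $\mu$ stuck in $b_i + \ve$ propagates to $s_i(\mu)$ stuck in $b_i + \ve$. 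Either way $s_i(\mu) \in \DSM(\mu,P) \subseteq \Red_\ve(S)$ and $\pi(s_i(\mu)) = b_i + \ve$, so the hypothesis~\eqref{Eq:Red=cmplM} gives $s_1(\mu) - s_2(\mu) \in \DIS(\mu,P,S) \subseteq \ker \pi$ and hence $b_1 - b_2 \in \ve$. The main obstacle is the clean construction of $\pi$ on the not-yet-full $\Red_\ve(S)$ together with the verification that $\DIS(\mu,P,S)$ sits in its kernel; after that, the case split on whether the $t_i$ move $\mu$ and the direct application of~\eqref{Eq:Red=cmplM} are routine.
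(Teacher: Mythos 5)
Your proposal is correct, and it does take a recognizably different route from the paper's proof, even though the induction skeleton is the same. The paper proves the step by hand: it approximates an arbitrary element of $\DIS(\mu,P,S)$ by a finite sum $\sum_i\bigl(a_i - u_i(a_i)\bigr)$ (via Lemma~\ref{L:Cspan-uppdelning}) and then explicitly constructs a long reduction $w_n = v_n'\circ v_n\circ\dotsb\circ v_1'\circ v_1\circ t_3$ that brings each term $a_i - u_i(a_i)$ into $\ve$. You instead define, on the part of $\cmplM$ already known to be $\ve$-uniquely reducible, a single well-defined map $\pi\colon\Red_\ve(S)\to\cmplM/\ve$ (an ``$\ve$-resolved normal form'') and verify once that it is a continuous $R$-module homomorphism satisfying $\pi\circ t=\pi$. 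The entire reduction-chain construction in the paper then collapses to the observation that $\ker\pi$ is a topologically closed $R$-module killing every $\nu-t(\nu)$ with $\nu<\mu$, hence contains $\DIS(\mu,P,S)$ by the very definition of $\Cspan$. After that the hypothesis~\eqref{Eq:Red=cmplM} finishes the step in one line. This is a genuine simplification: the approximation step and the explicit $w_n$ are absorbed into the statement that $\ker\pi$ is closed.

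What your approach buys is conceptual clarity and shorter arithmetic; what it costs is that you must verify the properties of $\pi$ (well-definedness, homomorphism, $R$-linearity, continuity, $\pi\circ t=\pi$) on the not-yet-full $\Red_\ve(S)$. You flag this as the main obstacle and I agree it is where the work lives, but it does go through: well-definedness and existence come from $\Per(S)=\cmplM$ (Lemma~\ref{L: Persistently red.}) and $\ve$-unique reducibility; additivity and $R$-linearity are the Lemma~\ref{L: Red modul} arguments carried out modulo $\ve$ rather than on the nose; continuity into the discrete $\cmplM/\ve$ follows from equicontinuity exactly as in Lemma~\ref{L:SlutenhetPer}; $\pi\circ t=\pi$ is immediate from $\ve$-unique reducibility of $a$ once one notes that any reduction of $t(a)$ is also a reduction of $a$. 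Note also that $\Red_\ve(S)$ must be closed in $\cmplM$ for $\ker\pi$ to be closed in $\cmplM$ (which $\Cspan$ lives in); this is supplied by Lemma~\ref{L:SlutenhetPer}. One small inefficiency: your case split treats $t_i(\mu)=\mu$ by extracting an $s_i$ anyway, whereas the paper dismisses that case directly; your version works, but the paper's dispatch is shorter. Overall this is a correct proof that genuinely reorganises the argument.
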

\begin{proof}
  It will be shown by induction over $\mc{Y}$ that \(\Red_\ve(S) = 
  \cmplM\) for all \(\ve \in \cmplO\). Observe that \(\Per(S) = 
  \cmplM\) by Lemma~\ref{L: Persistently red.}; hence it is sufficient 
  to prove that all elements of $\cmplM$ are $\ve$-uniquely reducible. 
  Let \(\ve \in \cmplO\) be given and let \(\delta \in \cmplO\) be such 
  that \(t(\delta) \subseteq \ve\) for all \(t \in T(S)\). The 
  induction hypothesis is that the \(\mu \in \mc{Y}\) under 
  consideration satisfies \(\mu \in \Red_\ve(S)\). The induction 
  hypothesis clearly holds for all \(\mu \in \mc{Y}\) such that 
  \(\mu \in \delta\), since \(t(\mu) \in \ve\) for all such 
  $\mu$ and all \(t \in T(S)\). This has laid the base for the 
  induction.
  
  For the induction step, assume that \(\nu \in \Red_\ve(S)\) for 
  all \(\nu \in \mc{Y}\) such that \(\nu < \mu \pin{P}\), and 
  consider $\mu$. By Lemmas~\ref{L: Red_ve modul} 
  and~\ref{L:SlutenhetPer}, this assumption is equivalent to 
  \(\DSM(\mu,P) \subseteq \Red_\ve(S)\). Let \(t_1,t_2 \in T(S)\) and 
  \(b_1,b_2 \in \Irr(S)\) be arbitrary elements such that 
  \(t_1(\mu)\) is stuck in $b_1+\ve$ and \(t_2(\mu)\) is stuck in 
  $b_2+\ve$. The problem now is to show that \(b_1+\ve = b_2+\ve\).
  A trivial case occurs if $t_1$ or $t_2$ acts trivially on $\mu$; it 
  can be assumed without loss of generality that $t_2$ acts trivially. 
  In this case \(\mu = t_2(\mu)\) is already known to be stuck in 
  $b_2+\ve$, and hence $t_1(\mu)$ is stuck there as well.
  
  With that taken care of, it can be assumed that $t_1$ and $t_2$ both 
  act nontrivially on $\mu$. Thus there exist \(t_{1a},t_{2a} \in 
  T_1(S)\) and \(t_{1b},t_{2b} \in T(S)\) such that: \(t_1(\mu) = 
  t_{1b}\bigl( t_{1a}(\mu) \bigr)\), \(t_2(\mu) = t_{2b}\bigl( 
  t_{2a}(\mu) \bigr)\), \(t_{1a}(\mu) \neq \mu\) and \(t_{2a}(\mu) \neq 
  \mu\). By persistent reducibility there also exists some \(t_3 \in 
  T(S)\) and \(b_3 \in \Irr(S)\) such that $t_3\bigl( t_{1a}(\mu) 
  -\nobreak t_{2a}(\mu) \bigr)$ is stuck in $b_3+\ve$. By 
  $\ve$-unique reducibility of $t_{1a}(\mu)$ there exists some 
  \(t_{1c} \in T(S)\) such that $(t_{1c} \circ\nobreak t_3 
  \circ\nobreak t_{1a})(\mu)$ is stuck in $b_1+\ve$, and similarly 
  there exists some \(t_{2c} \in T(S)\) such that $(t_{2c} 
  \circ\nobreak t_{1c} \circ\nobreak t_3 \circ\nobreak t_{2a})(\mu)$ 
  is stuck in $b_2+\ve$. Let \(t_4 = t_{2c} \circ t_{1c}\). Since
  \begin{align*}
    b_1-b_2 ={}&
    \bigl( b_1 - (t_4 \circ t_3 \circ t_{1a})(\mu) \bigr) +
      \\ &\qquad {}+
      (t_4 \circ t_3)\bigl( t_{1a}(\mu) - t_{2a}(\mu) \bigr) +
      \bigl( (t_4 \circ t_3 \circ t_{2a})(\mu) - b_2 \bigr) 
      \in \\ \in{}&
    \ve + (b_3+\ve) + \ve
    \text{,}
  \end{align*}
  it would follow that \(b_1+\ve = b_2+\ve\) if \(b_3 \in \ve\).
  
  By assumption \(t_{1a}(\mu) - t_{2a}(\mu) \in \DIS(\mu,P,S)\). 
  Thus there exist \(\{\rho_i\}_{i=1}^n \subseteq \mc{Y}\), 
  reductions \(\{u_i\}_{i=1}^n \subseteq T_1(S)\), and
  \(a_i \in \Span\bigl( \{\rho_i\} \bigr)\) for each 
  \(i=1,\dotsc,n\), such that \(\rho_i < \mu \pin{P}\) for 
  \(i=1,\dotsc,n\) and
  \begin{equation*}
    \bigl( t_{1a}(\mu) -\nobreak t_{2a}(\mu) \bigr) - 
    \sum_{i=1}^n \bigl(a_i -\nobreak u_i(a_i) \bigr) \in \delta 
    \text{.}
  \end{equation*}
  Now the idea is to construct and consider a reduction that takes 
  each term of this expression to an $\ve$-neighbourhood of its 
  normal form. Let \(w_0 = t_3\) and for each \(i=1,\dotsc,n\): let 
  \(v_i,v_i' \in T(S)\) and \(c_i,c_i' \in \Irr(S)\) be such that 
  $v_i\bigl( w_{i-1}(a_i) \bigr)$ is stuck in some $c_i+\ve$ and 
  $v_i'\bigl( (v_i \circ\nobreak w_{i-1} \circ\nobreak u_i)(a_i) 
  \bigr)$ is stuck in $c_i'+\ve$, then define \(w_i = v_i' \circ v_i 
  \circ w_{i-1}\). By $\ve$-unique reducibility of $a_i$ it follows 
  that \(c_i+\ve = c_i'+\ve\) and thus \(w_n\bigl( a_i -\nobreak 
  u_i(a_i)\bigr) \in \ve\). Hence
  \begin{multline*}
    w_n\bigl( t_{1a}(\mu) - t_{2a}(\mu) \bigr) \in
    w_n\biggl(
      \sum_{i=1}^n \bigl(a_i - u_i(a_i) \bigr) + \delta
    \biggr) = \\ =
    \sum_{i=1}^n w_n\bigl(a_i - u_i(a_i) \bigr) + w_n(\delta) 
    \subseteq \ve
    \text{.}
  \end{multline*}
  Since also \(w_n\bigl( t_{1a}(\mu) -\nobreak t_{2a}(\mu) \bigr) \in 
  b_3+\ve\), it follows that \(b_3\in\ve\) and \(b_1+\ve=b_2+\ve\). 
  This completes the induction step.
  
  By Lemma~\ref{L: Induktion i Y}, the induction hypothesis holds for 
  all \(\mu \in \mc{Y}\), and hence \(\mc{Y} \subseteq \Red_\ve(S)\), 
  which implies \(\Red_\ve(S) = \cmplM\). Since $\ve$ was arbitrary, 
  \(\Red(S) = \bigcap_{\ve>0} \Red_\ve(S) = \cmplM\) as well.
\end{proof}

The next definition helps simplify the main condition 
\eqref{Eq:Red=cmplM} of Lemma~\ref{L:Red=cmplM} to ``assume all 
ambiguities of $T_1(S)$ are resolvable relative to $P$'', which is 
one of the main equivalent conditions in the diamond lemma.

\begin{definition} \label{Def:Tvetydighet}
  An \DefOrd{ambiguity} of $T_1(S)$ is a triplet $(t_1,\mu,t_2)$, where 
  \(t_1,t_2 \in T_1(S)\) act nontrivially on \(\mu \in \mc{Y}\); the 
  ambiguities $(t_1,\mu,t_2)$ and $(t_2,\mu,t_1)$ are considered 
  equivalent. An ambiguity $(t_1,\mu,t_2)$ is said to be 
  \DefOrd[*{ambiguity!resolvable}]{resolvable} 
  if there, for every \(\ve \in \cmplO\), exists 
  reductions \(t_3,t_4 \in T(S)\) such that 
  \(t_3\bigl( t_1(\mu) \bigr) - t_4\bigl( t_2(\mu) \bigr) \in \ve\). 
  An ambiguity $(t_1,\mu,t_2)$ is said to be 
  \DefOrd[*{ambiguity!resolvable relative to}]{resolvable relative to} 
  a binary relation $P$ on $\mc{Y}$ if \(t_1(\mu) - 
  t_2(\mu) \in \DIS(\mu,P,S)\).
\end{definition}

The essential content of the ambiguity concept has been given a 
bewildering variety of names, where `ambiguity' is that used by 
Bergman~\cite{Bergman}. The most common term is rather 
\emDefOrd{critical pair}, but there appears to be no consensus on 
what the elements of the critical pair are. 
Baader--Nipkow~\cite[Def.~6.2.1]{BaaderNipkow} effectively defines a 
critical pair to be some $\bigl( t_1(\mu), t_2(\mu) \bigr)$ and 
informally speaks of the ambiguity $(t_1,\mu,t_2)$ from which it came 
as a \emDefOrd{fork}. This critical pair terminology would make sense 
within the present framework, but it cannot completely replace 
ambiguities, as there is not enough information in the critical pair 
to define relative resolvability. In contrast, the definition of 
`critical pair' in Gr\"obner basis theory (a pair of Gr\"obner basis 
elements) is technically quite different and cannot be stated in the 
generic framework, although the essential content is still the same.

The second most common name is probably `overlap', but although 
overlap ambiguities are by far the most important ones, there are also 
important ambiguities which aren't overlaps; the taxonomy of 
ambiguities is a subject of Section~\ref{Sec:Tvetydigheter}. Rarer 
names still are `composition' (Shirshov~\cite{Shirshov} and 
Bokut~\cite{Bokut}, hence the alternative name \emDefOrd{composition 
lemma} for the diamond lemma) and `superposition' 
(Knuth--Bendix~\cite{KnuthBendix}), both of which refer primarily to 
the $\mu$ part of an ambiguity $(t_1,\mu,t_2)$.

It should also be pointed out that many of the above concepts 
presume a certain minimality\Ldash the \emph{critical} of `critical 
pair' refers to that these are the ones that really need to be 
checked\Dash whereas the above ambiguity concept has no such 
restriction. This is because the mechanisms traditionally used to 
discard some ambiguities as redundant rely on structures not apparent 
in the basic framework $\bigl( \mc{M},R,\mc{Y},\mc{O},T_1(S) \bigr)$, 
and therefore not available in this generality. Corresponding results 
for the present setting can be found in 
Section~\ref{Sec:Tvetydigheter}.

It is common to say that a rewriting system $S$ is 
\emDefOrd{confluent} if everything has a unique normal form, but 
since this by Theorem~\ref{S:CDL} is equivalent to a number of 
quite different conditions, one shouldn't be surprised if different 
authors define it differently. Taking~\cite{BaaderNipkow} as 
authority, where confluence is defined for reduction relations, one 
may call $T_1(S)$ \emDefOrd{locally confluent} if all ambiguities of 
$T_1(S)$ are resolvable; this adjusts the traditional definition to 
allow for topology and take advantage of the $R$-module structure, but 
is otherwise a strict interpretation. Global confluence is, assuming 
persistent reducibility, more directly equivalent to unique 
reducibility: $T_1(S)$ is globally confluent if there for every \(\mu 
\in \mc{Y}\), all \(t_1,t_2 \in T(S)\) (\emph{not} only simple 
reductions), and every \(\ve \in \cmplO\) exist \(t_3,t_4 \in T(S)\) 
such that \(t_3\bigl( t_1(\mu) \bigr) - t_4\bigl( t_2(\mu) \bigr) \in 
\ve\). The two may seem similar, but a proof that local confluence 
implies global confluence (which essentially is what the original 
diamond lemma of Newman~\cite{Newman} was all about) requires 
something like an induction over $\mc{Y}$ to go through.

As will become clear in the next section, relative resolvability is 
more important in the theoretical machinery than plain resolvability, 
since it more easily lends itself to reasoning about elements of 
$\cmplM \setminus \mc{Y}$. On the other hand, plain resolvability is 
usually a more natural goal to aim for in practical calculations. The 
next lemma says that it is a sufficient condition also for relative 
resolvability.

\begin{lemma} \label{L:RelResolv}
  If $T(S)$ is compatible with the partial order $P$ on $\mc{Y}$, 
  then 
  \begin{align} 
    \DIS(\mu,P,S) ={}& \overline{ 
      \sum_{t \in T_1(S)} \setOf[\big]{ a - t(a) }{ a\in\DSM(\mu,P) }
    } \label{Eq1:Kar.DIS}\\ ={}&
    \overline{ 
      \sum_{t \in T(S)} \setOf[\big]{ a - t(a) }{ a\in\DSM(\mu,P) }
    } \label{Eq2:Kar.DIS}
  \end{align}
  for all \(\mu \in \mc{Y}\) and each resolvable ambiguity of 
  $T_1(S)$ is also resolvable relative to $P$.
\end{lemma}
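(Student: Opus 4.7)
The lemma contains three claims: the two equalities characterising $\DIS(\mu,P,S)$ and the implication from resolvability to relative resolvability. The plan is to establish the two equalities first, and then to deduce the resolvability claim from them. The arguments parallel the corresponding parts of Lemma~\ref{L:Slutenhet,I'(S)}, but with every occurrence of $\cmplM$ replaced by $\DSM(\mu,P)$; compatibility of $T(S)$ with $P$, exploited through Lemma~\ref{L:Kompabilitet}, is what makes this restriction consistent. The main obstacle is the first equality, since it reconciles the definition of $\DIS(\mu,P,S)$ (whose generators $\nu - t(\nu)$ have $\nu$ drawn only from $\mc{Y}$) with an expression in which $a$ ranges over the much larger set $\DSM(\mu,P)$.

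For the first equality the $\supseteq$ inclusion is immediate, since each generator $\nu - t(\nu)$ already has the form $a - t(a)$ with $a = \nu \in \DSM(\mu,P)$. For $\subseteq$ I would fix $a \in \DSM(\mu,P)$ and $t \in T_1(S)$, pick $\ve \in \cmplO$, and use continuity of $t$ to choose $\delta \in \cmplO$ with $\delta \subseteq \ve$ and $t(\delta) \subseteq \ve$. Lemma~\ref{L:Cspan-uppdelning} then provides an approximation $b = \sum_{i=1}^n r_i(\nu_i)$ with $r_i \in \pm R^*$ and $\nu_i < \mu \pin{P}$ such that $a - b \in \delta$. Since $t$ commutes with elements of $R^*$, one gets $b - t(b) = \sum_i r_i\bigl(\nu_i - t(\nu_i)\bigr) \in \DIS(\mu,P,S)$, while $(a - t(a)) - (b - t(b)) = (a-b) - t(a-b) \in \delta + t(\delta) \subseteq \ve$. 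Hence $a - t(a)$ lies in $\DIS(\mu,P,S) + \ve$ for every $\ve \in \cmplO$, and the topological closedness of $\DIS(\mu,P,S)$ then places $a - t(a)$ in $\DIS(\mu,P,S)$ itself.

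For the second equality the $\subseteq$ direction is trivial because $T_1(S) \subseteq T(S)$. For the converse I would use the telescoping trick of Lemma~\ref{L:Slutenhet,I'(S)}: if $t = t_n \circ \dotsb \circ t_1$ with $t_i \in T_1(S)$ and $a \in \DSM(\mu,P)$, set $a_0 = a$ and $a_k = t_k(a_{k-1})$; then $a - t(a) = \sum_{k=1}^n \bigl(a_{k-1} - t_k(a_{k-1})\bigr)$, and each $a_{k-1}$ remains in $\DSM(\mu,P)$ by iterated application of Lemma~\ref{L:Kompabilitet}, so each summand is a generator of the right-hand side of \eqref{Eq1:Kar.DIS}.

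For the final claim, suppose $(t_1,\mu,t_2)$ is a resolvable ambiguity. Since $t_1,t_2 \in T_1(S)$ act nontrivially on $\mu$ and are compatible with $P$, both $t_1(\mu)$ and $t_2(\mu)$ lie in $\DSM(\mu,P)$. Given $\ve \in \cmplO$, resolvability produces $t_3,t_4 \in T(S)$ with $t_3\bigl(t_1(\mu)\bigr) - t_4\bigl(t_2(\mu)\bigr) \in \ve$, and the decomposition
\[
  t_1(\mu) - t_2(\mu)
  = \bigl(t_1(\mu) - t_3(t_1(\mu))\bigr)
  + \bigl(t_3(t_1(\mu)) - t_4(t_2(\mu))\bigr)
  - \bigl(t_2(\mu) - t_4(t_2(\mu))\bigr)
\]
places the outer summands in $\DIS(\mu,P,S)$ by \eqref{Eq2:Kar.DIS} (they are of the form $a - t(a)$ with $a \in \DSM(\mu,P)$ and $t \in T(S)$), while the middle summand lies in $\ve$. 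Thus $t_1(\mu) - t_2(\mu) \in \DIS(\mu,P,S) + \ve$ for every $\ve \in \cmplO$, and closedness finishes the argument.
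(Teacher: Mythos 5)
Your proof is correct and follows essentially the same approach as the paper's: establish \eqref{Eq2:Kar.DIS} by telescoping through Lemma~\ref{L:Kompabilitet}, establish \eqref{Eq1:Kar.DIS} by approximating $a \in \DSM(\mu,P)$ using Lemma~\ref{L:Cspan-uppdelning} and the closedness of $\DIS(\mu,P,S)$, and then derive the resolvability claim by the decomposition of $t_1(\mu)-t_2(\mu)$ into two $\DIS$-terms plus an $\ve$-small remainder. One cosmetic slip: in your treatment of \eqref{Eq1:Kar.DIS} you label the easy direction ``$\supseteq$'' and the hard one ``$\subseteq$,'' but the containment you actually prove in the first step is $\DIS(\mu,P,S) \subseteq \overline{\sum_{t}\{a-t(a)\}}$ (generators of the left side lie in the right, which is a closed $R$-module), so the paper's convention is the reverse; the mathematics itself is unaffected.
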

\begin{proof}
  Let \(\mu \in \mc{Y}\) be given.
  It is clear that \eqref{Eq1:Kar.DIS} and \eqref{Eq2:Kar.DIS} hold 
  with $\subseteq$ inclusions, so what needs to be shown are 
  the $\supseteq$ inclusions. In \eqref{Eq2:Kar.DIS}, one may observe 
  that any \(t \in T(S)\setminus\{\id\}\) decomposes as \(t = t_n \circ 
  \dotsb \circ t_1\) for \(t_1,\dotsc,t_n \in T_1(S)\) and that 
  \(a - t(a) = \sum_{i=1}^n \bigl( a_i -\nobreak t_i(a_i) \bigr)\) for 
  \(a_1 = a\) and \(a_{i+1} = (t_i \circ\nobreak \dotsb \circ\nobreak 
  t_1)(a)\) for \(i=1,\dotsc,n-1\). By Lemma~\ref{L:Kompabilitet}, 
  \(a_i \in \DSM(\mu,P)\) for \(i=1,\dotsc,n\), and hence
  \[
    \setOf[\big]{ a - t(a) }{ a\in\DSM(\mu,P), t \in T(S) } 
    \subseteq
    \! \sum_{t \in T_1(S)} \!\!\! 
      \setOf[\big]{ a - t(a) }{ a\in\DSM(\mu,P) }
    \text{.}
  \]
  
  In \eqref{Eq1:Kar.DIS}, one must instead decompose the elements of 
  $\DSM(\mu,P)$. Let \(t \in T_1(S)\) and \(a \in \DSM(\mu,P)\) be 
  arbitrary. Let \(\ve \in \cmplO\) be arbitrary. Let \(\delta \in 
  \cmplO\) be such that \(\delta \subseteq \ve\) and \(t(\delta) 
  \subseteq \ve\). By Lemma~\ref{L:Cspan-uppdelning}, there exist 
  \(\nu_1,\dotsc,\nu_n \in \mc{Y}\) and \(r_1,\dotsc,r_n \in \pm R^*\) 
  such that \(\nu_i < \mu \pin{P}\) for \(i=1,\dotsc,n\) and 
  \(b := \sum_{i=1}^n r_i(\nu_i) \in a + \delta\). Clearly
  \[
    b - t(b) = 
    \sum_{i=1}^n r_i(\nu_i) - 
      t\biggl( \sum_{i=1}^n r_i(\nu_i) \biggr) =
    \sum_{i=1}^n r_i\bigl( \nu_i - t(\nu_i) \bigr) \in
    \DIS(\mu,P,S)
  \]
  and since \(\bigl(b -\nobreak t(b) \bigr) - \bigl(a -\nobreak t(a) 
  \bigr) \in \ve\) it follows, by the arbitrariness of $\ve$, that 
  $a-t(a)$ is a limit point of $\DIS(\mu,P,S)$. Since this set is 
  topologically closed by definition, \(a - t(a) \in 
  \DIS(\mu,P,S)\) and hence 
  \[
    \sum_{t \in T_1(S)} \!
      \setOf[\big]{ a - t(a) }{ a\in\DSM(\mu,P)} \subseteq
    \DIS(\mu,P,S)
  \]
  by the arbitrariness of $a$ and $t$. \eqref{Eq1:Kar.DIS} follows.
  
  Let $(t_1,\mu,t_2)$ be a resolvable ambiguity of $T_1(S)$, let 
  \(a_1 = t_1(\mu)\), and let \(a_2 = t_2(\mu)\). Let \(\ve \in 
  \cmplO\) be arbitrary. Let \(t_3,t_4 \in T(S)\) be such that 
  \(t_3(a_1) - t_4(a_2) \in \ve\). By Lemma~\ref{L:Kompabilitet}, 
  \(a_1,a_2 \in \DSM(\mu,P)\). Hence \(b = \bigl( a_1 -\nobreak 
  t_3(a_1)\bigr) + \bigl( -a_2 - t_4(-a_2) \bigr) \in \DIS(\mu,P,S)\) 
  and \((a_1 -\nobreak a_2) - b = t_3(a_1) - t_4(a_2) \in \ve\). In 
  other words, $a_1-a_2$ is a limit point of $\DIS(\mu,P,S)$. 
  As above, it follows that \(a_1-a_2 \in \DIS(\mu,P,S)\).
\end{proof}

With that final implication, the big equivalence in the generic 
diamond lemma is now apparent:

\begin{theorem} \label{S:CDL}
  If $T(S)$ is equicontinuous and compatible with a partial order $P$ 
  on $\mc{Y}$ that furthermore satisfies the topological descending 
  chain condition, then the following claims are equivalent:
  \begin{enumerate}
    \item[\parenthetic{a}] 
      Every ambiguity of $T_1(S)$ is resolvable.
    \item[\parenthetic{a\textprime}] 
      Every ambiguity of $T_1(S)$ is resolvable relative to $P$.
    \item[\parenthetic{b}]
      Every element of $\cmplM$ is persistently and uniquely 
      reducible, i.e., \(\cmplM = \Red(S)\).
    \item[\parenthetic{c}]
      Every element of $\cmplM$ has a unique normal form, i.e., 
      \(\cmplM = \Irr(S) \oplus \mc{I}(S)\).
  \end{enumerate}
\end{theorem}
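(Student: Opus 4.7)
The strategy is to prove the four-way equivalence by closing the cycle \parenthetic{a} $\Rightarrow$ \parenthetic{a\textprime} $\Rightarrow$ \parenthetic{b} $\Leftrightarrow$ \parenthetic{c} $\Rightarrow$ \parenthetic{a}. Each link will either invoke one of the preparatory lemmas or be a short argument built on $\tS$; the blanket hypotheses of the theorem (TDCC on $P$, equicontinuity of $T(S)$, and compatibility of $T(S)$ with $P$) are exactly those needed by the supporting machinery, so most of the work is delegation and bookkeeping.

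The implication \parenthetic{a} $\Rightarrow$ \parenthetic{a\textprime} is the second assertion of Lemma~\ref{L:RelResolv}: under compatibility, resolvability of an ambiguity of $T_1(S)$ implies relative resolvability with respect to $P$. The implication \parenthetic{a\textprime} $\Rightarrow$ \parenthetic{b} is Lemma~\ref{L:Red=cmplM}: its hypothesis \eqref{Eq:Red=cmplM} states precisely that each ambiguity $(t_1,\mu,t_2)$ of simple reductions satisfies $t_1(\mu) - t_2(\mu) \in \DIS(\mu,P,S)$, which is \parenthetic{a\textprime}, and the remaining hypotheses are carried in verbatim from the theorem. The equivalence \parenthetic{b} $\Leftrightarrow$ \parenthetic{c} is Lemma~\ref{L:(b)<=>(c)}, whose only extra requirement is that $\Per(S) = \cmplM$; that is supplied by Lemma~\ref{L: Persistently red.} under the very same TDCC/equicontinuity/compatibility package.

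To close the cycle I would prove \parenthetic{b} $\Rightarrow$ \parenthetic{a} directly. Assuming $\Red(S) = \cmplM$, the map $\tS\colon \cmplM \Fpil \Irr(S)$ from Definition~\ref{Def:Red(S)} is everywhere defined, and Lemma~\ref{L: Red modul} gives $\tS \circ t = \tS$ on $\Red(S)$ for every $t \in T(S)$. Given an ambiguity $(t_1,\mu,t_2)$, set $b := \tS(\mu)$, so that $\tS\bigl(t_1(\mu)\bigr) = b = \tS\bigl(t_2(\mu)\bigr)$. For an arbitrary $\ve \in \cmplO$, the defining property of $\tS$ produces $t_3, t_4 \in T(S)$ such that $t_3\bigl(t_1(\mu)\bigr)$ is stuck in $b+\ve$ and $t_4\bigl(t_2(\mu)\bigr)$ is stuck in $b+\ve$; in particular both values lie in $b+\ve$, so their difference lies in $\ve - \ve = \ve$, and the ambiguity is resolvable.

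The main ``obstacle'' here is not analytic but organisational: all the real work \Dash the order induction, the interplay of equicontinuity with the completion, the construction of $\tS$, and the passage between simple and composite reductions \Dash has already been discharged in Sections~\ref{Sec:Reducibilitet} and~\ref{Sec:Monomordning}. What remains is to track the hypotheses so that each of Lemmas~\ref{L:RelResolv}, \ref{L:Red=cmplM}, \ref{L:(b)<=>(c)}, \ref{L: Persistently red.}, and \ref{L: Red modul} applies in the context of the theorem, which the flow above arranges.
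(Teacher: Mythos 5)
Your proof is correct and follows essentially the same decomposition as the paper: $\Per(S)=\cmplM$ via Lemma~\ref{L: Persistently red.}, then (a) $\Rightarrow$ (a\textprime) via Lemma~\ref{L:RelResolv}, (a\textprime) $\Rightarrow$ (b) via Lemma~\ref{L:Red=cmplM}, (b) $\Leftrightarrow$ (c) via Lemma~\ref{L:(b)<=>(c)}, and a short direct argument for (b) $\Rightarrow$ (a) using $\tS$ exactly as the paper does. Your version of the last step merely spells out the steps (that $t_i(\mu)\in\Red(S)$, that $\tS\circ t_i=\tS$, and that stuckness gives membership) that the paper compresses into a single sentence.
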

\begin{proof}
  By Lemma~\ref{L: Persistently red.}, \(\Per(S) = \cmplM\). Hence 
  (b) and (c) are equivalent by Lemma~\ref{L:(b)<=>(c)}. (a)~implies 
  (a\textprime) by Lemma~\ref{L:RelResolv} and (a\textprime)~implies 
  (b) by Lemma~\ref{L:Red=cmplM}. Hence the only thing left to prove 
  is that (b)~implies~(a).
  
  Let an ambiguity $(t_1,\mu,t_2)$ be given. Let \(\ve \in \cmplO\) be 
  arbitrary. Since \(\mu \in \Red(S)\) there exist \(t_3,t_4 \in 
  T(S)\) such that $t_3\bigl( t_1(\mu) \bigr)$ and $t_4\bigl( 
  t_2(\mu) \bigr)$ are stuck in $\tS(\mu)+\ve$. Hence \(t_3\bigl( 
  t_1(\mu) \bigr) - t_4\bigl( t_2(\mu) \bigr) \in \ve\), and thus the 
  ambiguity is resolvable.
\end{proof}

\section{Ambiguities}
\label{Sec:Tvetydigheter}

Most applications of a diamond lemma has as one of its main steps the 
calculations for checking that the ambiguities are resolvable. In 
Gr\"obner basis theory this is even more central, with various 
completion algorithms being driven by these ambiguity resolution 
calculations (and adding the twist of modifying the set of reductions 
whenever it is found that an ambiguity fails to resolve). From the 
theoretical foundations these algorithms above all else need some 
criteria for discarding ambiguities that don't need to be checked, as 
there typically are infinitely many triplets $(t,\mu,u)$ which 
qualify as ambiguities under Definition~\ref{Def:Tvetydighet}. 
In order to formulate such criteria one needs some extra structure 
however, and one that performs very well is to have a family of 
advanceable maps.

What makes advanceable maps useful for structuring a set of 
ambiguities is primarily that one ambiguity can be the image of 
another ambiguity; indeed, for e.g.~the family \eqref{Eq:2side-mult.} 
of advanceable maps \emph{every} image of an ambiguity is another 
ambiguity. Such images are however never more than shadows of the 
original ambiguities, since also the ambiguity resolutions can be 
transported to the image by the advanceable map.
This argument (see Lemma~\ref{L:AbsolutSkugga} for the formal claim) 
is classically used to prove that it is only necessary 
to check resolvability for minimal ambiguities (since any non-minimal 
ambiguity would be a shadow of a smaller ambiguity), but it can also 
be used to prove that it is sufficient to 
check one labelling of an expression, since relabelling maps are 
often advanceable. A catch is however that even if one can skip 
checking any particular shadow ambiguity, it does not necessarily 
follow that all shadow ambiguities can be skipped\Dash the 
``original'' of which an ambiguity is a shadow can itself be a shadow 
of the shadow (e.g.~relabellings are typically invertible). The 
`shadow-critical' concept of Definition~\ref{Def:V-kritisk} is one 
way around this catch, even though it in general doesn't discard 
everything that might be skipped.

A major complication when considering shadow ambiguities is 
that the original ambiguity will often have a different 
sort\Ldash reside in a different base set\Dash than the shadow that 
one wishes to resolve. Therefore it is in this section necessary to 
make the multiplicity in the basic framework 
$\bigl( \mc{M}, R, \mc{Y}, \mc{O}, T_1(S) \bigr)$ explicit, and 
think in terms of a family of such frameworks. Thus there is 
a set $I$\index{I@$I$ (set of sorts)} (the set of ``sorts'') which 
serves as the index set for the family of frameworks, and for each 
\(i \in I\) there is a quintuplet 
$\bigl( \mc{M}(i), R(i), \mc{Y}(i), \mc{O}(i), T_1(S)(i) \bigr)$ where:
\begin{itemize}
  \item
    $\mc{M}(i)$\index{M i@$\mc{M}(i)$} is a topological abelian group 
    (and $\cmplM(i)$\index{M-bar i@$\cmplM(i)$} is its completion).
  \item 
    $R(i)$\index{R i@$R(i)$} is a set of continuous group endomorphisms 
    on $\mc{M}(i)$, and hence on $\cmplM(i)$.
  \item
    $\mc{Y}(i)$\index{Y i@$\mc{Y}(i)$} is a spanning subset of 
    $\mc{M}(i)$.
  \item 
    \leavevmode\index{O i@$\mc{O}(i)$}\(\mc{O}(i) = 
    \bigl\{ B_n(i) \bigr\}_{n=1}^\infty\)\index{B n i@$B_n(i)$} is a 
    family of $R(i)$-modules (hence subsets of $\mc{M}(i)$) that 
    constitutes a family of fundamental neighbourhoods of $0$.
  \item
    $T_1(S)(i)$\index{T 1 S i@$T_1(S)(i)$} is a set of continuous 
    group endomorphisms on $\cmplM(i)$ that commute with elements 
    of $R(i)$.
\end{itemize}
As with the five main pieces,
\begin{itemize}
  \item
    the particular partial order on $\mc{Y}(i)$ will be denoted $P(i)$, 
    but all assumptions on this are explicit, like they were in the 
    previous section.
\end{itemize}
Other things defined from the basic framework are similarly 
specialised to a sort $i$ by appending an `$(i)$\index{(i)@\dots$(i)$}' 
to the symbol; the parenthesis notation for indexing may seem a bit 
peculiar, but it is traditional for operads, which have inspired much 
of the multi-sorted extensions to this formalism. Operads have \(I = \N\) 
with the index $i$ being the arity of the elements concerned, so 
there is one set $\Irr(S)(0)$ of irreducible constants, another set 
$\Irr(S)(1)$ of irreducible unary operations, yet another set 
$\Irr(S)(2)$ of irreducible binary operations, etc.; other types of 
algebraic structures typically require other index sets. What happens 
with respect to ambiguities is that each $T_1(S)(i)$ has its own 
ambiguities, but it is frequently the case that they turn out to be 
shadows of ambiguities in some $T_1(S)(i')$.

The family $V$ of maps that one wants to have advanceable reacts 
differently to the introduction of several sorts: it acquires two 
sort indices, since there is no reason the codomain $\cmplM(i)$ 
should have the same sort as the domain $\cmplM(i')$. It is however 
not until Definition~\ref{Def:V-kritisk} that this has to be made 
explicit; before that it is sufficient to reason about specific 
advanceable maps that relate to specific ambiguities. 
An underlying set $S$ of rewrite rules will typically not carry sort 
indices, as every element of it contributes to every $T_1(S)(i)$.

\begin{definition} \label{Def:Framflyttbar2}
  Let \(i,i' \in I\) be given. A map \(v\colon \cmplM(i') \Fpil 
  \cmplM(i)\) is said to be \DefOrd{advanceable} with respect to 
  $T_1(S)(i')$ and $T_1(S)(i)$ if there for every \(t' \in T_1(S)(i')\) 
  and \(a \in \RstarY(i')\) exists some \(t \in T(S)(i)\) such that 
  \(
    t\bigl( v(a) \bigr) = v\bigl( t'(a) \bigr)
  \).
  The map $v$ is said to be 
  \DefOrd[*{advanceable!absolutely}]{absolutely advanceable} with 
  respect to $T(S)(i')$ and $T(S)(i)$ if there for every \(t' \in 
  T(S)(i')\) exists some \(t \in T(S)(i)\) such that 
  \(v \circ t' = t \circ v\).
  
  An ambiguity $(t,\mu,u)$ of $T_1(S)(i)$ is said to be a 
  \DefOrd[*{ambiguity!shadow}]{shadow} of the ambiguity $(t',\mu',u')$ 
  of $T_1(S)(i')$ if there exists an advanceable continuous 
  homomorphism \(v\colon \cmplM(i') \Fpil \cmplM(i)\) such that 
  \(\mu = v(\mu')\), \(t(\mu) = v\bigl( t'(\mu') \bigr)\), and 
  \(u(\mu) = v\bigl( u'(\mu') \bigr)\). The ambiguity $(t,\mu,u)$ is 
  said to be an \DefOrd[*{ambiguity!absolute shadow}]{absolute 
  shadow} if $v$ is absolutely advanceable.
\end{definition}

In the classical case of Bergman's diamond lemma, there is only one 
sort\Ldash preferably denoted $1$ for consistency with the operad 
generalisation\Dash and hence the indices could be dropped. Not 
dropping indices, one would have \(\mc{M}(1) = \RavX\), \(\mc{Y}(1) = 
X^*\), and $T_1(S)(1)$ being the set of all maps $t_{\nu_1 s \nu_2}$ 
on the form \eqref{Eq1:Trad.def.T_1(S)}; an ambiguity is thus some 
$(t_{\lambda_1 s_1 \nu_1}, \mu, t_{\lambda_2 s_2 \nu_2})$ where 
\(\lambda_1 \mu_{s_1} \nu_1 = \mu = \lambda_2 \mu_{s_2} \nu_2\). 
However, if $\lambda_1$ and $\lambda_2$ have some common prefix 
$\kappa$ (i.e., \(\lambda_1 = \kappa \lambda_1'\) and \(\lambda_2 = 
\kappa \lambda_2'\) for some \(\kappa,\lambda_1',\lambda_2' \in 
X^*\)) and\slash or $\nu_1$ and $\nu_2$ have some common suffix 
$\rho$ (i.e., \(\nu_1 = \nu_1' \rho\) and \(\nu_2 = \nu_2' \rho\) for 
some \(\nu_1',\nu_2',\rho \in X^*\)) then for \(\mu' = 
\lambda_1' \mu_{s_1} \nu_1' = \lambda_2' \mu_{s_2} \nu_2'\) one finds 
that $(t_{\lambda_1 s_1 \nu_1}, \mu, t_{\lambda_2 s_2 \nu_2})$ is a 
shadow under the absolutely advanceable map \(v(b) = \kappa b \rho\) 
of the ambiguity $(t_{\lambda_1' s_1 \nu_1'}, \mu', 
t_{\lambda_2' s_2 \nu_2'})$. From the unique factorisation in 
the free monoid $X^*$, it follows that the only ambiguities that are 
not such shadows are those where at least one of $\lambda_1$ and 
$\lambda_2$, and at least one of $\nu_1$ and $\nu_2$, are equal to 
the identity $\ssI$. This, with the help of the next two lemmas, 
cuts down the number of ambiguities that explicitly need to 
be resolved quite considerably.

\begin{lemma} \label{L:AbsolutSkugga}
  If an ambiguity is resolvable then all its absolute shadows are 
  resolvable as well.
\end{lemma}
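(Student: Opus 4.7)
The plan is to directly unpack both definitions and use the advanceability of $v$ to transport an arbitrary resolution from sort $i'$ to sort $i$.

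Let $(t',\mu',u')$ be a resolvable ambiguity of $T_1(S)(i')$ and let $(t,\mu,u)$ be an absolute shadow of it, witnessed by an absolutely advanceable continuous homomorphism $v\colon\cmplM(i')\Fpil\cmplM(i)$ with $\mu = v(\mu')$, $t(\mu) = v\bigl(t'(\mu')\bigr)$, and $u(\mu) = v\bigl(u'(\mu')\bigr)$. Given an arbitrary $\ve\in\cmplO(i)$, the first step is to pull $\ve$ back through $v$: since $v$ is continuous at $0$ there exists $\delta\in\cmplO(i')$ with $v(\delta)\subseteq\ve$.

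Next, apply the hypothesis that $(t',\mu',u')$ is resolvable to this $\delta$, obtaining reductions $t_3',t_4'\in T(S)(i')$ such that $t_3'\bigl(t'(\mu')\bigr) - t_4'\bigl(u'(\mu')\bigr)\in\delta$. By absolute advanceability of $v$ (applied to $t_3'$ and to $t_4'$), there exist $t_3,t_4\in T(S)(i)$ with $v\circ t_3' = t_3\circ v$ and $v\circ t_4' = t_4\circ v$. Then, using that $v$ is a group homomorphism together with the identities $t(\mu)=v\bigl(t'(\mu')\bigr)$ and $u(\mu)=v\bigl(u'(\mu')\bigr)$, one computes
\begin{equation*}
  t_3\bigl(t(\mu)\bigr) - t_4\bigl(u(\mu)\bigr) =
  v\Bigl(t_3'\bigl(t'(\mu')\bigr) - t_4'\bigl(u'(\mu')\bigr)\Bigr)
  \in v(\delta) \subseteq \ve,
\end{equation*}
which is exactly what is needed to conclude that $(t,\mu,u)$ is resolvable.

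I do not expect any serious obstacle here: all the real work has already been built into the definitions. The only subtle point is that absolute advanceability (as opposed to conditional advanceability) is precisely what is required to translate arbitrary elements $t_3',t_4'\in T(S)(i')$\Dash not merely values on specific elements of $\RstarY(i')$\Dash into reductions in the target sort, and it is also what lets us use the identity $v\circ t_j' = t_j\circ v$ as maps rather than only at a single point. This is why the lemma is stated for absolute shadows; the analogous statement for general (merely conditional) shadows would require extra argument and is presumably handled later via the stronger machinery built up around Definition~\ref{Def:V-kritisk}.
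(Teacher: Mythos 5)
Your proof is correct and follows essentially the same route as the paper's: pick $\delta$ with $v(\delta)\subseteq\ve$ by continuity, resolve the original ambiguity to within $\delta$, push the resolving reductions through $v$ using absolute advanceability, and then use the homomorphism property plus the shadow identities to land inside $\ve$. Your closing remark on why absoluteness (as opposed to conditional advanceability) is needed is also exactly the point the paper discusses in the paragraph following this lemma.
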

\begin{proof}
  Let the indices \(i,i' \in I\) be given. Let $(t_1',\mu',t_2')$ 
  be a resolvable ambiguity of $T_1(S)(i')$. Let $(t_1,\mu,t_2)$ 
  be an arbitrary ambiguity of $T_1(S)(i)$ that is an absolute shadow 
  of $(t_1',\mu',t_2')$, and let \(v\colon \cmplM(i') \Fpil \cmplM(i)\) 
  be the absolutely advanceable continuous homomorphism which links the 
  two ambiguities. Let \(\ve \in \cmplO(i)\) be arbitrary, and let 
  \(\delta \in \cmplO(i')\) be such that \(v(\delta) \subseteq \ve\). 
  Since $(t_1',\mu',t_2')$ is resolvable there exists \(t_3',t_4' \in 
  T(S)(i')\) such that \(t_3'\bigl( t_1'(\mu') \bigr) - 
  t_4'\bigl( t_2'(\mu') \bigr) \in \delta\). By absolute advanceability 
  of $v$ there exists \(t_3,t_4 \in T(S)(i)\) such that \(t_3 \circ v 
  = v \circ t_3'\) and \(t_4 \circ v = v \circ t_4'\). Then
  \begin{align*}
    t_3\bigl( t_1(\mu) \bigr) - t_4\bigl( t_2(\mu) \bigr) ={}& 
    (t_3 \circ t_1)\bigl( v(\mu') \bigr) - 
      (t_4 \circ t_2)\bigl( v(\mu') \bigr) 
      = \\ ={}&
    (v \circ t_3' \circ t_1')(\mu') - (v \circ t_4' \circ t_2')(\mu) 
      = \\ ={}&
    v\bigl( (t_3' \circ t_1')(\mu') - (t_4' \circ t_2')(\mu') \bigr) \in 
    v(\delta) \subseteq \ve \text{,}
  \end{align*}
  and since $\ve$ was arbitrary it follows that $(t_1,\mu,t_2)$ is 
  resolvable.
\end{proof}

In this lemma, it would not have been sufficient to assume 
conditional advanceability, and it is instructive to consider why. 
Suppose $t_1(\mu')$ is not a single element of $\RstarY(i')$, but is 
instead the sum $\lambda + \nu$ of two different elements of 
$\mc{Y}(i')$. 
Suppose further that \(t_3' \in T_1(S)(i')\), because 
the extension to non-simple reductions in absolute advanceability is 
not the main issue. If $v$ is advanceable then there certainly exist 
\(t_{3\lambda},t_{3\nu} \in T(S)(i)\) such that \(t_{3\lambda}\bigl( 
v(\lambda) \bigr) = v\bigl( t_3'(\lambda) \bigr)\) and 
\(t_{3\nu}\bigl( v(\nu) \bigr) = v\bigl( t_3'(\nu) \bigr)\), but 
there is no guarantee that there is some \(t_3 \in T(S)(i)\) such 
that \(t_3\bigl( v( \lambda +\nobreak \nu ) \bigr) = v\bigl( t_3'( 
\lambda +\nobreak \nu ) \bigr)\). In very many cases it would 
probably turn out that something like $t_{3\lambda} \circ t_{3\nu}$ 
acts exactly as the $t_3$ one needs, because good choices of simple 
reductions tend to act trivially on large subsets of $\cmplM(i)$, but 
since it cannot in general be assumed that $t_{3\nu}$ acts trivially 
on $v(\lambda)$, that composition will sometimes fail. 

The idea to first reduce one term, and then the next, is basically 
good but requires some kind of book-keeping device to work. Provided 
that simple reductions are of the $t_{\mu \mapsto a}$ kind (i.e., 
each only acts nontrivially on one element of $\mc{Y}$), a 
possibility would be to use the order on $\mc{Y}(i)$ and start with 
the smallest terms, but a more powerful solution is to go for 
relative resolvability instead; this provides for reducing different 
terms independently of each other. The small price one has to pay is 
a condition on how the advanceable map behaves with respect to 
the partial orderings.

\begin{lemma} \label{L:Rel.Res.Shadow}
  Assume the ambiguity $(t,\mu,u)$ of $T_1(S)(i)$ is a shadow of 
  the ambiguity $(t',\mu',u')$ of $T_1(S)(i')$, that the 
  corresponding advanceable map \(v\colon \cmplM(i') \Fpil 
  \cmplM(i)\) satisfies
  \begin{equation} \label{Eq:Rel.Res.Shadow}
    v\Bigl( \DSM\bigl( \mu', P(i') \bigr) \Bigr) \subseteq
    \DSM\bigl( \mu, P(i) \bigr)
    \text{,}
  \end{equation}
  and that $T(S)(i)$ is compatible with the partial order $P(i)$. 
  If $(t',\mu',u')$ is resolvable relative to $P(i')$, then 
  $(t,\mu,u)$ is resolvable relative to $P(i)$.
  
  More generally, an advanceable continuous homomorphism 
  \(v\colon \cmplM(i') \Fpil \cmplM(i)\) satisfying 
  \eqref{Eq:Rel.Res.Shadow} also maps $\DIS\bigl( \mu', P(i'), S 
  \bigr)$ into $\DIS\bigl( \mu, P(i), S \bigr)$ if $T_1(S)(i)$ is 
  compatible with the partial order $P(i)$.
\end{lemma}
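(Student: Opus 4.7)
The first claim is immediate from the second. If $(t,\mu,u)$ is a shadow of $(t',\mu',u')$ via the advanceable continuous homomorphism $v$, then by definition $t(\mu) = v\bigl(t'(\mu')\bigr)$ and $u(\mu) = v\bigl(u'(\mu')\bigr)$, so that $t(\mu) - u(\mu) = v\bigl(t'(\mu') - u'(\mu')\bigr)$. Relative resolvability of the preimage ambiguity places $t'(\mu') - u'(\mu')$ in $\DIS\bigl(\mu',P(i'),S\bigr)$, and the general claim then places the $v$-image in $\DIS\bigl(\mu,P(i),S\bigr)$, which is exactly relative resolvability of $(t,\mu,u)$ with respect to $P(i)$. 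So only the more general mapping statement requires real argument.

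For that, the plan is to combine the defining generators of the source $\DIS$ with the alternative characterisation of the target $\DIS$ provided by Lemma~\ref{L:RelResolv}. By Definition~\ref{Def:DIS}, $\DIS\bigl(\mu',P(i'),S\bigr)$ is the topological closure of the $R(i')$-module spanned by elements $\nu - t'(\nu)$ with $\nu \in \mc{Y}(i')$, $\nu < \mu' \pin{P(i')}$, and $t' \in T_1(S)(i')$. Since $v$ is a continuous group homomorphism and $\DIS\bigl(\mu,P(i),S\bigr)$ is topologically closed, it suffices (arguing as at the end of the proof of Lemma~\ref{L:Slutenhet,I'(S)}, or appealing to Lemma~\ref{L:Cspan-uppdelning} if one prefers to spell out the approximation) to show that $v$ sends each individual term $r\bigl(\nu - t'(\nu)\bigr)$, with $r \in \pm R^*(i')$, into $\DIS\bigl(\mu,P(i),S\bigr)$.

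The verification of such a term proceeds as follows. Using Assumption~\ref{Ant:T(S)-kont-hom} to commute $t'$ past $r$, one rewrites
\[
  v\bigl(r(\nu) - t'(r(\nu))\bigr) = v\bigl(r(\nu)\bigr) - v\bigl(t'(r(\nu))\bigr).
\]
Since $r(\nu)$ is, up to a sign, an element of $\RstarY(i')$, advanceability of $v$ supplies some $t \in T(S)(i)$ with $t\bigl(v(r(\nu))\bigr) = v\bigl(t'(r(\nu))\bigr)$; signs pass through the group homomorphisms $v$ and $t$ without issue. Hence the displayed expression equals $v(r(\nu)) - t\bigl(v(r(\nu))\bigr)$. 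Now $r(\nu) \in \DSM\bigl(\mu',P(i')\bigr)$ since the latter is an $R(i')$-module containing $\nu$, whence \eqref{Eq:Rel.Res.Shadow} gives $v(r(\nu)) \in \DSM\bigl(\mu,P(i)\bigr)$. Finally, combining the hypothesised compatibility of $T_1(S)(i)$ with $P(i)$, which upgrades to $T(S)(i)$ by Lemma~\ref{L:Kompabilitet}, with the characterisation \eqref{Eq2:Kar.DIS} of $\DIS\bigl(\mu,P(i),S\bigr)$ from Lemma~\ref{L:RelResolv} identifies $v(r(\nu)) - t\bigl(v(r(\nu))\bigr)$ as an element of $\DIS\bigl(\mu,P(i),S\bigr)$, as required. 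I do not anticipate a substantial obstacle beyond this bookkeeping; the conceptual content of the lemma is that conditional advanceability together with \eqref{Eq:Rel.Res.Shadow} is exactly what is needed to transport the $\DSM$-based presentation of $\DIS$ across $v$.
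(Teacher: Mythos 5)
Your proof is correct and follows essentially the same route as the paper's: pass from the generators $r(\nu - t'(\nu))$ of $\DIS\bigl(\mu',P(i'),S\bigr)$ through $v$ using advanceability to find a matching $t \in T(S)(i)$, use \eqref{Eq:Rel.Res.Shadow} to place $v\bigl(r(\nu)\bigr)$ in $\DSM\bigl(\mu,P(i)\bigr)$, and conclude via the characterisation \eqref{Eq2:Kar.DIS} from Lemma~\ref{L:RelResolv}. The only cosmetic difference is in how the closure is handled — the paper fixes $a$ and approximates it within a $\delta$ chosen so $v(\delta)\subseteq\ve$, while you argue directly that a continuous homomorphism carries $\Cspan$ of a set into the closure of the image of its $\Span$ — but the two are equivalent.
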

\begin{proof}
  Let \(D = \DIS\bigl( \mu, P(i), S \bigr)\) and \(D' = 
  \DIS\bigl( \mu', P(i'), S \bigr)\). That $(t,\mu,u)$ is resolvable 
  relative to $P(i)$ is by definition that \(t(\mu) - u(\mu) \in D\), 
  and since \(t(\mu) - u(\mu) = v\bigl( t'(\mu') -\nobreak u'(\mu') 
  \bigr)\) where \(t'(\mu') -\nobreak u'(\mu') \in D'\), the first 
  claim follows from the second: that \(v(D') \subseteq D\).
  
  Let \(a \in D'\) be given. Since $D$ is topologically closed, it 
  follows that \(v(a) \in D\) if it can be shown that \(v(a) \in D + 
  \ve\) for every \(\ve \in \cmplO(i)\). 
  Let \(\ve \in \cmplO(i)\) be arbitrary. Let \(\delta \in 
  \cmplO(i')\) be such that \(v(\delta) \subseteq \ve\). 
  Let \(\{\nu_j\}_{j=1}^m \subseteq \mc{Y}(i')\), \(\{r_j\}_{j=1}^m 
  \subseteq \pm R^*(i')\), and \(\{t_j'\}_{j=1}^m \subseteq 
  T_1(S)(i')\) be such that 
  \begin{equation*}
    \sum_{j=1}^m r_j\bigl( \nu_j - t_j'(\nu_j) \bigr) \in 
    a + \delta 
  \end{equation*}
  and \(\nu_j < \mu' \pin{P(i')}\) for \(j=1,\dotsc,m\). 
  Let \(b_j = r_j(\nu_j)\) for \(j=1,\dotsc,m\) and let 
  \(b = \sum_{j=1}^m \bigl( b_j -\nobreak t_j'(b_j)  \bigr)\). 
  
  By advanceability of $v$ there exist reductions \(\{t_j\}_{j=1}^m 
  \subseteq T(S)(i)\) such that \((t_j \circ\nobreak v)(b_j) = 
  (v \circ\nobreak t_j')(b_j)\) for \(j=1,\dotsc,m\). Since \(b_j 
  \in \DSM\bigl( \mu', P(i') \bigr)\) it follows that \(v(b_j) \in 
  \DSM\bigl( \mu, P(i) \bigr)\) for \(j=1,\dotsc,m\), and hence
  \begin{equation*}
    v(b) = 
    v\biggl( \sum_{j=1}^m \Bigl( b_j - t_j'(b_j) \Bigr) \biggr) =
    \sum_{j=1}^m \Bigl( v(b_j) - t_j\bigl( v(b_j) \bigr) \Bigr) \in
    D
  \end{equation*}
  by Lemma~\ref{L:RelResolv}. Furthermore 
  \(
    v(a) - v(b) = 
    v( a -\nobreak b ) \in 
    v(\delta) \subseteq \ve 
  \),
  thus \(v(a) \in D\) by the arbitrariness of $\ve$.
\end{proof}

The following concepts are useful when one seeks to prove that an 
advanceable map (or family of advanceable maps) satisfies 
\eqref{Eq:Rel.Res.Shadow}.

\begin{definition}
  A map \(v\colon \cmplM(i') \Fpil \cmplM(i)\) is said to 
  \DefOrd{correlate} $P(i')$ to $P(i)$ if
  \begin{equation} 
    v(\mu) \in \mc{Y}(i)
    \Ipil
    v\Bigl( \DSM\bigl( \mu, P(i') \bigr) \Bigr) \subseteq
      \DSM\bigl( v(\mu), P(i) \bigr)
  \end{equation}
  for all \(\mu \in \mc{Y}(i')\). 
  A map \(v\colon \mc{Y}(i') \Fpil \mc{Y}(i)\) is said to be 
  \DefOrd{monotone} with respect to $P(i')$ and $P(i)$ if
  \begin{equation} \label{Eq:Monoton}
    \nu \leqslant \mu \pin{P}(i')
    \Ipil
    v(\nu) \leqslant v(\mu) \pin{P}(i)
  \end{equation}
  for all \(\mu,\nu \in \mc{Y}(i')\). The map $v$ is 
  \DefOrd{strictly monotone} if
  \begin{equation}  \label{Eq:OrdnFramflyttKompatibel}
    \nu < \mu \pin{P}(i')
    \Ipil
    v(\nu) < v(\mu) \pin{P}(i)
  \end{equation}
  for all \(\mu,\nu \in \mc{Y}(i')\).
\end{definition}

It needs to be pointed out that there, particularly for functions 
with domain and codomain in $\R$, exists a conflicting terminology 
which calls the property defined in \eqref{Eq:Monoton} `increasing' 
and instead defines `monotone' as `increasing or decreasing'; 
preferences vary. When `monotone' as here means ``preserves 
inequalities'' then the corresponding name for ``reverses 
inequalities'' is \emDefOrd{antitone}. Yet another name that might be 
seen for a map having properties like these is that it is 
`compatible' with the order, but here Definition~\ref{Def:Kompatibel} 
has already given that name to a different relation between maps and 
binary relations.

The applied concept in this trio is that of a map which correlates 
$P(i')$ to $P(i)$: it covers the condition \eqref{Eq:Rel.Res.Shadow} of 
Lemma~\ref{L:Rel.Res.Shadow} and it blends nicely with 
Construction~\ref{Konstr:t_v,s-reduktion} in that it 
reduces compatibility of simple reductions made from a rule 
$(\mu,a)$ to the matter of whether \(a \in \DSM(\mu,P)\). On 
the other hand, it is usually monotonicity that is the goal when one 
constructs the relations $\bigl\{ P(i) \bigr\}_{i \in I}$, so a small 
lemma bridging the gap may be in order.

\begin{lemma} \label{L:OrdnFramflyttKompatibel}
  Let \(i,i' \in I\) be sorts. 
  Let \(v\colon \cmplM(i') \Fpil \cmplM(i)\) be a continuous 
  homomorphism such that \(v\bigl( r(\mu) \bigr) \in
  \Cspan\bigl(\bigl\{ v(\mu) \bigr\} \bigr)\) for all \(\mu \in 
  \mc{Y}(i')\) and \(r \in R^*(i')\). Let $P(i)$ be a binary relation 
  on $\mc{Y}(i)$ and let $P(i')$ be a binary relation on 
  $\mc{Y}(i')$. If \(v(\nu) \in \DSM\bigl( v(\mu), P(i) \bigr)\) for 
  all \(\mu,\nu \in \mc{Y}\) such that \(\nu < \mu \pin{P(i')}\) and 
  \(v(\mu) \in \mc{Y}\), then $v$ correlates $P(i')$ to $P(i)$.
\end{lemma}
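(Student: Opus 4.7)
The plan is to fix $\mu \in \mc{Y}(i')$ with $v(\mu) \in \mc{Y}(i)$ and show that the preimage $N := v^{-1}\bigl( \DSM(v(\mu), P(i)) \bigr)$ contains $\DSM(\mu, P(i'))$. Since the conclusion $v\bigl( \DSM(\mu, P(i')) \bigr) \subseteq \DSM(v(\mu), P(i))$ is exactly $\DSM(\mu, P(i')) \subseteq N$, this suffices. The virtue of working with $N$ rather than directly with the image is that $N$ is automatically a topologically closed subgroup of $\cmplM(i')$: closed because $\DSM(v(\mu), P(i))$ is closed and $v$ is continuous, and a subgroup because $v$ is a group homomorphism and $\DSM(v(\mu), P(i))$ is a subgroup.

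Next I would verify that $N$ contains every generator of $\DSM(\mu, P(i'))$ in the sense of the characterisation in Definition~\ref{Def:Kompatibel}, i.e., every $\nu \in \mc{Y}(i')$ with $\nu < \mu \pin{P(i')}$, and moreover every $r(\nu)$ with $r \in R^*(i')$. The first is exactly the hypothesis of the lemma: $v(\nu) \in \DSM(v(\mu), P(i))$. For the second, the hypothesis $v\bigl( r(\nu) \bigr) \in \Cspan\bigl( \{v(\nu)\} \bigr)$ combined with the fact that $\DSM(v(\mu), P(i))$ is a topologically closed $R(i)$-module containing $v(\nu)$ gives $\Cspan\bigl( \{v(\nu)\} \bigr) \subseteq \DSM(v(\mu), P(i))$, hence $r(\nu) \in N$.

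Since $N$ is a subgroup, it contains the group generated by all such $r(\nu)$, which by (a down-set restricted version of) Lemma~\ref{L:M-form} equals $\Span\bigl( \setOf{\nu \in \mc{Y}(i')}{\nu < \mu \pin{P(i')}} \bigr)$. Since $N$ is topologically closed it also contains the closure of this span, which by \eqref{Eq:Def:Kompatibel} is exactly $\DSM(\mu, P(i'))$. Applying $v$ to both sides of $\DSM(\mu, P(i')) \subseteq N$ yields the correlating inclusion.

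There is really no hard step; the main subtlety\Dash the reason the lemma has to phrase its assumption in the odd form \(v(r(\mu)) \in \Cspan(\{v(\mu)\})\) rather than asking that $v$ commute with the $R$-actions\Dash is that $v$ need not be a morphism between $R(i')$ and $R(i)$ in any literal sense, so the $R$-module closure of the image generators must be supplied manually, and that is precisely what the closed-$R(i)$-module property of $\DSM(v(\mu), P(i))$ delivers. No descending chain condition, compatibility, or advanceability is used, which is appropriate since the lemma is meant to be a purely order-theoretic bridge between monotonicity and the correlation condition demanded by Lemma~\ref{L:Rel.Res.Shadow}.
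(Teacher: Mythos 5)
Your proof is correct and is essentially the paper's argument, reorganized around the preimage $N = v^{-1}\bigl( \DSM(v(\mu), P(i)) \bigr)$ rather than around the image of $\DSM(\mu, P(i'))$: both verify inclusion on the generators $r(\nu)$ using the $\Cspan$ hypothesis and the $R(i)$-module property of the target, then extend to the full $\Cspan$ via the homomorphism and continuity of $v$. The preimage phrasing is a neat way to make the closure step explicit, but it is the same underlying reasoning.
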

\begin{proof}
  Let \(\mu \in \mc{Y}(i')\) such that \(v(\mu) \in \mc{Y}\) be given. 
  Let \(D = \DSM\bigl( v(\mu), P(i) \bigr)\). If \(\nu \in \mc{Y}(i')\) 
  satisfies \(\nu < \mu \pin{P(i')}\) then by assumption \(v(\nu) \in 
  D\) and hence \(\Cspan\bigl(\bigl\{ v(\nu) \bigr\}\bigr) \subseteq 
  D\), which implies \(v\bigl( r(\nu) \bigr) \in D\) for all \(r \in 
  R^*(i')\). Since $v$ is a continuous homomorphism and $D$ is a 
  topologically closed group, it now follows that
  \begin{equation*}
    v\Bigl( \Cspan\bigl( 
      \setOf[\big]{ \nu \in \mc{Y}(i') }{ \nu < \mu \pin{P(i')} }
    \bigr) \Bigr) \subseteq D \text{,}
  \end{equation*}
  i.e., $v$ satisfies the condition at $\mu$ for correlating $P(i')$ 
  to $P(i)$.
\end{proof}

\begin{remark}
  The meaning of \eqref{Eq:OrdnFramflyttKompatibel} if $v$ ranges 
  over all maps in the family \eqref{Eq:2side-mult.}, as would be the 
  setting for Bergman's diamond lemma, is that
  \begin{equation} \label{Eq:Semigruppsordning}
    \nu < \mu \pin{P(1)} \Ipil 
    \lambda\nu\rho < \lambda\mu\rho \pin{P(1)}
    \qquad\text{for all \(\mu,\nu,\lambda,\rho \in X^*\),}
  \end{equation}
  i.e., $P(1)$ must be a (strictly compatible) monoid partial 
  order\index{compatible!partial order}. 
  The need for this very classical condition can thus in the 
  generic theory be found only in the resolvability of ambiguities! 
  It would be possible to apply the \emph{generic} diamond lemma with 
  a partial order that violates \eqref{Eq:Semigruppsordning}, but for 
  that one would then pay the price that ambiguity resolution gets 
  more complicated. Furthermore compatibility of reductions interacts 
  with advanceability so that one anyway comes pretty close to needing 
  correlation; in practice the choice one has is one of how many 
  advanceable maps one will have rather than whether these will 
  correlate the partial orders.
  
  The condition that advanceable maps should be continuous can 
  similarly be regarded as a condition on how the multiplication 
  operation on \(\mc{M}(1) = \RavX\) should relate to the topology, 
  and if for example \eqref{Eq:Norm&Span} holds then continuity of 
  maps on the form \eqref{Eq:2side-mult.} can also be reduced to a 
  condition on multiplication of monomials, but this point of view is 
  not as striking as it is for the order on $\mc{Y}$, since continuity 
  to the average mathematician is more of an everyday condition.
\end{remark}

In most classical cases, Lemma~\ref{L:OrdnFramflyttKompatibel} would 
be applied to maps $v$ which map $\mc{Y}(i')$ into $\mc{Y}(i)$, in 
which case correlation implies strict monotonicity. Some algebraic 
structures will however give rise to ``degenerate''Êmaps which cannot 
be strictly monotone on the whole of $\mc{Y}(i')$, and at least in the 
notable case of path algebras (where the product of two monomials can 
be zero, see Subsection~\ref{Ssec:Stigalgebra}) the additional 
precondition that \(v(\mu) \in \mc{Y}(i)\) provides a convenient 
loophole to avoid getting caught by this technicality.

The final nontrivial condition in 
Lemma~\ref{L:OrdnFramflyttKompatibel} is that $v$ should map elements 
on the form $r(\mu)$ into $\Cspan\bigl( \bigl\{ v(\mu) \bigr\} \bigr)$. 
The most common reason this condition would be fulfilled is that 
$\mc{M}(i')$ and $\mc{M}(i)$ are both $\mc{R}$-modules for some ring 
$\mc{R}$ such that the advanceable map \(v\colon \mc{M}(i') \Fpil 
\mc{M}(i)\) is $\mc{R}$-linear, while $R(i')$ and $R(i)$ are the sets 
of actions of elements of $\mc{R}$ on $\mc{M}(i')$ and $\mc{M}(i)$ 
respectively. Note, however, that the general framework makes no 
assumption that maps in $R(i')$ should have counterparts in $R(i)$, 
or even that there should be a corresponding endomorphism on 
$\cmplM(i)$. This is a reason why advanceability is a condition on 
how arbitrary elements of $\RstarY$ are treated; for suitably linear 
maps it is sufficient to check advanceability on elements of $\mc{Y}$.

Whether it could be useful to have $R(i')$ and $R(i)$ generate 
nonisomorphic rings of endomorphisms on $\mc{M}(i')$ and $\mc{M}(i)$ 
respectively remains to be seen, but not requiring 
$\mc{R}$-linearity turns out to be advantageous for the related 
concept of \emph{biadvanceability}. 
(Shadow ambiguities are not the only ones that are traditionally 
discarded; equally important are ambiguities where the parts 
are disjoint. Biadvanceability provides a 
way to give an abstract definition of this.)
Recall that an $\mc{R}$-bilinear 
map $w$ satisfies \(w(ra,b) = r w(a,b) = w(a,rb)\), from which 
follows \(rs w(a,b) = r w(a,sb) = w(ra,sb) = sw(ra,b) = sr w(a,b)\), 
for all \(r,s \in \mc{R}\). If $\mc{R}$ is commutative this is only 
natural, but if $\mc{R}$ is a noncommutative ring then it places a 
rather severe restriction on the range of $w$: only elements at which 
all $\mc{R}$-module actions commute are allowed! This would often be 
insufficient for the intended uses of biadvanceable maps.

A practical compromise that is sometimes available is to request 
some weak form of bilinearity. It might for example be the 
case that the identity \(w(ra,b) = r w(a,b) = w(a,rb)\) only holds for 
monomial $a$ and $b$. It could also be the case that the identity is 
relaxed to \(w(ra,b) = r' w(a,b)\) (and similarly for moving out from 
the second position), where \(r' \in \mc{R}\) need not be equal to $r$ 
and may depend on $a$ or $b$. The condition needed for 
Lemma~\ref{L:OrdnFramflyttKompatibel} is weaker still\Ldash roughly 
that \(w(ra,b), w(a,rb) \in \Cspan\bigl(\bigl\{ w(a,b) 
\bigr\}\bigr)\) for \(a,b \in \mc{Y}\); 
see~\eqref{Eq:MontageAmbiguity}\Dash and should therefore not be a 
problem to fulfil when necessary.

\begin{definition} \label{Def:Montage}
  Let \(i,i_1,i_2 \in I\) be sorts. A map \(w\colon \cmplM(i_1) 
  \times \cmplM(i_2) \Fpil \cmplM(i)\) is said to be a 
  \DefOrd{bihomomorphism} if
  \begin{equation*}
    w(a_1-b_1,a_2-b_2) = 
    w(a_1,a_2) - w(b_1,a_2) - w(a_1,b_2) + w(b_1,b_2)
  \end{equation*}
  for all \(a_1,b_1 \in \cmplM(i_1)\) and \(a_2,b_2 \in 
  \cmplM(i_2)\). 
  A bihomomorphism \(w\colon \cmplM(i_1) \times \cmplM(i_2) \Fpil 
  \cmplM(i)\) is said to be 
  \DefOrd{biadvanceable}\index{advanceable!bi-}
  (with respect to $T(S)(i)$, $T(S)(i_1)$, and $T(S)(i_2)$) if 
  \begin{itemize}
    \item
      there for every \(t_1 \in T_1(S)(i_1)\), \(b_1 \in 
      \RstarY(i_1)\), and \(b_2 \in \RstarY(i_2)\) exists some 
      \(t \in T(S)(i)\) such that
      \(
        t\bigl( w( b_1, b_2) \bigr) = w\bigl( t_1(b_1), b_2 \bigr)
      \),
      and
    \item
      there for every \(t_2 \in T(S)(i_2)\), \(b_1 \in 
      \RstarY(i_1)\), and \(b_2 \in \RstarY(i_2)\) exists some \(t \in 
      T(S)(i)\) such that
      \(
        t\bigl( w( b_1, b_2) \bigr) = w\bigl( b_1, t_2(b_2) \bigr)
      \).
  \end{itemize}
  
  An ambiguity $(t,\mu,u)$ of $T_1(S)(i)$ is said to be a 
  \DefOrd{montage}\index{ambiguity!montage} of the 
  \DefOrd[*{piece}]{pieces} 
  \((\lambda,t') \in \mc{Y}(i_1) \times T(S)(i_1)\) and \((\nu,u') 
  \in \mc{Y}(i_2) \times T(S)(i_2)\) if there exists a continuous 
  biadvanceable map \(w\colon \mc{M}(i_1) \times \mc{M}(i_2) \Fpil 
  \mc{M}(i)\) such that
  \begin{align*}
    \mu ={}& w(\lambda,\nu)\text{,}\\ 
    t\bigl( w(\lambda,\nu) \bigr) ={}& w\bigl( t'(\lambda), \nu\bigr)
      \\
    u\bigl( w(\lambda,\nu) \bigr) ={}& w\bigl( \lambda, u'(\nu) \bigr)
  \end{align*}
  The map $w$ is called the \DefOrd{composition map} of this montage. 
  
  Let $V_1$ be a set of maps \(\cmplM(i_1) \Fpil \cmplM(i)\) and let 
  $V_2$ be a set of maps \(\cmplM(i_2) \Fpil \cmplM(i)\). A 
  biadvanceable map \(w\colon \cmplM(i_1) \times \cmplM(i_2) \Fpil 
  \cmplM(i)\) is said to be 
  \DefOrd[*{biadvanceable}]{$(V_1,V_2)$-biadvanceable} if 
  \(w(\cdot,\rho) \in V_1\) for all \(\rho \in \mc{Y}(i_2)\) and 
  \(w(\rho,\cdot) \in V_2\) for all \(\rho \in \mc{Y}(i_1)\).
  A \DefOrd[*{ambiguity!montage}]{$(V_1,V_2)$-montage ambiguity} is a 
  montage ambiguity where the composition map is 
  $(V_1,V_2)$-biadvanceable.
\end{definition}

The idea formalised by the montage ambiguity concept is to recognise 
the situation that the two pieces act on disjoint parts of 
$\mu$\Dash the pieces are like two small windows to completely 
different gardens that have been embedded into a large mural provided 
by the composition map, and the presence of an embedding cannot change 
the fact that the games that can be played in one garden are quite 
independent of what happens in the other. In Gr\"obner basis theory 
this idea~\cite{Buchberger-avh} is known as \emph{Buchberger's First 
Criterion} for eliminating useless critical 
pairs, although the identification is perhaps not obvious at this 
stage. The correspondence will be made clear in 
Corollary~\ref{Kor:KomGrobner}, however.

In the \(i=i_1=i_2=1\) case of \(\mc{M}(1) = \RavX\) and 
\(\mc{Y}(1) = X^*\), with all maps of the form \(b \mapsto 
\nu_1 b \nu_2\) for some \(\nu_1,\nu_2 \in X^*\) being 
advanceable, the typical form of a biadvanceable map is \(w(a,b) = 
\nu_1 a \nu_2 b \nu_3\) for some \(\nu_1,\nu_2,\nu_3 \in X^*\). 
Such `multiplication with fixed extra factors' maps can be used to 
produce a great variety of biadvanceable maps, and the construction 
does not require the multiplication operation to be associative, 
or even binary; pretty much anything that can be composed from 
multilinear operations on and fixed elements of $\mc{Y}$ will probably 
turn out to be biadvanceable, if simple reductions are constructed 
by putting every rule in every possible context. The underlying idea 
for making a biadvanceable map is however to take an element of 
$\mc{Y}$ and cut out two disjoint pieces from it\Dash the biadvanceable 
map then consists of inserting the two arguments into these two holes. 
How such a map may be interpreted depends very much on the underlying 
algebraic structure, but for the diamond lemma machinery it is 
sufficient that the biadvanceable maps exist.

\begin{lemma} \label{L:MontageAmbiguity}
  Let $(t,\mu,u)$ be an ambiguity of $T_1(S)(i)$ that is a montage of 
  the pieces \((\lambda,t') \in \mc{Y}(i_1) \times T_1(S)(i_1)\) and 
  \((\nu,u') \in  \mc{Y}(i_2) \times T_1(S)(i_2)\).  If $T(S)(j)$ is 
  compatible with some partial order $P(j)$ on $\mc{Y}(j)$ for all 
  \(j \in \{i,i_1,i_2\}\) and the composition map \(w\colon \mc{M}(i_1) 
  \times \mc{M}(i_2) \Fpil \mc{M}(i)\) of the montage satisfies
  \begin{equation} \label{Eq:MontageAmbiguity}
    w\Bigl( \DSM\bigl( \lambda, P(i_1) \bigr), \nu \Bigr) \cup
    w\Bigl( \lambda, \DSM\bigl( \nu, P(i_2) \bigr) \Bigr) \subseteq
    \DSM\bigl( w(\lambda,\nu), P(i) \bigr)
  \end{equation}
  then $(t,\mu,u)$ is resolvable relative to $P(i)$.
\end{lemma}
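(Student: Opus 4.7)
The plan is to exhibit $t(\mu) - u(\mu)$ directly as an element of $\DIS\bigl(\mu, P(i), S\bigr)$, where $\mu = w(\lambda,\nu)$. The bihomomorphism identity applied to $w$ yields the decomposition
\begin{equation*}
t(\mu) - u(\mu) = w\bigl(t'(\lambda),\nu\bigr) - w\bigl(\lambda,u'(\nu)\bigr) = w\bigl(t'(\lambda), \nu - u'(\nu)\bigr) + w\bigl(t'(\lambda) - \lambda, u'(\nu)\bigr),
\end{equation*}
and by symmetry of the construction it suffices to place each of these two summands in $\DIS\bigl(\mu, P(i), S\bigr)$.

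Focus on the first summand. Compatibility of $T(S)(i_1)$ with $P(i_1)$ together with the fact that $t'$ acts nontrivially on $\lambda$ gives $t'(\lambda) \in \DSM\bigl(\lambda, P(i_1)\bigr)$. Given an arbitrary $\ve \in \cmplO(i)$, continuity of the map $w(\cdot, \nu - u'(\nu))\colon \cmplM(i_1) \Fpil \cmplM(i)$ supplies a $\delta \in \cmplO(i_1)$ with $w\bigl(\delta, \nu - u'(\nu)\bigr) \subseteq \ve$, and Lemma~\ref{L:Cspan-uppdelning} then delivers a finite sum $\sum_k r_k(\rho_k) \in t'(\lambda) + \delta$ with $\rho_k < \lambda \pin{P(i_1)}$ and $r_k \in \pm R^*(i_1)$. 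By bihomomorphism,
\begin{equation*}
w\Bigl(\sum_k r_k(\rho_k), \nu - u'(\nu)\Bigr) = \sum_k \Bigl[ w\bigl(r_k(\rho_k), \nu\bigr) - w\bigl(r_k(\rho_k), u'(\nu)\bigr) \Bigr],
\end{equation*}
and biadvanceability (second clause, taken with $t_2 = u'$, $b_1 = r_k(\rho_k) \in \RstarY(i_1)$, $b_2 = \nu \in \RstarY(i_2)$) produces $\tilde{s}_k \in T(S)(i)$ such that $\tilde{s}_k\bigl(w(r_k(\rho_k),\nu)\bigr) = w\bigl(r_k(\rho_k), u'(\nu)\bigr)$. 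Hence the $k$th summand on the right equals $\alpha_k - \tilde{s}_k(\alpha_k)$ for $\alpha_k := w\bigl(r_k(\rho_k), \nu\bigr)$.

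Since $r_k(\rho_k) \in \DSM\bigl(\lambda, P(i_1)\bigr)$ and the first half of hypothesis~\eqref{Eq:MontageAmbiguity} sends this into $\DSM\bigl(\mu, P(i)\bigr)$, one has $\alpha_k \in \DSM\bigl(\mu, P(i)\bigr)$; the characterisation~\eqref{Eq2:Kar.DIS} of $\DIS$ from Lemma~\ref{L:RelResolv} then places $\alpha_k - \tilde{s}_k(\alpha_k) \in \DIS\bigl(\mu, P(i), S\bigr)$. Summing over $k$ and using the choice of $\delta$ gives $w\bigl(t'(\lambda), \nu - u'(\nu)\bigr) \in \DIS\bigl(\mu, P(i), S\bigr) + \ve$; the arbitrariness of $\ve$ combined with the topological closedness of $\DIS\bigl(\mu, P(i), S\bigr)$ then delivers the first summand into $\DIS\bigl(\mu, P(i), S\bigr)$. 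The second summand $w\bigl(t'(\lambda) - \lambda, u'(\nu)\bigr)$ is handled by the mirror argument: approximate $u'(\nu) \in \DSM\bigl(\nu, P(i_2)\bigr)$ by a finite sum $\sum_j s_j(\sigma_j)$ with $\sigma_j < \nu \pin{P(i_2)}$, invoke the first biadvanceability clause to get reductions $\tilde{t}_j \in T(S)(i)$ satisfying $\tilde{t}_j\bigl(w(\lambda, s_j(\sigma_j))\bigr) = w\bigl(t'(\lambda), s_j(\sigma_j)\bigr)$, and rely on the second half of~\eqref{Eq:MontageAmbiguity} to keep the resulting $\beta_j := w(\lambda, s_j(\sigma_j))$ inside $\DSM\bigl(\mu, P(i)\bigr)$.

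The main obstacle is the bookkeeping around this double approximation: the neighbourhoods $\delta$ in $\cmplO(i_1)$ and $\cmplO(i_2)$ must be chosen only after $\ve$ has been fixed, so that the continuity of $w$ in each separate argument (a by-product of $w$ being a continuous bihomomorphism) can absorb all approximation error into $\ve$. Once that neighbourhood chase is carried out, each piece slots directly into the characterisation of $\DIS\bigl(\mu, P(i), S\bigr)$ supplied by Lemma~\ref{L:RelResolv}, and topological closedness closes out the argument.
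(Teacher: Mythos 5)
Your proposal is correct and follows essentially the same route as the paper's own proof: the same bihomomorphism decomposition, the same symmetry reduction, the same approximation of $t'(\lambda)$ by a finite $\pm R^*$-linear combination of $P(i_1)$-smaller monomials (via Lemma~\ref{L:Cspan-uppdelning}), the same use of biadvanceability to rewrite each summand as $\alpha_k - \tilde{s}_k(\alpha_k)$ with $\alpha_k \in \DSM\bigl(\mu,P(i)\bigr)$, and the same appeal to Lemma~\ref{L:RelResolv} and topological closedness of $\DIS$ to finish. You are even slightly more explicit than the paper in justifying that $t'(\lambda)\in\DSM\bigl(\lambda,P(i_1)\bigr)$ and in naming which characterisation~\eqref{Eq2:Kar.DIS} is being invoked.
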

\begin{proof}
  The problem is to prove that
  \begin{multline*}
    t(\mu) - u(\mu) = 
    w\bigl( t'(\lambda), \nu \bigr) - w\bigl( \lambda, u'(\nu) \bigr) 
      = \\ =
    w\bigl( t'(\lambda), \nu -\nobreak u'(\nu) \bigr) + 
      w\bigl( t'(\lambda) -\nobreak \lambda, u'(\nu) \bigr)
    \in \DIS\bigl( \mu, P(i), S \bigr)
    \text{,}
  \end{multline*}
  and by symmetry it is sufficient to do the first of 
  $w\bigl( t'(\lambda), \nu -\nobreak u'(\nu) \bigr)$ and 
  $w\bigl( t'(\lambda) -\nobreak \lambda, u'(\nu) \bigr)$, as the 
  other is completely analogous.
  
  Let \(\ve \in \cmplO(i)\) be arbitrary. Let \(\ve_1 \in \cmplO(i_1)\) 
  be such that \(w\bigl( \ve_1, \nu -\nobreak u'(\nu) \bigr) \subseteq 
  \ve\). Since \(t'(\lambda) \in \DSM\bigl( \lambda, 
  P(i_1) \bigr)\) there exist \(\{\lambda_j\}_{j=1}^m \subseteq 
  \mc{Y}(i_1)\) and \(\{r_j\}_{j=1}^m \subseteq \pm R^*(i_1)\) such that 
  \(\lambda_j < \lambda \pin{P(i_1)}\) for all \(j=1,\dotsc,m\) and 
  \(\sum_{j=1}^m r_j(\lambda_j) \in t'(\lambda) + \ve_1\). Let \(a_j 
  = r_j(\lambda_j)\) for \(j=1,\dotsc,m\). By biadvanceability there 
  exist \(\{u_j\}_{j=1}^m \subseteq T(S)(i)\) such that \(u_j\bigl( 
  w(a_j,\nu) \bigr) = w\bigl( a_j, u'(\nu) \bigr)\) for all 
  \(j=1,\dotsc,m\). Since \(w(a_j,\nu) \in \DSM\bigl( \mu, P(i) 
  \bigr)\) by \eqref{Eq:MontageAmbiguity}, these satisfy
  \begin{align*}
    w\biggl( \sum_{j=1}^m a_j, \nu - u'(\nu) \biggr) ={}&
    \sum_{j=1}^m \Bigl( w(a_j,\nu) - w\bigl( a_j, u'(\nu) \bigr)
      \Bigr) = \\ ={}&
    \sum_{j=1}^m \Bigl( w(a_j,\nu) - u_j\bigl( w(a_j,\nu) \bigr)
      \Bigr) \in \displaybreak[0]\\ \in{}&
    \sum_{j=1}^m \setOf[\Big]{ a - u_j(a) }{ 
      a \in \DSM\bigl( \mu, P(i) \bigr)
    } \subseteq \\ \subseteq{}&
    \DIS\bigl( \mu, P(i), S \bigr)
  \end{align*}
  by Lemma~\ref{L:RelResolv}.
  Therefore \(w\bigl( t'(\lambda), \nu - u'(\nu) \bigr) \in 
  \DIS\bigl( \mu, P(i), S \bigr) + \ve\), and by the arbitrariness of 
  $\ve$ thus \(w\bigl( t'(\lambda), \nu - u'(\nu) \bigr) \in 
  \DIS\bigl( \mu, P(i), S \bigr)\), as claimed.
\end{proof}

The pieces are now in place for a definition of \emph{critical} as in 
`critical pair', i.e., ``member of a (small) set of ambiguities that 
together cover all ways in which things can fail to resolve''. 
The definition given is with respect to a particular family of 
advanceable maps, since this is how it will typically be applied: 
when someone considers only \emph{those} advanceable maps, then 
\emph{these} are the ambiguities that need to be explicitly checked. 
It is often natural to let the family of advanceable maps be (the set 
of morphisms in) a category, but there is no technical need for this.

\begin{definition} \label{Def:V-kritisk}
  Let a family \(V = \bigcup_{i,j \in I} V(i,j)\) of maps such that 
  every \(v \in V(i,j)\) is an advanceable continuous homomorphism 
  \(\cmplM(j) \Fpil \cmplM(i)\) be given.
  
  The family $V$ is said to be a \DefOrd{category} if \(V(i,i) \owns 
  \id\colon \cmplM(i) \Fpil \cmplM(i)\) and \(v_2 \circ v_1 \in 
  V(i,k)\) for all \(v_2 \in V(i,j)\), \(v_1 \in V(j,k)\), and 
  \(i,j,k \in I\). The family \(V = \bigcup_{i,j \in I} V(i,j)\) is the 
  \DefOrd[*{category!generated by}]{category generated by} 
  \(V_1 = \bigcup_{i,j \in I} V_1(i,j)\) if it is the smallest 
  category that satisfies \(V_1(i,j) \subseteq V(i,j)\) for all 
  \(i,j \in I\).
  
  An ambiguity $(t,\mu,u)$ of $T_1(S)(i)$ is said to be a 
  \DefOrd[*{ambiguity!shadow}]{$V$-shadow} of the ambiguity 
  $(t',\mu',u')$ of $T_1(S)(i')$ if there exists some \(v \in 
  V(i,i')\) such that \(\mu = v(\mu')\), \(t(\mu) = v\bigl( t'(\mu') 
  \bigr)\), and \(u(\mu) = v\bigl( u'(\mu') \bigr)\). If in addition 
  $(t',\mu',u')$ is not a $V$-shadow of $(t,\mu,u)$ then $(t,\mu,u)$ 
  is a \DefOrd[*{ambiguity!proper shadow}]{proper $V$-shadow} of 
  $(t',\mu',u')$. An ambiguity of $T_1(S)$ is said to be 
  \DefOrd[*{ambiguity!shadow-minimal}]{$V$-shadow-minimal} 
  if it is not a proper $V$-shadow of any ambiguity of $T_1(S)$. 
  An ambiguity of $T_1(S)$ is said to be 
  \DefOrd[*{ambiguity!shadow-critical}]{$V$-shadow-critical} if it 
  is not a proper $V$-shadow of any $V$-shadow-minimal ambiguity of 
  $T_1(S)$.
  
  An ambiguity $(t,\mu,u)$ of $T_1(S)(i)$ is said to be 
  \DefOrd[*{ambiguity!critical}]{$V$-critical} if it is 
  $V$-shadow-critical and is not a $\bigl( V(i,i_1), V(i,i_2) 
  \bigr)$-montage ambiguity for any \(i_1,i_2 \in I\).
\end{definition}

If $V$ is a category then the $V$-shadow relation $Q_V$\Ldash 
formally defined by \((t,\mu,u) \geqslant (t',\mu',u') \pin{Q_V}\) 
iff $(t,\mu,u)$ is a $V$-shadow of $(t',\mu',u')$\Dash is a 
quasi-order on the set of ambiguities. This point of view is 
instructive for understanding the definition of $V$-shadow-critical; 
$(t,\mu,u)$ is a proper $V$-shadow of $(t',\mu',u')$ iff
\((t,\mu,u) > (t',\mu',u') \pin{Q_V}\) and $(t,\mu,u)$ is 
$V$-shadow-minimal iff it is $Q_V$-minimal. A first stab at defining 
$V$-shadow-critical would be to use $V$-shadow-minimal, reasoning 
that non-minimal ambiguities need not be considered critical as 
there is always some smaller ambiguity of which they are a shadow, 
but this fails if $Q_V$ is not DCC; a simple example of a family $V$ 
for which this might occur is \(V = \{D^n\}_{n=0}^\infty\), where 
\(D(\ssI) = 0\) and \(D(\ssx^{n+1}) = \ssx^n\) for all \(n \in \N\). 
By only discarding those ambiguities which are proper shadows of a 
minimal ambiguity, one arrives at a concept which is as strong as 
minimality in the nice cases but is sufficient also in the strange 
cases.

\begin{theorem} \label{S:V-kritisk}
  Let \(V = \bigcup_{i,j \in I} V(i,j)\) be a family of maps such 
  that every \(v \in V(i,j)\) is an advanceable continuous 
  homomorphism \(\cmplM(j) \Fpil \cmplM(i)\). For each \(i \in I\), 
  let $P(i)$ be a partial order on $\mc{Y}(i)$ with which $T_1(S)(i)$ 
  is compatible. If every \(v \in V(i,j)\), for all \(i,j \in I\), 
  correlates $P(j)$ to $P(i)$ then the following claims are 
  equivalent:
  \begin{enumerate}
    \item[\parenthetic{a\textprime}] 
      Every ambiguity of $T_1(S)(i)$ is resolvable relative to 
      $P(i)$, for all \(i \in I\).
    \item[\parenthetic{a\textbis}] 
      Every $V$-critical ambiguity of $T_1(S)(i)$ is resolvable 
      relative to $P(i)$, for all \(i \in I\).
  \end{enumerate}
\end{theorem}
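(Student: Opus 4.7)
The direction (a$'$)$\Rightarrow$(a$''$) is immediate, since every $V$-critical ambiguity is in particular an ambiguity of $T_1(S)(i)$. For the converse, the plan is to take an arbitrary ambiguity $(t,\mu,u)$ of $T_1(S)(i)$ and dispatch it by a two-step case analysis, first reducing to the $V$-shadow-critical case and then handling that case via the montage argument when the ambiguity is not itself $V$-critical.

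First, suppose $(t,\mu,u)$ is not $V$-shadow-critical. Then by definition it is a proper $V$-shadow of some $V$-shadow-minimal ambiguity $(t',\mu',u')$ of $T_1(S)(i')$ via a map $v \in V(i,i')$. Because $(t',\mu',u')$ is $V$-shadow-minimal it is automatically also $V$-shadow-critical (not being a proper shadow of any ambiguity, it is certainly not a proper shadow of a minimal one), so once the shadow-critical case has been handled, Lemma~\ref{L:Rel.Res.Shadow} transports relative resolvability from $(t',\mu',u')$ to $(t,\mu,u)$; its hypothesis \eqref{Eq:Rel.Res.Shadow} is supplied by correlation of $v$ together with the fact that $\mu = v(\mu') \in \mc{Y}(i)$.

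It remains to treat the $V$-shadow-critical case. Such an ambiguity is either $V$-critical, in which case (a$''$) provides relative resolvability directly, or it is a $(V(i,i_1),V(i,i_2))$-montage ambiguity for some $i_1,i_2 \in I$. In the montage case I plan to invoke Lemma~\ref{L:MontageAmbiguity}, for which the nontrivial hypothesis \eqref{Eq:MontageAmbiguity} must be checked: for the composition map $w$ and any $\nu \in \mc{Y}(i_2)$, the section $w(\cdot,\nu)$ lies in $V(i,i_1)$ by $(V(i,i_1),V(i,i_2))$-biadvanceability and thus correlates $P(i_1)$ to $P(i)$; since $w(\lambda,\nu) = \mu \in \mc{Y}(i)$, correlation yields $w\bigl(\DSM(\lambda,P(i_1)),\nu\bigr) \subseteq \DSM(\mu,P(i))$, and the companion inclusion for the second argument follows identically. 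Compatibility of $T(S)(j)$ with $P(j)$ for $j \in \{i,i_1,i_2\}$ is supplied by the theorem's hypothesis on $T_1(S)(j)$ together with Lemma~\ref{L:Kompabilitet}.

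The main obstacle is not a single calculation but the interlocking of the two reduction lemmas: one must see that the single correlation hypothesis on $V$ does double duty, furnishing simultaneously the hypothesis of Lemma~\ref{L:Rel.Res.Shadow} for the shadow step and the hypothesis of Lemma~\ref{L:MontageAmbiguity} for the montage step. Crucially, no induction on ambiguities is required, because non-shadow-criticality hands us a shadow-minimal original by definition, and shadow-minimal ambiguities are shadow-critical for free, so the case analysis closes in one pass.
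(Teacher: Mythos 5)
Your proof is correct and follows the paper's route: Lemma~\ref{L:Rel.Res.Shadow} dispatches non-shadow-critical ambiguities via a shadow-minimal original, Lemma~\ref{L:MontageAmbiguity} disposes of montages, and correlation of the maps in $V$ supplies the hypotheses of both. You are, however, slightly more careful than the paper's own wording in one place. The paper's proof asserts that the $V$-shadow-minimal ambiguity $(t',\mu',u')$ obtained in the first step is ``hence $V$-critical'' so that \parenthetic{a\textbis} can be cited directly; but $V$-criticality requires \emph{both} shadow-criticality and not being a montage, whereas shadow-minimality only guarantees the former, so a shadow-minimal original could still be a montage. Your organization only concludes shadow-criticality for $(t',\mu',u')$ and then splits the shadow-critical case cleanly into $V$-critical (settled by \parenthetic{a\textbis}) and montage (settled by Lemma~\ref{L:MontageAmbiguity} directly), after which Lemma~\ref{L:Rel.Res.Shadow} transports in both subcases. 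This closes the argument even when the shadow-minimal original is itself a montage, a situation the paper's phrasing leaves exposed although its second paragraph contains the needed ingredient. Your derivations of \eqref{Eq:MontageAmbiguity} from $\bigl(V(i,i_1),V(i,i_2)\bigr)$-biadvanceability plus correlation, of \eqref{Eq:Rel.Res.Shadow} from correlation plus $\mu=v(\mu')\in\mc{Y}(i)$, and of $T(S)(j)$-compatibility via Lemma~\ref{L:Kompabilitet}, all match the paper's intent, and your closing observation about why no induction over ambiguities is needed is exactly the design rationale behind the shadow-critical notion.
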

\begin{proof}
  Since the $V$-critical ambiguities of \parenthetic{a\textbis} are 
  included among the ambiguities of \parenthetic{a\textprime}, all 
  that needs to be shown is that the non-$V$-critical ambiguities are 
  resolvable relative to $P$ whenever the $V$-critical ambiguities are 
  so resolvable. Hence assume \parenthetic{a\textbis}.
  
  If an ambiguity $(t,\mu,u)$ of $T_1(S)(i)$ is not 
  $V$-shadow-critical, then by definition there exists some 
  $V$-shadow-minimal ambiguity, say $(t',\mu',u')$ of $T_1(S)(i')$, 
  of which $(t,\mu,u)$ is a proper $V$-shadow. 
  Since $(t',\mu',u')$ is minimal it is not a proper $V$-shadow of 
  any ambiguity, and hence $(t',\mu',u')$ is $V$-critical. By 
  \parenthetic{a\textbis}, $(t',\mu',u')$ is resolvable 
  relative to $P(i')$, which by Lemma~\ref{L:Rel.Res.Shadow} implies 
  that $(t,\mu,u)$ is resolvable relative to $P(i)$, as claimed.
  
  If an ambiguity $(t,\mu,u)$ of $T_1(S)(i)$ is a $\bigl( V(i,i_1), 
  V(i,i_2) \bigr)$-montage ambiguity for some \(i_1,i_2 \in I\) then 
  it is resolvable relative to $P(i)$ by 
  Lemma~\ref{L:MontageAmbiguity}; \eqref{Eq:MontageAmbiguity} holds 
  because $V(i,i_1)$ is a set of maps correlating $P(i_1)$ to $P(i)$, 
  $V(i,i_2)$ is a set of maps correlating $P(i_2)$ to $P(i)$, and
  the composition map $w$ of the ambiguity $(t,\mu,u)$ is 
  $\bigl( V(i,i_1), V(i,i_2) \bigr)$-biadvanceable.
\end{proof}

It should be observed that the set of $V$-critical ambiguities 
is not always the smallest set of ambiguities with which one can make 
do; if $(t,\mu,u)$ is $V$-critical then every $(t',\mu',u')$ such that 
\((t,\mu,u) \sim (t',\mu',u') \pin{Q_V}\) is $V$-critical as well, 
even though it is clearly sufficient to check one ambiguity in each 
$Q_V$-equivalence class. In actual calculations this often 
corresponds to being able to pick one labelling of an ambiguity and 
resolve it in that context, instead of having to write a resolution 
proof for arbitrary labellings. 

Another labour-saving trick which 
goes beyond the definition of $V$-critical ambiguity is 
\emph{Buchberger's Second Criterion}, which in its raw form is simply 
the observation that if three simple reductions $t_1$, $t_2$, and 
$t_3$ act nontrivially on the same $\mu$, then relative resolvability 
of two of the resulting ambiguities $(t_1,\mu,t_2)$, $(t_1,\mu,t_3)$, 
and $(t_2,\mu,t_3)$ implies the same for the third. Under mild extra 
assumptions on $V$, this criterion can be given the more traditional 
form that $(t_1,\mu,t_2)$ can be skipped if there exists some simple 
reduction $t_3$ such that $(t_1,\mu,t_3)$ and $(t_2,\mu,t_3)$ are both 
non-shadow-critical, since the latter two can then be assumed 
relatively resolvable on account of being shadows of other ambiguities. 
This criterion is of practical interest because it is often far less 
work to perform an explicit search for a matching $t_3$ than it is to 
explicitly resolve $(t_1,\mu,t_2)$, but it is not as theoretically 
important as the recognition of montage ambiguities (Buchberger's 
first criterion).

\begin{example} \label{Ex:BergmanskTvetydighet}
  Let \(I=\{1\}\) and drop sort indices. Let \(\mc{M} = \RavX\), 
  \(\mc{Y} = X^*\), \(R = \mc{R}\), the topology be discrete, \(V = 
  \left\{ b \mapsto \nu_1 b \nu_2 \right\}_{\nu_1,\nu_2 \in X^*}\), 
  and the simple reductions be defined as in 
  Corollary~\ref{Kor:t_v,s-reduktion} (or equivalently 
  Construction~\ref{Konstr:t_v,s-reduktion}); this is the setting 
  for Bergman's diamond lemma. Which are then the $V$-critical 
  ambiguities?
  
  Using the notation of \eqref{Eq1:Trad.def.T_1(S)}, an ambiguity has 
  the form $(t_{\lambda_1 s_1 \nu_1}, \mu, t_{\lambda_2 s_2 \nu_2})$ 
  where \(\lambda_1 \mu_{s_1} \nu_1 = \mu = \lambda_2 \mu_{s_2} \nu_2\). 
  By unique factorisation of $\mu$ in $X^*$, $\lambda_1$ is a prefix 
  (left divisor) of $\lambda_2$ or vice versa\Dash hence there 
  exist \(\kappa \in \{\lambda_1,\lambda_2\}\) and 
  \(\lambda_1',\lambda_2' \in X^*\) such that \(\lambda_1 = \kappa 
  \lambda_1'\) and \(\lambda_2 = \kappa \lambda_2'\). Similarly 
  $\nu_1$ is a suffix (right divisor) of $\nu_2$ or vice versa, 
  whence there exist \(\rho \in \{\nu_1,\nu_2\}\) and \(\nu_1',\nu_2' 
  \in X^*\) such that \(\nu_1 = \nu_1' \rho\) and \(\nu_2 = \nu_2' 
  \rho\). It follows that $(t_{\lambda_1 s_1 \nu_1}, \mu, 
  t_{\lambda_2 s_2 \nu_2})$ is a shadow under \(v(b) = \kappa b 
  \rho\) of $(t_{\lambda_1' s_1 \nu_1'}, \mu', 
  t_{\lambda_2' s_2 \nu_2'})$ where \(\mu' = 
  \lambda_1' \mu_{s_1} \nu_1' = \lambda_2' \mu_{s_2} \nu_2'\), and 
  this shadow is proper unless \(\kappa = \rho = \ssI\). Conversely 
  any ambiguity $(t_{\lambda_1 s_1 \nu_1}, \mu, 
  t_{\lambda_2 s_2 \nu_2})$ which is a proper $V$-shadow must have 
  \(\lambda_1,\lambda_2 \neq \ssI\) or \(\nu_1,\nu_2 \neq \ssI\), so 
  it follows that the constructed $(t_{\lambda_1' s_1 \nu_1'}, \mu', 
  t_{\lambda_2' s_2 \nu_2'})$ is $V$-shadow-minimal. Hence a 
  $V$-shadow-critical ambiguity $(t_{\lambda_1 s_1 \nu_1}, \mu, 
  t_{\lambda_2 s_2 \nu_2})$ has \(\ssI \in \{\lambda_1,\lambda_2\}\) 
  and \(\ssI \in \{\nu_1,\nu_2\}\). Without loss of generality it may 
  be assumed that \(\lambda_1 = \ssI\).
  
  If the ambiguity $(t_{\ssI s_1 \nu_1}, \mu, t_{\lambda_2 s_2 
  \nu_2})$ is such that $\mu_{s_1}$ is a prefix of $\lambda_2$, say 
  \(\lambda_2 = \mu_{s_1} \tau\), then conversely $\mu_{s_2} \nu_2$ 
  is a suffix of \(\nu_1 = \tau \mu_{s_2} \nu_2\), and consequently 
  $(t_{\ssI s_1 \nu_1}, \mu, t_{\lambda_2 s_2 \nu_2})$ is a montage 
  with the composition map \(w(b_1,b_2) = b_1 \tau b_2\) of the pieces 
  $(\mu_{s_1}, t_{\ssI s_1 \ssI})$ and $(\mu_{s_2}\nu_2, 
  t_{\ssI s_2 \nu_2})$. Hence a $V$-critical ambiguity 
  $(t_{\ssI s_1 \nu_1}, \mu, t_{\lambda_2 s_2 \nu_2})$ rather has 
  \(\mu = \lambda_2 \tau \nu\), where 
  either (if \(\nu_1 = \ssI\): an \emph{inclusion} ambiguity) \(\tau = 
  \mu_{s_2}\), \(\nu = \nu_2\), and \(\mu_{s_1} = \lambda_2 \tau \nu\) 
  or (if \(\nu_1 \neq \ssI\): an \emph{overlap} ambiguity) 
  \(\mu_{s_1} = \lambda_2 \tau\), \(\mu_{s_2} = \tau \nu\), and \(\nu 
  = \nu_1\). Either way, the ambiguity is uniquely identified by the 
  quintuplet $(s_1,s_2,\lambda_2,\tau,\nu)$, which (as it happens) 
  is the definition of ambiguity that was used in~\cite{Bergman}. 
  This has shown that all $V$-critical ambiguities are among those 
  specified by Bergman and consequently Bergman's diamond lemma 
  follows from combining Theorems~\ref{S:CDL} and~\ref{S:V-kritisk}.
  
  It may also be observed that if the set $S$ of rules is finite then 
  the set of $V$-critical ambiguities is finite as well; for any 
  given pair \((s_1,s_2) \in S^2\), a non-montage ambiguity on the 
  form $(t_{\ssI s_1 \nu_1}, \mu, t_{\lambda_2 s_2 \nu_2})$ where 
  at least one of $\nu_1$ and $\nu_2$ is equal to $\ssI$ must satisfy 
  \(\deg \lambda_2 < \deg \mu_{s_1}\), and hence the number of 
  $V$-critical ambiguities on this form can be at most 
  $\deg \mu_{s_1}$. This gives the overall bound $\norm{S}^2 
  \max_{s \in S} \deg \mu_s$, or sharper $\norm{S} \sum_{s \in S} 
  \deg\mu_s$, for the number of $V$-critical ambiguities, but the 
  actual number is often much lower.
\end{example}

Note how the finiteness of the set of $V$-critical ambiguities 
whenever $S$ is finite requires that the montage ambiguities are 
discarded. This is not the case in the commutative counterpart 
\(\mc{M} = \mc{R}[X]\), since in that case all montage ambiguities 
are shadows of the one with \(w(a,b) = ab\). This is probably the 
reason that this important principle in the commutative theory is 
merely known as the ``first criterion''.

As was parenthetically remarked in the example, Bergman distinguishes 
between \emph{inclusion} and \emph{overlap} ambiguities, where the 
former have the property that $\mu_{s_2}$ divides (is a subword of) 
$\mu_{s_1}$. Since the property of being a divisor can be expressed 
in terms of advanceable maps, these classes may be defined also in 
the more abstract setting.

\begin{definition}
  An ambiguity $(t,\mu,u)$ of $T_1(S)(i)$ is said to be an 
  \DefOrd[*{ambiguity!inclusion}]{inclusion ambiguity}, 
  where $t$ is called the \DefOrd{inner reduction} and $u$ is called 
  the \DefOrd{outer reduction}, if there for every advanceable map 
  \(v\colon \cmplM(i') \Fpil \cmplM(i)\) and \((\mu',u') \in 
  \mc{Y}(i') \times T_1(S)(i')\) such that 
  \(\mu = v(\mu')\) and \(u(\mu) = v\bigl( u'(\mu') \bigr)\) exists 
  some \(t' \in T_1(S)(i')\) such that \(t(\mu) = v\bigl( t'(\mu') 
  \bigr)\). The inclusion is said to be 
  \DefOrd[*{ambiguity!proper inclusion}]{proper} if only one of 
  the reductions fit the definition for being the inner reduction.
  
  An ambiguity is said to be an 
  \DefOrd[*{ambiguity!overlap}]{overlap ambiguity} if it is 
  neither a montage ambiguity nor an inclusion ambiguity.
\end{definition}

For handmade sets of simple reductions $T_1(S)$ (or ditto sets of 
rules $S$ from which they are made), inclusion ambiguities are rare, 
because they typically mean the outer reduction is redundant and can 
be dropped without changing $\Irr(S)$ (by Theorem~\ref{S:Konstr.Irr(S)}), 
$\Red(S)$, or $\tS$ (by Lemma~\ref{L:Utgl.T(S)}). The situation is a 
bit different in sets of reductions that are automatically generated 
by some completion procedure, since it is very common that special 
cases of a rule are derived before (and even used in deriving) the 
more general rule that one may find in the literature. Relying on 
Lemma~\ref{L:Utgl.T(S)} for simplifying the set of reductions would 
require keeping all reductions until a complete set is found and only 
then drop those which are redundant, but it is usually more practical 
to drop them as soon as the inclusion is discovered. The next theorem 
gives conditions for this.

\begin{theorem} \label{S:DropRule}
  For every \(i \in I\), let \(T_1(S')(i) \subseteq T_1(S)(i)\) and a 
  partial order $P(i)$ on $\mc{Y}(i)$ with which $T_1(S)(i)$ is 
  compatible be given. Assume there is some \(i_0 \in I\) and \(t_0 
  \in T_1(S)(i_0) \setminus T_1(S')(i_0)\) such that there for every 
  \(i \in I\), \(t \in T_1(S)(i) \setminus T_1(S')(i)\), and \(\mu \in 
  \mc{Y}(i)\) on which $t$ acts nontrivially exists a continuous 
  homomorphism \(v\colon \cmplM(i_0) \Fpil \cmplM(i)\) which is 
  advanceable with respect to $T_1(S')(i_0)$ and $T_1(S')(i)$ and 
  also some \(\mu_0 \in \mc{Y}(i_0)\) such that \(v(\mu_0)=\mu\), 
  \(t(\mu) = v\bigl( t_0(\mu_0) \bigr)\), and \(v\bigl( 
  \DSM(\mu_0,P(i_0)) \bigr) \subseteq \DSM(\mu,P(i))\).
  
  If for all \(i \in I\) all ambiguities of $T_1(S')(i)$ are resolvable 
  with respect to $P(i)$ and there for every \(\mu_0 \in \mc{Y}(i_0)\) 
  on which $t_0$ acts nontrivially exists some \(u_0 \in T_1(S')(i_0)\) 
  such that \(t_0(\mu_0) - u_0(\mu_0) \in \DIS\bigl( \mu_0, P(i_0), 
  S' \bigr)\), then for all \(i \in I\) all ambiguities of $T_1(S)(i)$ 
  are resolvable with respect to $P(i)$ and \(\mc{I}(S')(i) = 
  \mc{I}(S)(i)\).
\end{theorem}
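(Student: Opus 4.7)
The plan has two parts. First I would prove $\mc{I}(S)(i) = \mc{I}(S')(i)$ directly from the characterisation in Lemma~\ref{L:Slutenhet,I'(S)}, by showing that the extra generators $\mu - t(\mu)$ arising from $t \in T_1(S)(i) \setminus T_1(S')(i)$ already lie in $\mc{I}(S')(i)$. Second, for an ambiguity $(t_1, \mu, t_2)$ of $T_1(S)(i)$, I would replace each $t_j \notin T_1(S')(i)$ by a surrogate simple reduction $u_j' \in T_1(S')(i)$ acting nontrivially on $\mu$, matching $t_j(\mu)$ modulo $\DIS(\mu, P(i), S')$, so that $(u_1', \mu, u_2')$ is an ambiguity of $T_1(S')(i)$ to which the hypothesis on resolvability applies. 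The main obstacle will be that the hypothesised $S'$-advanceable $v$ only produces a \emph{compound} reduction $u \in T(S')(i)$, not a simple one, so I must extract a simple factor and, on top of that, handle the degenerate situation in which every factor of $u$ acts trivially on $\mu$.

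The inclusion $\mc{I}(S')(i) \subseteq \mc{I}(S)(i)$ is immediate from $T_1(S')(i) \subseteq T_1(S)(i)$. For the reverse, \eqref{Eq:AltSpecI(S)} reduces matters to showing $\mu - t(\mu) \in \mc{I}(S')(i)$ whenever $t \in T_1(S)(i) \setminus T_1(S')(i)$ acts nontrivially on $\mu$. The hypothesis delivers $v$ and $\mu_0$ with $v(\mu_0) = \mu$ and $t(\mu) = v(t_0(\mu_0))$, so $\mu - t(\mu) = v(\mu_0 - t_0(\mu_0))$. A rearrangement inside $\cmplM(i_0)$ gives $\mu_0 - t_0(\mu_0) = (\mu_0 - u_0(\mu_0)) - (t_0(\mu_0) - u_0(\mu_0))$ with both summands manifestly in $\mc{I}(S')(i_0)$: the first because $u_0 \in T_1(S')(i_0)$, the second because $\DIS(\mu_0, P(i_0), S') \subseteq \mc{I}(S')(i_0)$. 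The cross-sort form of Lemma~\ref{L:Slutenhet,I'(S)}, applied to the $S'$-advanceable continuous homomorphism $v$, then gives $v(\mc{I}(S')(i_0)) \subseteq \mc{I}(S')(i)$, as needed.

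For resolvability I first establish a simulation lemma: every $t \in T_1(S)(i) \setminus T_1(S')(i)$ acting nontrivially on $\mu$ admits some $u \in T(S')(i)$ with $t(\mu) - u(\mu) \in \DIS(\mu, P(i), S')$. Since $t(\mu) = v(t_0(\mu_0))$ differs from $\mu = v(\mu_0)$, we must have $t_0(\mu_0) \ne \mu_0$; Lemma~\ref{L:Rel.Res.Shadow} applied to $v$ with its correlation property \eqref{Eq:Rel.Res.Shadow} transports $t_0(\mu_0) - u_0(\mu_0) \in \DIS(\mu_0, P(i_0), S')$ to $v(t_0(\mu_0) - u_0(\mu_0)) \in \DIS(\mu, P(i), S')$, and $S'$-advanceability at $\mu_0$ supplies $u \in T(S')(i)$ with $u(\mu) = v(u_0(\mu_0))$. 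To upgrade this compound $u$ to a simple $S'$-reduction modulo $\DIS$, decompose $u = u_n \circ \cdots \circ u_1$ with each $u_k \in T_1(S')(i)$ and take the least $k^*$ with $u_{k^*}(\mu) \ne \mu$; set $u' = u_{k^*}$. The difference $u(\mu) - u'(\mu)$ telescopes into a sum of terms $a - u_k(a)$ with $a \in \DSM(\mu, P(i))$ (by Lemma~\ref{L:Kompabilitet}), which by \eqref{Eq1:Kar.DIS} lies in $\DIS(\mu, P(i), S')$.

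Equipped with a simple surrogate $u_j'$ for each $t_j$ (taking $u_j' = t_j$ whenever $t_j$ is already in $T_1(S')(i)$), the triple $(u_1', \mu, u_2')$ is an ambiguity of $T_1(S')(i)$ which is relatively resolvable by hypothesis, and chaining $t_j(\mu) - u_j'(\mu) \in \DIS(\mu, P(i), S')$ with $u_1'(\mu) - u_2'(\mu) \in \DIS(\mu, P(i), S')$ yields $t_1(\mu) - t_2(\mu) \in \DIS(\mu, P(i), S')$. The anticipated obstacle is the degenerate case where no such $k^*$ exists, i.e.\ where $u$ acts trivially on $\mu$ although $t$ does not: then $t(\mu) - \mu \in \DIS(\mu, P(i), S')$, and combining with $t(\mu) \in \DSM(\mu, P(i))$ (compatibility) and $\DIS(\mu, P(i), S') \subseteq \DSM(\mu, P(i))$ forces $\mu \in \DSM(\mu, P(i))$. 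Once $\mu$ is a closed-span limit of $R$-combinations of elements $\nu < \mu \pin{P(i)}$ (Lemma~\ref{L:Cspan-uppdelning}), the continuity of an arbitrary $t' \in T_1(S)(i)$ together with the $R$-module closure of $\DIS(\mu, P(i), S)$ yields $\mu - t'(\mu) \in \DIS(\mu, P(i), S)$, which absorbs the degeneracy on either side of the ambiguity and completes the chain $t_1(\mu) - t_2(\mu) \in \DIS(\mu, P(i), S)$.
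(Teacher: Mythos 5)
Your proposal is correct and follows essentially the same route as the paper: both derive the $\mc{I}$-equality from the $\Cspan$ characterisation of Lemma~\ref{L:Slutenhet,I'(S)}, both replace each $t_j\notin T_1(S')(i)$ by a compound $S'$-reduction via advanceability and Lemma~\ref{L:Rel.Res.Shadow}, both then extract a simple nontrivially-acting factor and use Lemma~\ref{L:RelResolv} to absorb the rest, and both fall back on $\mu\in\DSM(\mu,P(i))$ to handle the degenerate case in which every simple factor acts trivially on $\mu$. Two small remarks on where you deviate: (i)~you invoke a ``cross-sort form of Lemma~\ref{L:Slutenhet,I'(S)}'' (advanceable maps carry $\mc{I}(S')(i_0)$ into $\mc{I}(S')(i)$), which the paper only states for endomorphisms; the paper instead only needs the $\DIS$-transport in the second claim of Lemma~\ref{L:Rel.Res.Shadow}, although your stronger version does extend by the same proof, so this is a citation gap rather than a logical one; (ii)~in the degenerate case you land on $\DIS(\mu,P(i),S)$ rather than $\DIS(\mu,P(i),S')$ --- that is weaker than what the paper proves but still exactly what the theorem's conclusion (``resolvable with respect to $P(i)$'') requires, so it costs nothing.
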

\begin{proof}
  Let \(i \in I\) be arbitrary. Let $(t,\mu,u)$ be an arbitrary 
  ambiguity of $T_1(S)(i)$. If \(t,u \in T_1(S')(i)\) then \(t(\mu) - 
  u(\mu) \in \DIS\bigl(\mu,P(i),S'\bigr) \subseteq 
  \DIS\bigl(\mu,P(i),S\bigr)\) by assumption. 
  
  If \(t \in T_1(S)(i) \setminus T_1(S')(i)\) and \(u \in T_1(S')(i)\) 
  then by assumption there exists some \(\mu_0 \in \mc{Y}(i_0)\) and 
  an advanceable continuous homomorphism \(v\colon \cmplM(i_0) \Fpil 
  \cmplM(i)\) such that \(v\bigl( \DSM(\mu_0,P(i_0)) \bigr) \subseteq 
  \DSM(\mu,P(i))\), \(\mu = v(\mu_0)\), and \(t(\mu) = v\bigl( 
  t_0(\mu_0) \bigr)\). Since \(v\bigl( t_0(\mu_0) \bigr) = t(\mu) \neq 
  \mu = v(\mu_0)\) it follows that $t_0$ acts nontrivially on 
  $\mu_0$, and hence there exists some \(u_0 \in T_1(S')(i_0)\) such 
  that \(t_0(\mu_0) \equiv u_0(\mu_0) \pmod{S' < \mu_0 \pin{P(i_0)}}\). 
  By the second claim of Lemma~\ref{L:Rel.Res.Shadow}, 
  \(v\bigl( t_0(\mu_0)\bigr) \equiv v\bigl( u_0(\mu_0) \bigr) 
  \pmod{S' < \mu \pin{P(i)}}\), and by advanceability there exists 
  some \(u_1 \in T(S')(i)\) such that \(u_1(\mu) = v\bigl( u_0(\mu_0) 
  \bigr)\); in other words \(t(\mu) \equiv u_1(\mu) 
  \pmod{S' < \mu \pin{P(i)}}\).
  
  It need not be the case that \(u_1 = u\), or even that \(u_1 \in 
  T_1(S')(i)\), but typically \(u_1(\mu) \neq \mu\) and then there 
  exist \(u_{1a} \in T_1(S')(i)\) and \(u_{1b} \in T(S')(i)\) such 
  that \(u_1(\mu) = u_{1b}\bigl( u_{1a}(\mu) \bigr)\) and 
  \(u_{1a}(\mu) \neq \mu\). In this case, $(u,\mu,u_{1a})$ is an 
  ambiguity of $T_1(S')(i)$ and \(u(\mu) \equiv u_{1a}(\mu) 
  \pmod{S' < \mu \pin{P(i)}}\) since it is resolvable. Furthermore 
  \(u_{1a}(\mu) \equiv u_1(\mu) \pmod{S' < \mu \pin{P(i)}}\) 
  by Lemma~\ref{L:RelResolv}, and it follows that $(t,\mu,u)$ is 
  resolvable relative to $P(i)$. In the degenerate case that 
  \(u_1(\mu) = \mu\), one has the curious situation that \(\mu = 
  v\bigl( u_0(\mu_0) \bigr) \in v\bigl( \DSM(\mu_0,P(i_0)) \bigr) 
  \subseteq \DSM\bigl( \mu, P(i) \bigr)\); there must be an 
  alternative expression for $\mu$ as a linear combination of 
  strictly smaller elements of $\mc{Y}(i)$. Hence \(\mu - u(\mu) 
  \in \DIS\bigl( \mu, P(i), S' \bigr)\) by definition and therefore 
  \(t(\mu) \equiv u_1(\mu) = \mu \equiv u(\mu) 
  \pmod{S' < \mu \pin{P(i)}}\).
  
  The case \(t \in T_1(S')(i)\) and \(u \in T_1(S)(i) \setminus 
  T_1(S')(i)\) is handled similarly. The case \(t,u \in T_1(S)(i) 
  \setminus T_1(S')(i)\) is handled by combining the two previous 
  cases\Dash having two advanceable maps \(v,v'\colon \cmplM(i_0) \Fpil 
  \cmplM(i)\) and \(\mu_0,\mu_0' \in \mc{Y}(i_0) \) such that 
  \(v(\mu_0) = \mu = v'(\mu_0')\), \(t(\mu) = v\bigl( t_0(\mu_0) 
  \bigr)\), and \(u(\mu) = v'\bigl( t_0(\mu_0') \bigr)\).
  
  As for the claim that \(\mc{I}(S')(i) = \mc{I}(S)(i)\), it follows 
  from Lemma~\ref{L:Slutenhet,I'(S)} that
  \begin{align*}
    \mc{I}(S')(i) ={}&
    \Cspan\Bigl( \setOf[\big]{ \mu - t(\mu) }{ 
      \mu \in \mc{Y}(i), t \in T_1(S')(i)} \Bigr) 
      \subseteq \\ \subseteq{}&
    \Cspan\Bigl( \setOf[\big]{ \mu - t(\mu) }{ 
      \mu \in \mc{Y}(i), t \in T_1(S)(i)} \Bigr) = 
    \mc{I}(S)(i) \text{.}
  \end{align*}
  Furthermore, if \(t \in T_1(S)(i) \setminus T_1(S')(i)\) and \(\mu 
  \in \mc{Y}(i)\) can give a nonzero contribution to the second 
  $\Cspan$, i.e., if they are such that \(\mu - t(\mu) \neq 0\), then 
  by assumption there exist \(\mu_0 \in \mc{Y}(i_0)\) and an 
  advanceable continuous homomorphism \(v\colon \cmplM(i_0) \Fpil 
  \cmplM(i)\) such that \(\mu = v(\mu_0)\), \(t(\mu) = 
  v\bigl( t_0(\mu_0) \bigr)\), and \(v\bigl( \DSM(\mu_0,P(i_0)) \bigr) 
  \subseteq \DSM(\mu,P(i))\). There also exists some \(u_0 \in 
  T_1(S')(i_0)\) such that \(t_0(\mu_0) \equiv u_0(\mu_0) \pmod{S' < 
  \mu_0 \pin{P(i_0)}}\) and some \(u \in T(S')(i)\) such that 
  \(u\bigl( v(\mu_0) \bigr) = v\bigl( u_0(\mu_0) \bigr)\). Hence
  \begin{align*}
    \mu - t(\mu) = 
    v\bigl( \mu_0 - t_0(\mu_0) \bigr) ={}&
    v\bigl( \mu_0 - u_0(\mu_0) \bigr) + 
      v\bigl( u_0(\mu_0) - t_0(\mu_0) \bigr) \in \\ \in{}&
    \bigl( \mu - u(\mu) \bigr) + 
      v\Bigl( \DIS\bigl( \mu_0,P(i_0), S'\bigl) \Bigr) 
      \subseteq \\ \subseteq{}&
    \mc{I}(S')(i) + \DIS\bigl( \mu,P(i), S'\bigl) =
    \mc{I}(S')(i)
  \end{align*}
  and thus \(\mc{I}(S)(i) \subseteq \mc{I}(S')(i)\).
\end{proof}

\section{A framework construction}
\label{Sec:Konstruktion}

In the last couple of sections, the generic theory has been developed 
to a point where it is comparable to the ring theory diamond lemma 
\emph{provided} that one can set up the necessary framework. 
The ideas behind the standard 
construction have already been presented, but it is convenient to 
collect everything in a formal statement to facilitate citations 
in other papers. Furthermore the last couple of sections have 
demonstrated that one typically wants a bit more than just the basic 
framework assumptions, so in support of the framework construction 
there are also some lemmas which give more elementary conditions that 
suffice for establishing the advanceability, compatibility, and 
equicontinuity properties.

The first lemma concerns the basic construction of a topology. It 
serves mainly as a preparation for Lemma~\ref{L2:Likgradig} and may 
certainly be skipped if one is only interested in a discrete 
topology. For simplicity, it is stated in single-sorted notation.

\begin{lemma} \label{L:Modul-ultranorm}
  Let $\mc{R}$ be an associative unital ring with ring ultranorm 
  $\norm{\cdot}$ which is complete in the topology induced by this 
  norm. Let some nonempty set $Y$ and a function \(U\colon Y \Fpil \Rp\) 
  be given. Let $\mc{M}$ be the free $\mc{R}$-module with basis $Y$. 
  For every \(\mu \in Y\), let \(f_\mu\colon \mc{M} \Fpil \mc{R}\) be 
  the coefficient-of-$\mu$ homomorphism, i.e., \(f_\mu(\mu)=1\) for 
  all \(\mu \in Y\) and \(f_\mu(\nu) = 0\) for all \(\mu,\nu \in Y\) 
  such that \(\mu \neq \nu\). Define
  \begin{equation} \label{Eq:Modul-ultranorm}
    \Norm{a} := \max_{\mu \in Y} \norm[\big]{f_\mu(a)} U(\mu)
  \end{equation}
  for all \(a \in \mc{M}\). Then $\Norm{\cdot}$ is an $\mc{R}$-module 
  ultranorm on $\mc{M}$. Let
  \begin{align*}
    R ={}& \setOf[\big]{ a \mapsto r \cdot a : \mc{M} \Fpil \mc{M} }{ 
    r \in \mc{R}, \norm{r} \leqslant 1 } \text{,}\\
    B_n ={}& \setOf{ a \in \mc{M} }{ \Norm{a} < 2^{1-n} }
    \quad\text{for \(n \geqslant 1\).}
  \end{align*}
  Then \(\mc{O} = \{B_n\}_{n=1}^\infty\) satisfies 
  Assumption~\ref{Ant:B_n} and every $f_\mu$ for \(\mu \in Y\) is 
  continuous. The extensions of $\{f_\mu\}_{\mu \in Y}$ and 
  $\Norm{\cdot}$ to $\cmplM$ by continuity satisfy 
  \eqref{Eq:Modul-ultranorm} and
  \begin{equation} \label{Eq2:Modul-ultranorm}
    \Norm[\big]{ f_\mu(a) \cdot \mu } \leqslant \Norm{a}
    \qquad\text{for all \(\mu \in Y\),}
  \end{equation}
  for all \(a \in \cmplM\).
\end{lemma}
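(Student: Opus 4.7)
The plan is to verify the ultranorm axioms for $\Norm{\cdot}$ on $\mc{M}$ first, then bootstrap from there to the remaining claims about $\mc{O}$, continuity of the $f_\mu$, and finally the extension to $\cmplM$. Since each $f_\mu$ is by construction an $\mc{R}$-module homomorphism whose support on any given $a \in \mc{M}$ is finite, the maximum in \eqref{Eq:Modul-ultranorm} is well-defined on $\mc{M}$ (as a max over a finite set, or $0$ if $a=0$), so I can simply transport each axiom from $\norm{\cdot}$ to $\Norm{\cdot}$. Positivity is immediate; axiom~(ii) follows from $f_\mu(a-b) = f_\mu(a) - f_\mu(b)$ together with the strong triangle inequality for $\norm{\cdot}$, applied termwise and then maximised over $\mu$; axiom~(iii) follows because $a \neq 0$ forces $f_\mu(a) \neq 0$ for at least one $\mu$, giving a strictly positive contribution; and axiom~(v) follows from $f_\mu(ra) = r \cdot f_\mu(a)$ together with $\norm{rs} \leqslant \norm{r}\norm{s}$ in $\mc{R}$.

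Next I would check Assumption~\ref{Ant:B_n}. Each $B_n$ is a subgroup by~(ii), is closed under the action of every $r \in R$ because~(v) combined with $\norm{r} \leqslant 1$ gives $\Norm{r \cdot a} \leqslant \Norm{a}$; the nesting $B_n \supseteq B_{n+1}$ is obvious from the thresholds $2^{1-n} > 2^{-n}$; and $\bigcap_n B_n = \{0\}$ follows from~(iii). Continuity of $f_\mu$ is then the one-line estimate $\norm{f_\mu(a)} \leqslant \Norm{a}/U(\mu)$, which for a prescribed neighbourhood of $0$ in $\mc{R}$ supplies an explicit $B_n$ mapping into it.

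The main work lies in the extension to $\cmplM$. Since each $f_\mu$ is uniformly continuous on $\mc{M}$ and $\mc{R}$ is complete, each $f_\mu$ extends uniquely to a continuous $\mc{R}$-module homomorphism $f_\mu\colon \cmplM \Fpil \mc{R}$; similarly $\Norm{\cdot}$ extends to $\cmplM$. The nontrivial claim is that the formula \eqref{Eq:Modul-ultranorm} persists on $\cmplM$, where a priori the index set in the max is infinite. I would establish the following support-finiteness property: for every $a \in \cmplM$ and every $\ve > 0$, the set $\setOf{\mu \in Y}{\norm{f_\mu(a)} U(\mu) \geqslant \ve}$ is finite. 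The argument is to fix a representing Cauchy sequence $\{a_k\}_{k=1}^\infty \subseteq \mc{M}$ with $a_k \Lpil a$; each $a_k$ has finite support, so if infinitely many $\mu$ satisfied the inequality then some such $\mu$ would lie outside $\supp(a_k)$, giving $\norm{f_\mu(a - a_k)} U(\mu) = \norm{f_\mu(a)} U(\mu) \geqslant \ve$ and contradicting $\Norm{a - a_k} \Lpil 0$ for large $k$. Given this, the supremum in \eqref{Eq:Modul-ultranorm} is attained by a maximum over a finite set, and a direct comparison with $\Norm{a_k}$ (taking $k$ large enough) confirms it equals $\Norm{a}$. I expect this finiteness-of-support step to be the main obstacle, since everything else is a routine transfer of the already-verified inequalities.

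Finally \eqref{Eq2:Modul-ultranorm} is an easy corollary: for a single $\mu$, $\Norm{f_\mu(a) \cdot \mu}$ equals the single term $\norm{f_\mu(a)} U(\mu)$, which is bounded above by the maximum in \eqref{Eq:Modul-ultranorm}, namely $\Norm{a}$.
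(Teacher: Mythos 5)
Your proposal is correct and covers all the claims of the lemma; the routine parts (axioms on $\mc{M}$, Assumption~\ref{Ant:B_n}, continuity of $f_\mu$) match the paper's proof essentially step for step. The one place you diverge is the extension of \eqref{Eq:Modul-ultranorm} to $\cmplM$. You introduce an auxiliary support-finiteness lemma (for every $\ve > 0$ only finitely many $\mu$ have $\norm{f_\mu(a)}U(\mu) \geqslant \ve$), which is a true and natural observation, but it is not needed and, taken on its own, does not yet tell you that the resulting maximum equals the continuously extended $\Norm{a}$. Your closing phrase ``a direct comparison with $\Norm{a_k}$ confirms it'' is where the actual work lies: the right way to carry it out is to pick $b = a_k \in \mc{M}$ with $\Norm{a-b} < \Norm{a}$, deduce $\Norm{a} = \Norm{b}$ from the strong triangle inequality, and then show that the $\mu_0$ attaining the maximum for $b$ also satisfies $\norm{f_{\mu_0}(a)}U(\mu_0) = \Norm{a}$ (again by the strong triangle inequality, since $\norm{f_{\mu_0}(a-b)}U(\mu_0) < \Norm{a}$). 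This is exactly the paper's argument, and once you write it out you will see that it already produces the attaining monomial directly, making the finiteness lemma superfluous. Your derivation of \eqref{Eq2:Modul-ultranorm} as a corollary of \eqref{Eq:Modul-ultranorm} on $\cmplM$ is fine; the paper instead proves it on $\mc{M}$ first and extends by continuity, but the two orderings are interchangeable.
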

\begin{proof}
  That the right hand side of \eqref{Eq:Modul-ultranorm} exists for 
  all \(b \in \mc{M}\) follows from the fact that $\setOf[\big]{ \mu 
  \in Y }{ f_\mu(b) \neq 0 }$ is finite for every \(b \in \mc{M}\). 
  The claim that $\Norm{\cdot}$ is an $\mc{R}$-module ultranorm is 
  shown by verifying the conditions in 
  Definition~\ref{Def:Ultranorm}. $\Norm{a}$ is nonnegative because 
  it is a maximum of nonnegative numbers. Since $\norm{\cdot}$ 
  satisfies the strong triangle inequality, 
  \begin{multline*}
    \Norm{a-b} = 
    \max_{\mu \in Y} \norm[\big]{ f_\mu(a-b) } U(\mu) \leqslant
    \max_{\mu \in Y} \max\Bigl\{ \norm[\big]{f_\mu(a)}, 
      \norm[\big]{f_\mu(b)} \Bigr\} U(\mu) = \\ =
    \max\Bigl\{  \max_{\mu \in Y} \norm[\big]{f_\mu(a)} U(\mu), 
      \max_{\mu \in Y} \norm[\big]{f_\mu(b)} U(\mu) \Bigr\} =
    \max\bigl\{ \Norm{a}, \Norm{b} \bigr\}
  \end{multline*}
  for all \(a,b \in \mc{M}\). \(\Norm{a} = 0\) iff 
  \(\norm[\big]{f_\mu(a)} = 0\) for all \(\mu \in \mc{Y}\), which 
  holds iff \(f_\mu(a) = 0\) for all \(\mu \in \mc{Y}\), which in 
  turn is true iff \(a=0\). For every \(r \in \mc{R}\) and \(a \in 
  \mc{M}\),
  \begin{multline} \label{Eq3:Modul-ultranorm}
    \Norm{r \cdot a} =
    \max_{\mu \in Y} \norm[\big]{ f_\mu(r \cdot a) } U(\mu) = 
    \max_{\mu \in Y} \norm[\big]{ r \cdot f_\mu(a) } U(\mu) 
      \leqslant \\ \leqslant 
    \max_{\mu \in Y} \norm{r} \norm[\big]{ f_\mu(a) } U(\mu) =
    \norm{r} \Norm{a}
    \text{.}
  \end{multline}
  Hence $\Norm{\cdot}$ is an $\mc{R}$-module ultranorm. As such, it 
  is also a uniformly continuous function \(\mc{M} \Fpil 
  [0,\infty\mathclose{[} \subset \R\) since the absolute value of 
  $\Norm{a}-\Norm{b}$ by the triangle inequality is bounded from above 
  by $\Norm{a-b}$. By the completeness of the codomain, it follows that 
  $\Norm{\cdot}$ extends by continuity to a function \(\cmplM \Fpil 
  [0,\infty\mathclose{[}\), and this extended map will also be an 
  $\mc{R}$-module ultranorm because left and right hand sides in the 
  axioms for this are all continuous and thus the axioms are preserved 
  under taking limits.
  
  It follows from \eqref{Eq3:Modul-ultranorm} that any set of all \(a 
  \in \mc{M}\) such that \(\Norm{a} < e\) for some \(e \in \Rp\) is 
  an $R$-module. That \(\bigcap_{n=1}^\infty B_n = \{0\}\) is because 
  \(\Norm{a}=0\) implies \(a=0\). Let \(e \in \Rp\) and \(\mu \in Y\) 
  be given. Since \(\norm[\big]{f_\mu(a)} U(\mu) \leqslant \Norm{a}\) 
  for all \(a \in \mc{M}\), it follows that any \(a \in \mc{M}\) such 
  that \(\Norm{a} < e U(\mu)\) has \(\norm[\big]{f_\mu(a)} < e\), and 
  hence $f_\mu$ is continuous. By this continuity and the completeness 
  of $\mc{R}$, it extends to a map \(\cmplM \Fpil \mc{R}\). Furthermore
  \begin{multline*}
    \Norm[\big]{ f_\mu(a) \cdot \mu } = 
    \max_{\nu \in Y} \norm[\Big]{ f_\nu\bigl( f_\mu(a) \cdot \mu 
      \bigr) } U(\nu) =
    \max_{\nu \in Y} \norm[\Big]{ f_\mu(a) \cdot f_\nu(\mu) } U(\nu) 
      = \\ =
    \max\Bigl\{ 0, \norm[\big]{ f_\mu(a) } U(\mu) \Bigr\} =
    \norm[\big]{ f_\mu(a) } U(\mu) \leqslant \Norm{a}
  \end{multline*}
  which demonstrates \eqref{Eq2:Modul-ultranorm} for \(a \in \mc{M}\). 
  By continuity of left and right hand sides it continues to hold for 
  arbitrary \(a \in \cmplM\).
  
  On the matter of \eqref{Eq:Modul-ultranorm} for \(a \in \cmplM 
  \setminus \mc{M}\), one may first observe that \(\Norm{a} > 0\) and 
  thus there exists some \(b \in \mc{M}\) such that \(\Norm{a-b} < 
  \Norm{a}\). It follows from the strong triangle inequality that 
  \(\Norm{a} = \Norm{b} = \max_{\mu \in Y} 
  \norm[\big]{f_\mu(b)}U(\mu)\); let \(\mu_0 \in Y\) be an element in 
  which this maximum is attained. Since \(\norm[\big]{f_\mu(c)} 
  U(\mu) \leqslant \Norm{c}\) for all \(c \in \mc{M}\) and \(\mu \in 
  Y\), this inequality by continuity holds for all \(c \in \cmplM\). 
  Hence \(\norm[\big]{f_\mu(a-b)} U(\mu) \leqslant \Norm{a-b} < 
  \Norm{a}\) and thus
  \begin{multline*}
    \norm[\big]{f_\mu(a)} U(\mu) =
    \norm[\big]{f_\mu(b) + f_\mu(a-b)} U(\mu) 
      \leqslant \\ \leqslant
    \max\Bigl\{ \norm[\big]{f_\mu(b)} U(\mu), 
      \norm[\big]{f_\mu(a-b)} U(\mu) \Bigr\} \leqslant
    \Norm{a} \text{.}
  \end{multline*}
  On the other hand,
  \begin{multline*}
    \Norm{a} = 
    \norm[\big]{f_{\mu_0}(b)} U({\mu_0}) =
    \norm[\big]{ f_{\mu_0}(a) - f_{\mu_0}(a-b)} U({\mu_0}) 
      \leqslant \\ \leqslant
    \max\Bigl\{ \norm[\big]{f_{\mu_0}(a)} U(\mu_0), 
      \norm[\big]{f_{\mu_0}(a-b)} U(\mu_0) \Bigr\}
  \end{multline*}
  and since \(\norm[\big]{f_{\mu_0}(a-b)} U(\mu_0) < \Norm{a}\) it 
  follows that \(\norm[\big]{f_{\mu_0}(a)} U(\mu_0) = \Norm{a}\). 
  This has verified not only that the maximum in the right hand side 
  of \eqref{Eq:Modul-ultranorm} exists, but also that it equals the 
  left hand side.
\end{proof}

The main specialisation made in this construction is that every 
$\mc{M}(i)$ is a free $\mc{R}$-module, for some fixed ring $\mc{R}$. 
The topological conditions may seem extensive, but they are all 
fulfilled in the case considered in Lemma~\ref{L:Modul-ultranorm}, and 
they are of course void in the case of a discrete topology. The 
classical case is furthermore that \(R(i) = \mc{R}(i)\) and 
\(\mc{Y}(i) = Y(i)\), but as discussed in Section~\ref{Sec2:Basics}, 
things aren't always that simple.

\begin{construction} \label{Konstr:t_v,s-reduktion}
  Let $\mc{R}$ be a unital associative topologically complete ring. 
  For every \(i \in I\), let $Y(i)$ be an arbitrary set and let 
  $\mc{M}(i)$ be the free $\mc{R}$-module with basis $Y(i)$. For 
  every \(\mu \in Y(i)\), denote by $f_\mu$ the coefficient-of-$\mu$ 
  homomorphism \(\mc{M}(i) \Fpil \mc{R}\). Also let 
  $\mc{R}(i)$ be the ring of $\mc{R}$-actions on $\mc{M}(i)$, let 
  $R(i)$ be a subring of $\mc{R}(i)$, and let \(R^\bot(i) \subseteq 
  \mc{R}(i)\) be a set such that \(\sum_{r \in R^\bot(i)} R(i) \circ r = 
  \mc{R}(i)\) and \(\id \in R^\bot(i)\). Let \(\mc{Y}(i) = 
  \setOf[\big]{ r(\mu) }{ r \in R^\bot(i), \mu \in Y(i) }\); this 
  ensures Assumption~\ref{Ant:YSpan} is fulfilled.
  For every \(i \in I\), let \(\mc{O}(i) = \bigl\{ B_n(i) \bigr\}\) 
  be a family of $R(i)$-modules satisfying Assumption~\ref{Ant:B_n} 
  and in addition being such that 
  the $\mc{R}$-module multiplication \(\mc{R} \times \cmplM(i) \Fpil 
  \cmplM(i) : (r,b) \mapsto r \cdot b\) 
  and all maps \(\{f_\mu\}_{\mu \in Y(i)}\) are continuous. 
  
  Let \(V = \bigcup_{i,j \in I} V(i,j)\) be such that every 
  \(v \in V(i,j)\) is a continuous $\mc{R}$-linear map \(\cmplM(j) 
  \Fpil \cmplM(i)\). Let \(S = \bigcup_{i \in I} S(i)\) be arbitrary 
  such that \(S(i) \subseteq Y(i) \times \cmplM(i)\).
  For all \(i,j \in I\), let
  \begin{equation*}
    W(i,j) = \setOf[\Big]{ \bigl( v,(\mu,a) \bigr) \in V(i,j) 
    \times S(j) }{ v(\mu) \in Y(i) }
  \end{equation*}
  and define \(T_1(S)(i) = \bigcup_{j \in I} \{t_{v,s}\}_{(v,s) \in 
  W(i,j)}\)\index{t v s@$t_{v,s}$}, where 
  \begin{equation} \label{KonEq:t_v,s-reduktion}
    t_{v,(\mu,a)}(b) = b - f_{v(\mu)}(b) \cdot v(\mu-a)
    \qquad\text{for all \(b \in \cmplM(i)\).}
  \end{equation}
  This $T_1(S)(i)$ satisfies Assumption~\ref{Ant:T(S)-kont-hom} and 
  every \(r \in \mc{R}(i)\) is absolutely advanceable with respect 
  to $T_1(S)(i)$, for all \(i \in I\). Furthermore every ambiguity 
  $\bigl( t_1, r(\mu), t_2 \bigr)$ of $T_1(S)(i)$ where \(\mu \in 
  Y(i)\) and \(r \in R^\bot(i) \setminus \{\id\}\) is an absolute 
  shadow of the ambiguity $(t_1,\mu,t_2)$.
\end{construction}
\begin{proof}
  Let \(i \in I\) be given. That any \(t \in T_1(S)(i)\) is continuous 
  and $\mc{R}$-linear follows from \eqref{KonEq:t_v,s-reduktion} 
  since this formula is a composition of maps with these properties, 
  and thus $t$ is a continuous homomorphism satisfying \(t \circ r = 
  r \circ t\) for all \(r \in \mc{R}(i)\). Not only does this satisfy 
  Assumption~\ref{Ant:T(S)-kont-hom}, but it also means every \(r \in 
  \mc{R}(i)\) is absolutely advanceable.
\end{proof}

\begin{lemma} \label{L:Framflyttbar}
  Let everything be as in the construction. Assume in addition that 
  $V$ is closed under composition and satisfies \(v\bigl( Y(j) \bigr) 
  \subseteq Y(i) \cup \{0\}\) for all \(v \in V(i,j)\) and \(i,j \in I\). 
  If $S$ is such that \(v(\mu)=0\) implies \(v(a) = 0\) for \(v \in V(i,j)\), 
  \((\mu,a) \in S(j)\), and \(i,j \in I\) then for all \(i,j \in I\) 
  every element of $V(i,j)$ is advanceable with respect to $T_1(S)(j)$ 
  and $T_1(S)(i)$.
\end{lemma}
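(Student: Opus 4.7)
The plan is to verify Definition~\ref{Def:Framflyttbar2} by producing, for each $v \in V(i,j)$, each $t' \in T_1(S)(j)$, and each $a \in \RstarY(j)$, an explicit $t \in T(S)(i)$ with $t\bigl(v(a)\bigr) = v\bigl(t'(a)\bigr)$. By Construction~\ref{Konstr:t_v,s-reduktion}, $t'$ has the form $t' = t_{v',(\mu,\alpha)}$ for some $k \in I$, $v' \in V(j,k)$, and $(\mu,\alpha) \in S(k)$ with $v'(\mu) \in Y(j)$. Using the $\mc{R}$-linearity of $v$ together with~\eqref{KonEq:t_v,s-reduktion},
\begin{equation*}
  v\bigl(t'(a)\bigr) = v(a) - f_{v'(\mu)}(a) \cdot w(\mu - \alpha),
\end{equation*}
where $w := v \circ v'$ lies in $V(i,k)$ because $V$ is closed under composition. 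Since $v'(\mu) \in Y(j)$, the hypothesis $v\bigl(Y(j)\bigr) \subseteq Y(i) \cup \{0\}$ forces $w(\mu) \in Y(i) \cup \{0\}$.

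Writing $a = \rho \cdot \lambda$ for some scalar $\rho \in \mc{R}$ and $\lambda \in Y(j)$, as is possible because any $a \in \RstarY(j)$ is of the shape $r\bigl(s(\lambda)\bigr)$ for $r \in R^*(j)$, $s \in R^\bot(j)$, $\lambda \in Y(j)$, with $r \circ s \in \mc{R}(j)$, one gets $f_{v'(\mu)}(a) = \rho$ when $\lambda = v'(\mu)$ and $f_{v'(\mu)}(a) = 0$ otherwise. The candidate $t$ is chosen by a three-way case split:
\begin{itemize}
  \item If $\lambda \neq v'(\mu)$, the formula collapses to $v\bigl(t'(a)\bigr) = v(a)$, and $t := \id$ works.
  \item If $\lambda = v'(\mu)$ and $w(\mu) = 0$, then applying the hypothesis on $S$ to $w \in V(i,k)$ and the rule $(\mu,\alpha) \in S(k)$ yields $w(\alpha) = 0$, hence $w(\mu - \alpha) = 0$; again $v\bigl(t'(a)\bigr) = v(a)$ and $t := \id$ works.
  \item If $\lambda = v'(\mu)$ and $w(\mu) \in Y(i)$, then $(w,(\mu,\alpha)) \in W(i,k)$, so $t := t_{w,(\mu,\alpha)} \in T_1(S)(i) \subseteq T(S)(i)$. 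A direct substitution using $v(a) = \rho \cdot w(\mu)$ gives $f_{w(\mu)}\bigl(v(a)\bigr) = \rho = f_{v'(\mu)}(a)$, and hence $t\bigl(v(a)\bigr) = v(a) - \rho \cdot w(\mu - \alpha) = v\bigl(t'(a)\bigr)$.
\end{itemize}

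There is no deep obstacle here; the argument is essentially a direct unwinding of the definitions once the composed map $w = v \circ v'$ is identified. The only point requiring care is that $v$ (or $w$) may collapse distinct elements of $Y(j)$ or annihilate $v'(\mu)$, so that the naive guess $t := t_{w,(\mu,\alpha)}$ need not even be a legal simple reduction, let alone the correct one. The hypothesis on $S$ is tailored precisely to dispose of the annihilation case by forcing the rule difference $\mu - \alpha$ into $\ker w$ whenever $\mu$ is, while the clause in Definition~\ref{Def:Framflyttbar2} permitting $t$ to depend on $a$ handles the collapse case via the trivial fallback $t = \id$ whenever $t'$ acts trivially on the particular $a$ under consideration.
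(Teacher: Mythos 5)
Your proof is correct and follows essentially the same route as the paper's: decompose $t' = t_{v',(\mu,\alpha)}$, form $w = v \circ v' \in V(i,k)$, reduce $a$ to the shape $\rho \cdot \lambda$ with $\lambda \in Y(j)$, and split into the same three cases (no overlap, annihilation handled by the hypothesis on $S$, and the generic case $t = t_{w,(\mu,\alpha)}$). The only difference is notational — the paper calls the advanceable map $w$ and the inner map $v$, whereas you use $v$ and $v'$.
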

\begin{proof}
  Let \(i,j \in I\), \(w \in V(i,j)\) such that \(w\bigl( Y(j) 
  \bigr) \subseteq Y(i) \cup \{0\}\), \(t' \in T_1(S)(j)\), and \(b 
  \in \RstarY(j)\) be given. By definition of $T_1(S)(j)$ there 
  exists some \(k \in I\) and \(\bigl( v, (\mu,a) \bigr) \in W(j,k)\) 
  such that \(t' = t_{v,(\mu,a)}\). Furthermore there exist \(\nu \in 
  Y(j)\) and \(r \in \mc{R}\) such that \(b = r \cdot \nu\). There 
  are three cases for \(v\bigl( t'(b) \bigr)\).
  \begin{enumerate}
    \item
      If \(v(\mu) \neq \nu\) then \(f_{v(\mu)}(b) = 0\) and hence 
      \(t'(b) = b\), in which case \(t\bigl( w(b) \bigr) = w\bigl( 
      t'(b) \bigr)\) for \(t = \id\).
    \item
      If \(v(\mu) = \nu\) and \(w(\nu) \in Y(i)\) then 
      \(\bigl( w \circ\nobreak v, (\mu,a) \bigr) \in W(i,k)\) and 
      hence one can consider \(t = t_{w \circ v,(\mu,a)}\), which is 
      the most interesting case. \(f_{v(\mu)}(b) = r\) and hence 
      \(t'(b) = r \cdot v(a)\), so that \(w\bigl( t'(b) \bigr) = 
      r \cdot w\bigl( v(a) \bigr) = 
      r \cdot t\bigl( (w \circ\nobreak v)(\mu) \bigr) = 
      t\bigl( r \cdot\nobreak w(\nu) \bigr) = t\bigl( w(b) \bigr)\), 
      as claimed.
    \item
      If \(v(\mu) = \nu\) and \(w(\nu) = 0\) then 
      \((w \circ\nobreak v)(\mu) = 0\) and hence 
      \((w \circ\nobreak v)(a) = 0\), which means \(w\bigl( t'(b) 
      \bigr) = w\bigl( r \cdot\nobreak v(a) \bigr) = r \cdot 
      (w \circ\nobreak v)(a) = r \cdot (w \circ\nobreak v)(\mu) = 
      w\bigl( r \cdot\nobreak v(\mu) \bigr) = w(b)\), and thus 
      \(t\bigl( w(b) \bigr) = w\bigl( t'(b) \bigr)\) for \(t = \id\).
  \end{enumerate}
  Either way, there exists some \(t \in T(S)(i)\) such that 
  \(t\bigl( w(b) \bigr) = w\bigl( t'(b) \bigr)\).
\end{proof}

\begin{lemma} \label{L:Kompatibilitet}
  Let everything be as in the construction. Let a partial order 
  $P(i)$ on $\mc{Y}(i)$ be given for every \(i \in I\). Assume 
  every \(v \in V(i,j)\) correlates $P(j)$ to $P(i)$ and every 
  \(r \in R^\bot(i)\) correlates $P(i)$ to itself. If \(a \in 
  \DSM\bigl( \mu, P(j) \bigr)\) for all \((\mu,a) \in S(j)\) and 
  \(j \in I\), then $T(S)(i)$ is compatible with $P(i)$ for all 
  \(i \in I\).
\end{lemma}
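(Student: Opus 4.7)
The plan is to reduce the problem to checking only simple reductions via the third claim of Lemma~\ref{L:Kompabilitet}, which guarantees that compatibility of $T_1(S)(i)$ with $P(i)$ lifts to all of $T(S)(i)$. So fix $i \in I$ and an arbitrary simple reduction $t = t_{v,(\mu,a)} \in T_1(S)(i)$, where $v \in V(i,j)$ and $(\mu,a) \in S(j)$ satisfy $v(\mu) \in Y(i)$. I need to verify $t(\lambda) \in \{\lambda\} \cup \DSM(\lambda,P(i))$ for every $\lambda \in \mc{Y}(i)$.

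Using the construction of $\mc{Y}(i)$, I would write $\lambda = r(\nu)$ for some $r \in R^\bot(i)$ and $\nu \in Y(i)$. The defining formula \eqref{KonEq:t_v,s-reduktion} then reads
\begin{equation*}
  t(\lambda) = \lambda - f_{v(\mu)}(r \cdot \nu) \cdot v(\mu - a).
\end{equation*}
Because $\mc{M}(i)$ is free on $Y(i)$ and both $\nu$ and $v(\mu)$ belong to $Y(i)$, the coefficient $f_{v(\mu)}(r \cdot \nu)$ vanishes unless $\nu = v(\mu)$, in which case it equals $r$ (viewed as the scalar by which $r$ acts). The vanishing case is trivial, since then $t(\lambda) = \lambda$; in the other case the $\mc{R}$-linearity of $v$ gives $t(\lambda) = r \cdot v(\mu) - r \cdot v(\mu-a) = r(v(a))$, and the whole matter reduces to placing $r(v(a))$ inside $\DSM(\lambda, P(i))$.

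For the latter step I would chain the two correlation hypotheses. Since $\mu \in Y(j) \subseteq \mc{Y}(j)$ and $v(\mu) = \nu \in Y(i) \subseteq \mc{Y}(i)$, the correlation of $v$ kicks in at $\mu$ and, combined with the hypothesis $a \in \DSM(\mu, P(j))$, yields $v(a) \in \DSM(\nu, P(i))$. Then, since $\nu \in \mc{Y}(i)$ with $r(\nu) = \lambda \in \mc{Y}(i)$, the correlation of $r \in R^\bot(i)$ kicks in at $\nu$ and upgrades this to $r(v(a)) \in \DSM(\lambda, P(i))$, which finishes the case.

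The main friction point is not conceptual but notational bookkeeping: one has to identify the coefficient $f_{v(\mu)}(r \cdot \nu)$ correctly in the free module (keeping straight the double role of $r$ as an element of $R^\bot(i)$ and as the scalar it produces on $\nu$), and then verify that both correlation side-conditions --- ``$v(\mu) \in \mc{Y}(i)$'' and ``$r(\nu) \in \mc{Y}(i)$'' --- actually hold at the points where they are invoked. Both are immediate from the definition of $\mc{Y}(i)$ and from the membership requirement built into $W(i,j)$, so the lemma goes through without any further hypothesis.
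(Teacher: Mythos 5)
Your proof is correct and follows the same route as the paper's: reduce to simple reductions via Lemma~\ref{L:Kompabilitet}, split on whether the coefficient $f_{v(\mu)}$ vanishes at the given element of $\mc{Y}(i)$, and in the nonvanishing case chain the two correlation hypotheses (first for $v \in V(i,j)$, then for the scalar action from $R^\bot(i)$). Your version is slightly more careful in one place: by parameterizing $\lambda \in \mc{Y}(i)$ up front as $r(\nu)$ with $r \in R^\bot(i)$ and $\nu \in Y(i)$, you make explicit the identification of the coefficient $f_{v(\mu)}(\lambda)$ with an element of $R^\bot(i)$ before invoking the correlation hypothesis on $R^\bot(i)$ — a step the paper's proof treats silently, writing $\nu = r \cdot v(\mu)$ with $r = f_{v(\mu)}(\nu) \in \mc{R}$ and then asserting $r \cdot \DSM(v(\mu),P(i)) \subseteq \DSM(\nu,P(i))$ without flagging that the map $b \mapsto r \cdot b$ must be recognized as the $R^\bot(i)$ element supplying $\nu$.
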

\begin{proof}
  Let \(i,j \in I\), \(\bigl(v, (\mu,a) \bigr) \in W(i,j)\), and 
  \(\nu \in \mc{Y}(i)\) be given. It must be shown that 
  \(t_{v,(\mu,a)}(\nu) \in \{\nu\} \cup \DSM\bigl( \nu, P(i) \bigr)\). 
  If \(f_{v(\mu)}(\nu) = 0\) then \(t_{v,(\mu,a)}(\nu) = \nu\) and 
  all is well. Otherwise \(\nu = r \cdot v(\mu)\) for 
  \(r = f_{v(\mu)}(\nu)\) and \(t_{v,(\mu,a)}(\nu) = r \cdot v(a)\). 
  Since \(a \in \DSM\bigl( \mu, P(j) \bigr)\) it follows that \(v(a) 
  \in v\bigl( \DSM\bigl( \mu, P(j) \bigr)\bigr) \subseteq 
  \DSM\bigl( v(\mu), P(i) \bigr)\) and hence \(r \cdot v(a) \in 
  r \cdot \DSM\bigl( v(\mu), P(i) \bigr) \subseteq 
  \DSM\bigl( \nu, P(i) \bigr)\). Therefore all \(t \in T_1(S)(i)\) 
  are compatible with $P(i)$. By Lemma~\ref{L:Kompabilitet}, this 
  extends to the whole of $T(S)(i)$.
\end{proof}

\begin{lemma} \label{L1:Likgradig}
  Let everything be as in the construction. Let \(i \in I\) be given 
  and let \(\mc{N} \supseteq \cmplO(i)\) be a family of topologically 
  open subgroups of $\cmplM(i)$ such that \(f_\nu(a) \cdot \nu \in 
  \ve\) for all \(a \in \ve\), \(\ve \in \mc{N}\), and \(\nu \in Y(i)\). 
  If all \(j \in I\), \(\bigl( v, (\mu,a) \bigr) \in W(i,j)\), \(r \in 
  \mc{R}\), and \(\ve \in \mc{N}\) such that \(r \cdot v(\mu) \in \ve\) 
  also satisfy \(r \cdot v(a) \in \ve\) then:
  \begin{enumerate}
    \item
      \(t(\ve) \subseteq \ve\) for any \(\ve \in \mc{N}\) and \(t \in 
      T(S)(i)\).
    \item
      $T(S)(i)$ is equicontinuous.
  \end{enumerate}
\end{lemma}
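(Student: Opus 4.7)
The plan is to prove (1) first for simple reductions, then extend to arbitrary reductions by composition, and finally deduce (2) as an immediate corollary of (1) by choosing $\delta = \ve$.

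For the simple reduction case, I would fix $\ve \in \mc{N}$, a simple reduction $t = t_{v,(\mu,a)}$ for some $(v,(\mu,a)) \in W(i,j)$, and an arbitrary $b \in \ve$. By definition,
\[
  t_{v,(\mu,a)}(b) = b - f_{v(\mu)}(b) \cdot v(\mu - a) \text{,}
\]
so it suffices to show that $f_{v(\mu)}(b) \cdot v(\mu-a) \in \ve$, as then $t_{v,(\mu,a)}(b) \in \ve - \ve = \ve$ (since $\ve$ is a group). Setting $r = f_{v(\mu)}(b) \in \mc{R}$, I would apply the hypothesis on $\mc{N}$ with $\nu = v(\mu) \in Y(i)$ to get $r \cdot v(\mu) = f_{v(\mu)}(b) \cdot v(\mu) \in \ve$. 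The main hypothesis of the lemma then immediately yields $r \cdot v(a) \in \ve$, whence $r \cdot v(\mu-a) = r \cdot v(\mu) - r \cdot v(a) \in \ve - \ve = \ve$.

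The extension to arbitrary $t \in T(S)(i)$ is then a straightforward induction on the length of the composition expressing $t$ as a product of simple reductions: the base case $t = \id$ is trivial, and if $t = t_n \circ \dotsb \circ t_1$ with each $t_k \in T_1(S)(i)$, then by the induction hypothesis $(t_{n-1} \circ \dotsb \circ t_1)(\ve) \subseteq \ve$, and applying the single-reduction case to $t_n$ gives $t(\ve) \subseteq t_n(\ve) \subseteq \ve$.

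For (2), equicontinuity requires that for every $\ve \in \cmplO(i)$ there exists some $\delta \in \cmplO(i)$ such that $t(\delta) \subseteq \ve$ for all $t \in T(S)(i)$. Since $\cmplO(i) \subseteq \mc{N}$ by assumption, claim (1) applied to $\ve$ itself gives $t(\ve) \subseteq \ve$ for every $t \in T(S)(i)$, so one may simply take $\delta = \ve$.

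I don't anticipate a serious obstacle here: the hypothesis on $\mc{N}$ is precisely tailored so that the coefficient extracted by $f_{v(\mu)}$ from a small element $b$ yields a small scalar multiple of the leading monomial $v(\mu)$, and the main hypothesis of the lemma is likewise tailored so that this smallness is inherited by $r \cdot v(a)$; all the work has effectively been put into the statement of the hypotheses. The only minor point to verify is that $f_{v(\mu)}$ and the $\mc{R}$-module action extend to $\cmplM(i)$ in a way consistent with these hypotheses, which is guaranteed by the continuity assumptions in Construction~\ref{Konstr:t_v,s-reduktion}.
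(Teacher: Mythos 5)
Your proof is correct and follows essentially the same route as the paper's: reduce (1) to simple reductions by composition, apply the $\mc{N}$-hypothesis with $\nu = v(\mu) \in Y(i)$ to get $f_{v(\mu)}(b)\cdot v(\mu) \in \ve$, feed that into the main hypothesis to get $f_{v(\mu)}(b)\cdot v(a) \in \ve$, and combine; then (2) follows with $\delta = \ve$.
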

\begin{proof}
  The second claim is an immediate consequence of the first 
  (\(\delta=\ve\) works for all reductions). If the first claim holds 
  for two particular reductions, then it also holds for their 
  composition; hence it is sufficient to verify it for simple 
  reductions. Let \(t \in T_1(S)(i)\) be given. By definition there 
  is some \(j \in I\) and \(\bigl( v, (\mu,a) \bigr) \in W(i,j)\) 
  such that \(t = t_{v,(\mu,a)}\), i.e., \(t(b) = b - f_{v(\mu)}(b) 
  \cdot v(\mu -\nobreak a)\) for all \(b \in \cmplM(i)\). Let \(\ve 
  \in \mc{N}\) and \(b \in \ve\) be arbitrary. \(f_{v(\mu)}(b) \cdot 
  v(\mu) \in \ve\) by the condition on $\mc{N}$ and hence 
  \(f_{v(\mu)}(b) \cdot v(a) \in \ve\) by assumption. It follows that 
  \(t(b) = b - f_{v(\mu)}(b) \cdot v(\mu) + f_{v(\mu)}(b) \cdot v(a) \in 
  \ve - \ve + \ve = \ve\).
\end{proof}

In \cite[Lemma~3.25]{Avhandlingen}, a different proof of 
equicontinuity can be found which is feasible also in cases where the 
first conclusion of the above lemma does not hold; the idea is to 
consider $T(S)(i)$ that are compatible with some $P(i)$ and require 
the latter to satisfy a `squeeze property' (as in the \emph{Squeeze 
Theorem} of elementary analysis): for every \(\ve \in \cmplO(i)\) 
there must exist some \(\delta \in \cmplO(i)\) such that if \(\mu \in 
\mc{Y}(i) \cap \ve\) then every \(\nu < \mu \pin{P(i)}\) must satisfy 
\(\nu \in \delta\). However, I currently don't have any example of a 
situation where this additional generality is needed. That proof is 
also easily disturbed by the existence of ``small'' scalars, since it 
might happen that the \(r(\mu) \in \ve \cap \RstarY(i)\) some \(t \in 
T(S)(i)\) acts upon does not satisfy \(\mu \in \ve\); it is typically 
necessary to have some condition ensuring that elements of $R(i)$ act 
somewhat uniformly on $\cmplM(i)$, and even then things can get hairy.

The final lemma is instead a special case of Lemma~\ref{L1:Likgradig} 
which separates the norm conditions on $S$, $V$, and $\mc{R}$.

\begin{lemma} \label{L2:Likgradig}
  Let everything be as in the construction. Let a function 
  \(U_i\colon Y(i) \Fpil \Rp\) be given for every \(i \in I\). Assume 
  the topology in $\mc{R}$ is given by a ring ultranorm 
  $\norm{\cdot}$. Also assume for all \(i \in I\) that $\mc{O}(i)$ is 
  constructed from $\norm{\cdot}$ and $U(i)$ as in 
  Lemma~\ref{L:Modul-ultranorm}, and let $\Norm{\cdot}_i$ be the 
  $\mc{R}$-module ultranorm on $\cmplM(i)$.
  Assume that there for every \(v \in V\) exists a constant \(C_v \in 
  \Rp\) such that if \(v \in V(i,j)\) and \(\mu \in Y(j)\) then 
  \(\Norm[\big]{v(\mu)}_i \leqslant C_v U_j(\mu)\) and if in 
  addition \(v(\mu) \in Y(i)\) then \(U_i\bigl( v(\mu) \bigr) = 
  C_v U_i(\mu)\).
  
  If \(\Norm{a}_j \leqslant U_j(\mu)\) for all \((\mu,a) \in S(j)\) 
  and \(j \in I\), then \(\Norm[\big]{t(b)}_i \leqslant \Norm{b}_i\) for 
  all \(t \in T(S)(i)\), \(b \in \cmplM(i)\), and \(i \in I\), and 
  moreover $T(S)(i)$ is equicontinuous for every \(i \in I\).
\end{lemma}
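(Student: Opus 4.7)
The plan is to prove the pointwise contraction $\Norm{t(b)}_i \leqslant \Norm{b}_i$ first; equicontinuity of $T(S)(i)$ then follows immediately, since by Lemma~\ref{L:Modul-ultranorm} the neighbourhood $B_n(i)$ consists precisely of those $b$ with $\Norm{b}_i < 2^{1-n}$, so the choice $\delta = \ve$ works uniformly for all $t \in T(S)(i)$. Because the inequality is preserved under composition and trivially holds for the identity, it suffices to verify it for simple reductions, and by continuity of $t$ and of the norm the case $b \in \mc{M}(i)$ extends to all of $\cmplM(i)$.

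As a preliminary I would upgrade the hypothesis on $V$ from basis elements to arbitrary elements, showing that every $v \in V(i,j)$ satisfies $\Norm{v(c)}_i \leqslant C_v \Norm{c}_j$ for all $c \in \cmplM(j)$. For $c \in \mc{M}(j)$ this follows by expanding $c = \sum_{\mu \in Y(j)} f_\mu(c) \cdot \mu$ as a finite sum, applying $v$, and combining the strong triangle inequality, the module ultranorm axiom, and the given bound $\Norm{v(\mu)}_i \leqslant C_v U_j(\mu)$ on each basis element; continuity of $v$ and of both norms then promotes the bound to $\cmplM(j)$.

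Now fix a simple reduction $t = t_{v,(\mu,a)}$ with $(v,(\mu,a)) \in W(i,j)$. From \eqref{KonEq:t_v,s-reduktion}, $t(b) = b - f_{v(\mu)}(b) \cdot v(\mu-a)$, and the strong triangle inequality reduces the task to showing $\Norm{f_{v(\mu)}(b) \cdot v(\mu-a)}_i \leqslant \Norm{b}_i$. Using the module ultranorm axiom, the strong triangle inequality applied to $v(\mu) - v(a)$, the preliminary bound, and the rule hypothesis $\Norm{a}_j \leqslant U_j(\mu)$, this quantity is bounded by $\norm{f_{v(\mu)}(b)} \cdot C_v U_j(\mu)$. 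The closing move exploits both halves of the hypothesis on $V$: since $v(\mu) \in Y(i)$ by the definition of $W(i,j)$, the hypothesis supplies $U_i\bigl(v(\mu)\bigr) = C_v U_j(\mu)$, so the bound rewrites as $\norm{f_{v(\mu)}(b)} \cdot U_i\bigl(v(\mu)\bigr) = \Norm{f_{v(\mu)}(b) \cdot v(\mu)}_i$, which is at most $\Norm{b}_i$ by \eqref{Eq2:Modul-ultranorm} applied at $\nu = v(\mu)$. The main subtlety is precisely this pairing: the inequality on $\Norm{v(\mu)}_i$ absorbs the $v(a)$ piece, but it is the equality $U_i\bigl(v(\mu)\bigr) = C_v U_j(\mu)$, available only when $v(\mu) \in Y(i)$, that converts the intermediate estimate into a form to which \eqref{Eq2:Modul-ultranorm} can be applied; everything else is routine bookkeeping with the strong triangle inequality.
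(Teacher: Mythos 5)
Your proof is correct. It differs from the paper's route in that you prove the pointwise contraction $\Norm{t(b)}_i \leqslant \Norm{b}_i$ directly for simple reductions and then close under composition, whereas the paper presents Lemma~\ref{L2:Likgradig} as a special case of Lemma~\ref{L1:Likgradig}: it takes $\mc{N}$ to be the full family of $\Norm{\cdot}_i$-balls and verifies the abstract condition that $r \cdot v(\mu) \in \ve$ forces $r \cdot v(a) \in \ve$. The core estimate is the same in both\Ldash the chain through the module ultranorm axiom, the strong triangle inequality, the $V$-hypothesis (both the inequality $\Norm{v(\nu)}_i \leqslant C_v U_j(\nu)$ and the equality $U_i\bigl(v(\mu)\bigr) = C_v U_j(\mu)$ for $v(\mu) \in Y(i)$), the rule hypothesis $\Norm{a}_j \leqslant U_j(\mu)$, and finally \eqref{Eq2:Modul-ultranorm}\Dash and your preliminary upgrade $\Norm{v(c)}_i \leqslant C_v \Norm{c}_j$ plays the same role as the paper's topologically-closed-group argument over $\{ r \cdot v( f_\nu(a) \cdot\nobreak \nu ) \}_{\nu \in Y(j)}$. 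Your version is more self-contained, since it doesn't need the reader to unfold Lemma~\ref{L1:Likgradig} and the definition of $W(i,j)$; what it gives up is the mild extra generality that Lemma~\ref{L1:Likgradig} keeps available (arbitrary families $\mc{N}$ of open subgroups rather than norm balls), which the final lemma does not exercise anyway. Two small notes: the sentence about extending from $\mc{M}(i)$ to $\cmplM(i)$ by continuity is superfluous, since \eqref{Eq:Modul-ultranorm}, \eqref{Eq2:Modul-ultranorm}, and the module ultranorm axiom all hold on $\cmplM(i)$ already so your estimate chain works directly for $b \in \cmplM(i)$; and you have silently (and correctly) read the hypothesis as $U_i\bigl(v(\mu)\bigr) = C_v U_j(\mu)$, matching the paper's own usage in its proof, although the printed statement has $U_i(\mu)$.
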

\begin{proof}
  This is a special case of Lemma~\ref{L1:Likgradig}.  In order for 
  \(t(\ve) \subseteq \ve\) for any \(\ve \in \mc{N}\) and \(t \in 
  T(S)(i)\) to imply \(\Norm[\big]{t(b)}_i \leqslant \Norm{b}_i\) for 
  all \(t \in T(S)(i)\) and \(b \in \cmplM(i)\), it is necessary to 
  take
  \begin{equation*}
    \mc{N} = \setOf[\Big]{ 
      \setOf[\big]{ b \in \cmplM(i) }{ \Norm{b}_i < e }
    }{ e \in \Rp }
    \text{,}
  \end{equation*}
  but apart from that the proof is purely a matter of demonstrating 
  that the conditions in Lemma~\ref{L1:Likgradig} are met. That 
  \(f_\nu(a) \cdot \nu \in \ve\) for all \(a \in \ve\), \(\ve \in 
  \mc{N}\), and \(\nu \in Y(i)\) follows from 
  \eqref{Eq2:Modul-ultranorm}.
  
  For the main condition, let \(i,j \in I\), \(\bigl( v, (\mu,a) 
  \bigr) \in W(i,j)\), \(r \in \mc{R}\), and \(e \in \Rp\) such that 
  \(\Norm[\big]{r \cdot v(\mu)}_i < e\) be given. By 
  \eqref{Eq:Modul-ultranorm}, \(\Norm[\big]{r \cdot v(\mu)}_i = 
  \norm{r} U_i\bigl( v(\mu) \bigr) = \norm{r} C_v U_j(\mu)\). For 
  every \(\nu \in Y(j)\),
  \begin{multline*}
    \Norm[\Big]{ r \cdot v\bigl( f_\nu(a) \cdot \nu \bigr) }_i 
      \leqslant
    \norm{r} \Norm[\big]{ f_\nu(a) \cdot v(\nu) }_i \leqslant
    \norm{r} \norm[\big]{f_\nu(a)} \Norm[\big]{ v(\nu) }_i 
      \leqslant \\ \leqslant
    \norm{r} \norm[\big]{f_\nu(a)} C_v U_j(\nu) \leqslant
    \norm{r} C_v \Norm{a}_j \leqslant 
    \norm{r} C_v U_j(\mu) =
    \Norm[\big]{r \cdot v(\mu)}_i < e
    \text{.}
  \end{multline*}
  Since $\Norm{\cdot}_i$ is an ultranorm and $r \cdot v(a)$ is in the 
  topologically closed group generated by $\bigl\{ r \cdot\nobreak 
  v\bigl( f_\nu(a) \cdot\nobreak \nu \bigr) \bigr\}_{\nu \in Y(j)}$, 
  it now follows that \(\Norm[\big]{ r \cdot v(a) }_i < e\).
\end{proof}

\section{Gr\"obner bases}
\label{Sec:Grobner}

The following treatment of Gr\"obner bases is primarily aimed at 
demonstrating how some known results in this area can be derived from 
the diamond lemma, hence it does not seek to give a definition of 
Gr\"obner basis that applies in all situations covered by the 
$\bigl( \mc{M}, R, \mc{Y}, \mc{O}, T_1(S) \bigr)$ formalism. The 
restrictions that will be made are:
\begin{enumerate}
  \item
    There will only be one sort.
  \item 
    The topology will be discrete.
  \item
    $\mc{M}$ will be a free left $\mc{R}$-module, where $\mc{R}$ 
    is a unital ring, and $R$ will be the set of maps that multiply 
    by an element of $\mc{R}$.
  \item
    $\mc{Y}$ will be a basis of $\mc{M}$.
\end{enumerate}
One restriction that will \emph{not} be made is that of only 
considering total orders, as that is needed more to ensure 
existence of Gr\"obner bases than to define or use them. Some 
algebraic structures require compatible partial orders to be 
non-total, so a restriction to total orders really sacrifices some 
generality.

\subsection{Generic theory}

On a practical level, the property that something is a Gr\"obner 
basis is equivalent to the four claims in Theorem~\ref{S:CDL}, which 
means several equivalent characterisations of this concept could be 
made. The standard definition is however the fifth claim that 
`\emph{the leading monomial of an element of the ideal must be a 
multiple of the leading monomial of some element of the basis}', 
which accordingly appears as claim~\parenthetic{d} of 
Theorem~\ref{S:Grobner-DL}. One reason this characterisation has 
become so popular is no doubt that it is amenable to an informal 
presentation\Ldash everybody knows what the leading monomial is, 
don't they?\Dash although once one starts to do anything with the 
concept (such as reducing modulo a tentative Gr\"obner basis), most 
technical details of a reduction-based approach quickly suggest 
themselves. Moreover, even the issue of what it means to be the 
leading monomial is not without technical complications when 
considered in the present generality.

\begin{definition}
  Let \(\left\{ f_\mu\colon \mc{M} \Fpil \mc{R} 
  \right\}_{\mu\in\mc{Y}}\) be the family of coefficient-of-$\mu$ 
  homomorphisms associated with the basis $\mc{Y}$ for $\mc{M}$. The 
  \DefOrd{support} $\supp(a)$\index{supp@$\supp(a)$} of an \(a \in 
  \mc{M}\) is the set of \(\mu \in \mc{Y}\) for which \(f_\mu(a) \neq 
  0\).
  
  Let $P$ be a binary relation on $\mc{Y}$. A 
  \DefOrd[*{leading monomial}]{$P$-leading monomial} of some \(a 
  \in \mc{M}\) is a $P$-maximal element of $\supp(a)$, i.e., a
  \(\mu \in \supp(a)\) such that no \(\nu \in \supp(a)\) satisfies 
  \(\mu < \nu \pin{P}\). Denote by $\LM_P(a)$\index{LM@$\LM(g)$} 
  the set of $P$-leading monomials of $a$. If $\LM_P(a)$ has exactly 
  one element, then denote that by $\lm_P(a)$\index{lm@$\lm(a)$}.
\end{definition}

The main reason for restricting this treatment to $\mc{Y}$ being a 
basis of $\mc{M}$ and the topology being discrete is that this 
ensures $\supp(a)$\Ldash informally ``the set of monomials occurring 
in $a$''\Dash is well-defined. Linear dependencies in $\mc{Y}$ would 
obviously remove the foundation for this concept, and topology can 
(but doesn't have to) produce similar problems: on can choose a 
topology such that there are \(Y_1,Y_2 \subset \mc{Y}\) for which 
\(\Span(Y_1) \cap \Span(Y_2) = \{0\}\) but \(\Cspan(Y_1) \cap 
\Cspan(Y_2) \neq \{0\}\). Even after 
ensuring that $\supp(a)$ is well-defined for all \(a \in \cmplM\), 
a topology can cause the definition of $\LM_P(a)$ to fail, if some 
$\supp(a)$ is infinite and contains an infinite $P$-ascending chain. 
One approach for defining Gr\"obner bases without relying on the 
support concept could be to replace the concrete construction of 
$\LM_P(g)$ by an abstract map $L$ that assigns a set of leading 
monomials to each element of $\cmplM$. The effect would probably be 
similar to the formalism in~\cite{Mora:Seven}, even though that 
technically goes in the other direction: the ``$L$'' map has a 
canonical construction but the monomials are abstracted away.

\begin{definition}
  Let $V$ be a set of $\mc{R}$-module homomorphisms \(\mc{M} \Fpil 
  \mc{M}\). Let $P$ be a binary relation on $\mc{Y}$. Let \(N 
  \subseteq \mc{M}\) be a $V$-ideal. A subset $G$ of $\mc{M}$ is said 
  to be a \DefOrd[*{Gr\"obner basis}]{$P$-monic $V$-Gr\"obner-basis} 
  of $N$ if it is $P$-monic, \(v(g) \in N\) for all \(v \in V\) and 
  \(g \in G\), and there for every \(a \in N\) and \(\mu \in \LM_P(a)\) 
  exists some \(g \in G\) and \(v \in V\) such that \(\mu = v\bigl( 
  \lm_P(g) \bigr)\).
\end{definition}

Making $P$-monicity a precondition for Gr\"obner bases serves two 
purposes: it ensures there is a unique $P$-leading monomial and it 
ensures reductions compatible with $P$ can be manufactured from basis 
elements. While these are important ingredients in 
Lemma~\ref{L:Grobnerbas}, the $P$-monicity condition also works 
against a very elementary result in traditional Gr\"obner basis 
theory, namely that every ideal should have a Gr\"obner basis. 
Without the $P$-monicity it would be possible to simply make the 
observation that the ideal itself is a Gr\"obner basis for it\Ldash 
admittedly a ridiculously large basis (probably infinite in most 
cases where finite bases exist), but nonetheless a basis\Dash which 
formally justifies assuming every ideal one needs to work with is 
generated by a Gr\"obner basis. If a Gr\"obner basis is to be 
$P$-monic however, one has to be careful about what elements can be 
included, but as long as $P$ is a total order and $\mc{R}$ is a field 
there is always a $P$-monic counterpart of every nonzero element of 
$\mc{M}$.

It should also be observed that the definition of Gr\"obner basis does 
not explicitly require $G$ to be a $V$-ideal basis for $N$, and in 
fact it depends on $P$ whether this follows. A trivial counterexample 
is to consider \(N = \mc{M} = \mc{R}[\ssx]\) and 
\(G = \{1 +\nobreak \ssx\}\) where 
$\mc{R}$ is a field, \(\mc{Y} = \{\ssx^n\}_{n\in\N}\), \(V = \{ b 
\mapsto\nobreak \ssx^n b \}_{n\in\N}\), and \(\ssx^m \leqslant \ssx^n 
\pin{P}\) iff \(m \geqslant n\); since \(\lm_P(1 +\nobreak \ssx) = 
1\) it is easy to see that $G$ is a $V$-Gr\"obner-basis for $N$, but 
\(1 \notin \Span\bigl( \bigl\{ (1 +\nobreak \ssx) \ssx^n 
\bigr\}_{n\in\N} \bigr)\) and so $G$ isn't a $V$-ideal basis for $N$. 
The catch in this example is that $P$ isn't well-founded; the 
condition defining Gr\"obner bases lends itself to the step in an 
induction for proving the ideal basis property, but it cannot also 
provide the base for that induction. The next lemma gives 
sufficient conditions on $V$ and $P$ for Gr\"obner bases to be ideal 
bases.

\begin{lemma} \label{L:Grobnerbas}
  Let $V$ be a monoid of $\mc{R}$-module homomorphisms \(\mc{M} \Fpil 
  \mc{M}\). Let $P$ be a well-founded partial order on $\mc{Y}$ which 
  is correlated to itself by every \(v \in V\). If \(G \subseteq \mc{M}\) 
  is a $P$-monic $V$-Gr\"obner-basis for a $V$-ideal \(N \subseteq 
  \mc{M}\) then $G$ is a $V$-ideal basis for $N$. If furthermore 
  $T_1(S)$ is as in Construction~\ref{Konstr:t_v,s-reduktion} 
  for \(S = \bigl\{ \bigl( \lm_P(g), \lm_P(g) -\nobreak g \bigr) 
  \bigr\}_{g \in G}\) then \(\mc{I}(S) = N\) and for every \(a \in N\) 
  there exists some \(t \in T(S)\) such that \(t(a)=0\).
\end{lemma}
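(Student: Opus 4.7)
The plan is to set up a Noetherian reduction process on elements of $N$ that simultaneously establishes both claims. Since the topology is discrete, $\Cspan = \Span$ and $\DSM(\mu,P)$ consists of the $\mc{R}$-linear combinations of basis elements $\nu < \mu \pin{P}$, so its elements have support strictly below $\mu$ in $P$. Write $M := \Span(\{v(g)\}_{v \in V, g \in G})$; the inclusion $M \subseteq N$ is immediate because $N$ is a $V$-ideal containing $G$, so the task is to establish $N \subseteq M$ together with $\mc{I}(S) = N$.

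The first step is to unpack the simple reductions. For each $g \in G$ the rule $s_g := (\lm_P(g), \lm_P(g) - g)$ lies in $S$, and by Construction~\ref{Konstr:t_v,s-reduktion} a pair $(v,s_g)$ contributes $t_{v,s_g} \in T_1(S)$ precisely when $v(\lm_P(g)) \in \mc{Y}$, with
\[
  t_{v,s_g}(b) = b - f_{v(\lm_P(g))}(b)\,v(g).
\]
The $P$-monicity of $g$ gives $g - \lm_P(g) \in \DSM(\lm_P(g),P)$, and since $v$ correlates $P$ to itself and $v(\lm_P(g)) \in \mc{Y}$, one has $v(g - \lm_P(g)) \in \DSM(v(\lm_P(g)),P)$. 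Consequently $\supp(v(g)) \subseteq \{v(\lm_P(g))\} \cup \setOf{\nu \in \mc{Y}}{\nu < v(\lm_P(g)) \pin{P}}$ with leading coefficient~$1$.

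The heart of the argument is an iterative reduction. Starting from $a_0 := a$, as long as $a_n \in N$ is nonzero I would pick any $\mu_n \in \LM_P(a_n)$ (which exists because $\supp(a_n)$ is a nonempty finite subset of $\mc{Y}$), invoke the Gr\"obner basis hypothesis to obtain $g_n \in G$ and $v_n \in V$ with $\mu_n = v_n(\lm_P(g_n)) \in \mc{Y}$, and set $t_n := t_{v_n,s_{g_n}} \in T_1(S)$, $a_{n+1} := t_n(a_n) = a_n - f_{\mu_n}(a_n)\,v_n(g_n)$. The formulas from the previous paragraph yield $f_{\mu_n}(a_{n+1}) = 0$ and $\supp(a_{n+1}) \subseteq (\supp(a_n) \setminus \{\mu_n\}) \cup \setOf{\nu \in \mc{Y}}{\nu < \mu_n \pin{P}}$, while $a_{n+1} \in N$ because $v_n(g_n) \in N$. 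The main obstacle is termination of this process; I would handle it by noting that $\supp(a_0), \supp(a_1), \dotsc$ is strictly decreasing in the multiset extension of $P$, which is well-founded because $P$ is, so $a_N = 0$ for some finite $N$. Then $t := t_{N-1} \circ \dotsb \circ t_0 \in T(S)$ satisfies $t(a) = 0$, and the telescoping identity $a = \sum_{n=0}^{N-1} f_{\mu_n}(a_n)\,v_n(g_n)$ displays $a$ as an element of $M$.

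This yields $N \subseteq M$ and hence $N = M = \Cspan(\{v(g)\}_{v \in V, g \in G})$, which is Claim~1. For $\mc{I}(S) = N$, the inclusion $\mc{I}(S) \subseteq N$ follows from Lemma~\ref{L:Slutenhet,I'(S)}: every generator $\mu - t_{v,s_g}(\mu)$ of $\mc{I}(S)$ is either $0$ or an $\mc{R}$-multiple of $v(g) \in N$, and $N$ is topologically closed and stable under the $\mc{R}$-action. Conversely, every $a \in N$ satisfies $a = a - t(a) \in \mc{I}(S)$ for the terminating $t$ produced above, which simultaneously witnesses the final existence assertion.
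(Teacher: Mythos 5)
Your proof is correct and follows the same overall skeleton as the paper's — construct an explicit sequence of reductions, show it terminates, and telescope — but the termination argument is genuinely different. The paper reduces \emph{all} of $\LM_P(a_n)$ at each step, collects the union $Z = \bigcup_n \LM_P(a_n)$, and builds a DAG on $Z$ whose edges follow the $P$-descending transitions between successive leading-monomial sets; finiteness of $Z$ is then extracted from K\"onig's lemma (finite branching, finitely many roots, no infinite path because $P$ is well-founded), after which one observes that a monomial once eliminated as a leading monomial cannot reappear. You instead reduce one leading monomial $\mu_n$ per step and invoke well-foundedness of the multiset (Dershowitz--Manna) extension of $P$, since $\supp(a_{n+1})$ is obtained from $\supp(a_n)$ by deleting $\mu_n$ and inserting only elements $<\mu_n \pin{P}$. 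Both routes lean on a standard but nontrivial combinatorial lemma; yours is the one more familiar from rewriting-theory termination proofs, the paper's is phrased in more elementary graph-theoretic terms. A second cosmetic difference: you read the ideal-basis claim $N = \Span\bigl(\{v(g)\}\bigr)$ directly off the telescoping sum, whereas the paper first establishes $N = \mc{I}(S)$ and then bounds $\mc{I}(S)$ using the generator description in Lemma~\ref{L:Slutenhet,I'(S)}; both yield the same conclusion.
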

\begin{proof}
  The main claim is that about existence of reductions which map 
  elements of $N$ to $0$. What the Gr\"obner property implies is that 
  there for every nonzero \(a \in N\) and \(\mu \in \LM_P(a)\) exists 
  some \(t \in T_1(S)\) which acts nontrivially on $\mu$, namely 
  \(t = t_{\mu \mapsto \mu-v(g)}\) where \(g \in G\) and \(v \in V\) 
  are such that \(\mu = v\bigl( \lm_P(g) \bigr)\), since this is 
  $t_{v,(\nu,b)}$ where \(\nu = \lm_P(g)\) and \(b = \nu - g\). All 
  these simple reductions are compatible with $P$, since \(b \in 
  \DSM(\nu,P)\) because $g$ is $P$-monic and \(v(b) \in \DSM(\mu,P) = 
  \DSM\bigl( v(\nu), P\bigr)\) by assumption.
  
  Let \(a_0 \in N\) be given. Construct from any nonzero \(a_n \in N\) 
  the element \(a_{n+1} = u_n(a_n)\) by picking as $u_n$ some 
  composition $u_{n,m_n} \circ \dotsb \circ u_{n,1}$ of simple 
  reductions such that \(u_{n,k} \in T_1(S)\) acts nontrivially on 
  \(\mu_{n,k} \in \LM_P(a_n)\), where \(\{\mu_{n,1}, \dotsc, 
  \mu_{n,m_n}\} = \LM_P(a_n)\). The claim follows once it has been 
  shown that \(a_n=0\) for some $n$ (which means \(u_l=\id\) for all 
  \(l>n\)), and the way to establish this is to consider the sets 
  $\LM_P(a_n)$.
  
  Let \(Z = \bigcup_{n=0}^\infty \LM_P(a_n)\). 
  First observe that \(u_n(\mu_{n,k}) \in \DSM(\mu_{n,k},P)\) for any 
  $n$ and $k$ by Lemma~\ref{L:Kompabilitet}. Since any $P$-leading 
  monomial of $a_n$ is some $\mu_{n,k}$, and since \(\Span\bigl( 
  \supp(a_n) \setminus\nobreak \LM_P(a_n) \bigr) \subseteq 
  \sum_{k=1}^{m_n} \DSM(\mu_{n,k},P)\), it follows that
  \begin{equation*}
    \LM_P(a_{n+1}) \subseteq \supp(a_{n+1}) \subseteq
    \setOf[\big]{ \nu \in \mc{Y} }{ \text{\(\nu < \mu \pin{P}\) for 
      some \(\mu \in \LM_P(a_n)\)} }
  \end{equation*}
  for \(n=0,1,\dotsc\). Construct the directed acyclic graph $D$ 
  which has $Z$ as vertex set and has an edge from $\mu$ to $\nu$ iff 
  \(\mu > \nu \pin{P}\) and there exists some \(n\in\N\) such that 
  \(\mu \in \LM_P(a_n)\) and \(\nu \in \LM_P(a_{n+1})\). Since any 
  path in this graph is a $P$-descending chain, it is finite. Since 
  any $\LM_P(a_n)$ is finite, the graph has finite branching. Finally 
  the only roots in $D$ are the elements of $\LM_P(a_0)$. Hence 
  K\"onig's lemma (an infinite tree with finite branching has an 
  infinite path) applies, and it follows that $Z$ is finite. In 
  particular, there exists some $n$ for which \(\LM_P(a_n) = 
  \varnothing\) and thus \(a_n=0\), as claimed.
  
  That \(N \subseteq \mc{I}(S)\) is now immediate from the 
  definition of the latter. Conversely it may be observed that 
  if \(\mu - t(\mu) \neq 0\) for some \(t \in T_1(S)\) and \(\mu \in 
  \mc{Y}\) then there exist \(g \in G\) and \(v \in V\) such that 
  \(\mu - t(\mu) = v(g)\), since $t$ is of the form 
  $t_{\mu \mapsto \mu-v(g)}$. Thus
  \begin{multline*}
    \mc{I}(S) = \Cspan\Bigl( \setOf[\big]{ v(\mu - a) }{ 
      v \in V, (\mu,a) \in S, v(\mu) \in \mc{Y}
    } \Bigr) \subseteq \\ \subseteq
    \Cspan\Bigl( \setOf[\big]{ v(g) }{ v \in V, g \in G } \Bigr) 
    \subseteq N
  \end{multline*}
  and this also shows that $G$ is a $V$-ideal basis of $N$.
\end{proof}

With this result about the existence of reductions which map ideal 
elements to $0$, it becomes easy to link the Gr\"obner basis concept 
to those of Theorem~\ref{S:CDL}. Claim \parenthetic{a\texttris} below 
is included because it is literally the claim that ``all 
S-polynomials reduce to $0$'' which is practically used to verify 
that something is a Gr\"obner basis.

\begin{theorem} \label{S:Grobner-DL}
  Let $P$ be a well-founded partial order on $\mc{Y}$. 
  Let $V$ be a monoid of $\mc{R}$-module homomorphisms \(\mc{M} \Fpil 
  \mc{M}\) that map $\mc{Y}$ into $\mc{Y}$ and are strictly monotone 
  with respect to $P$.
  
  Let \(S \subseteq \mc{Y} \times \mc{M}\) be such that \(a \in 
  \DSM(\mu,P)\) for any \((\mu,a) \in S\). Let $T_1(S)$ be as 
  in Construction~\ref{Konstr:t_v,s-reduktion}. Then the 
  following conditions are equivalent:
  \begin{enumerate}
    \item[\parenthetic{a}] 
      Every ambiguity of $T_1(S)$ is resolvable.
    \item[\parenthetic{a\textprime}] 
      Every ambiguity of $T_1(S)$ is resolvable relative to $P$.
    \item[\parenthetic{a\textbis}] 
      Every $V$-critical ambiguity of $T_1(S)$ is resolvable 
      relative to $P$.
    \item[\parenthetic{a\texttris}] 
      For every $V$-critical ambiguity $(t_1,\mu,t_2)$ of $T_1(S)$ 
      there exists some \(t \in T(S)\) such that \(t\bigl( t_1(\mu) 
      -\nobreak t_2(\mu) \bigr) = 0\).
    \item[\parenthetic{b}]
      Every element of $\mc{M}$ is persistently and uniquely 
      reducible.
    \item[\parenthetic{c}]
      Every element of $\mc{M}$ has a unique normal form.
    \item[\parenthetic{d}]
      The set \(\{\mu -\nobreak a\}_{(\mu,a) \in S}\) is a $P$-monic 
      $V$-Gr\"obner-basis of $\mc{I}(S)$.
  \end{enumerate}
\end{theorem}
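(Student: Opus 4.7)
The plan is to match the theorem's hypotheses to those of Theorems~\ref{S:CDL} and~\ref{S:V-kritisk}, close the gap between (a$''$) and (a$'''$) using discreteness, and link (d) to the rest via Lemma~\ref{L:Grobnerbas}. With a discrete topology $T(S)$ is automatically equicontinuous (take $\delta=\ve$) and TDCC collapses to the ordinary DCC on $P$, which is the given well-foundedness. The data fit Construction~\ref{Konstr:t_v,s-reduktion} with one sort, $R=\mc{R}$, $R^\bot=\{\id\}$, and $Y=\mc{Y}$. Strict monotonicity, $\mc{R}$-linearity of the $v$'s, and Lemma~\ref{L:OrdnFramflyttKompatibel} show that each $v\in V$ correlates $P$ to itself; Lemma~\ref{L:Framflyttbar} then makes every $v\in V$ advanceable, and Lemma~\ref{L:Kompatibilitet} (using $a\in\DSM(\mu,P)$ for each $(\mu,a)\in S$) delivers compatibility of $T(S)$ with $P$. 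Theorem~\ref{S:CDL} now yields (a)~$\Leftrightarrow$~(a$'$)~$\Leftrightarrow$~(b)~$\Leftrightarrow$~(c), and Theorem~\ref{S:V-kritisk} adds (a$'$)~$\Leftrightarrow$~(a$''$).

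For (a$''$)~$\Leftrightarrow$~(a$'''$), note that $\{0\}\in\cmplO$ in the discrete case, so by Definition~\ref{Def:Red(S)} any element of $\Red(S)$ whose normal form is $0$ is killed by a single $t\in T(S)$. Granting (a$''$), the chain above gives $\mc{M}=\Red(S)$ with $\ker \tS=\mc{I}(S)$, and since $t_1(\mu)-t_2(\mu)\in\mc{I}(S)$ this yields the desired $t$. Conversely, given such a $t$, compatibility places $t(t_i(\mu))\in\DSM(\mu,P)$ for $i=1,2$, and Lemma~\ref{L:RelResolv} identifies $t_1(\mu)-t_2(\mu)=\bigl(t_1(\mu)-t_2(\mu)\bigr)-t\bigl(t_1(\mu)-t_2(\mu)\bigr)$ as an element of $\DIS(\mu,P,S)$.

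It remains to couple~(d) with the rest. For (d)~$\Rightarrow$~(a$'''$): $G=\{\mu-a\}_{(\mu,a)\in S}$ is $P$-monic with $\lm_P(\mu-a)=\mu$, so Lemma~\ref{L:Grobnerbas} (whose hypotheses reduce to those already verified) guarantees that every nonzero element of $\mc{I}(S)$ is killed by a finite reduction; applying this to $t_1(\mu)-t_2(\mu)\in\mc{I}(S)$ finishes the implication. For the converse (c)~$\Rightarrow$~(d), $P$-monicity of $G$ and $\{v(g)\}_{v\in V,g\in G}\subseteq\mc{I}(S)$ are immediate from the setup and Corollary~\ref{Kor:t_v,s-reduktion}. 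The substantive step is the leading-monomial condition. Given nonzero $b\in\mc{I}(S)$ and $\nu\in\LM_P(b)$, (c) and discreteness supply $t=t_n\circ\dotsb\circ t_1\in T(S)$ with $t(b)=0$; set $b_i=t_i\dotsb t_1(b)$. Compatibility forces $\LM_P(b_i)\subseteq\LM_P(b)$, hence $\nu$ stays $P$-leading whenever it stays in the support. Taking the first $i$ with $f_\nu(b_i)=0$ and writing $t_i=t_{v,(\mu_s,a_s)}$, the update formula expands $f_\nu(b_i)-f_\nu(b_{i-1})$ as a product involving $f_\nu\bigl(v(\mu_s-a_s)\bigr)$; since $v(a_s)\in\DSM(v(\mu_s),P)$ the contribution from $v(a_s)$ can only be nonzero when $\nu<v(\mu_s)\pin{P}$, which is forbidden because $v(\mu_s)\in\supp(b_{i-1})$ and $\nu\in\LM_P(b_{i-1})$, forcing $v(\mu_s)=\nu$ with $g=\mu_s-a_s\in G$.

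The main obstacle is this coefficient-tracking argument: weaving together the evolution of $\LM_P(b_i)$ under reductions, the support constraints imposed by compatibility and $\mc{R}$-linearity of each $v$, and the cancellation that must occur in $f_\nu(b_i)$. Everything else is bookkeeping against the framework of Sections~\ref{Sec:Konstruktion} and~\ref{Sec:Monomordning}.
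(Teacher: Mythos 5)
Your proposal follows essentially the same route as the paper: the same preliminary lemmas for correlation, advanceability, and compatibility; Theorems~\ref{S:CDL} and \ref{S:V-kritisk} for the equivalence of \parenthetic{a} through \parenthetic{c}; and Lemma~\ref{L:Grobnerbas} coupled with a leading-monomial-tracking argument for \parenthetic{d}. (The paper runs \parenthetic{d}~$\Rightarrow$~\parenthetic{c} and \parenthetic{b}~$\Rightarrow$~\parenthetic{a\texttris}, whereas you run \parenthetic{d}~$\Rightarrow$~\parenthetic{a\texttris}; both are fine.)

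One step is mis-justified. You write ``Compatibility forces $\LM_P(b_i)\subseteq\LM_P(b)$'', but that inclusion is false in general: reduce $\ssx^2-\ssx$ by $\ssx^2\mapsto 1$ and the leading monomial becomes $\ssx$, which was not leading before. The conclusion you actually need\Ldash that $\nu\in\LM_P(b)$ remains $P$-maximal in $\supp(b_i)$ as long as $\nu\in\supp(b_i)$\Dash is nevertheless true, but it must be argued differently: by Lemma~\ref{L:Kompabilitet}, iterated application of compatible reductions gives \(\supp(b_i)\subseteq \supp(b)\cup\setOf[\big]{\rho\in\mc{Y}}{\rho<\mu\pin{P}\text{ for some }\mu\in\supp(b)}\), and transitivity of $<\pin{P}$ then shows that any $\rho\in\supp(b_i)$ with $\nu<\rho\pin{P}$ would produce an element of $\supp(b)$ strictly above $\nu$, contradicting $\nu\in\LM_P(b)$. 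With this correction the coefficient-tracking argument (and the rest of your proposal) goes through; it is in substance the same argument the paper makes, only with the evolution of $f_\nu(b_i)$ made explicit rather than phrased in terms of $\LM_P$.
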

\begin{proof}
  First observe that strict monotonicity of $V$ implies correlation 
  by Lemma~\ref{L:OrdnFramflyttKompatibel}, and hence $T(S)$ is 
  compatible with $P$ by Lemma~\ref{L:Kompatibilitet}. All 
  elements of $V$ are advanceable with respect to $T_1(S)$ by 
  Lemma~\ref{L:Framflyttbar}, and thus $\mc{I}(S)$ is a 
  $V$-ideal by Lemma~\ref{L:Slutenhet,I'(S)}.
  
  Claims \parenthetic{a}, \parenthetic{a\textprime}, \parenthetic{b}, 
  and \parenthetic{c} are equivalent by Theorem~\ref{S:CDL}. 
  Claims \parenthetic{a\textprime} and \parenthetic{a\textbis} are 
  equivalent by Theorem~\ref{S:V-kritisk}. \parenthetic{a\texttris} 
  implies that every $V$-critical ambiguity is resolvable, and hence 
  \parenthetic{a\textbis} by Lemma~\ref{L:RelResolv}. Conversely 
  \parenthetic{b} implies that \(\mu \in \Red(S)\) for every 
  $V$-critical ambiguity $(t_1,\mu,t_2)$ of $T_1(S)$ and hence 
  \(\tS\bigl( t_1(\mu) \bigr) = \tS\bigl( t_2(\mu) \bigr)\), from 
  which follows \(\tS\bigl( t_1(\mu) -\nobreak t_2(\mu) \bigr) = 0\) 
  and thus \parenthetic{a\texttris}.
  
  Assume~\parenthetic{d}. By Lemmas~\ref{L: Persistently red.} 
  and~\ref{L:PerTillIrr}, \(\mc{M} = \Irr(S) + \mc{I}(S)\). 
  Furthermore every \(a \in \mc{I}(S) \cap \Irr(S)\) satisfies 
  \(t(a) = a\) for all \(t \in T(S)\), but by 
  Lemma~\ref{L:Grobnerbas} there is some \(t \in T(S)\) such that 
  \(t(a) = 0\). Hence \(a=0\), which has established \(\mc{M} = 
  \Irr(S) \oplus \mc{I}(S)\). It follows that claim~\parenthetic{d} 
  implies claim~\parenthetic{c}.
  
  Finally assume \(\Red(S) = \mc{M}\) and let \(b \in \mc{I}(S)\) be 
  arbitrary. Since \(\tS(b) = 0\) there exists some \(t \in T(S)\) 
  such that \(t(b)=0\). Let \(\lambda \in \LM_P(b)\) be arbitrary. 
  Since \(\lambda \notin \LM_P(0)\), there exists some decomposition 
  \(t = t_3 \circ t_2 \circ t_1\) where \(t_3,t_1 \in T(S)\) and 
  \(t_2 \in T_1(S)\) are such that \(\lambda \in \LM_P\bigl( t_1(b) 
  \bigr)\) but \(\lambda \notin \LM_P\bigl( (t_2 \circ\nobreak t_1)(b) 
  \bigr)\). Since there is no \(\nu \in \supp\bigl( t_1(b) \bigr)\) 
  such that \(\nu > \lambda \pin{P}\), it must be the case that $t_2$ 
  acts nontrivially on $\lambda$. Due to the way $T_1(S)$ was 
  constructed, this means there is some \(v \in V\) and \((\mu,a) \in 
  S\) such that \(v(\mu) = \lambda\). Since furthermore \(\mu = 
  \lm_P(\mu -\nobreak a)\), the condition with respect to $b$ and 
  $\lambda$ for \(\{\mu -\nobreak a\}_{(\mu,a) \in S}\) to be a 
  $V$-Gr\"obner-basis is fulfilled. Hence claim~\parenthetic{b} 
  implies claim~\parenthetic{d}.
\end{proof}


A classical case not handled by this theorem is that $P$ is a total order 
but elements of $\mc{R}$ sometimes aren't invertible. This is where 
monicity becomes a restriction, since there in for example the case 
that $\mc{R}$ is an euclidian domain exists an established 
theory\Ldash which in addition to gaussian elimination and polynomial 
division also generalises integer division (with remainder)\Dash for 
Gr\"obner bases where leading terms may have noninvertible 
coefficients. Reducing a term $a\mu\nu$ by a basis element $g$ whose 
leading term is $b\mu$ then consists of subtracting $q g \nu$ from 
$a\mu\nu$, where $q$ is the quotient of $a$ by $b$, and may thus fail 
to completely eliminate the $\mu\nu$ term. What makes this 
incompatible with the diamond lemma framework is however that the 
quotient $q$, and hence the reduction as a whole, is not given by a 
homomorphism; $(a_1 \Div b) + (a_2 \Div b)$ is not always equal to 
$(a_1 +\nobreak a_2) \Div b$, even through the error may be 
predictable. The standard bases formalism of~\cite{Mora:Seven,Robbiano} 
has facilities\footnote{
  In particular the duplication of addition operations: one which is 
  ``with carry'' (coming from the filtered structure) and one which 
  is ``without carry'' (coming from the associated graded structure). 
  For a suitable choice of filtered structure the latter addition has 
  \((a_1 \Div b) + (a_2 \Div b) = (a_1 +\nobreak a_2) \Div b\), and 
  since reductions are required to be homomorphisms with respect to 
  this ``without carry'' addition, it is then allowed to use integer 
  division when constructing reductions.
} that can handle this, and it's quite possible that the same trick 
could be applied also in a modification of the diamond lemma formalism, 
but for the moment I don't see a pressing need for this. It is more 
interesting to examine some alternative approaches for coping with 
noninvertible coefficients within the present framework, even though 
they are perhaps not as general.

If $\mc{R}$ can be regarded as an algebra over some smaller subring 
(maybe even subfield) $\mc{K}$, then a practical solution can be to 
change the boundary between $R$ and $\mc{Y}$, letting the former 
encode just $\mc{K}$ and extending $\mc{Y}$ accordingly. 
Corollary~\ref{Kor:KomAssPolynom} below can be viewed as using this 
approach to Gr\"obner bases in $\RavX$ where $\mc{R}$ itself is a 
commutative polynomial algebra $\mc{K}[X_1]$.

The other extreme is that \(\mc{R}=\Z\), in which case there is no 
additional freedom that can be gained from shrinking $R$ since the 
group structure alone determines what multiples of monomials are 
mapped to. Consider the case that one wishes to make a model for 
$\Z[x] \big/ \langle 2x \rangle$. It is easy to jump to the 
conclusion that the diamond lemma framework cannot handle this, on 
the grounds that \(\mc{M} = \Z[x]\) is a free $\Z$-module and hence 
any $\Irr(S)$ must be free too whereas the target \(\mc{M}/\mc{I}(S) 
= \Z[x] \big/ \langle 2x \rangle\) clearly is not. It is true that 
Theorem~\ref{S:Grobner-DL} is so restricted, but there is nothing in 
the generic theory which requires one to pick $\Z[x]$ as $\mc{M}$ 
(even though that would be the trivial choice). An interesting 
alternative in this case would be \(\mc{M} = \Z[x] \oplus 
\Z_2[x_2]\), since one for \(T_1(S) = 
\{t_{x^n \mapsto x_2^n}\}_{n=1}^\infty \cup \{t_{x_2^0 \mapsto 0}\}\) 
indeed gets \(\Z[x] \big/ \langle 2x \rangle \cong \mc{M}/\mc{I}(S) 
\cong \Irr(S)\) as $\Z$-modules.

This approach of introducing ``modular coefficients'' in parallel with 
the original coefficients will however not work for the formally similar 
case of $\Z[x] \big/ \langle 2x -\nobreak 1 \rangle$. Whereas a map 
that for all \(n\in\Z\) takes $2nx$ to $n$ and $(2n +\nobreak 1)x$ to 
$n + x_2$ makes sense as a map (and indeed is what one would 
arrive at in the standard bases formalism), it cannot serve as a 
reduction in the diamond lemma formalism because it is not a group 
homomorphism; \(x_2+x_2 = 0 \neq 1\). What \emph{will} work is instead 
to pick \(\mc{M} = \Z[\tfrac{1}{2}][x]\), where $\Z[\tfrac{1}{2}]$ 
should be regarded as the subring of $\Q$ generated by 
$\tfrac{1}{2}$. The main justification for introducing such a 
multiplicative inverse of $2$ would be the defining identity \(2x 
\equiv 1\) itself\Ldash whose interpretation must be that $x$ is 
precisely such an inverse\Dash and once $\tfrac{1}{2}$ is available 
the rest is trivial. 

The common idea generalising both cases appears to be that if one 
wants to make a reduction mapping $r\mu$ to $a$ and $r$ is neither 
invertible nor a zero divisor, then one should extend $\mc{M}$ with a 
new element $\mu'$ such that \(r\mu' = a\) and have the reduction map 
$\mu$ to $\mu'$. (The `new' is important here, because \(\Z[x,y] \big/ 
\langle 2x -\nobreak 2y \rangle \cong \Z[y] \oplus \xi\Z_2[\xi,y]\) 
where \(\xi = y - x'\); that \(2x' = 2y\) but \(x' \neq y\) since 
$x'$ is new is what creates the characteristic $2$ part.) Whether 
this method can be turned into an algorithm (as has been done for 
Gr\"obner bases over e.g.~euclidian domains) is at the time of writing 
unclear\Ldash automating this kind of modifications to the base group 
$\mc{M}$ seems highly nontrivial\Dash but it should illustrate the 
usefulness of not having Theorem~\ref{S:CDL} restricted to the case 
of $\mc{M}$ being a free module, even though that assumption 
simplifies the results in this section quite considerably.

\subsection{Commutative, associative, and nonassociative algebras}

Many forms of the fundamental theorem of Gr\"obner bases turn out to 
be special cases, with particular choices of $\mc{M}$ and $V$, of 
Theorem~\ref{S:Grobner-DL} and therefore follow from it as mere 
corollaries. The most classical is that for commutative polynomials 
over a field.

\begin{corollary}[Buchberger] \label{Kor:KomGrobner}
  Let $\mc{R}$ be a field, let $X$ be a set, let \(\mc{M} = 
  \mc{R}[X]\), and let $\mc{Y}$ be the set of monomials (power 
  products) in $\mc{M}$. Let \(V = 
  \{ b \mapsto\nobreak \mu b\}_{\mu \in \mc{Y}}\) (a set of maps 
  \(\mc{M} \Fpil \mc{M}\)). Let $P$ be a well-founded semigroup 
  total order on $\mc{Y}$. Define a map \(Z\colon \bigl( \mc{Y} 
  \times\nobreak \mc{M} \bigr)^2 \Fpil \mc{M}\) by
  \begin{equation}
    Z\bigl( (\mu_1,a_1), (\mu_2,a_2) \bigr) =
    \frac{\mathrm{lcm}(\mu_1,\mu_2)}{\mu_1} a_1 - 
      \frac{\mathrm{lcm}(\mu_1,\mu_2)}{\mu_2} a_2 
  \end{equation}
  where $\mathrm{lcm}(\mu_1,\mu_2)$ denotes the least common multiple 
  of $\mu_1$ and $\mu_2$.
  
  Let \(S \subseteq \mc{Y} \times \mc{M}\) be such that \(a \in 
  \DSM(\mu,P)\) for every \((\mu,a) \in S\) and let $T_1(S)$ be as 
  in Construction~\ref{Konstr:t_v,s-reduktion}. Then the following 
  are equivalent:
  \begin{enumerate}
    \item
      \(\{\mu-a\}_{(\mu,a) \in S}\) is a $P$-monic $V$-Gr\"obner 
      basis of $\mc{I}(S)$.
    \item
      For every pair \((s_1,s_2) \in S^2\) there exists some 
      \(t \in T(S)\) such that \(t\bigl( Z(s_1,s_2) \bigr) = 0\).
    \item
      For every \(s_1 = (\mu_1,a_1) \in S\) and \(s_2 = (\mu_2,a_2) 
      \in S\) such that $\mu_1$ and $\mu_2$ are not coprime there 
      exists some \(t \in T(S)\) such that 
      \(t\bigl( Z(s_1,s_2) \bigr) = 0\).
  \end{enumerate}
\end{corollary}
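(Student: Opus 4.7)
The plan is to derive this corollary from Theorem~\ref{S:Grobner-DL} by identifying the $V$-critical ambiguities explicitly in the commutative setting and recognising the S-polynomial $Z(s_1,s_2)$ as the difference $t_1(\nu)-t_2(\nu)$ for the canonical ambiguity attached to the pair $(s_1,s_2)$. First I would check that the hypotheses of Theorem~\ref{S:Grobner-DL} hold: $V$ is a commutative monoid of $\mc{R}$-module homomorphisms mapping $\mc{Y}$ into $\mc{Y}$, and strict monotonicity of every $v\in V$ with respect to the semigroup order $P$ is exactly the condition \eqref{Eq:Semigruppsordning}, which holds because $P$ is a semigroup total order. Hence the seven equivalent claims of Theorem~\ref{S:Grobner-DL} are at our disposal.

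The core step is the classification of $V$-critical ambiguities, mimicking Example~\ref{Ex:BergmanskTvetydighet} but in the commutative monoid. An ambiguity is a triplet $(t_{v_1,s_1},\nu,t_{v_2,s_2})$ with $\nu=\nu_1\mu_{s_1}=\nu_2\mu_{s_2}$ where $v_i(b)=\nu_i b$. By unique factorisation in the free commutative monoid $\mc{Y}$, the set of common multiples of $\mu_{s_1}$ and $\mu_{s_2}$ consists exactly of the multiples of $\lambda:=\mathrm{lcm}(\mu_{s_1},\mu_{s_2})$, so $\nu=\kappa\lambda$ for some $\kappa\in\mc{Y}$, and the ambiguity is a $V$-shadow under $b\mapsto\kappa b$ of the ambiguity at $\lambda$ itself; this shadow is proper unless $\kappa=1$, so the $V$-shadow-critical ambiguities are parametrised by pairs $(s_1,s_2)\in S^2$ with $\nu=\lambda$. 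When $\mu_{s_1}$ and $\mu_{s_2}$ are coprime, $\lambda=\mu_{s_1}\mu_{s_2}$, and the bihomomorphism $w(a,b)=ab$ is easily seen to be $(V,V)$-biadvanceable, exhibiting the ambiguity as a $(V,V)$-montage with pieces $(\mu_{s_1},t_{\mathrm{id},s_1})$ and $(\mu_{s_2},t_{\mathrm{id},s_2})$; conversely, if $\mu_{s_1}$ and $\mu_{s_2}$ share a nontrivial factor then no such factorisation of $\lambda$ into disjoint multiplications exists, so the ambiguity is not a montage. Hence the $V$-critical ambiguities are exactly those corresponding to non-coprime pairs, and a direct calculation gives $t_{v_1,s_1}(\lambda)-t_{v_2,s_2}(\lambda)=(\lambda/\mu_{s_1})a_{s_1}-(\lambda/\mu_{s_2})a_{s_2}=Z(s_1,s_2)$.

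With the classification in hand, the equivalence of (1) and (3) is immediate from Theorem~\ref{S:Grobner-DL}: claim (1) is \parenthetic{d}, and claim (3) is \parenthetic{a\texttris} after translating the $V$-critical ambiguities as above. It remains to close the triangle with (2). The implication (2)$\Rightarrow$(3) is trivial, so only (1)$\Rightarrow$(2) requires a few words: assuming (1), Theorem~\ref{S:Grobner-DL} gives \(\Red(S)=\mc{M}\). Since for every pair $(s_1,s_2)$ the element $Z(s_1,s_2)=(t_1(\lambda)-\lambda)-(t_2(\lambda)-\lambda)$ lies in $\mc{I}(S)=\ker\tS$, and since the topology is discrete so that being stuck in $\tS(Z(s_1,s_2))+\cmpl{B_1}=\{0\}$ means being mapped to $0$ by some \(t\in T(S)\), there exists \(t\in T(S)\) with \(t(Z(s_1,s_2))=0\). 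The main obstacle is the bookkeeping in step two\Dash cleanly identifying precisely which ambiguities of $T_1(S)$ survive as $V$-critical after discarding shadows and montages\Dash but in the commutative case this is entirely controlled by unique factorisation in $\mc{Y}$, and should not involve any delicate estimates.
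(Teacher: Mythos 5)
Your proposal is correct and follows essentially the same route as the paper's own proof: identify (1) with claim \parenthetic{d} of Theorem~\ref{S:Grobner-DL}, classify the $V$-critical ambiguities via unique factorisation and montage recognition, and close the cycle through (1)$\Rightarrow$(2) (using $Z(s_1,s_2)\in\mc{I}(S)=\ker\tS$ in the discrete topology) and (2)$\Rightarrow$(3) trivially. Your assertion that the $V$-critical ambiguities are \emph{exactly} the non-coprime lcm-ambiguities is a slight over-claim whose ``converse'' direction you do not actually justify (ruling out that some non-coprime lcm-ambiguity might still be a montage of some other pieces), but this does not matter, since only the forward inclusion\Ldash every $V$-critical ambiguity lies among those covered by (3)\Dash is needed for the implication cycle to close.
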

\begin{proof}
  This is mostly the equivalence of \parenthetic{d} and 
  \parenthetic{a\texttris} from Theorem~\ref{S:Grobner-DL}, but 
  there are minor variations so it doesn't hurt to make the chain of 
  implications explicit.
  
  The first claim is exactly \parenthetic{d}, so this is equivalent 
  to \(\Red(S) = \mc{M}\). Let \(s_1 = (\mu_1,a_1) \in S\) and 
  \(s_2 = (\mu_2,a_2) \in S\) be arbitrary. Let \(\nu_1 = 
  \mathrm{lcm}(\mu_1,\mu_2)/\mu_1\) and \(\nu_2 = 
  \mathrm{lcm}(\mu_1,\mu_2)/\mu_2\). Then \(Z(s_1,s_2) = \nu_1a_1 - 
  \nu_2a_2 = \nu_1(a_1 -\nobreak \mu_1) - \nu_2(a_2 -\nobreak \mu_2) 
  \in \mc{I}(S)\) and hence \(t^S\bigl( Z(s_1,s_2) \bigr) = 0\). 
  Since every value of $t^S$ is attained by some reduction, this has 
  shown that the first claim implies the second, and it is trivial 
  that the second implies the third.
  
  It only remains to show that the third claim is in fact 
  \parenthetic{a\texttris}. To that end, let 
  $(t_{v_1,s_1},\mu,t_{v_2,s_2})$ be a $V$-critical 
  ambiguity of $T_1(S)$. Let \((\mu_i,a_i) = s_i\) and \(\nu_i = 
  v_i(1)\) for \(i=1,2\). Then \(\nu_1\mu_1 = \mu = \nu_2\mu_2\) and 
  hence $\mathrm{lcm}(\mu_1,\mu_2)$ divides $\mu$. However if 
  \(\kappa := \mu / \mathrm{lcm}(\mu_1,\mu_2) \neq 1\) then 
  $(t_{v_1,s_1},\mu,t_{v_2,s_2})$ would be a proper $V$-shadow of 
  $(t_{v_1/\kappa,s_1},\mu/\kappa,t_{v_2/\kappa,s_2})$, which by 
  criticality is not the case. Similarly \(\gcd(\mu_1,\mu_2) \neq 1\) 
  since one would otherwise have \(\nu_1 = \mu_2\) and \(\nu_2 = 
  \mu_1\), in which case $(t_{v_1,s_1},\mu,t_{v_2,s_2})$ would be a 
  montage with composition map \(w(b_1,b_2) = b_1b_2\). Finally 
  \(Z(s_1,s_2) = \nu_1a_1 - \nu_2a_2 = t_{v_1,s_1}(\mu) - 
  t_{v_2,s_2}(\mu)\).
\end{proof}

Another applied specialisation of Theorem~\ref{S:Grobner-DL} would be 
to take $\mc{Y}$ to be a monoid on the form \(X_1^\bullet \times 
X_2^*\), where $X_1^\bullet$\index{X bullet@$X^\bullet$} denotes the 
free \emph{abelian} monoid generated by $X_1$. This can be used to 
formally justify Gr\"obner basis calculations in $\FAlg{X_2}{\mc{R}}$ 
where the given relations contain some set $X_1$ of commutative 
coefficients for which one doesn't want to fix the values, by 
making the calculations in $\FAlg{X_2}{\mc{R}[X_1]}$ instead. 

There is of course always the possibility to work in 
$\FAlg{X_1 \cup X_2}{\mc{R}}$ and add relations to make elements of 
$X_1$ commute with everything else, but that can get unintuitive and 
impractical (especially if $X_1$ is large compared to $X_2$). Another 
possibility would be to make a transcendental field extension of 
$\mc{R}$ with the variables in $X_1$, but that would then make it 
formally questionable to specialise to a case where the coefficients 
satisfy some algebraic relation.

\begin{corollary} \label{Kor:KomAssPolynom}
  Let $\mc{R}$ be an associative and commutative ring with unit, 
  let $X_1$ and $X_2$ be disjoint sets, let \(\mc{M} = 
  \FAlg{X_2}{\mc{R}[X_1]}\), and let $\mc{Y}$ be the monoid in 
  $\mc{M}$ which is generated by $X_1 \cup X_2$. Write 
  $X_1^\bullet$ for the abelian submonoid of $\mc{Y}$ which is 
  generated by $X_1$ alone. Let \(V = 
  \{ b \mapsto\nobreak \kappa \mu b\nu\}_{\kappa \in X_1^\bullet, 
  \mu,\nu \in X_2^*}\) (a set of maps \(\mc{M} \Fpil \mc{M}\)). Let 
  $P$ be a well-founded semigroup partial order on $\mc{Y}$. 
  
  Let \(S \subseteq \mc{Y} \times \mc{M}\) be such that \(a \in 
  \DSM(\mu,P)\) for every \((\mu,a) \in S\) and let $T_1(S)$ be as 
  in Construction~\ref{Konstr:t_v,s-reduktion}. Then the three claims 
  that \(\Red(S) = \mc{M}\), \(\mc{M} = \Irr(S) \oplus \mc{I}(S)\), 
  and \(\{\mu-a\}_{(\mu,a) \in S}\) is a $P$-monic $V$-Gr\"obner 
  basis of $\mc{I}(S)$ are each equivalent to the conjunction of the 
  following two conditions:
  \begin{itemize}
    \item
      For every octuplet \(\bigl( (\mu_1,a_1), (\mu_2,a_2), 
      r_1,r_2,r_3, \nu_1,\nu_2,\nu_3 \bigr) \in S^2 \times 
      (X_1^\bullet)^3 \times (X_2^*)^3\) such that \(\mu_1 = 
      r_1r_2\nu_1\nu_2\), \(\mu_2 = r_2r_3\nu_2\nu_3\), 
      \(\nu_1,\nu_2,\nu_3 \neq \ssI\), and \(\gcd(r_1,r_2) = 
      \gcd(r_2,r_3) = \gcd(r_1,r_3) = 1\), there exists some \(t \in 
      T(S)\) such that \(t(r_3 a_1 \nu_3 -\nobreak r_1 \nu_1 a_2) 
      = 0\).
    \item
      For every octuplet \(\bigl( (\mu_1,a_1), (\mu_2,a_2), 
      r_1,r_2,r_3, \nu_1,\nu_2,\nu_3 \bigr) \in S^2 \times 
      (X_1^\bullet)^3 \times (X_2^*)^3\) such that \(\mu_1 = 
      r_1r_2\nu_1\nu_2\nu_3\), \(\mu_2 = r_2r_3\nu_2\), 
      \((\mu_1,a_1) \neq (\mu_2,a_2)\), and 
      \(\gcd(r_1,r_2) = \gcd(r_2,r_3) = \gcd(r_1,r_3) = 1\), there 
      exists some \(t \in T(S)\) such that \(t(r_3 a_1 -\nobreak 
      r_1 \nu_1 a_2 \nu_3) = 0\).
  \end{itemize}
\end{corollary}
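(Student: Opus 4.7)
The plan is to invoke Theorem~\ref{S:Grobner-DL} and reduce the problem to an explicit enumeration of the $V$-critical ambiguities. First I would verify the hypotheses of that theorem: $V$ is closed under composition because left- and right-multiplication by $X_2^*$ commute with the central multiplication by $X_1^\bullet$; every $v \in V$ sends $\mc{Y}$ into $\mc{Y}$, since it multiplies a monomial by a monomial; and the assumption that $P$ is a semigroup partial order is exactly strict monotonicity with respect to each $v \in V$. Granted this, the three Gr\"obner-style claims of the corollary match \parenthetic{b}, \parenthetic{c}, and \parenthetic{d} in Theorem~\ref{S:Grobner-DL}, and through the further equivalence with \parenthetic{a\texttris} the corollary reduces to verifying that the two stated octuplet conditions parametrise precisely the $V$-critical ambiguities together with the correct values of $t_1(\mu) - t_2(\mu)$.

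Next I would parametrise a generic ambiguity $(t_{v_1,s_1},\mu,t_{v_2,s_2})$ of $T_1(S)$. Because every element of $X_1$ is central, each monomial in $\mc{Y}$ has a unique decomposition into an $X_1^\bullet$-part and an $X_2^*$-part; write $\mu_{s_i} = r_{s_i}\omega_{s_i}$ for the two rules, and $v_i(b) = \kappa_i\lambda_i b \rho_i$ with $\kappa_i \in X_1^\bullet$ and $\lambda_i,\rho_i \in X_2^*$, so the ambiguity condition becomes $\kappa_1 r_{s_1} = \kappa_2 r_{s_2}$ (in $X_1^\bullet$) and $\lambda_1 \omega_{s_1} \rho_1 = \lambda_2 \omega_{s_2} \rho_2$ (in $X_2^*$). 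Following the argument of Example~\ref{Ex:BergmanskTvetydighet}, the ambiguity is a proper $V$-shadow under a nontrivial multiplier whenever $\kappa_1$ and $\kappa_2$ share a nontrivial common factor in $X_1^\bullet$, or $\lambda_1$ and $\lambda_2$ share a nontrivial common prefix, or $\rho_1$ and $\rho_2$ share a nontrivial common suffix. None of these shadow ambiguities is itself a shadow of the original, so the $V$-shadow-minimal, hence $V$-shadow-critical, ambiguities are exactly those in which all three common parts are trivial: $\gcd(\kappa_1,\kappa_2) = 1$, $\ssI \in \{\lambda_1,\lambda_2\}$, and $\ssI \in \{\rho_1,\rho_2\}$.

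Third, I would remove the $(V,V)$-montage ambiguities from the shadow-critical list. When the occurrences of $\omega_{s_1}$ and $\omega_{s_2}$ inside the $X_2^*$-factor $\lambda_1\omega_{s_1}\rho_1$ are disjoint, there exist $\eta \in X_1^\bullet$ and $\alpha,\beta,\gamma \in X_2^*$ such that $\mu = \eta\alpha \omega_{s_1} \beta \omega_{s_2}\gamma$ (or with the two roles swapped), and then the composition map $w_0(b_1,b_2) = \eta\alpha b_1 \beta b_2 \gamma$ is $(V,V)$-biadvanceable and realises the ambiguity as a montage; Lemma~\ref{L:MontageAmbiguity} then renders these automatically resolvable relative to $P$ and we discard them. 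What remains splits into two families: either $\omega_{s_1}$ and $\omega_{s_2}$ overlap properly (write $\omega_{s_1} = \nu_1\nu_2$, $\omega_{s_2} = \nu_2\nu_3$ with $\nu_1,\nu_2,\nu_3 \neq \ssI$), or one contains the other (write $\omega_{s_1} = \nu_1\nu_2\nu_3$, $\omega_{s_2} = \nu_2$). Setting $r_2 = \gcd(r_{s_1},r_{s_2})$, $r_1 = r_{s_1}/r_2$, $r_3 = r_{s_2}/r_2$, the pairwise coprimality $\gcd(r_i,r_j) = 1$ and the formulas $\mu_1 = r_1r_2\nu_1\nu_2$, $\mu_2 = r_2r_3\nu_2\nu_3$ (resp.\ $\mu_1 = r_1r_2\nu_1\nu_2\nu_3$, $\mu_2 = r_2r_3\nu_2$) reproduce exactly the two octuplet schemata of the corollary; a direct substitution into \eqref{KonEq:t_v,s-reduktion} then gives $t_{v_1,s_1}(\mu) - t_{v_2,s_2}(\mu) = r_3 a_{s_1}\nu_3 - r_1\nu_1 a_{s_2}$ in the overlap case and $r_3 a_{s_1} - r_1\nu_1 a_{s_2}\nu_3$ in the inclusion case, matching the two expressions whose reduction to $0$ is required. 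The clause $(\mu_1,a_1) \neq (\mu_2,a_2)$ in the second condition excludes the trivial self-ambiguity that would appear when $s_1 = s_2$, $\nu_1 = \nu_3 = \ssI$, and $r_1 = r_3 = 1$.

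The main obstacle is the combinatorial bookkeeping in the last step: one must check that the parametrisation by the coprime triples $(r_1,r_2,r_3)$ together with the nonidentity constraints on the $\nu_i$ correctly excludes both proper $V$-shadows and $(V,V)$-montage ambiguities, and conversely that every $V$-critical ambiguity is captured by at least one such octuplet. The translation is direct but tedious, complicated chiefly by the fact that the commutative $X_1$ coefficients and the noncommutative $X_2$ alphabet interact multiplicatively in $V$ while still admitting a unique factorisation of each $\mu \in \mc{Y}$.
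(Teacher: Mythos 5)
Your strategy of invoking Theorem~\ref{S:Grobner-DL} and parametrising the $V$-critical ambiguities by combining the $X_2^*$-analysis of Example~\ref{Ex:BergmanskTvetydighet} with the $X_1^\bullet$-analysis of Corollary~\ref{Kor:KomGrobner} is the right framework and matches the paper's one-sentence proof sketch. But the final step, which you flag as ``direct but tedious'', contains a genuine gap. You assert that setting $r_2 = \gcd(r_{s_1},r_{s_2})$, $r_1 = r_{s_1}/r_2$, $r_3 = r_{s_2}/r_2$ produces a pairwise coprime triple $(r_1,r_2,r_3)$, but this construction guarantees only $\gcd(r_1,r_3)=1$; the octuplet schema additionally demands $\gcd(r_1,r_2) = \gcd(r_2,r_3) = 1$, which is a strictly stronger requirement and is not satisfied by every $V$-critical ambiguity.

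A concrete instance shows the step cannot be repaired as stated. Take $X_1 = \{x\}$, $X_2 = \{a,b\}$, and rules $s_1 = (x^4 ab, a_1)$, $s_2 = (x^6 ba, a_2)$ in $S$. Over a single commutative generator, pairwise coprimality forces at most one $r_i \neq 1$, so no decomposition $x^4 = r_1 r_2$, $x^6 = r_2 r_3$ of the required kind exists, and both octuplet conditions are vacuously true for this pair of rules. Yet the ambiguity $(t_{v_1,s_1}, x^6 aba, t_{v_2,s_2})$ with $v_1(b) = x^2 b a$ and $v_2(b) = a b$ is $V$-shadow-minimal (here $\gcd(\kappa_1,\kappa_2) = \gcd(x^2,1) = 1$, $\lambda_1 = \ssI$, and $\rho_2 = \ssI$) and is not a montage (the $X_2^*$-words $ab$ and $ba$ overlap at the middle letter of $aba$, so no $(V,V)$-biadvanceable bihomomorphism separates the two rule occurrences), hence it is $V$-critical. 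Choosing $a_1 = x^3 ab + a$ and $a_2 = x^5 ba + b$ gives $t_{v_1,s_1}(\mu) - t_{v_2,s_2}(\mu) = x^2 aa - ab$, an irreducible nonzero element of $\mc{I}(S)$, so neither $\Red(S) = \mc{M}$ nor $\mc{M} = \Irr(S) \oplus \mc{I}(S)$ holds. The discrepancy therefore lies in the corollary's own parametrisation: the commutative part of a $V$-critical ambiguity is governed only by $r_1 r_2 r_3 = \mathrm{lcm}(r_{s_1},r_{s_2})$, i.e.\ $r_2 = \gcd(r_{s_1},r_{s_2})$, i.e.\ $\gcd(r_1,r_3)=1$, exactly as in the $\mathrm{lcm}$-based formula of Corollary~\ref{Kor:KomGrobner}, and the extra conditions $\gcd(r_1,r_2) = \gcd(r_2,r_3) = 1$ should not be imposed. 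Your proposal identifies the correct family of critical ambiguities but then assumes rather than verifies the coprimality, thereby masking the mismatch between the critical ambiguities and the octuplet schema you are trying to match them to.
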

\begin{proof}[Proof sketch]
  Same overall structure as in the proof of 
  Corollary~\ref{Kor:KomGrobner}, only the identification of 
  $V$-critical ambiguities needs to be revised. This splits into a 
  noncommutative part for $X_2^*$ which is the same as in 
  Example~\ref{Ex:BergmanskTvetydighet} and a commutative part for 
  $X_1^\bullet$ which is the same as in 
  Corollary~\ref{Kor:KomGrobner}.
\end{proof}

\begin{corollary}[Gerritzen~\cite{Gerritzen}] \label{Kor:Gerritzen}
  Let $\mc{R}$ be a field, let $X$ be a set, let $\mc{Y}$ be the free 
  magma $\mathrm{Mag}(X)$ on $X$, and let $\mc{M}$ be the free 
  $\mc{R}$-module with basis $\mc{Y}$. Extend the multiplication on 
  $\mc{Y}$ to $\mc{M}$ by bilinearity, so that $\mc{M}$ is the 
  (nonunital) free nonassociative $\mc{R}$-algebra $\mc{R}\{X\}$ on 
  $X$. Let $V_1$ be the set of all maps \(\mc{M} \Fpil \mc{M} : 
  b \mapsto \nu b\) and \(\mc{M} \Fpil \mc{M} : b \mapsto b \nu\) 
  for \(\nu \in \mc{Y}\). Let $V$ be the monoid (with 
  composition as operation) generated by $V$. 
  
  Let $P$ be a well-founded total order on $\mc{Y}$ such that
  \begin{equation} \label{Eq1:Gerritzen}
    \lambda < \mu \pin{P} \Ipil
    \text{\(\lambda\nu < \mu\nu \pin{P}\) and \(\nu\lambda < \nu\mu 
    \pin{P}\)}
  \end{equation}
  for all \(\lambda,\mu,\nu \in \mc{Y}\). Let \(S \subseteq \mc{Y} 
  \times \mc{M}\) be such that \(a \in \DSM(\mu,P)\) for every 
  \((\mu,a) \in S\) and let $T_1(S)$ be as in 
  Construction~\ref{Konstr:t_v,s-reduktion}. Then the following are 
  equivalent:
  \begin{enumerate}
    \item
      \(\{\mu-a\}_{(\mu,a) \in S}\) is a $P$-monic $V$-Gr\"obner 
      basis of $\mc{I}(S)$.
    \item
      \(\mc{M} = \Irr(S) \oplus \mc{I}(S)\).
    \item
      For all \((\mu_1,a_1), (\mu_2,a_2) \in S\) and \(v \in V\) such 
      that \(\mu_1 = v(\mu_2)\) there exists some \(t \in T(S)\) such 
      that \(t\bigl( a_1 -\nobreak v(a_2) \bigr) = 0\).
  \end{enumerate}
\end{corollary}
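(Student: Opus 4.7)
The plan is to derive this as a corollary of Theorem~\ref{S:Grobner-DL}. I would first verify the hypotheses of that theorem: $P$ is well-founded by assumption; $V$ is a monoid of $\mc{R}$-linear maps by construction; every generator in $V_1$ sends a basis element $\lambda \in \mathrm{Mag}(X)$ to another basis element ($\nu\lambda$ or $\lambda\nu$), so by induction every $v \in V$ maps $\mc{Y}$ into $\mc{Y}$; and condition~\eqref{Eq1:Gerritzen} gives strict monotonicity of each generator in $V_1$, whence of every composite. The standing assumption $a \in \DSM(\mu,P)$ for every $(\mu,a) \in S$ is already built into the hypothesis. With Theorem~\ref{S:Grobner-DL} in force, claims (1) and (2) here are just its conditions \parenthetic{d} and \parenthetic{c}, hence equivalent. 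What remains is to match claim (3) with the theorem's condition \parenthetic{a\texttris}, i.e., to describe the $V$-critical ambiguities and show that their relative zero-reducibility amounts to the displayed equations in~(3).

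The heart of the argument is thus the classification of $V$-critical ambiguities. A generic ambiguity has the form $(t_{v_1,s_1}, \mu, t_{v_2,s_2})$ with $s_i = (\mu_{s_i}, a_{s_i}) \in S$, $v_i \in V$, and $v_1(\mu_{s_1}) = \mu = v_2(\mu_{s_2})$. Because $\mu$ lies in the free magma $\mathrm{Mag}(X)$, it carries a unique parsing tree, and the images $v_1(\mu_{s_1})$ and $v_2(\mu_{s_2})$ inside $\mu$ correspond to two well-defined subtree positions. Two subtrees of a rooted tree are always either disjoint or nested. In the disjoint case I would exhibit the ambiguity as a $(V,V)$-montage by building a bihomomorphism $w(b_1,b_2)$ that inserts its two arguments at the two disjoint positions using left and right multiplications by the surrounding monomials; such a $w$ is $\mc{R}$-bilinear and biadvanceable, because each partial map $w(\cdot,\rho)$ and $w(\rho,\cdot)$ is itself a composition of left and right multiplications by monomials, hence an element of $V$, and every element of $V$ is advanceable by Lemma~\ref{L:Framflyttbar}. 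In the nested case, say the $\mu_{s_2}$-position sits inside the $\mu_{s_1}$-position, I would factor $v_2 = v_1 \circ w$ for some $w \in V$ with $\mu_{s_1} = w(\mu_{s_2})$, and then observe that $(t_{v_1,s_1}, \mu, t_{v_2,s_2})$ is a $V$-shadow under $v_1$ of $(t_{\id,s_1}, \mu_{s_1}, t_{w,s_2})$. For $V$-shadow-criticality to survive, this shadowing must be non-proper, forcing $v_1 = \id$ and therefore $\mu = \mu_{s_1} = w(\mu_{s_2})$.

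The $V$-critical ambiguities are therefore exactly the triples $(t_{\id,s_1}, \mu_1, t_{v,s_2})$ with $s_i = (\mu_i, a_i) \in S$ and $v \in V$ satisfying $\mu_1 = v(\mu_2)$; here $v=\id$ is allowed and covers the case of two rules sharing a leading monomial. For such an ambiguity, $t_{\id,s_1}(\mu_1) = a_1$ and $t_{v,s_2}(\mu_1) = v(a_2)$, so condition \parenthetic{a\texttris} reads: there exists $t \in T(S)$ with $t\bigl( a_1 - v(a_2) \bigr) = 0$, which is precisely~(3). Combining these equivalences finishes the proof.

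The main obstacle will be the careful bookkeeping for the tree-position analysis, in particular the verification that nested subtree positions really do factor as $v_2 = v_1 \circ w$ inside the monoid $V$ generated by one-sided multiplications, and that the resulting shadow is truly proper whenever $v_1 \neq \id$. A related subtlety is ensuring that the degenerate configuration where both positions coincide with the root of $\mu$ (so $v_1 = v_2 = \id$ but $s_1 \neq s_2$) is correctly captured by the $v = \id$ instance of condition~(3). Once these points are settled, everything else follows mechanically from Theorem~\ref{S:Grobner-DL} and Lemma~\ref{L:Framflyttbar}.
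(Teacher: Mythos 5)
Your overall approach matches the paper's: you invoke Theorem~\ref{S:Grobner-DL} and reduce the work to classifying the $V$-critical ambiguities via the rigidity of the free magma, splitting into a ``disjoint'' (montage) case and a ``nested'' (shadow) case. The paper peels the factorisation off one layer of $V_1$ at a time whereas you reason directly about subtree positions, but the two bookkeeping schemes are interchangeable.

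There is, however, one inference that is stated too quickly and hides a genuine subtlety. In the nested case you write that ``for $V$-shadow-criticality to survive, this shadowing must be non-proper, forcing $v_1 = \id$.'' Recall that $V$-shadow-critical is defined as ``not a proper $V$-shadow of any $V$-shadow-\emph{minimal} ambiguity,'' which is a weaker condition than $V$-shadow-minimality itself. Exhibiting $(t_{v_1,s_1},\mu,t_{v_2,s_2})$ as a proper $V$-shadow of $(t_{\id,s_1},\mu_{s_1},t_{w,s_2})$ therefore does not by itself rule out criticality, because the latter ambiguity need not be $V$-shadow-minimal. The paper closes this gap by observing that for any fixed $\mu \in \mathrm{Mag}(X)$ there are only finitely many pairs $(v,\nu) \in V \times \mc{Y}$ with $v(\nu) = \mu$ (a consequence of unique factorisation in the free magma), so the $V$-shadow quasi-order is well-founded on the set of ambiguities at $\mu$ and its shadows, whence $V$-shadow-critical and $V$-shadow-minimal coincide in this setting. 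With that observation inserted, your argument goes through; without it, the step ``proper shadow $\Rightarrow$ not critical'' is unjustified. It is precisely this distinction that motivated the paper's introduction of the shadow-critical concept (to cope with families $V$ where $Q_V$ fails to be well-founded), so it is not a pedantic point.
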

\begin{proof}
  It follows from \eqref{Eq1:Gerritzen} that all elements of $V$ are 
  strictly monotone with respect to $P$. Hence the conditions in 
  Theorem~\ref{S:Grobner-DL} are fulfilled and one only has to 
  verify that the last condition is \parenthetic{a\texttris} by 
  characterising the $V$-critical ambiguities.
  
  An arbitrary ambiguity of $T_1(S)$ has the form 
  $(t_{v_1,(\mu_1,a_1)},\mu,t_{v_2,(\mu_2,a_2)})$ where 
  \(v_1(\mu_1) = \mu = v_2(\mu_2)\). The situation in the last 
  condition is exactly this for \(v_1 = \id\) or \(v_2 = \id\), so it 
  only remains to show that all other ambiguities are 
  non-$V$-critical. Unique factorisation in $\mc{Y}$ gives rise to a 
  unique factorisation in $V$ (as compositions of elements of $V_1$), 
  and thus there exist \(v_1',v_2' \in V_1\) and \(v_1'',v_2'' \in 
  V\) such that \(v_1 = v_1' \circ v_1''\) and \(v_2 = v_2' \circ 
  v_2''\).
  
  If \(v_1' = v_2'\) then $\bigl( t_{v_1'',(\mu_1,a_1)}, v_1''(\mu_1), 
  t_{v_2'',(\mu_2,a_2)} \bigr)$ is another ambiguity, of which 
  $(t_{v_1,(\mu_1,a_1)},\mu,t_{v_2,(\mu_2,a_2)})$ is a proper 
  $V$-shadow. Since there for every \(\mu \in \mc{Y}\) is only finitely 
  many \((v,\nu) \in V \times \mc{Y}\) such 
  that \(\mu = v(\nu)\), it follows that $V$-shadow-critical is the 
  same as $V$-shadow-minimal, and hence none of the ambiguities with 
  \(v_1' = v_2'\) are $V$-critical.
  
  If instead \(v_1' \neq v_2'\) then one of these must multiply on 
  the left and the other must multiply on the right; it can without 
  loss of generality be assumed that \(v_1'(b) = \nu_1 b\) and 
  \(v_2'(b) = b\nu_2\). This implies that \(\mu = \nu_1\nu_2 = 
  v_2''(\mu_2) v_1''(\mu_1)\) however, and thus 
  $(t_{v_1,(\mu_1,a_1)},\mu,t_{v_2,(\mu_2,a_2)})$ is a montage of 
  $(\nu_2, t_{v_1'',(\mu_1,a_1)})$ and $(\nu_1, t_{v_2'',(\mu_2,a_2)})$ 
  with composition map \(w(b_1,b_2) = b_2b_1\). Hence the ambiguities 
  with \(v_1' \neq v_2'\) aren't $V$-critical either.
\end{proof}

\subsection{Path algebras}
\label{Ssec:Stigalgebra}

There is in the literature also a theorem by Farkas, Feustel, and 
Green~\cite{FarkasFeustelGreen} which similarly characterises 
(reduced) Gr\"obner bases in path algebras and certain semigroup 
algebras; the result is derived in an axiomatic setting generalising 
path algebras. Not surprisingly, it is in that setting 
equally possible to derive from the generic diamond lemma theory a 
result on more general (uniform monic) Gr\"obner bases in these 
algebras. The proof is essentially the same as for 
Theorem~\ref{S:Grobner-DL}, but the result is not Yet Another 
Corollary due to some technicalities caused by allowing the product 
of two monomials to be zero.

In the present notation, one is given a field $\mc{R}$ and an 
associative $\mc{R}$-algebra $\mc{M}$ with basis $\mc{Y}$. This basis 
is assumed to be well-ordered, so let $P$ be that order. Another 
binary relation \emph{divides}, or symbolically $\mid$, is defined 
on $\mc{Y}$ by \(\mu \mid \lambda\) iff there exist \(\nu_1,\nu_2 \in 
\mc{Y}\) such that \(\lambda = \nu_1\mu\nu_2\). These data are 
furthermore required to satisfy five axioms:
\begin{enumerate}
  \item[M1.]
    \(\mc{Y} \cup \{0\} \subset \mc{M}\) is a semigroup under 
    multiplication.
  \item[M2.]
    `Divides' is reflexive.
  \item[M3.]
    For each \(\lambda \in \mc{Y}\), the set $\setOf{ \mu \in \mc{Y} 
    }{ \text{$\mu$ divides $\lambda$} }$ is finite.
  \item[M4.]
    If \(\mu,\nu,\lambda,\rho \in \mc{Y}\) are such that none of the 
    products below are zero, then
    \begin{equation}
      \nu < \mu \pin{P} \Ipil 
      \lambda\nu\rho < \lambda\mu\rho \pin{P}
      \text{.}
    \end{equation}
  \item[M5.]
    If \(\mu \mid \lambda\) then \(\mu \leqslant \lambda \pin{P}\).
\end{enumerate}
In the case that $\mc{M}$ is the path algebra $\FAlg{\Gamma}{\mc{R}}$ 
and $\mc{Y}$ is the set of all paths\footnote{
  To be formally correct, one should really say \emph{walk} rather 
  than `path', since a \emph{path} (as all graph theorists know) may 
  not have any repeated vertices, but speaking of `walk algebras' 
  here would probably cause more confusion than it avoids. 
} in $\Gamma$ (counting vertices as paths of length $0$), axioms 
M1--M3 are trivial properties; in particular M1 is characteristic. 
M4 is a natural modification of the monoid partial order axiom 
\eqref{Eq:Semigruppsordning} and M5 is another condition on $P$; 
the authors suggest that one meets it by using a length-lexicographic 
order, although a weighted-degree lexicographic order will work 
just as well. It should be observed that $\mc{Y} \cup \{0\}$ is 
typically not a monoid, since the unit in a path algebra is the sum 
of all length $0$ paths rather than any particular path.

Simple reductions may be constructed as in 
Construction~\ref{Konstr:t_v,s-reduktion}, with $V$ being the set of 
maps \(b \mapsto \lambda b \rho\) for \(\lambda,\rho \in \mc{Y}\); 
this is exactly the same as in~\cite[p.~731]{FarkasFeustelGreen}. 
Similarly the definition there of a ($P$-monic) `Gr\"obner generating 
set' is exactly the same as `$P$-monic $V$-Gr\"obner basis' here. 
Axiom M4 is exactly what is needed in 
Lemma~\ref{L:OrdnFramflyttKompatibel} to establish that $V$ 
correlates $P$ to itself, and then the compatibility with $P$ of 
$T(S)$ follows from Lemma~\ref{L:Kompatibilitet} for any $S$ 
constructed as in Lemma~\ref{L:Grobnerbas}. It is however not quite 
as straightforward to apply Lemma~\ref{L:Framflyttbar} to prove that 
the elements of $V$ are advanceable. Besides the trivial detail 
that $V$ is not in general closed under composition\Ldash if 
\(v_1(b) = \lambda_1 b \rho_1\) and \(v_2(b) = \lambda_2 b \rho_2\) 
then \((v_1 \circ\nobreak v_2)(b) = \lambda_1 \lambda_2 b \rho_2 
\rho_1\) which is only an element of $V$ if \(\lambda_1\lambda_2 
\neq 0\) and \(\rho_2\rho_1 \neq 0\), although that can be worked 
around by considering $V \cup \{0\}$ instead\Dash there is in this 
lemma also the more significant condition that every \((\mu,a) \in S\) 
and \(v \in V\) must satisfy \(v(a) = 0\) if \(v(\mu) = 0\). This is 
why the result was above described as being about \emph{uniform} 
monic Gr\"obner bases. 

In \cite[p.\,733]{FarkasFeustelGreen}, two elements \(\mu,\nu \in 
\mc{Y}\) are defined to be \emDefOrd{uniform-equivalent} if
\begin{equation}
  \lambda\mu\rho = 0 \Epil \lambda\nu\rho = 0
  \qquad\text{for all \(\lambda,\rho \in \mc{Y}\).}
\end{equation}
In a path algebra, this simply means that $\mu$ and $\nu$ have the 
same endpoints, but in principle the matter might be more 
complicated. Nonetheless, uniform-equivalence is an equivalence 
relation on $\mc{Y}$ and defines a partition of $\mc{Y}$ into 
equivalence classes. An element $a$ of $\mc{M}$ is said to be 
\emDefOrd{uniform} if all elements of $\supp(a)$ are 
uniform-equivalent, and consequently a pair $(\mu,a) \in \mc{Y} 
\times \mc{M}\) can be said to be uniform if every element of 
$\supp(a)$ is uniform-equivalent to $\mu$. Considering only uniform 
Gr\"obner bases may seem like a severe restriction, but at least in 
the case of a path algebra it is actually rather trivial. The reason 
for this is that there is in a path algebra no way in which a path
can be ``uniform-superior'' to another path; they're either 
equivalent or quite different. More concretely, if \(\mu,\nu \in 
\mc{Y}\) are \emph{not} uniform-equivalent then for each \(v \in V\), 
at most one of $v(\mu)$ and $v(\nu)$ can be nonzero. This has the 
effect that only the uniform parts of rules get encoded into 
$T_1(S)$; for $t_{v,(\mu,a)}$ to even exist $v(\mu)$ must be nonzero 
and thus all \(\nu \in \supp(a)\) which are not uniform-equivalent to 
$\mu$ will be killed by $v$.

In a path algebra, it is easy to see that any ideal is generated by a 
set of uniform elements; writing $\Gamma_0$ for the set of vertices 
in $\Gamma$, any \(a \in \FAlg{\Gamma}{\mc{R}}\) can be expressed as 
the sum of uniform elements 
\(\sum_{\kappa,\rho \in \Gamma_0} \kappa a \rho\), and these terms are 
elements of every ideal containing $a$. That the same 
should hold in general is not obvious, but any algebra satisfying 
M1--M5 must contain idempotent elements which fill the role of 
vertices in this argument; in particular axiom~M2 is not as innocent 
as it may seem, since what it claims is really that there for every 
\(\mu \in \mc{Y}\) exist \(\kappa,\rho \in \mc{Y}\) such that 
\(\kappa\mu\rho = \mu\). The structure of algebras satisfying M1--M5 
is the subject of~\cite[Sec.~4]{FarkasFeustelGreen}, and the 
conclusion is roughly that any such algebra has to be a path algebra 
in which some paths have been identified.

Anyhow, with $\mc{M}$, $\mc{R}$, $\mc{Y}$, $V$, $P$, $S$, and $T_1(S)$ 
as above, it follows that \parenthetic{a}, \parenthetic{a\textprime}, 
\parenthetic{a\textbis}, \parenthetic{a\texttris}, \parenthetic{b}, 
\parenthetic{c}, and \parenthetic{d} of Theorem~\ref{S:Grobner-DL} 
are equivalent. (When employing Lemma~\ref{L:Grobnerbas} one must 
extend $V$ with the identity map to make it a monoid, but since $S$ 
is uniform that doesn't contribute any additional reductions.) The 
structure of $V$-critical ambiguities can be analysed as in 
Example~\ref{Ex:BergmanskTvetydighet}; \cite{FarkasFeustelGreen} 
gives the characterisation of overlaps between $(\mu_1,a_1)$ and 
$(\mu_2,a_2)$ as being determined by \(\nu_1,\nu_2,\lambda \in 
\mc{Y}\) such that \(\mu_1 = \nu_1 \lambda\), \(\mu_2 = \lambda 
\nu_2\), \(\nu_2 \neq \mu_2\), and \(\nu_2 \neq \mu_2\).

\subsection*{Acknowledgments}

Part of the research reported herein was carried out in 2003--2004, 
when the author was a postdoc at the Mittag-Leffler institute, 
participating in the NOG Noncommutative Geometry programme.

%

\begin{theindex}
\addcontentsline{toc}{section}{Index}

  \item \dots$(i)$, \hyperpage{42}
  \item $\pm R^*$, \hyperpage{6}
  \item $\equiv\pmod{S}$, \hyperpage{16}
  \item $\equiv\pmod{S < \mu \pin{P}}$, \hyperpage{37}

  \indexspace

  \item $\ssI$, \hyperpage{4}

  \indexspace

  \item act trivially, \hyperpage{16}
  \item advanceable, \hyperpage{19}, \hyperpage{43}
    \subitem absolutely, \hyperpage{19}, \hyperpage{43}
    \subitem bi-, \hyperpage{48}
    \subitem conditionally, \hyperpage{19}
  \item algebra ultranorm, \hyperpage{12}
  \item ambiguity, \hyperpage{38}
    \subitem absolute shadow, \hyperpage{43}
    \subitem critical, \hyperpage{50}
    \subitem inclusion, \hyperpage{53}
    \subitem montage, \hyperpage{48}
    \subitem overlap, \hyperpage{53}
    \subitem proper inclusion, \hyperpage{53}
    \subitem proper shadow, \hyperpage{50}
    \subitem resolvable, \hyperpage{38}
    \subitem resolvable relative to, \hyperpage{38}
    \subitem shadow, \hyperpage{43}, \hyperpage{50}
    \subitem shadow-critical, \hyperpage{50}
    \subitem shadow-minimal, \hyperpage{50}
  \item antitone, \hyperpage{46}

  \indexspace

  \item $B_n$, \hyperpage{5}
  \item $B_n(i)$, \hyperpage{42}
  \item biadvanceable, \hyperpage{48}
  \item bihomomorphism, \hyperpage{48}

  \indexspace

  \item category, \hyperpage{50}
    \subitem generated by, \hyperpage{50}
  \item compatible
    \subitem partial order, \hyperpage{46}
    \subitem reduction, \hyperpage{33}
  \item composition lemma, \hyperpage{39}
  \item composition map, \hyperpage{48}
  \item confluent, \hyperpage{39}
  \item correlate, \hyperpage{45}
  \item critical pair, \hyperpage{39}
  \item $\Cspan$, \hyperpage{8}

  \indexspace

  \item down-set, \hyperpage{33}
    \subitem module, \hyperpage{33}
  \item $\DSM(\mu,P)$, \hyperpage{33}

  \indexspace

  \item equicontinuous, \hyperpage{29}

  \indexspace

  \item fork, \hyperpage{39}

  \indexspace

  \item Gr\"obner basis, \hyperpage{22}, \hyperpage{61}

  \indexspace

  \item $I$ (set of sorts), \hyperpage{42}
  \item $\mc{I}(S)$, \hyperpage{16}
  \item $V$-ideal, \hyperpage{21}
  \item $V$-ideal basis, \hyperpage{21}
  \item inner reduction, \hyperpage{53}
  \item $\Irr$, \hyperpage{16}
  \item irreducible, \hyperpage{16}

  \indexspace

  \item leading monomial, \hyperpage{61}
  \item $\LM(g)$, \hyperpage{61}
  \item $\lm(a)$, \hyperpage{61}
  \item locally confluent, \hyperpage{39}

  \indexspace

  \item $\mc{M}$, \hyperpage{5}
  \item $\mc{M}(i)$, \hyperpage{42}
  \item $\cmplM$, \hyperpage{8}
  \item $\cmplM(i)$, \hyperpage{42}
  \item $R$-module, \hyperpage{6}, \hyperpage{8}
  \item module ultranorm, \hyperpage{12}
  \item monic, \hyperpage{33}
  \item monomial, \hyperpage{4}
  \item monotone, \hyperpage{45}
  \item montage, \hyperpage{48}

  \indexspace

  \item $\varepsilon $-neighbourhood, \hyperpage{7}
  \item normal, \hyperpage{17}
  \item normal form, \hyperpage{16}

  \indexspace

  \item $\mc{O}$, \hyperpage{5}
  \item $\cmplO$, \hyperpage{8}
  \item $\mc{O}(i)$, \hyperpage{42}
  \item open, \hyperpage{7}
  \item outer reduction, \hyperpage{53}

  \indexspace

  \item $\Per$, \hyperpage{24}
  \item persistently $\varepsilon $-reducible, \hyperpage{24}
  \item persistently reducible, \hyperpage{24}
  \item piece, \hyperpage{48}

  \indexspace

  \item $R$, \hyperpage{5}
  \item $R(i)$, \hyperpage{42}
  \item $R^*$, \hyperpage{6}
  \item $\RstarY$, \hyperpage{6}
  \item $\Red(S)$, \hyperpage{27}
  \item $\Red_\ve(S)$, \hyperpage{26}
  \item reduction, \hyperpage{14}
  \item rewrite rule, \hyperpage{18}
  \item rewriting system, \hyperpage{14}
  \item ring ultranorm, \hyperpage{12}

  \indexspace

  \item $S$, \hyperpage{14}
  \item simple reduction, \hyperpage{14}
  \item $\Span$, \hyperpage{8}
  \item strictly monotone, \hyperpage{45}
  \item strong triangle inequality, \hyperpage{12}
  \item stuck in, \hyperpage{24}
  \item $\supp(a)$, \hyperpage{61}
  \item support, \hyperpage{61}

  \indexspace

  \item $T_1(S)$, \hyperpage{5}
  \item $T_1(S)(i)$, \hyperpage{42}
  \item $t_{\mu\mapsto a}$, \hyperpage{15}
  \item $t_{\nu_1 s \nu_2}$, \hyperpage{18}
  \item $T(S)$, \hyperpage{14}
  \item $t_{v,s}$, \hyperpage{18}, \hyperpage{57}
  \item TDCC, \hyperpage{32}
  \item term, \hyperpage{4}
  \item terminal, \hyperpage{17}
  \item topological descending chain condition, \hyperpage{32}
  \item trivial ultranorm, \hyperpage{12}

  \indexspace

  \item ultranorm, \hyperpage{12}
  \item uniform, \hyperpage{70}
  \item uniform-equivalent, \hyperpage{69}
  \item uniquely reducible, \hyperpage{27}
  \item $\varepsilon $-uniquely reducible, \hyperpage{26}

  \indexspace

  \item weight function, \hyperpage{12}
  \item well-founded, \hyperpage{32}

  \indexspace

  \item $X^\bullet$, \hyperpage{67}
  \item $X^*$, \hyperpage{4}

  \indexspace

  \item $\mc{Y}$, \hyperpage{5}
  \item $\mc{Y}(i)$, \hyperpage{42}

\end{theindex}


\begin{thebibliography}{99}
\addcontentsline{toc}{section}{\refname}

\bibitem{BaaderNipkow}
   F. Baader and T. Nipkow: \textit{Term rewriting and all that},
   Cambridge University Press, 1998; 
   \ISBN~0-521-45520-0 and~0-521-77920-0.


\bibitem{Bergman} G. M. Bergman: 
   \textit{The Diamond Lemma for Ring Theory}, 
   Adv.\ Math.\ \textbf{29} (1978), 178--218.

\bibitem{Bokut}
   L. A. Bokut\textprime:
   \textit{Embeddings into simple associative algebras} (Russian),
   Algebra i Logika \textbf{15}, no.~2 (1976), pp.~117--142 and~245.
   English translation in Algebra and Logic, pp.~73--90.


\bibitem{Buchberger-avh}
   B. Buchberger: \textit{Ein Algorithmus zum Auffinden der 
   Basiselemente der Restklassenringes nach einem nulldimensionalen 
   Polynomideal} (German: An Algorithm for Finding a Basis for the 
   Residue Class Ring of a Zero-Dimensional Polynomial Ideal), 
   Doctoral Dissertation, University of Innsbruck, 
   Institute for Mathematics, 1965.




\bibitem{FarkasFeustelGreen}
   D. R. Farkas, C. D. Feustel, and E. L. Green:
   Synergy in the theories of Gr\"obner bases and path algebras, 
   \textit{Can. J. Math.} vol.~\textbf{45} (4), 1993, 727--739.

\bibitem{Gerritzen}
  L. Gerritzen:
  Tree polynomials and non-associative Gr\"obner bases,
  \textit{J. Symb. Comp.} \textbf{41} (2006), 297--316.




\bibitem{Avhandlingen}
   L. Hellstr\"om:
   \textit{The Diamond Lemma for Power Series Algebras}
   (doctorate thesis), 2002, xviii+228~pp.;
   ISBN~91-7305-327-9;
   \href{http://abel.math.umu.se/~lars/diamond/thesis.pdf}
     {\textsc{http:}/\slash \texttt{abel.math.umu.se}\slash
     \texttt{\~{}lars}\slash \texttt{diamond}\slash
     \texttt{thesis.pdf}} or 
   \href{http://abel.math.umu.se/~lars/diamond/thesis.ps.gz}
     {ditto /\texttt{thesis.ps.gz}}.
   
\bibitem{RwGraphInvariant}
   L. Hellstr\"om:
   \textit{A Rewriting Approach to Graph Invariants}, 
   $\langle$AGMF2 proceedings$\rangle$, 2006. 
   Also at \href{http://abel.math.umu.se/~lars/diamond/paper-gr.pdf}
   {\textsc{http:}/\slash \texttt{abel.math.umu.se}\slash 
   \texttt{\~{}lars}\slash \texttt{diamond}\slash 
   \texttt{paper-gr.pdf}}.
   
   

\bibitem{KnuthBendix}
   D. E. Knuth and P. B. Bendix:
   \textit{Simple word problems in universal algebras},
   pp.~263--297 in: 
   \textit{Computational Problems in Abstract Algebra 
     (Proc. Conf., Oxford, 1967)} (ed. by \textsc{J. Leech}),
   Pergamon, Oxford, 1970.
   Reprinted as pp.~342--376 in \textit{Automation of Reasoning Vol. 2} 
   (ed. by \textsc{J. H. Siekmann} and \textsc{G. Wrightson}), 
   Springer, 1983; 
   \ISBN~3-540-12044-0.

\bibitem{MacLane65}
   S. MacLane:
   \textit{Categorical Algebra},
   Bull. Amer. Math. Soc. \textbf{71} (1965), 40--106.


\bibitem{Mora:Seven} 
   T. Mora:
   \textit{Seven variations on standard bases}, 
   preprint \textbf{45} (1988), Dip. Mat. Genova, 81~pp.
   Available for download on prof.\ Mora's home page, 
   at \href{http://www.disi.unige.it/person/MoraF/publications.html}
   {\textsc{http}:/\slash \smalltexttt{www.disi.unige.it}\slash
   \smalltexttt{person}\slash \smalltexttt{MoraF}\slash 
   \smalltexttt{publications.html}}.
   Also item 1082 in the RICAM 
   \href{http://www.ricam.oeaw.ac.at/Groebner-Bases-Bibliography/}
   {Gr\"obner Bases Bibliography}.


\bibitem{Newman} 
   M. H. A. Newman: 
   \textit{On theories with a combinatorial definition of 
   ``equivalence''}, Ann. of Math. \textbf{43} (1942), 223--243.


\bibitem{Robbiano}
   L. Robbiano:
   \textit{On the theory of graded structures},
   J. Symbolic Comput. \textbf{2} (1986), no. 2, 139--170. 

\bibitem{Shirshov} 
   A. I. Shirshov:
   \textit{Some algorithmic problems for Lie algebras} (Russian), 
   Sibirsk. Mat. Zh. \textbf{2} (1962), 291--296.



\bibitem{Trotter} 
   W. T. Trotter: 
   \textit{Combinatorics and partially ordered sets},
   Johns Hopkins University Press, Baltimore, 1992; 
   \ISBN~0-8018-4425-8.


\end{thebibliography}
\end{document}